\let\mathcal \undefined
\def\mathcal{\mathscr}
\let\emptyset \undefined
\let\ge       \undefined
\let\le       \undefined
\let\leq\le
\let\geq\ge
\theoremstyle{plain}
\newtheorem{theorem}{Theorem}[section]
\newtheorem{corollary}[theorem]{Corollary}
\newtheorem{lemma}[theorem]{Lemma}
\newtheorem{proposition}[theorem]{Proposition}
\theoremstyle{remark}
\newtheorem{remark}[theorem]{Remark}
\newtheorem{definition}[theorem]{Definition}
\newtheorem{example}[theorem]{Example}
\numberwithin{equation}{section}
\def\N{{\mathbb N}}
\def\Z{{\mathbb Z}}
\def\R{{\mathbb R}}
\def\C{{\mathbb C}}
\newcommand{\E}{{\mathbb E}}
\renewcommand{\P}{{\mathbb P}}
\newcommand{\F}{{\mathcal F}}
\renewcommand{\H}{{\mathscr{H}}}
\newcommand{\G}{\mathscr{G}}
\renewcommand{\a}{\alpha}
\renewcommand{\b}{\beta}
\newcommand{\ga}{\gamma}
\renewcommand{\d}{\delta}
\newcommand{\e}{\varepsilon}
\newcommand{\om}{\omega}
\renewcommand{\O}{\Omega}
\newcommand{\Om}{\Omega}
\newcommand{\sgn}{{\rm sgn}}
\newcommand{\cT}{\mathscr{T}}
\newcommand{\beq}{\begin{equation}}
\newcommand{\eeq}{\end{equation}}
\newcommand{\bal}{\begin{aligned}}
\newcommand{\eal}{\end{aligned}}
\newcommand{\ben}{\begin{enumerate}}
\newcommand{\beni} {\begin{enumerate}[(i)]}
\newcommand{\een}{\end{enumerate}}
\newcommand{\bit}{\begin{itemize}}
\newcommand{\eit}{\end{itemize}}
\newcommand{\beqw}{\begin{equation*}}
\newcommand{\eeqw}{\end{equation*}}
\newcommand{\bthm}{\begin{theorem}}
\newcommand{\ethm}{\end{theorem}}
\newcommand{\bpr}{\begin{proposition}}
\newcommand{\epr}{\end{proposition}}
\newcommand{\ble}{\begin{lemma}}
\newcommand{\ele}{\end{lemma}}
\newcommand{\blem}{\begin{lemma}}
\newcommand{\elem}{\end{lemma}}
\newcommand{\bpf}{\begin{proof}}
\newcommand{\epf}{\end{proof}}
\newcommand{\bex}{\begin{example}}
\newcommand{\eex}{\end{example}}
\newcommand{\bre}{\begin{example}}
\newcommand{\ere}{\end{example}}
\newcommand{\bma}{\begin{bmatrix}}
\newcommand{\ema}{\end{bmatrix}}
\newcommand{\wh}{\widehat}
\renewcommand{\Re}{\hbox{\rm Re}}
\newcommand{\wt}{\widetilde}
\newcommand{\seqx}{(x_n)_{n\ge 1}}
\newcommand{\seqy}{(y_n)_{n\ge 1}}
\newcommand{\seqS}{(S_n)_{n\ge 1}}
\newcommand{\calA}{{\mathscr A}}
\newcommand{\calL}{{\mathscr L}}
\newcommand{\n}{\Vert}
\newcommand{\g}{\gamma}
\newcommand{\one}{{{\mathbbm 1}}}
\newcommand{\embed}{\hookrightarrow}
\newcommand{\s}{^*}
\newcommand{\lb}{\langle}
\newcommand{\rb}{\rangle}
\newcommand{\limn}{\lim_{n\to\infty}}
\newcommand{\limk}{\lim_{k\to\infty}}
\newcommand{\sumn}{\sum_{n\ge 1}}
\newcommand{\sumk}{\sum_{k\ge 1}}
\newcommand{\sumj}{\sum_{j\ge 1}}
\newcommand{\ot}{\otimes}
\newcommand{\ov}{\overline}
\begin{document}

\title{$\g$-Radonifying operators -- a survey}

\author{Jan van Neerven}
\address{Delft Institute of Applied Mathematics\\
Delft University of Technology\\ P.O. Box 5031\\ 2600 GA Delft\\The
Netherlands}
\email{J.M.A.M.vanNeerven@tudelft.nl}

\thanks{Support by VICI subsidy 639.033.604
of the Netherlands Organisation for Scientific Research (NWO) is gratefully
acknowledged}

\keywords{$\g$-Radonifying operators, stochastic integral, isonormal process,
Gaussian random variable, covariance domination, uniform tightness,
$K$-convexity, type and cotype}
  
\subjclass[2000]{Primary: 47B10; Secondary: 28C20, 46B09, 47B10, 60B11, 60H05}

\begin{abstract}
We present a survey of the theory of $\g$-radonifying operators and its
applications to stochastic integration in Banach spaces.
\end{abstract}

\date\today

\maketitle

\tableofcontents

\section{Introduction}

The theory of $\g$-radonifying operators can be traced back to the pioneering
works of {\sc Gel$'$fand} \cite{Gel}, {\sc Segal}, \cite{Seg}, {\sc Gross}
\cite{Gro62, Gro67}, who considered the following problem. A {\em cylindrical
Gaussian distribution} on a real Banach space $F$ is a bounded linear operator
$W: F\s\to L^2(\O)$, where  $F\s$ is the dual of $F$ and $(\O,\F,\P)$ is a
probability space.  
If $T$ is a bounded linear operator from $F$ into another real Banach space $E$,
then
$T$ maps $W$ to a cylindrical Gaussian distribution $T\circ W: E\s \to L^2(\O)$
by
$$ (T\circ W) x\s := W(T\s x\s), \quad x\s\in E\s.$$
The problem is to find criteria on $T$ which ensure that $T\circ W$ is {\em
Radon}. By this we mean that there exists a strongly measurable Gaussian random
variable $X\in L^2(\O;E)$ such that $$(T\circ W)x\s = \lb X,x\s\rb, \quad x\s\in
E\s$$ (the terminology ``Radon'' is explained by Proposition \ref{prop:tight}
and the remarks following it). 
The most interesting instance of this problem occurs when $F = H$ is a real
Hilbert space with inner product $[\cdot,\cdot]$ and $W: H\to L^2(\O)$ is an
{\em isonormal process}, i.e. a cylindrical Gaussian distribution satisfying
$$ \E W(h_1)W(h_2) = [h_1,h_2], \quad h_1,h_2\in H.$$   
Here we identify $H$ with its dual $H\s$ via the Riesz representation theorem.
A bounded operator $T:H\to E$ such that $T\circ W$ is 
Radon is called {\em $\g$-radonifying}. Here the adjective `$\g$-' stands for
`Gaussian'.

{\sc Gross} \cite{Gro62, Gro67} obtained a necessary and sufficient condition
for $\g$-radonification in terms of so-called measurable seminorms on $H$.
His result includes the classical result that a bounded operator from $H$ into a
Hilbert space $E$ is $\g$-radonifying if and only if it is Hilbert-Schmidt. 
These developments marked the birth of the theory of Gaussian distributions on
Banach spaces.  
The state-of the-art around 1975 is presented in the lecture notes by {\sc Kuo}
\cite{Kuo}.  
 
$\g$-Radonifying operators can be thought of as the Gaussian analogues of
$p$-absolutely summing operators. For a systematic exposition of this point of
view we refer to the lecture note by 
{\sc Badrikian} and {\sc Chevet} \cite{BadChe}, the monograph by {\sc Schwartz}
\cite{Schw-Radon} and the {\sc Maurey-Schwartz} seminar notes published between
1972 and 1976. More recent monographs include {\sc Bogachev} \cite{Bog},
{\sc Mushtari} \cite{Mus96}, and {\sc Vakhania, Tarieladze, Chob\-anyan}
\cite{VTC}.

In was soon realised that spaces of $\g$-radonifying operators provide a natural
tool for constructing a theory of stochastic integration in Banach spaces. This
idea, which goes back to a paper of {\sc Hoffman-J\o rgensen} and {\sc Pisier}
\cite{HofPis}, was first developed systematically in the Ph.D. thesis of {\sc
Neidhardt} \cite{Nei} in the context of $2$-uniformly smooth Banach spaces. His
results were taken up and further developed in a series of papers by {\sc
Dettweiler} (see \cite{Dett91} and the references given there) and subsequently
by {\sc Brze\'zniak} (see \cite{Brz95,Brz97}) who used the setting of martingale
type $2$ Banach spaces; this class of Banach spaces had been proved equal, up to
a renorming, to the class of $2$-uniformly smooth Banach spaces by {\sc Pisier}
\cite{Pis75}. The more general problem of radonification of cylindrical
semimartingales has been covered by {\sc Badrikian} and {\sc {\"U}st{\"u}nel}
\cite{BadUst}, {\sc Schwartz} \cite{Schw96} and {\sc Jakubowski, Kwapie{\'n},
Raynaud de Fitte, Rosi{\'n}ski} \cite{JKRR}.

If $E$ is a Hilbert space, then a strongly measurable function $f:\R_+\to E$ is
stochastically integrable with respect to Brownian motions $B$ if and only if
$f\in L^2(\R_+;E)$.
It had been known for a long time that functions in $L^2(\R_+;E)$ may fail to be
stochastically integrable with respect to $B$. The first simple counterexamples,
for $E=\ell^p$ with $1\le p<2$, were given by {\sc Yor} \cite{Yor}. {\sc
Rosi\'nski} and {\sc Suchanecki} \cite{RosSuc} (see also {\sc Rosi\'nski}
\cite{Ros84, Ros87}) were able to get around this by constructing a stochastic
integral of Pettis type for functions with valued in an arbitrary Banach space.
This integral was interpreted in the language of $\g$-radonifying operators by
{\sc van Neerven} and {\sc Weis} \cite{NeeWei05a}; some of the ideas in this
paper were already implicit in {\sc Brze\'zniak} and {\sc van Neerven}
\cite{BrzNee00}. The picture that emerged 
is that the space $\g(L^2(\R_+),E)$ of all $\g$-radonifying operators from
$L^2(\R_+)$ into $E$, rather than the Lebesgue-Bochner space $L^2(\R_+;E)$, is
the `correct' space of $E$-valued integrands for the stochastic integral with
respect to a Brownian motion $B$.  Indeed, the classical It\^o isometry extends
to the space $\g(L^2(\R_+),E)$ in the sense that
$$ \E\Big\n\int_0^\infty \phi\,dB\Big\n^2 = \n \wt\phi\n_{\g(L^2(\R_+),E)}^2$$
for all simple functions $\phi :\R_+\to H\ot E$; here $\wt\phi: L^2(\R_+) 
\to E$ is given by integration against $\phi$; on the level of elementary
tensors, the identification $\phi\mapsto \wt\phi$ is given by the identity
mapping
$f\ot x \mapsto f\ot x$. For Hilbert spaces,
this identification sets up an isomorphism 
$$ L^2(\R_+;E) \eqsim \g(L^2(\R_+),E).$$
In the converse direction, if the identity mapping
$f\ot x \mapsto f\ot x$ extends to an isomorphism $ L^2(\R_+;E) \simeq
\g(L^2(\R_+),E)$, then $E$ has both type $2$ and cotype $2$, so $E$ is
isomorphic to a Hilbert space by a classical result of {\sc Kwapie\'n}
\cite{Kwa72}. 

Interpreting $B$ as an isonormal process $W:L^2(\R_+)\to L^2(\O)$ by putting
\begin{align}\label{eq:BM} 
W(f) := \int_0^\infty f\,dB,
\end{align}
this brings us back to the question originally studied by {\sc Gross}. However,
instead of thinking of an operator $T_\phi: L^2(\R_+)\to E$ as `acting' on the
isonormal process $W$, we now think of $W$ as `acting' on $T_\phi$ as an
`integrator'. This suggests an abstract approach to $E$-valued stochastic
integration, where the `integrator' is an arbitrary isonormal processes $W: H\to
L^2(\Omega)$, with $H$ an abstract Hilbert space, and the `integrand' is a
$\g$-radonifying operator from $H$ to $E$. For finite rank operators $T =
\sum_{n=1}^N h\ot x$ the stochastic integral with respect to $W$ is then given
by
$$ W\Big(\sum_{n=1}^N h\ot x\Big) := \sum_{n=1}^N W(h)\ot x.$$
In the special case $H=L^2(\R_+)$ and $W$ given by a standard Brownian motion
through \eqref{eq:BM}, this is easily seen to be consistent with the classical
definition of the stochastic integral. 

This idea will be worked out in detail. This paper contains no new results; the
novelty is rather in the organisation of the material and the abstract point of
view. Neither have we tried to give credits to many results which are more or
less part of the folklore of the subject. This would be difficult, since theory
of $\g$-radonifying operators has changed face many times. Results that are 
presented here as theorems may have been taken as definitions in previous works
and vice versa, and many results have been proved and reproved in apparently
different but essentially equivalent formulations by different authors. Instead,
we hope that the references given in this introduction serves as a guide for the
interested reader who wants to unravel the history of the subject. For the
reasons just mentioned we have decided to present full proofs, hoping that this
will make the subject more accessible.

The emphasis in this paper is on $\g$-radonifying operators rather than on
stochastic integrals. Accordingly we shall only discuss stochastic integrals of
deterministic functions. The approach taken here extends to 
stochastic integrals of stochastic processes if the underlying Banach space is a
so-called UMD space by following the lines of {\sc van Neerven, Veraar, Weis}
\cite{NVW07a}.  
We should mention that various alternative approaches to stochastic integration
in general Banach spaces exist, among them the vector measure approach of {\sc
Brooks} and {\sc Dinculeanu} \cite{BroDin} and {\sc Dinculeanu} \cite{Din}, and
the Dol\'eans measure approach of {\sc Metivier} and {\sc Pellaumail}
\cite{MetPel}. As we see it, the virtue of the approach presented here is that
it is tailor-made for applications to stochastic PDEs; see, e.g., {\sc
Brze\'zniak} \cite{Brz95, Brz97}, {\sc Da Prato} and {\sc Zabczyk}
\cite{DaPZab}, {\sc van Neerven, Veraar, Weis} \cite{NVW09, NVW08} and the
references therein. For an introduction to these applications we refer to the
author's 2007/08 Internet Seminar lecture notes \cite{ISEM}.

Let us finally mention that the applicability of radonifying operators is by no
means limited to vector-valued stochastic integration. Radonifying norms have
been used, under the guise of $l$-norms, in the local theory of Banach space for
many years; see e.g. {\sc Diestel, Jarchow, Tonge} \cite{DJT}, {\sc Kalton} and
{\sc Weis} \cite{KalWei07b}, {\sc Pisier} \cite{Pis89}, {\sc Tomczak-Jaegermann}
\cite {TomJae}. In harmonic analysis, $\g$-radonifying norms are the natural
generalisation of the square functions arising in connection with
Littlewood-Paley theory (see e.g. {\sc Stein} \cite{Ste70})
and were used as such in {\sc Kalton} and {\sc Weis} \cite{KalWei07}, {\sc
Hyt\"onen} \cite{Hyt07}, 
{\sc Hyt\"onen, McIntosh, Portal} \cite{HMP08}, and {\sc Hyt\"onen, van Neerven,
Portal} \cite{HNP}. Further applications have appeared in interpolation theory,
see {\sc Kalton, Kunstmann, Weis} \cite{KKW} and {\sc Su\'arez} and {\sc Weis}
\cite{SuaWei}, control theory, see {\sc Haak} and {\sc Kunstmann}
\cite{HaakKun},
and in image processing, see {\sc Kaiser} and {\sc Weis} \cite{KaiWei}. This
list far from being complete. 

This paper is loosely based on the lectures presented at the 2009 workshop on
Spectral Theory and Harmonic Analysis held at the Australian National University
in Canberra. It is a pleasure to thank the organisers Andrew Hassell and Alan
McIntosh for making this workshop into such a success.

\medskip\noindent
{\bf Notation.} \ 
Throughout these notes, we use the symbols $H$ and $E$ to denote real Hilbert
spaces and real Banach spaces, respectively. The inner product of a Hilbert
space $H$ will be denoted by $[\cdot,\cdot]_H$ or, if no confusion can arise, by
$[\cdot,\cdot]$. We will always identify $H$ with its dual via the Riesz
representation theorem.
The duality pairing between a Banach space $E$ and its dual $E\s$ will be
denoted by $\lb \cdot,\cdot\rb_{E,E\s}$ or simply $\lb \cdot,\cdot\rb.$
The space of all bounded linear operators from a Banach space $E$ into another
Banach space $F$ is denoted by $\calL(E,F)$. The word `operator' always means
`bounded linear operator'. 

\section{Banach space-valued random variables}\label{sec:preliminaries}

Let $(A,\calA,\mu)$ be a $\sigma$-finite measure space and $E$ a Banach space.
A function $f:A\to E$ is called {\em simple} if it is a finite linear
combination of functions of the form $\one_B\ot x$ with $B\in\calA$ of finite
$\mu$-measure and $x\in E$,
and \emph{strongly measurable}
if there exists a sequence of simple functions $f_n:A\to E$
such that $\limn f_n = f$ pointwise almost surely.
By the Pettis measurability theorem, $f$ is strongly measurable if and only if 
$f$ is {\em essentially separably valued} (which means that there exists a null
set $N\in\calA$ and a separable closed subspace $E_0$ of $E$ such that
$f(\xi)\in E_0$ for all $\xi\not\in N$) and {\em weakly measurable} (which means
that $\lb f,x\s\rb$ is measurable for all $x\s\in E\s$).

When $(\O,\F,\P)$ is a probability space, strongly measurable functions $f:\O\to
E$ are called {\em random variables}. Standard probabilistic notions such as
independence and symmetry carry over to the $E$-valued case in an obvious way.
Following tradition in the probability literature, random variables will be
denoted by the letter $X$ rather than by $f$.
The {\em distribution} of an $E$-valued random variable $X$ is the 
Borel probability measure $\mu_X$ on $E$ defined by
$$\mu_X(B) := \P\{X\in B\}, \qquad B\in\mathscr{B}(E).$$
The set $\{X\in B\}:= \{\om\in\O: \ X(\om)\in B\}$ may not belong to $\F$, but
there always exists a set $F\in \F$ such that the symmetric difference $F\Delta
\{X\in B\}$ is contained in a null set in $\F$, and therefore the measure
$\mu_X$ is well-defined.
 
For later use we collect some classical facts concerning $E$-valued random
variables. Proofs, further results, and references to the literature can be
found in 
{\sc Albiac} and {\sc Kalton} \cite{AlbKal}, {\sc Diestel, Jarchow, Tonge}
\cite{DJT}, {\sc Kwapie\'n} and {\sc Woyczy{\'n}ski} \cite{KwaWoy}, {\sc Ledoux}
and {\sc Talagrand} \cite{LedTal}, and {\sc Vakhania, Tarieladze, Chob\-anyan}
\cite{VTC}.

The first result states that $E$-valued random variables 
are {\em tight}: 

\begin{proposition}\label{prop:tight}
If $X$ is a $E$-valued random variable, then for every $\e>0$ there exists a
compact
set $K$ in $E$ such that $\P\{X\not\in K\} < \e.$
\end{proposition}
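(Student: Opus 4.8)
The plan is to exploit the essential separability that strong measurability provides, and then to run the standard totally-bounded covering argument (Ulam's theorem) for Borel probability measures on a separable complete metric space.

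First I would pass to a separable setting by means of the Pettis measurability theorem recalled above. Since $X$ is strongly measurable it is essentially separably valued, so there exist a null set $N\in\F$ and a separable closed subspace $E_0\subseteq E$ with $X(\om)\in E_0$ for all $\om\notin N$. Consequently the distribution is concentrated on $E_0$, i.e. $\mu_X(E\setminus E_0)=0$. As a closed subspace of a Banach space, $E_0$ is itself complete, and being separable it is a Polish space; moreover any subset of $E_0$ that is compact for the norm topology of $E_0$ is compact in $E$, since the inclusion $E_0\embed E$ is isometric. It therefore suffices to produce a compact $K\subseteq E_0$ with $\mu_X(E_0\setminus K)<\e$.

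Next I would build such a $K$ from a countable dense set. Fix a sequence $(x_n)_{n\ge 1}$ dense in $E_0$. For each $k\ge 1$ the closed balls of radius $1/k$ centred at the points $x_n$ cover $E_0$, so by continuity of $\mu_X$ along the increasing finite unions there is an integer $N_k$ with
\[
\mu_X\Big(\bigcup_{n=1}^{N_k}\overline{B}(x_n,1/k)\Big)>1-\e\,2^{-k}.
\]
Set $A_k:=\bigcup_{n=1}^{N_k}\overline{B}(x_n,1/k)$ and $K:=\bigcap_{k\ge 1}A_k$. Then $K$ is closed in $E_0$, and for every $k$ it is covered by the finitely many balls of radius $1/k$ making up $A_k$; hence $K$ is totally bounded, and by completeness of $E_0$ it is compact. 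Finally
\[
\mu_X(E_0\setminus K)\le\sum_{k\ge 1}\mu_X(E_0\setminus A_k)<\sum_{k\ge 1}\e\,2^{-k}=\e,
\]
and since $\mu_X(E\setminus E_0)=0$ this yields $\P\{X\notin K\}=\mu_X(E\setminus K)<\e$.

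I do not expect a serious obstacle here: the only points requiring care are that the balls are Borel sets, so that $\mu_X$ is defined on them (already guaranteed by the well-definedness of $\mu_X$ established above), and the equivalence ``closed and totally bounded $\LRa$ compact'' in the complete metric space $E_0$, which is standard. The essential idea is simply that strong measurability forces the distribution to live on a Polish space, where tightness is automatic.
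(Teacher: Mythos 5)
Your proof is correct and follows essentially the same route as the paper: pass to a separable (hence Polish) subspace via the Pettis measurability theorem, then run the standard Ulam argument with finite unions of closed balls of radius $1/k$ whose intersection is closed, totally bounded, and therefore compact by completeness. Your version merely spells out the reduction to $E_0$ in slightly more detail than the paper's one-line ``we may assume that $E$ is separable.''
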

\begin{proof}
Since $X$ is separably valued outside some null set, we may assume that $E$ is
separable.
Let $\seqx$ be a dense sequence in $E$ and fix $\e>0$. 
For each integer $k\ge 1$ 
the closed balls $B(x_n,\frac1k)$
cover $E$, and therefore there exists an index $N_k\ge 1$ such that
$$\P\Big\{X\in \bigcup_{n=1}^{N_k} B\big(x_n,\tfrac1{k}\big)\Big\} >
1-\frac{\e}{2^{k}}.$$  
The set 
$ K := \bigcap_{k\ge 1} \bigcup_{n=1}^{N_k} B\big(x_n,\frac1{k}\big)$
is closed and totally bounded. Since $E$ is complete, 
$K$ is compact.
Moreover, $\P\{ X\not\in K\} < \sum_{k\ge 1} 2^{-k}\e = \e.$
\end{proof}
 
This result implies that the distribution $\mu_X$ is a {\em Radon measure}, i.e.
for all $B\in\mathscr{B}(E)$ and $\e>0$ there exists a compact set $K\subseteq
B$ such that $\mu_X(B\setminus K)<\e$. Indeed, the proposition allows us to
choose a compact subset $C$ of $E$ such that $\mu_X(C)>1-\frac12\e$, and by the
inner regularity of Borel measures on complete separable metric spaces there is
a closed set $F\subseteq B$ with $\mu(B\setminus F)<\frac{1}{2}\e$. The set
$K=C\cap F$ has the desired properties. Conversely, every Radon measure $\mu$ on
$E$ is the distribution of the random variable $X(x)= x$ on the probability
space $(E,\mathscr{B}(E),\mu)$. 
 
Motivated by the above proposition, a family $\mathscr{X}$ of $E$-valued random
variables is called {\em uniformly tight} if for every $\e>0$ there exists a
compact
set $K$ in $E$ such that $\P\{X\not\in K\} < \e$ for all $X\in \mathscr{X}$.

A sequence of $E$-valued random variables $(X_n)_{n\ge 1}$
is said to {\em converge in distribution} to an $E$-valued random variable $X$
if 
$\limn \E f(X_n) = \E f(X)$
for all $f\in C_{\rm b}(E)$, the space of all bounded continuous functions $f$
on $E$. 

\begin{proposition}[{\sc Prokhorov}'s theorem]
For a family $\mathscr{X}$ of $E$-valued random variables the following
assertions are equivalent:
\begin{enumerate}
\item[\rm(1)] $\mathscr{X}$ is uniformly tight; 
\item[\rm(2)] every
sequence in $\mathscr{X}$ has a subsequence which converges in distribution.
\end{enumerate}
\end{proposition}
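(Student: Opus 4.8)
The plan is to work throughout with the distributions $\mu_X$, which by Proposition \ref{prop:tight} and the remarks following it are Radon probability measures on $E$; in particular each single $\mu_X$ is tight. Thus (2) reads: every sequence of distributions $\mu_{X_n}$ has a weakly convergent subsequence, where convergence in distribution is exactly weak convergence against $C_{\rm b}(E)$. I would prove the two implications separately, using the portmanteau theorem (for Radon measures on a metric space, $\mu_n\to\mu$ weakly iff $\liminf_n\mu_n(U)\geq\mu(U)$ for every open $U$, equivalently $\limsup_n\mu_n(F)\leq\mu(F)$ for every closed $F$) as the workhorse in both directions.

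For (1)$\Rightarrow$(2), fix a sequence and write $\mu_n$ for its distributions. Uniform tightness yields an increasing sequence of compacts $K_m$ with $\mu_n(K_m)\geq 1-\frac1m$ for all $n$, so all $\mu_n$ are concentrated on the separable complete space $E_0:=\overline{\bigcup_m K_m}$. First I would embed $E_0$ homeomorphically into the Hilbert cube $Q=[0,1]^{\N}$ (Urysohn) via a map $\iota$ and push the measures forward to $\nu_n$ on the compact metric space $Q$. Since $C(Q)$ is separable, the unit ball of $C(Q)\s=M(Q)$ is weak${}^*$ sequentially compact, so a subsequence $\nu_{n_k}$ converges weak${}^*$ to a probability measure $\nu$ on $Q$ (total mass is preserved by testing against the constant function $\one$). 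Applying the closed-set portmanteau inequality to the compacts $\iota(K_m)$ gives $\nu(\iota(K_m))\geq 1-\frac1m$, whence $\nu$ is concentrated on $\iota(E_0)$. By the Lusin--Souslin theorem $\iota(E_0)$ is Borel and $\iota$ is a Borel isomorphism onto its image, so $\nu$ pulls back to a Radon probability measure $\mu$ on $E_0\subseteq E$. Finally, transporting the open-set portmanteau inequality back through $\iota$ (using that all measures involved live on $\iota(E_0)$) yields $\liminf_k\mu_{n_k}(U)\geq\mu(U)$ for every open $U\subseteq E$, i.e. $\mu_{n_k}\to\mu$ weakly; the required limit is the random variable realised as the identity on $(E,\mathscr{B}(E),\mu)$.

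For (2)$\Rightarrow$(1), I would argue by contradiction, exploiting the equivalence: a family is uniformly tight iff for every $\e>0$ and $r>0$ there are finitely many balls of radius $r$ whose union carries mass $\geq 1-\e$ under every $\mu_X$ (the nontrivial direction builds a compact set as a countable intersection of finite ball-covers, using completeness of $E$). If $\mathscr{X}$ is not uniformly tight, this furnishes $\e>0$ and $r>0$ such that every finite union of $r$-balls is charged with mass $\leq 1-\e$ by some member of the family. I then build a sequence $(X_k)$ recursively: keeping a finite set of centres $Y_{k-1}$, choose $X_k$ with $\mu_{X_k}$ putting mass $\leq 1-\e$ on the $r$-balls about $Y_{k-1}$, pick a compact $C_k$ with $\mu_{X_k}(C_k)>1-\tfrac\e2$, adjoin a finite $\tfrac r2$-net $Z_k$ of $C_k$, and set $Y_k=Y_{k-1}\cup Z_k$. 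Suppose now that a subsequence $\mu_{X_{k_i}}$ converges weakly to a (tight) limit $\mu$. On the separable set $E_0:=\overline{\bigcup_k C_k}$ the union $\bigcup_k Z_k$ is $\tfrac r2$-dense, and closed-set portmanteau gives $\mu(E_0)\geq 1-\tfrac\e2$; choosing a compact $D\subseteq E_0$ with $\mu(D)>1-\e$ and covering it by finitely many $\tfrac r2$-balls centred in $\bigcup_k Z_k\subseteq Y_{K_0}$ produces an \emph{open} set $W$, a finite union of $r$-balls about $Y_{K_0}$, with $\mu(W)>1-\e$. But by construction $\mu_{X_{k_i}}(W)\leq 1-\e$ for all large $i$, and open-set portmanteau forces $\mu(W)\leq\liminf_i\mu_{X_{k_i}}(W)\leq 1-\e$, a contradiction. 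Hence $\mathscr{X}$ is uniformly tight.

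The routine ingredients are the portmanteau theorem, the Hilbert-cube embedding, and weak${}^*$ sequential compactness. The main obstacle I anticipate is the non-separability of $E$: in each direction one must first isolate a single separable complete space carrying the relevant measures before any compactness or covering argument can bite. In (1)$\Rightarrow$(2) this is immediate from the $\sigma$-compactness of the tightness witnesses, but one pays for it in the transfer step, where $\iota(E_0)$ need not be closed in $Q$ and bounded continuous functions on $E_0$ need not extend to $Q$; this is exactly why I route the final convergence through the open-set form of portmanteau rather than through test functions, and why Lusin--Souslin is needed to pull $\nu$ back. In (2)$\Rightarrow$(1) the delicate point is to guarantee that the compact support $D$ of the limit is genuinely captured by the chosen balls, which is what forces the centres to come from a $\tfrac r2$-net of the $C_k$ and the ambient set to be $\overline{\bigcup_k C_k}$ rather than the closed linear span, so that $\bigcup_k Z_k$ is actually dense in $E_0$.
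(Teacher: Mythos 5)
The paper offers no proof of this proposition at all: it is quoted as a classical result, with the reader referred to Billingsley and Parthasarathy, so there is no in-paper argument to compare with. Your proof is correct, and it is essentially the classical Prokhorov argument properly adapted to the paper's non-separable setting. In (1)$\Rightarrow$(2), the localisation to the separable complete subspace $E_0=\overline{\bigcup_m K_m}$, the Hilbert-cube embedding, weak$^*$ sequential compactness of the unit ball of $C(Q)\s$, and the transfer of the portmanteau inequalities through $\iota$ all work; one simplification worth noting is that, since the closed-set inequality shows $\nu$ is carried by the $\sigma$-compact set $\bigcup_m \iota(K_m)$ and each $\iota|_{K_m}$ is a homeomorphism onto its compact image, the pullback of $\nu$ is elementary and Lusin--Souslin is not really needed (though invoking it is harmless). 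Your final passage from the open-set inequality for all open $U\subseteq E$ (plus equal total masses) to convergence against $C_{\rm b}(E)$ is the standard portmanteau step and needs no separability of $E$. In (2)$\Rightarrow$(1), the ball-cover reformulation of uniform tightness (whose nontrivial direction repeats the paper's own proof of Proposition \ref{prop:tight}) and the recursive construction are sound; the one cosmetic slip is that open $r/2$-balls about the net $\bigcup_k Z_k$ need not cover $D$, since points of $E_0=\overline{\bigcup_k C_k}$ are only at distance $\le r/2$ from the net --- but open $r$-balls about the net do cover $D$, compactness of $D$ gives a finite subcover with centres in some $Y_{K_0}$, and this is precisely the set $W$ you then use, so the contradiction $1-\e < \mu(W)\le\liminf_i \mu_{X_{k_i}}(W)\le 1-\e$ goes through. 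Finally, realising the limit law as the identity map on $(E,\mathscr{B}(E),\mu)$ is exactly what the remark following Proposition \ref{prop:tight} sanctions, and since $\mu$ is carried by the separable closed set $E_0$, this identity is strongly measurable, hence a genuine $E$-valued random variable in the paper's sense.
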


Excellent accounts of this result and its ramifications can be found in {\sc
Billingsley} \cite{Bil} and {\sc Parathasarathy} \cite{Par}.

We continue with a maximal inequality.

\begin{proposition}[{\sc L\'evy}'s inequality]\label{prop:Levy}
 Let
$X_1,\dots,X_N$ be independent symmetric $E$-valued random variables,
and put $S_n:=\sum_{j=1}^n X_j$ for $n=1,\dots,N$.
Then for all $r\ge 0$ we have
$$\P\big\{\max_{1\le n\le N}\n S_n\n > r\big\}\le 2\P\{\n S_N\n> r\}.$$
\end{proposition}

This inequality will be used in Section \ref{sec:HJ}. It is also the main
ingredient of a theorem of {\sc It\^o} and {\sc Nisio}, presented here only in
its simplest formulation which goes back to {\sc L\'evy}.

\begin{proposition}[{\sc L\'evy, It\^o-Nisio}]\label{prop:IN1}
Let
$(X_n)_{n\ge 1}$ be a sequence of independent symmetric $E$-valued random
variables,
and put $S_n:=\sum_{j=1}^n X_j$ for $n\ge 1$. 
The following assertions are equivalent:
\ben
\item[\rm(1)]
the sequence $(S_n)_{n\ge 1}$ converges in probability;
\item[\rm(2)]
the sequence $(S_n)_{n\ge 1}$ converges almost surely.
\een
\end{proposition}

Let
$(x_i)_{i\in I}$ be a family of elements of a Banach space $E$, indexed by a set
$I$. The sum $\sum_{i\in I} x_i$ is {\em summable} to an element $s\in E$ if for
all
$\e>0$ there is a finite subset $J\subseteq I$ such that for all finite subsets
$J'\subseteq I$ containing $J$ we have
$$\Big\n s - \sum_{j\in J'}x_j\Big\n<\e.$$
Stated differently, this means that $\lim_J s_J = s$, where 
$s_J := \sum_{j\in J} x_j$ and the limit is taken along the net of all finite
subsets $J\subseteq I$. 

As we shall see in Example \ref{ex:1}, this summability method adequately
captures the convergence of coordinate expansions with respect to arbitrary
maximal orthonormal systems in Hilbert spaces. 
For countable index sets $I$, summability is equivalent to unconditional
convergence. The `only if' part is clear, and the `if' part can be seen as
follows. Suppose, for a contradiction, that $\sum_{i\in I}x_n = s$
unconditionally while $\sum_{i\in I}x_n$ is not summable to $s.$  Let $I =
(i_n)_{n\ge 1}$ be an enumeration. There is an $\e>0$ and an increasing sequence
$J_1\subseteq J_2\subseteq \dots$ of finite subsets of $I$ such that
$\{i_1,\dots,i_k\}\subseteq J_k$ and $\n s - s_{J_k}\n\ge \e$. Clearly
$\bigcup_{k\ge 1}J_k = I$. If $I = (i_n')_{n\ge 1}$ is an enumeration with the
property that $J_k = \{i_1',\dots, i_{N_k}'\}$ for all $k\ge 1$ and suitable
$N_1\le N_2\le \dots$, the sum $\sum_{n\ge 1} x_{i_n'}$ fails to converge to
$s$. This contradicts the unconditional convergence of the sum $\sum_{i\in I}
x_i $ to $s$.

Convergence of sums of random variables in in $L^p(\O;E)$ has been investigated
systematically by {\sc Hoffmann-J\o rgensen} \cite{HofJor}. Here we only need
the following prototypical result:

\begin{proposition}\label{prop:IN}
Let $1\le p<\infty$, let $(X_i)_{i\in I}$ be an indexed family of independent
and symmetric random variables in $L^p(\O;E)$
and let $S\in L^p(\O;E)$.
The following assertions are equivalent:
\ben
\item[\rm(1)] $\sum_{i\in I} X_i$ is summable to $S$ in $L^p(\O;E)$
\item[\rm(2)] $\sum_{i\in I} \lb X_i,x\s\rb$ is summable to $\lb S,x\s\rb$ in
$L^p(\O)$ for all $x\s\in E\s$.
\een
\end{proposition}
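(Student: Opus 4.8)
The implication $(1)\Rightarrow(2)$ is immediate. Each $x\s\in E\s$ induces a bounded operator $u_{x\s}:L^p(\O;E)\to L^p(\O)$, $Z\mapsto \lb Z,x\s\rb$, of norm at most $\n x\s\n$, and $u_{x\s}$ sends the finite partial sum $s_J:=\sum_{i\in J}X_i$ to $\sum_{i\in J}\lb X_i,x\s\rb$. Since $\n u_{x\s}(s_J)-\lb S,x\s\rb\n_{L^p(\O)}\le \n x\s\n\,\n s_J-S\n_{L^p(\O;E)}$, summability of $\sum_{i} X_i$ to $S$ in $L^p(\O;E)$ forces summability of $\sum_{i}\lb X_i,x\s\rb$ to $\lb S,x\s\rb$ in $L^p(\O)$.

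For $(2)\Rightarrow(1)$ the plan is to show that the net $(s_J)_J$, indexed by the finite subsets $J\subseteq I$ directed by inclusion, converges to $S$ in $L^p(\O;E)$. As $L^p(\O;E)$ is complete it suffices to prove that this net is Cauchy: if $s_J\to S''$ then the implication just proved gives $\lb S'',x\s\rb=\lb S,x\s\rb$ in $L^p(\O)$ for every $x\s$, whence $S''=S$ almost surely. Suppose, for a contradiction, that $(s_J)$ is not Cauchy. By the Cauchy criterion for summability there are $\e>0$ and pairwise disjoint finite sets $K_1,K_2,\dots\subseteq I$ with $\n s_{K_k}\n_{L^p(\O;E)}\ge\e$ for all $k$. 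Put $Y_k:=s_{K_k}=\sum_{i\in K_k}X_i$; these are independent (the $K_k$ are disjoint) and symmetric, and by $(2)$ together with the fact that a subfamily of a summable family is summable, the series $\sum_k\lb Y_k,x\s\rb$ converges in $L^p(\O)$ for every $x\s\in E\s$. It therefore suffices to prove the following sequential statement: \emph{if $(Y_k)_{k\ge1}$ are independent symmetric elements of $L^p(\O;E)$ with $\sum_k\lb Y_k,x\s\rb$ convergent in $L^p(\O)$ for all $x\s$, then $\sum_k Y_k$ converges in $L^p(\O;E)$}; this forces $\n Y_k\n_{L^p(\O;E)}\to0$, contradicting $\n Y_k\n_{L^p(\O;E)}\ge\e$.

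To prove the sequential statement write $U_n:=\sum_{k=1}^n Y_k$. Since the countably many $Y_k$ take values in a common separable closed subspace almost surely, we may assume $E$ separable. For each $x\s$ the scalars $\lb U_n,x\s\rb$ converge in $L^p(\O)$, hence in probability and in distribution. The core of the argument is to establish that $\{U_n:n\ge1\}$ is uniformly tight. Granting this, Prokhorov's theorem yields a subsequence converging in distribution to a Radon law $\mu$; as the one-dimensional distributions of $\mu$ are prescribed by the limits $\lim_n\lb U_n,x\s\rb$ and a Radon measure on $E$ is determined by its characteristic functional, every subsequential distributional limit equals $\mu$, so $U_n\to S'$ in distribution for an $E$-valued random variable $S'$ of law $\mu$. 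Independence and symmetry of the increments give the reflection bound $\P\{\n U_n-U_m\n>r\}\le 2\P\{\n U_n\n>r\}$, which together with L\'evy's inequality (Proposition \ref{prop:Levy}) and the tightness of $\{U_n\}$ controls the increments and upgrades distributional convergence to convergence in probability; Proposition \ref{prop:IN1} then gives almost sure convergence of $U_n$ to $S'$. Finally a uniform-integrability argument promotes this to $L^p(\O;E)$: the bounds $\sup_n\n\lb U_n,x\s\rb\n_{L^p(\O)}<\infty$ yield $\sup_n\n U_n\n_{L^p(\O;E)}<\infty$ by a uniform boundedness argument, and moment estimates for sums of independent symmetric variables (of Hoffmann--J\o rgensen type) make $\{\n U_n\n^p\}$ uniformly integrable, so almost sure convergence entails $L^p$-convergence.

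The main obstacle is the uniform tightness of $\{U_n\}$. L\'evy's inequality already gives the comparison $\P\{\n s_J\n>r\}\le 2\P\{\n s_{J'}\n>r\}$ for all finite $J'\supseteq J$ (order the summands so those indexed by $J$ come first), reducing tightness to control of the large partial sums; but converting the merely functional convergence $\lb U_n,x\s\rb\to\lb S',x\s\rb$ into control of $\P\{\n U_n\n>r\}$ by compact sets is the delicate point. I expect to handle it by exploiting that the limit is Radon, hence almost finite-dimensional: choosing a compact $K$ with $\P\{S'\notin K\}$ small, approximating $K$ to within $\delta$ by a finite-dimensional subspace and its associated functionals $x_1\s,\dots,x_m\s$, one controls the projections $(\lb U_n,x_j\s\rb)_{j\le m}$ directly (they converge in $\R^m$) while a symmetrization argument based on L\'evy's inequality shows the independent symmetric sums do not spread out in the complementary directions. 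This is the technical heart of the It\^o--Nisio theorem; the remaining steps are bookkeeping with the three maximal and compactness tools already at hand.
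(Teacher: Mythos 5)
Your implication (1)$\Rightarrow$(2) is fine, but your reduction in (2)$\Rightarrow$(1) contains a fatal loss of information. When you pass from the non-Cauchy net to the blocks $Y_k=s_{K_k}$, you discard the hypothesis that the scalar limits come from a single strongly measurable $S\in L^p(\O;E)$, and the ``sequential statement'' you then set out to prove is \emph{false}. The paper's Example \ref{ex:LP} is a counterexample: take $E=c_0$ and $Y_k=\g_k u_k/\sqrt{\log(k+1)}$ with $(u_k)$ the unit vectors. These are independent, symmetric, in every $L^p(\O;c_0)$, and for each $x\s=(a_k)\in\ell^1$ the series $\sum_k a_k\g_k/\sqrt{\log(k+1)}$ converges in $L^2(\O)$, hence in every $L^p(\O)$ by equivalence of Gaussian moments; yet $\sum_k Y_k$ does not converge even in probability in $c_0$ (otherwise, by Proposition \ref{prop:IN1} and the L\'evy-inequality argument in the proof of Theorem \ref{thm:c0}, the operator of Example \ref{ex:LP} would be $\g$-radonifying). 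In particular the partial sums there are \emph{not} uniformly tight, so your ``core'' tightness step cannot succeed at that level of generality. Indeed your sketch of it is circular: you propose to pick a compact $K$ with $\P\{S'\not\in K\}$ small, but $S'$ is produced by Prokhorov's theorem only \emph{after} tightness is established. This is precisely the cylindrical-versus-Radon dichotomy the whole paper is about: hypothesis (2) gives you a genuine Radon limit $S$, and the proof must keep $S$ in play. (A secondary slip: Banach--Steinhaus applied to $x\s\mapsto\lb U_n,x\s\rb$ bounds only the weak moments $\sup_{\n x\s\n\le 1}\E|\lb U_n,x\s\rb|^p$, which do not dominate $\E\n U_n\n^p$ in a general Banach space.)

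The one idea you are missing is conditional expectation, which is how the paper exploits $S$. From (2), for each finite $J\subseteq I$ the variable $\lb S-S_J,x\s\rb$ is an $L^p$-limit of sums $\sum_{i\in J'\setminus J}\lb X_i,x\s\rb$, hence is independent of $\F_J:=\sigma(X_j:\,j\in J)$ and has mean zero by symmetry; therefore $\E(\lb S,x\s\rb\,|\,\F_J)=\lb S_J,x\s\rb$ for all $x\s\in E\s$, i.e.\ $\E(S|\F_J)=S_J$ in $L^p(\O;E)$. Thus $(S_J)_J$ is a martingale indexed by the net of finite subsets of $I$, and the elementary $E$-valued martingale convergence theorem yields $\lim_J S_J=\E(S|\F_\infty)=S$ in $L^p(\O;E)$ -- no tightness, Prokhorov, It\^o--Nisio or Hoffmann--J{\o}rgensen machinery is needed. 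If you insist on your route, the same device repairs it: the candidate limit for the subfamily indexed by $I_0=\bigcup_k K_k$ is $\E(S|\F_{I_0})$, and uniform tightness of its partial sums then follows from tightness of this single random variable (Proposition \ref{prop:tight}) together with L\'evy's inequality (Proposition \ref{prop:Levy}) -- but at that point the martingale argument is already strictly shorter.
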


\begin{proof}
We only need to prove the implication (2)$\Rightarrow$(1).

Let $[I]$ denote the collection of all finite subsets of $I$. For $J\in [I]$ set
$S_J := \sum_{j\in J} X_j$.
From (2) it easily follows that for all $J\in [I]$ and $x\s\in E\s$ 
the random variables $\lb S_J,x\s\rb$ and
$\lb S -S_{J}, x\s\rb$  are independent. If we denote by $\F_J$ the
$\sigma$-algebra generated by $\{X_j:\, j\in J\}$, for all $x\s\in E\s$ it
follows that
$$ \lb \E (S|\F_J), x\s\rb  = \E (\lb S,x\s\rb|\F_J) = \lb S_J,x\s\rb$$
in $L^p(\O)$. As a consequence, $$\E (S|\F_J) = S_J$$
in $L^p(\O;E)$.
Now (1) follows from the elementary version of the $E$-valued martingale
convergence theorem (see {\sc Diestel} and {\sc Uhl} \cite[Corollary
V.2]{DieUhl}).
\end{proof}

We continue with a useful comparison result for Rademacher sequences and
Gaussian sequences.
Recall that a {\em Rademacher sequence} is a sequence of independent random
variables taking the values $\pm 1$ with probability $\frac12$. A {\em Gaussian
sequence} is a sequence of independent real-valued standard Gaussian random
variables. 

\begin{proposition}\label{prop:Rad-phi} Let $(r_n)_{n\ge 1}$ be a Rademacher
sequence and 
$(\g_n)_{n\ge 1}$ a Gaussian sequence.
\begin{enumerate}
\item[\rm(1)] For all $1\le p<\infty$ and all finite sequences $x_1,\dots,x_N\in
E$ we have
$$ \E \Big\n \sum_{n=1}^N r_n x_n\Big\n^p \le
(\tfrac12 \pi)^\frac{p}{2} \E \Big\n \sum_{n=1}^N \g_n x_n\Big\n^p.
$$
\item[\rm(2)] If $E$ has finite cotype, then for all $1\le p<\infty$ there
exists a constant $C_{p,E}\ge 0$ such that 
for all finite sequences $x_1,\dots,x_N\in E$ 
 we have
$$ \E \Big\n \sum_{n=1}^N \g_n x_n\Big\n^p \le
C_{p,E}^p\E \Big\n \sum_{n=1}^N r_n x_n\Big\n^p.
$$
\end{enumerate}
\end{proposition}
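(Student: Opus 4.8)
The plan is to handle both directions with the same device -- splitting a standard Gaussian into its sign times its modulus -- but using convexity in opposite ways. For part (1), I would first pass to a product probability space on which $(r_n)$ and $(\g_n)$ are independent, writing $\E_r,\E_\g$ for the partial expectations and setting $c:=\E|\g_1|=\sqrt{2/\pi}$. The key remarks are that $\sgn(\g_n)$ and $|\g_n|$ are independent with $\sgn(\g_n)$ Rademacher, so that $(r_n|\g_n|)_n$ and $(\g_n)_n$ have the same distribution, and that $\sum_n r_nx_n=c^{-1}\E_\g\bigl[\sum_n r_n|\g_n|x_n\bigr]$. Applying Jensen's inequality to the convex map $\n\cdot\n^p$ for the inner expectation $\E_\g$ (with $r$ frozen) yields $\E_r\n\sum_n r_nx_n\n^p\le c^{-p}\E_r\E_\g\n\sum_n r_n|\g_n|x_n\n^p$, and the distributional identity rewrites the right-hand side as $c^{-p}\E\n\sum_n\g_nx_n\n^p$. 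Since $c^{-p}=(\tfrac12\pi)^{p/2}$ this is exactly the claim, and no hypothesis on $E$ is used.

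For part (2) the same splitting $\g_n=\sgn(\g_n)|\g_n|$ gives the \emph{exact} identity $\E\n\sum_n\g_nx_n\n^p=\E_g\E_r\n\sum_n r_ng_nx_n\n^p$ with $g_n:=|\g_n|$, so everything reduces to comparing a Rademacher sum with random nonnegative coefficients $g_n$ to the plain Rademacher sum. By Kahane's inequality (all $L^p(\Om)$-norms of a fixed Rademacher, resp.\ Gaussian, sum are mutually equivalent with dimension-free constants) it suffices to treat $p=1$. The first tool is the contraction principle: for deterministic $a_n$ with $\max_n|a_n|\le1$ one has $\E_r\n\sum_n a_nr_nx_n\n\le\E_r\n\sum_n r_nx_n\n$, since $(a_n)\mapsto\E_r\n\sum_n a_nr_nx_n\n$ is convex, hence maximal over $[-1,1]^N$ at a vertex, where by symmetry of the $r_n$ the value is $\E_r\n\sum_n r_nx_n\n$. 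Truncating $g_n$ at a level $M$ then disposes of the bounded part $g_n\wedge M$ at the cost of a factor $M$, and the whole difficulty is pushed into the tail $\sum_n r_n(g_n-M)^+x_n$; this is where finite cotype must enter.

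The hard part is precisely this tail. The obstruction is already visible in $c_0$, which has no finite cotype: there $\E\n\sum_{n\le N}r_ne_n\n=1$ while $\E\n\sum_{n\le N}\g_ne_n\n=\E\max_{n\le N}|\g_n|\sim\sqrt{2\log N}$, so no $N$-free constant can exist. Thus the cotype hypothesis serves exactly to kill the logarithmic contribution of the atypically large $|\g_n|$, and a crude deterministic truncation fails because $M$ and the tail balance badly (reproducing the $\log N$). The route I would follow instead is the layer-cake identity $\sum_n r_ng_nx_n=\int_0^\infty\bigl(\sum_n r_n\one_{\{g_n>t\}}x_n\bigr)\,dt$, which after taking expectations bounds $\E\n\sum_n\g_nx_n\n$ by $\int_0^\infty G(\P(|\g|>t))\,dt$, where $G(\rho)$ is the best constant comparing a Rademacher sum over a random subset that contains each index independently with probability $\rho$ to the full Rademacher sum. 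The contraction principle alone gives only $G(\rho)\le1$, whence the integral diverges; the cotype-$q$ inequality $(\sum_n\n x_n\n^q)^{1/q}\le C(\E\n\sum_n r_nx_n\n^2)^{1/2}$ is what upgrades this to a decay estimate $G(\rho)\lesssim\rho^{1/q}$, and since $\P(|\g|>t)\le e^{-t^2/2}$ the integral $\int_0^\infty\P(|\g|>t)^{1/q}\,dt$ converges with a bound depending only on $q$ and the cotype constant. Proving the subset-comparison decay $G(\rho)\lesssim\rho^{1/q}$ from cotype is the genuine technical heart; the remaining ingredients (the Kahane reduction, the contraction principle, Lévy's inequality of Proposition~\ref{prop:Levy} to pass to partial sums if an Abel-summation variant is preferred, and the Gaussian tail bound) are routine.
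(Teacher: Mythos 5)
Your part (1) is correct and coincides with the argument the paper indicates: the remark following the proposition derives (1) from the fact that $(\g_n)_{n\ge 1}$ and $(r_n|\g_n|)_{n\ge 1}$ are identically distributed together with the contraction principle, whose content for this application is precisely your Jensen step applied to $\sum_n r_n x_n = c^{-1}\E_\g\bigl[\sum_n r_n|\g_n|x_n\bigr]$ with $c=\E|\g_1|=\sqrt{2/\pi}$; your constant $c^{-p}=(\pi/2)^{p/2}$ matches the statement.

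Part (2) has a genuine gap, and you have located it yourself: the selector estimate $\E\n\sum_n \delta_n r_n x_n\n \le C\rho^{1/q}\,\E\n\sum_n r_n x_n\n$, for i.i.d.\ mean-$\rho$ selectors $(\delta_n)$ independent of the Rademachers, is asserted but not proved, and it is not a routine consequence of the cotype inequality. The contraction principle gives only the constant $1$; cotype $q$ bounds $\bigl(\sum_n\n x_n\n^q\bigr)^{1/q}$ by the Rademacher average, which controls how many of the $x_n$ can be large but gives no gain on the possibly enormous family of small vectors: if you split at $\n x_n\n\le\tau$, the triangle inequality handles the at most $(C/\tau)^q$ large vectors, while for the small ones contraction again yields the constant $1$. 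It is worth noting how little would actually be needed: writing $G(\rho)$ for your best subset-comparison constant, $G$ is nonincreasing and submultiplicative ($G(\rho_1\rho_2)\le G(\rho_1)G(\rho_2)$, by factoring a mean-$\rho_1\rho_2$ selector as a product of two independent selectors and conditioning on the first), so a single value $G(\rho_0)<1$ already yields $G(\rho)\le C\rho^\a$ for some $\a>0$ and makes your integral $\int_0^\infty G(\P\{|\g|>t\})\,dt$ converge. But even this weak statement is essentially equivalent to $E$ not containing the spaces $\ell_\infty^n$ uniformly (in $c_0$ one has $G\equiv 1$, as your own example shows), and passing from finite cotype to that property is the Maurey--Pisier theorem, while deducing the Gaussian--Rademacher comparison from it is Pisier's theorem. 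So the ``technical heart'' you defer is not a lemma-sized step: it carries the full strength of the classical proofs (see Ledoux--Talagrand or Diestel--Jarchow--Tonge, Chapter 12).

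For completeness: the paper itself offers no proof of (2) --- it states that only part (1) will be used and justifies only that part --- so there is no paper argument to compare against. Your surrounding reductions (Kahane--Khintchine to $p=1$, the sign/modulus splitting, the layer-cake identity in $t$ with the Bernoulli selectors $\one_{\{|\g_n|>t\}}$, and the Gaussian tail bound making $\int_0^\infty \rho(t)^{1/q}\,dt$ finite) are all correct as far as they go, but the proposal as written assumes the substance of the theorem.
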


For the definition of cotype we refer to Section \ref{sec:embedding}. 
We will only need part (1) which is an elementary consequence of the Kahane
contraction principle (see {\sc Kahane} \cite{Kah}) and the fact that the
sequences $(\g_n)_{n\ge 1}$ and $(r_n |\g_n|)_{n\ge 1}$ are identically
distributed when $(r_n)_{n\ge 1}$ is independent of $(\g_n)_{n\ge 1}$. 

We finish this section with the so-called Kahane-Khintchine inequalities.

\begin{proposition}[Kahane-Khintchine inequalities]\label{prop:KK-R}
Let $(r_n)_{n\ge 1}$ be a Rademacher sequence and $(\g_n)_{n\ge 1}$ a Gaussian 
sequence.
\begin{enumerate}
\item[\rm(1)]For all $1\le p,q< \infty$ there exists a constant $C_{p,q}$,
depending only on $p$ and $q$, such that for all finite sequences
$x_1,\dots,x_N\in E$ we have
$$ \Big(\E\Big\n\sum_{n=1}^N r_n x_n\Big\n^p\Big)^\frac1p 
\le C_{p,q} \Big(\E\Big\n\sum_{n=1}^N r_n x_n\Big\n^q\Big)^\frac1q.
$$
\item[\rm(2)]For all $1\le p,q< \infty$ there exists a constant $C_{p,q}^\g$,
depending only on $p$ and $q$, such that for all finite sequences
$x_1,\dots,x_N\in E$ we have
$$ \Big(\E\Big\n\sum_{n=1}^N \g_n x_n\Big\n^p\Big)^\frac1p 
\le C_{p,q}^\g \Big(\E\Big\n\sum_{n=1}^N \g_n x_n\Big\n^q\Big)^\frac1q.
$$
\end{enumerate}
\end{proposition}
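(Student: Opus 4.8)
The plan is to prove the stronger assertion that every $L^p$-norm of $\n S\n$ is comparable to the median $M$ of $\n S\n$, where $S$ stands for the relevant sum $\sum_{n=1}^N r_n x_n$ in part (1) or $\sum_{n=1}^N \g_n x_n$ in part (2). Since we work on a probability space, $\n S\n_{L^p}\le\n S\n_{L^q}$ whenever $p\le q$, so the whole content lies in bounding a higher moment by a lower one. By homogeneity I may assume $M>0$. The lower estimate $\n S\n_{L^p}\ge 2^{-1/p}M$ is immediate from $\P\{\n S\n\ge M\}\ge\tfrac12$, so it remains to prove the upper estimate $\n S\n_{L^p}\le A_p M$ with $A_p$ depending only on $p$; these two bounds give $C_{p,q}=2^{1/q}A_p$ (and $C^\g_{p,q}$ likewise). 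The upper estimate will in turn follow from a tail bound of the form $\P\{\n S\n>t\}\le C\exp(-c\,t/M)$ with universal constants $c,C>0$, by integrating $\E\n S\n^p=\int_0^\infty pt^{p-1}\P\{\n S\n>t\}\,dt$.

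For the Gaussian sum $G=\sum_n \g_n x_n$ the tail bound comes from rotation invariance. Let $G'$ be an independent copy; then $U:=2^{-1/2}(G+G')$ and $V:=2^{-1/2}(G-G')$ are again independent and each distributed as $G$. On the event $\{\n G\n>t,\ \n G'\n\le s\}$ both $\n U\n$ and $\n V\n$ exceed $2^{-1/2}(t-s)$, so by independence $\P\{\n G\n>t\}\,\P\{\n G\n\le s\}\le \P\{\n G\n>2^{-1/2}(t-s)\}^2$. Taking $s=M$ and using $\P\{\n G\n\le M\}\ge\tfrac12$ yields the self-improving recursion $\P\{\n G\n>t\}\le 2\,\P\{\n G\n>2^{-1/2}(t-M)\}^2$, which, iterated from a base point $t_0$ with $\P\{\n G\n>t_0\}<\tfrac12$, produces a Gaussian tail $\P\{\n G\n>t\}\le C\exp(-c(t/M)^2)$; this is even more than the exponential decay required above.

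For the Rademacher sum $S=\sum_n r_n x_n$ rotation invariance is unavailable, and replacing it is the crux. Here I would pass to an independent copy $S'=\sum_n r_n' x_n$ and split $S=P+Q$ with $P:=\tfrac12(S+S')$ and $Q:=\tfrac12(S-S')$. The decisive point is that the $n$-th coefficients $\tfrac12(r_n+r_n')$ and $\tfrac12(r_n-r_n')$ are never simultaneously nonzero, so that, conditionally on the agreement pattern $(\one_{\{r_n=r_n'\}})_n$, the variables $P$ and $Q$ are independent Rademacher sums over complementary random index sets whose conditional sum $P+Q$ is again distributed as $S$. Applying Lévy's inequality (Proposition \ref{prop:Levy}) to the two-term sequence $(P,Q)$, and then to $(Q,P)$, gives $\P\{\n P\n>r\mid\text{pattern}\}\le 2\,\P\{\n S\n>r\}$ and likewise for $Q$. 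On $\{\n S\n>t,\ \n S'\n\le s\}$ both $\n P\n$ and $\n Q\n$ exceed $\tfrac12(t-s)$, whence by conditional independence $\P\{\n S\n>t\}\,\P\{\n S\n\le s\}\le 4\,\P\{\n S\n>\tfrac12(t-s)\}^2$. With $s=M$ this becomes the recursion $\P\{\n S\n>t\}\le 8\,\P\{\n S\n>\tfrac12(t-M)\}^2$, which, iterated from a base point $t_0=O(M)$, yields the exponential tail $\P\{\n S\n>t\}\le C\exp(-c\,t/M)$ needed to control the moments.

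\textbf{Main obstacle.} The delicate step is exactly this self-improving tail inequality in the Rademacher case; the disjoint-support decomposition of $\tfrac12(S\pm S')$ combined with Lévy's inequality is what makes it go through in the absence of rotation invariance. A minor technical point is securing a base point $t_0$ that is a universal multiple of $M$ at which $\P\{\n S\n>t_0\}$ drops below the contraction threshold of the recursion; this is routine. It is worth noting that exponential tail decay already suffices for the Kahane--Khintchine comparison, so the subgaussian decay that is automatic for $G$ is a bonus rather than a necessity.
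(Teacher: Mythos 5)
Your two main devices are sound: the Gaussian symmetrization recursion is exactly the inequality \eqref{eq:Fernique} that the paper itself uses to prove Proposition \ref{prop:fernique}, and your conditional decomposition $S=P+Q$ with $P=\tfrac12(S+S')$, $Q=\tfrac12(S-S')$ — conditioning on the agreement pattern so that $P$ and $Q$ become independent Rademacher sums over complementary index sets, then invoking L\'evy's inequality (Proposition \ref{prop:Levy}) — is correct and is indeed the right substitute for rotation invariance. The genuine gap is the step you dismiss as routine: the base point of the iteration. The recursion $\P\{\|S\|>t\}\le 8\,\P\{\|S\|>\tfrac12(t-M)\}^2$ is self-improving only once the probability has dropped below the contraction threshold $1/8$ (below $1/2$ for the Gaussian recursion), whereas the median certifies only the level $1/2$: plugging in $t=3M$ gives the vacuous bound $8\cdot(\tfrac12)^2=2$, and no amount of iterating from median information alone ever leaves the trivial regime. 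A point $t_0=O(M)$ with $\P\{\|S\|>t_0\}<1/8$ is precisely a statement that a level-$1/8$ quantile is dominated by a universal multiple of the median, which is essentially the concentration you are trying to prove, not a routine preliminary. Worse, your headline claim "all moments are comparable to the median" is false without care in choosing the median: for $S=r_1+r_2$ in $E=\R$, the value $0$ is a legitimate median of $|S|$ (since $\P\{|S|\le 0\}=\tfrac12$), yet $\E|S|^p=2^{p-1}>0$, so "assume $M>0$ by homogeneity" cannot rescue the normalisation.

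The repair is cheap and keeps your entire architecture: anchor at the first moment instead of the median. With $s:=2\,\E\|S\|$, Markov's inequality gives $\P\{\|S\|\le s\}\ge\tfrac12$, so your recursion holds with $s$ in place of $M$; taking $t_0:=16\,\E\|S\|$ gives $8\,\P\{\|S\|>t_0\}\le\tfrac12$ by Markov again, and iterating along $t_{n+1}=s+2t_n$ yields $\P\{\|S\|>t\}\le e^{-ct/\E\|S\|}$ for $t\ge C\,\E\|S\|$, hence $\big(\E\|S\|^p\big)^{1/p}\le C\,p\,\E\|S\|$ (with $p$ improved to $\sqrt p$ in the Gaussian case). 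Since the proposition only involves exponents $1\le q<\infty$, the trivial bound $\E\|S\|\le(\E\|S\|^q)^{1/q}$ finishes both parts, and the median is not needed at all. For comparison: the paper gives no proof of this proposition — it is quoted as classical, with only the remark that part (2) follows from part (1) by a central limit argument justified by Lemma \ref{lem:sq-w-conv} — so your direct two-pronged argument, once the anchor is repaired, is a legitimate self-contained alternative rather than a reproduction of anything in the text.
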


The least admissible constants in these inequalities are called the {\em
Kahane-Khint\-chine constants} and are usually denoted by $K_{p,q}$ and
$K_{p,q}^\g$. 
Note that $K_{p,q}=1$ if $p\le q$ by H\"older's
inequality. It was shown by 
{\sc Lata{\l}a} and {\sc Oleszkiewicz} \cite{LatOle} 
that $K_{2,1} = \sqrt{2}$.

Part (2) of the proposition can be deduced from part (1) by a central limit
theorem argument (which can be justified by Lemma \ref{lem:sq-w-conv} below);
this gives the inequality $K_{p,q}^\g \le K_{p,q}$.

\section{$\g$-Radonifying operators}

After these preparations we are ready to introduce the main object of study, the
class of $\g$-radonifying operators.
Throughout this section $H$ is a real Hilbert space and $E$ is a real Banach
space. Gaussian random variables are always assumed to be centred.

\begin{definition}\label{def:isonormal}
An {\em $H$-isonormal process} on a probability space $(\O,\F,\P)$ is a mapping
$W: H \to
L^2(\O)$
with the following properties:
\ben
\item[\rm(i)] for all $h\in H$ the random
variable $W(h)$ is Gaussian;
\item[\rm(ii)] for all $h_1,h_2\in H$ we have $\E W(h_1)W(h_2) = [h_1,h_2].$
\een
\end{definition}

Isonormal processes lie at the basis of Malliavin calculus. We refer to {\sc
Nualart} \cite{Nua} for an introduction to this subject. We shall use isonormal
processes to set up an abstract version of the vector-valued It\^o stochastic
integral. 
As the scalar It\^o stochastic integral arises naturally within Malliavin
calculus, the theory developed below serves as a natural starting point for
setting up a vector-valued Malliavin calculus. This idea is taken up in {\sc
Maas}  \cite{Maa09} and {\sc Maas} and {\sc van Neerven} \cite{MaaNee08}.

We turn to some elementary properties of isonormal processes. 
From (ii) we have $$\E |W(c_1 h_1 + c_2 h_2) - (c_1 W(h_1) + c_2 W(h_2))|^2 =
0,$$ which shows that $W$ is linear. As a consequence, for all $h_1,\dots,h_N\in
H $ the random variables $W(h_1),\dots, W(h_N)$ are jointly Gaussian (which
means that every linear combination is Gaussian as well). Recalling that jointly
Gaussian random variables are independent if and only if they are uncorrelated,
another application of (ii) shows that $W(h_1),\dots, W(h_N)$ are independent if
and only if $h_1,\dots,h_N\in H$ are orthogonal.

\begin{example}\label{ex:1}
Let $H$ be a Hilbert space with maximal orthonormal system $(h_i)_{i\in I}$
and let $(\g_i)_{i\in I}$ be a family of independent standard Gaussian random
variables with the same index set. Then for all $h\in H$, $\sum_{i\in I} \g_i
[h,h_i]$ is summable in $L^2(\Om)$ and
$$ W(h):= \sum_{i\in I} \g_i [h,h_i], \quad h\in H,$$
defines an $ H $-isonormal process.
To see this let $h\in H$ be fixed. Given $\e>0$ choose indices $i_1,\dots,i_N\in
I$ such that
$$ \Big\n h - \sum_{n=1}^N [h,h_{i_n}]h_{i_n}\Big\n<\e.$$
For any finite set $J'\subseteq I$ containing $i_1,\dots,i_N$ we then have, by
the Pythagorean theorem,
$$  \Big\n h - \sum_{j\in J'} [h,h_{j}]h_{j}\Big\n <\e. $$
This implies that $\sum_{i\in I} [h,h_i]h_i$ is summable to $h$. Since $W$
clearly defines an isometric linear mapping from the linear span of $(h_i)_{i\in
I}$ into $L^2(\Om)$ satisfying $W(h_i)=\g_i$, $\sum_{i\in I} \g_i[h,h_i]$ is
summable in $L^2(\Om)$. Denoting its limit by $W(h)$, the easy proof that the
resulting linear map $W:H\to L^2(\Om)$ is isonormal is left to the reader.
\end{example}

\begin{example}\label{ex:2}
If $B$ is a standard Brownian motion, then the It\^o stochastic integral  
$$ W(h) := \int_0^\infty h\,dB,\quad h\in L^2(\R_+),$$
defines an $L^2(\R_+)$-isonormal process $W$. 
Conversely, if $W$ is an $L^2(\R_+)$-isonormal process, then $$B(t):=
W(\one_{[0,t]}), \quad t\ge 0,$$ 
is a standard Brownian motion. Indeed, this process is Gaussian
and satisfies $\E B(s)B(t)  = [\one_{(0,s)},\one_{(0,t)}]_{L^2(\R_+)} = s\wedge
t$ for all $s,t\ge 0$.
\end{example}

\begin{example}
Let $B$ be a Brownian motion with values in a Banach space $E$ and let $\H$ be
the closed linear span in $L^2(\O)$ spanned by the random variables
$\lb B(1),x\s\rb$, $x\s\in E\s$. Then $B$ induces an $L^2(\R_+;\H)$-isonormal
process by putting
$$ W(f\ot \lb B(1),x\s\rb) := \int_0^\infty f\,d\lb B,x\s\rb, \quad f\in
L^2(\R_+), \ x\s\in E\s.$$
To see this, note that since $\lb B,x\s\rb$ is a real-valued Brownian motion,
$$\E|\lb B(t),x\s\rb|^2 = t\E|\lb B(1),x\s\rb|^2$$
for all $t\ge 0$. Hence by normalising the Brownian motions $\lb B,x\s\rb$, the
It\^o isometry gives
$$
\bal
\E W(f\ot \lb B(1),x\s\rb)W(g\ot \lb B(1),y\s\rb)
& = \E\lb B(1),x\s\rb\lb B(1),y\s\rb [f,g]_{L^2(\R_+)} 
\\ & = [f\ot \lb B(1),x\s\rb,g\ot \lb B(1),x\s\rb]_{L^2(\R_+;\H)}.
\eal$$

\end{example}

\begin{remark} In many papers, $\H$-cylindrical Brownian motions  are defined
as a family $W = (W(t))_{t\ge 0}$ of bounded linear operators from $\H$ to
$L^2(\O)$ with the following properties:
\begin{enumerate}
\item[\rm(i)] for all $h\in \H$, the process $(W(t)h)_{t\ge 0}$ is a Brownian
motion;
\item[\rm(ii)] for all $t_1,t_2\ge 0$ and $h_1,h_2\in \H$ we have
$$ \E (W(t_1)h_1\cdot W(t_2)h_2) = (t_1\wedge t_2)[h_1,h_2].$$
\end{enumerate}
Subsequent arguments frequently use that the family 
$\{W(t)h: \, t\ge 0, \, h\in \H\}$ is jointly Gaussian, something that is not
obvious from (i) and (ii). If we add this as an additional assumption, then
every $\H$-cylindrical Brownian motion defines an $L^2(\R_+;\H)$-isonormal
process in a natural way and vice versa.

In the special case $\H=L^2(D)$, where $D$ is a domain in $\R^d$,
$L^2(D)$-cylindrical Brownian motions provide the rigorous mathematical model of
{\em space-time white noise} on $D$. 
\end{remark}

In what follows, $W: H \to L^2(\O)$ will always denote a fixed $H$-isonormal
process. 
For any Banach space $E$, $W$ induces a linear mapping
from $ H \otimes E$ to $L^2(\O)\otimes E$, also denoted by $W$, 
by putting
$$ W(h\otimes x) := W(h)\otimes x$$
and extending this definition by linearity. The problem we want to address is
whether there is a norm on
$ H \otimes E$ turning $W$ into a {\em bounded} operator from $ H \otimes E$
into
$L^2(\O;E)$. 

\begin{example}\label{ex:SI}
Let $B$ be a Brownian motion and let 
$W:L^2(\R_+)\to L^2(\Om)$ be the associated isonormal process. 
Identifying $E$-valued step functions
with elements in $L^2(\R_+)\otimes E$ we have
$$W(\one_{(a,b)}\ot x) = \int_0^\infty \one_{(a,b)}\ot x\,dB.$$
Thus, $W$ can be viewed as an $E$-valued extension of the stochastic integral
with respect to $B$.
In the same way, for isonormal processes
$W:L^2(\R_+;\H)\to L^2(\Om)$ we have  
$$ W(\one_{(a,b)}\ot h)\ot x)  = \int_0^\infty \one_{(a,b)}\otimes (h\ot x)
\,dW,$$ where the right-hand side is the side the stochastic integral for 
$\H\ot E$-valued step functions with respect to $\H$-cylindrical Brownian
motions introduced in {\sc van Neerven} and {\sc Weis} \cite{NeeWei05a}.
\end{example}

Suppose an element in $ H \otimes E$ of the form
$\sum_{n=1}^N h_n\otimes x_n$ is given with $h_1,\dots,h_N$ orthonormal in $H$.
Then the random variables $W(h_1),\dots,W(h_N)$ are independent and standard
Gaussian and therefore
$$\E \Big\n \sum_{n=1}^N W(h_n)\otimes x_n\Big\n^2 
=  \E \Big\n \sum_{n=1}^N \g_n x_n\Big\n^2,
$$
where $(\g_n)_{n=1}^N$ is any Gaussian sequence.
The right-hand side is independent of the representation of 
the element in $ H \otimes E$ as a finite sum $\sum_{n=1}^N h_n\otimes x_n$ as
long as we choose the vectors $h_1,\dots,h_N$ orthonormal in $ H $.
Indeed, suppose we have a second representation
$$\sum_{n=1}^N h_n\otimes x_n = \sum_{m=1}^M h_m'\otimes x_m',$$ where the
vectors $h_1',\dots,h_M'$ are
again orthonormal in $ H $.
There is no loss in generality if we assume that the sequences
$(h_n)_{n=1}^N$ and 
$(h_m')_{m=1}^M$ span the same finite-dimensional subspace $G$ of
$ H $. In fact we may
consider the linear span of the set $\{h_1,\dots,h_N,h_1',\dots,h_M'\}$ and
complete both sequences to orthonormal bases,  
say $(h_k)_{k=1}^K$ and $(h_k')_{k=1}^K$, for this linear span. Then
we may write 
$$ \sum_{k=1}^K h_k\otimes x_k = \sum_{k=1}^K h_k'\otimes x_k'$$
with $x_k=0$ for $k=N+1,\dots, K$ and $x_m'=0$ for $k=M+1,\dots,K$.
Under this assumption, we
have $M=N=K$ and there is an orthogonal transformation $O$ on $G$
such that $Oh_k' = h_k$ for all $k=1,\dots,K$.
Then
$$x_k =    \sum_{j=1}^K [h_j',h_k] x_j'
=  \sum_{j=1}^K[Oh_j,h_k] x_j'.$$
Let $O = (o_{jk})$ denote the matrix representation with respect to the basis
$(h_k)_{k=1}^K$. Then,
$$ \E \Big\n \sum_{k=1}^K \g_k x_k\Big\n^2
= \E \Big\n \sum_{k=1}^K \g_k \sum_{j=1}^K o_{jk} x_j'\Big\n^2
= \E \Big\n\sum_{j=1}^K \Big(\sum_{k=1}^K   o_{jk} \g_k\Big) x_j'\Big\n^2
= \E \Big\n\sum_{j=1}^K \g_j' x_j'\Big\n^2,
$$
where
$ \g_j' : = \sum_{k=1}^K   o_{jk}\g_k.$
Writing $\g = (\g_1,\dots,\g_K)$ and $\g' = (\g_1',\dots,\g_K')$,
this means that $$\g'= O\g.$$ As $\R^d$-valued Gaussian random variables, 
$\g$ and $\g'$ have covariance matrices $I$ (by assumption) and $OIO^* = I$
(since $O$ is orthogonal), respectively. Stated differently, the random
variables $\g_j'$ form a standard Gaussian sequence, and thereby we have proved
the asserted  well-definedness. 

\begin{definition}\label{def:g-rad}
The Banach space $\g( H ,E)$ is defined as the completion of $ H \otimes E$ with
respect to
the norm
$$ \Big\n \sum_{n=1}^N h_n\otimes x_n\Big\n_{\g(H,E)}^2 
:=  \E \Big\n \sum_{n=1}^N \g_n x_n\Big\n^2,
$$
where it is assumed that $h_1,\dots,h_N$ are orthonormal in $ H $.
\end{definition}

The following example is used frequently in the context of stochastic integrals,
where $H_1 = L^2(\R_+)$ and $H_2=\H$ is some abstract Hilbert space:
\begin{example}\label{ex:f-ot-T} 
Let $H_1$ and $H_2$ be Hilbert spaces and let $H_1\wh\ot H_2$ denote the Hilbert
space completion of their tensor product. Then for all
$h\in H_1$ and $h_1,\dots,h_N\in H_2$, $x_1,\dots,x_N\in E$, 
$$ \Big\n \sum_{n=1}^N (h \ot h_n) \ot x_n \Big\n_{\g(H_1\wh\ot H_2, E)} = \n
h\n_{H_1} \Big\n \sum_{n=1}^N h_n \ot x_n\Big\n_{\g(H_2,E)}.$$
\end{example} 

The preceding discussion can be summarized as follows.

\begin{proposition}[It\^o isometry]\label{prop:Ito}
Every isonormal process $W:  H \to L^2(\O)$ induces an isometry, also denoted by
$W$, from
$\g( H ,E)$ into $L^2(\O;E)$.
\end{proposition}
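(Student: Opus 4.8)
The plan is to read off the proposition from the computation already carried out in the discussion preceding Definition~\ref{def:g-rad}. For a finite-rank tensor $\sum_{n=1}^N h_n\ot x_n$ with $h_1,\dots,h_N$ orthonormal in $H$, the random variables $W(h_1),\dots,W(h_N)$ are independent standard Gaussians, so that
\beq
\E\Big\n\sum_{n=1}^N W(h_n)\ot x_n\Big\n^2 = \E\Big\n\sum_{n=1}^N \g_n x_n\Big\n^2 = \Big\n\sum_{n=1}^N h_n\ot x_n\Big\n_{\g(H,E)}^2 .
\eeq
Thus $W$ is norm-preserving on such elements, and the whole task reduces to promoting this to an isometry on all of $\g(H,E)$ by density. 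First I would confirm that $W$, defined on elementary tensors by $W(h\ot x):=W(h)\ot x$ and extended linearly, is a well-defined linear map from the algebraic tensor product $H\ot E$ into $L^2(\O;E)$: well-definedness holds because $(h,x)\mapsto W(h)\ot x$ is bilinear and hence factors through $H\ot E$, while the image of a finite-rank tensor is a finite combination $\sum_n W(h_n)\ot x_n$ of products of $L^2(\O)$-functions with vectors in $E$, which is plainly strongly measurable and square-integrable, i.e.\ lies in $L^2(\O;E)$.

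Next I would reduce an arbitrary element of $H\ot E$ to the orthonormal form required by the displayed identity. Given $\xi=\sum_{n=1}^N h_n\ot x_n$, the vectors $h_1,\dots,h_N$ span a finite-dimensional subspace $G\subseteq H$; choosing an orthonormal basis $e_1,\dots,e_k$ of $G$ by Gram--Schmidt and writing $h_n=\sum_{j=1}^k [h_n,e_j]e_j$, one rewrites
\beq
\xi=\sum_{j=1}^k e_j\ot\Big(\sum_{n=1}^N [h_n,e_j]x_n\Big),
\eeq
which is of the required form. Applying the identity then gives $\n W\xi\n_{L^2(\O;E)}=\n\xi\n_{\g(H,E)}$ for every $\xi\in H\ot E$; since the right-hand side is representation-independent (as established before Definition~\ref{def:g-rad}), so is the left-hand side, and $W$ is an isometry from $(H\ot E,\n\cdot\n_{\g(H,E)})$ into $L^2(\O;E)$.

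Finally I would pass to the completion. By Definition~\ref{def:g-rad}, $H\ot E$ is dense in $\g(H,E)$, and $L^2(\O;E)$ is a Banach space. An isometry from a dense subspace of a normed space into a complete space is uniformly continuous and therefore extends uniquely to a continuous map on the whole completion; the extension remains norm-preserving because norms are continuous and the isometric identity passes to limits. This yields the desired isometry $W:\g(H,E)\to L^2(\O;E)$.

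I do not expect any serious obstacle: the analytic content, namely the Gaussian computation and the representation-independence of the $\g$-norm, has already been dispatched in the paragraphs leading up to Definition~\ref{def:g-rad}, so the proposition is essentially a bookkeeping statement recording that $W$ is isometric on the dense subspace $H\ot E$ and hence on its completion. The only points requiring a word of care are the bilinearity argument for well-definedness of $W$ on $H\ot E$ and the Gram--Schmidt reduction to an orthonormal representation; both are routine.
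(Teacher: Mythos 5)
Your proposal is correct and takes essentially the same route as the paper: the paper presents this proposition explicitly as a summary of the discussion preceding Definition~\ref{def:g-rad} (the Gaussian identity $\E\|\sum_{n=1}^N W(h_n)\otimes x_n\|^2=\E\|\sum_{n=1}^N \g_n x_n\|^2$ for orthonormal $h_1,\dots,h_N$, together with the representation-independence argument), after which the isometry extends from the dense subspace $H\otimes E$ to the completion $\g(H,E)$. Your additional bookkeeping (bilinear well-definedness on $H\otimes E$ and the Gram--Schmidt reduction to an orthonormal representation) is exactly what the paper leaves implicit, and it is carried out correctly.
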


For $H = L^2(\R_+;\H)$ this result reduces to the It\^o isometry for the
stochastic integral with respect to $\H$-cylindrical Brownian motions of {\sc
van Neerven} and {\sc Weis} \cite{NeeWei05a}.

We continue with some elementary mapping properties of the spaces $\g(H,E)$.
The first is an immediate consequence of Definition \ref{def:g-rad}.

\begin{proposition}\label{prop:incl}
Let $H_0$ be a closed subspace of $H$. The inclusion mapping
$i_0: H_0\to H$ induces an isometric embedding $i_0:\g(H_0,E)\to \g(H,E)$ by
setting $$i_0(h_0\ot x) := i_0 h_0\ot x.$$
\end{proposition}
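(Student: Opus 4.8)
The plan is to verify that the prescription $i_0(h_0 \ot x) := i_0 h_0 \ot x$ defines a linear isometry on the dense subspace $H_0 \ot E$ of $\g(H_0,E)$, and then to invoke the universal property of completions to extend it to an isometric embedding $i_0 : \g(H_0,E) \to \g(H,E)$. The whole point is that the norm in Definition \ref{def:g-rad} is computed from an \emph{orthonormal} representation of a tensor, and orthonormality is preserved under the inclusion $i_0 : H_0 \to H$.

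First I would take an arbitrary element $\xi \in H_0 \ot E$ and, using the Gram--Schmidt procedure inside $H_0$, write it as $\xi = \sum_{n=1}^N h_n \ot x_n$ with $h_1,\dots,h_N$ orthonormal in $H_0$. Because $H_0$ carries the inner product inherited from $H$, the vectors $i_0 h_1,\dots,i_0 h_N$ are again orthonormal, now as elements of $H$. Consequently both $\xi$ and its image $i_0\xi = \sum_{n=1}^N i_0 h_n \ot x_n$ are in the admissible form to which the defining formula applies, and one computes
\beq
\n i_0 \xi \n_{\g(H,E)}^2 = \E \Big\n \sum_{n=1}^N \g_n x_n \Big\n^2 = \n \xi \n_{\g(H_0,E)}^2,
\eeq
so that $i_0$ preserves norms on $H_0 \ot E$.

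The only point requiring care is well-definedness: a priori $\xi$ admits many representations, and one must know that the common value $\E \n \sum_n \g_n x_n \n^2$ does not depend on the chosen orthonormal system. This, however, is exactly the invariance established in the discussion preceding Definition \ref{def:g-rad}, where a change of orthonormal basis is absorbed by an orthogonal transformation of the Gaussian vector, leaving its distribution unchanged; applied once inside $H_0$ and once inside $H$ it guarantees that both norms above are unambiguous. There is therefore no genuine obstacle here---the statement is essentially a bookkeeping consequence of the fact that an orthonormal family in a closed subspace stays orthonormal in the ambient space. Finally, since $i_0$ is a linear isometry on the dense subspace $H_0 \ot E$, it extends uniquely to a linear isometry of the completion $\g(H_0,E)$ into $\g(H,E)$, which is the desired isometric embedding.
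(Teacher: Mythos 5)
Your proof is correct and follows exactly the route the paper has in mind: the paper dispenses with this proposition by remarking that it is an immediate consequence of Definition \ref{def:g-rad}, precisely because an orthonormal system in $H_0$ remains orthonormal in $H$, so the two norms agree on $H_0\ot E$ and the isometry extends to the completion. Your additional attention to well-definedness (the basis-invariance established before Definition \ref{def:g-rad}) is exactly the right point to flag, though the paper treats it as already settled.
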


The next proposition is in some sense the dual version of this result:

\begin{proposition}[Composition with orthogonal projections]\label{prop:proj}
Let $H_0$
be a closed subspace of $H$. Let $P_0$ be the orthogonal projection in $H$ onto
$H_0$ and let $\E_0$ denote the conditional expectation operator with respect to
the $\sigma$-algebra $\F_0$ generated by the family of random variables
$\{W(h_0): \ h_0\in H_0\}$. The operator $P_0$ extends to a surjective linear
contraction  $P_0:\g(H,E)\to \g(H_0,E)$ by setting $$P_0(h\ot x):= P_0 h\ot x$$
and the following diagram commutes:
$$
 \begin{CD}
    \g( H,E)    @> W >> L^2(\Om;E)    \\
     @V P_0 VV                @V \E_0 VV \\
     \g( H_0 ,E)  @> W >>   L^2(\Om,\F_0;E)
     \end{CD}
$$
\end{proposition}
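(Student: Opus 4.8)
The plan is to reduce everything to the single scalar identity $W(P_0 h) = \E_0 W(h)$ in $L^2(\O)$ and then let the It\^o isometry (Proposition~\ref{prop:Ito}) do all the remaining work, so that the contraction property and the commutativity of the diagram come out together. First I would observe that $P_0\otimes \mathrm{id}_E$ is a well-defined linear map from $H\ot E$ into $H_0\ot E$, so the prescription $P_0(h\ot x):= P_0 h\ot x$ genuinely determines a linear map on the algebraic tensor product; the content is to control its norm and to identify $W\circ P_0$.

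The core step is the identity $W\circ P_0 = \E_0\circ W$ on $H\ot E$. By linearity it suffices to treat an elementary tensor $\xi = h\ot x$, where $W(P_0\xi) = W(P_0 h)\ot x$ and $\E_0(W\xi) = (\E_0 W(h))\ot x$ (the fixed vector $x$ is pulled out of the conditional expectation). Thus everything reduces to proving $W(P_0 h)=\E_0 W(h)$ in $L^2(\O)$. I would decompose $h = h_0 + h_1$ with $h_0 = P_0 h\in H_0$ and $h_1 = (I-P_0)h\in H_0^\perp$. Then $W(h_0)$ is $\F_0$-measurable, so $\E_0 W(h_0)=W(h_0)$; and $W(h_1)$ is uncorrelated with every generator $W(h_0')$, $h_0'\in H_0$, by property (ii) of Definition~\ref{def:isonormal} together with $h_1\perp H_0$. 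Since the whole family $\{W(h_0'):h_0'\in H_0\}\cup\{W(h_1)\}$ is jointly Gaussian, $W(h_1)$ is in fact independent of $\F_0$, whence $\E_0 W(h_1)=\E W(h_1)=0$. Adding the two contributions gives $\E_0 W(h)=W(h_0)=W(P_0 h)$.

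Once the identity is in hand, the contraction estimate is free. Since $W|_{H_0}$ is again an $H_0$-isonormal process, Proposition~\ref{prop:Ito} provides isometries $W:\g(H,E)\to L^2(\O;E)$ and $W:\g(H_0,E)\to L^2(\O;E)$, so for $\xi\in H\ot E$,
$$\n P_0\xi\n_{\g(H_0,E)} = \n W(P_0\xi)\n_{L^2(\O;E)} = \n \E_0(W\xi)\n_{L^2(\O;E)} \le \n W\xi\n_{L^2(\O;E)} = \n \xi\n_{\g(H,E)},$$
using that $\E_0$ is a contraction on $L^2(\O;E)$. This bound extends $P_0$ to a linear contraction $\g(H,E)\to\g(H_0,E)$, and the identity $W\circ P_0=\E_0\circ W$ then extends by continuity from $H\ot E$ to all of $\g(H,E)$, which is precisely the asserted commutativity (note $\E_0 W\xi$ is $\F_0$-measurable, so the bottom arrow indeed lands in $L^2(\O,\F_0;E)$). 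Surjectivity is immediate: composing with the isometric embedding $i_0:\g(H_0,E)\to\g(H,E)$ of Proposition~\ref{prop:incl} yields $P_0\circ i_0 = \mathrm{id}_{\g(H_0,E)}$.

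The main obstacle — really the only genuine point — is upgrading ``$W(h_1)$ is uncorrelated with each generator'' to ``$W(h_1)$ is independent of the full $\sigma$-algebra $\F_0$.'' This is exactly where the joint Gaussianity of $\{W(h):h\in H\}$, established in the elementary remarks on isonormal processes, is indispensable; everything else is routine manipulation with the It\^o isometry and the contractivity of conditional expectation.
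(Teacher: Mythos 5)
Your proof is correct and follows essentially the same route as the paper: reduce to the scalar identity $\E_0 W(h) = W(P_0 h)$ via the orthogonal decomposition $h = P_0h + (I-P_0)h$ (using joint Gaussianity to upgrade uncorrelatedness to independence of $\F_0$), then deduce the contraction bound from the two It\^o isometries and the contractivity of $\E_0$. The only difference is your surjectivity argument, $P_0\circ i_0 = \mathrm{id}_{\g(H_0,E)}$ via Proposition~\ref{prop:incl}, which is if anything slightly more direct than the paper's appeal to the surjectivity of $\E_0$.
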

\begin{proof}
For $h\in H_0$ we have $\E_0 W(h) = W(h) = W(P_0 h)$.
For $h\perp H_0$, the random variable $W(h)$ is independent of
$\{W(h_1),\dots,W(h_N)\}$ for all
$h_1,\dots,h_N\in H_0$, and therefore $W(h)$ is independent of $\F_0$.
Hence, $$\E_0 W(h) = \E W(h) = 0 = W(0) = W(P_0(h)).$$ This proves the
commutativity of the diagram 
$$
 \begin{CD}
    H    @> W >> L^2(\Om)    \\
     @V P_0 VV                @V \E_0 VV \\
    H_0  @> W >>   L^2(\Om,\F_0)
     \end{CD}
$$
For elementary tensors $h\ot x\in H\ot E$ it follows that 
$$\E_0 W(h\ot x) = \E_0 W(h)\otimes x = W(P_0 h)\otimes x_n = W(P_0(h\ot x)).$$
By linearity, this proves that the $E$-valued diagram commutes as well. 
That $P_0$ extends to a linear contraction from $\g(H,E)$ to $\g(H_0,E)$ now
follows from the facts that $\E_0$ is a contraction from $L^2(\O;E)$ to
$L^2(\O,\F_0;E)$ and  both $W:\g(H,E)\to L^2(\Om;E)$ and 
$W:\g(H_0,E)\to L^2(\Om,\F_0;E)$ are isometric embeddings.
The surjectivity of $P_0$ follows from
the surjectivity of $\E_0$.
\end{proof}

\begin{proposition}[Composition with functionals]\label{prop:xs}
Every functional $x\s\in E\s$ extends to a bounded operator $x\s: \g(H,E) \to H$
by setting $$x\s(h\ot x):= \lb x,x\s\rb h$$ and the following diagram commutes:
$$
 \begin{CD}
    \g( H,E)    @> W >> L^2(\Om;E)    \\
     @V x\s VV                @V x\s VV \\
     H  @> W >>   L^2(\Om)
     \end{CD}
$$
\end{proposition}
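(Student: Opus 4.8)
The plan is to verify the two assertions---that the prescription extends to a bounded operator, and that the square commutes---first on the algebraic tensor product $H\ot E$, where every object is an explicit finite sum, and then to transfer both facts to the completion $\g(H,E)$ by density. On $H\ot E$ the formula $x\s(h\ot x):=\lb x,x\s\rb h$ is just the map $\mathrm{id}_H\ot x\s$, which is well defined by the universal property of the tensor product; the only genuine content is therefore the norm estimate that permits continuous extension.

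For boundedness I would fix an element $\xi=\sum_{n=1}^N h_n\ot x_n$ with $h_1,\dots,h_N$ orthonormal in $H$, so that $x\s\xi=\sum_{n=1}^N \lb x_n,x\s\rb h_n$ and, by orthonormality, $\n x\s\xi\n_H^2=\sum_{n=1}^N |\lb x_n,x\s\rb|^2$. The key step is to read the right-hand side as a Gaussian variance: since $(\g_n)_{n=1}^N$ is a standard Gaussian sequence, $\sum_{n=1}^N |\lb x_n,x\s\rb|^2=\E\big|\sum_{n=1}^N \g_n\lb x_n,x\s\rb\big|^2=\E\big|\lb \sum_{n=1}^N \g_n x_n,x\s\rb\big|^2$. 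Bounding the duality pairing by $\n x\s\n$ and invoking Definition \ref{def:g-rad} then yields
$$
\n x\s\xi\n_H^2 \le \n x\s\n^2\,\E\Big\n\sum_{n=1}^N \g_n x_n\Big\n^2 = \n x\s\n^2\,\n\xi\n_{\g(H,E)}^2.
$$
Thus $x\s$ is bounded of norm at most $\n x\s\n$ on the dense subspace $H\ot E$ and extends uniquely to $\g(H,E)$.

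For the commutativity I would again argue on elementary tensors. Going right and then down sends $h\ot x$ to $W(h)\ot x$ and then, applying $x\s$ pointwise, to $\lb x,x\s\rb W(h)\in L^2(\Om)$; going down and then right sends $h\ot x$ to $\lb x,x\s\rb h$ and then, by linearity of $W:H\to L^2(\Om)$, to the same $\lb x,x\s\rb W(h)$. Hence the diagram commutes on $H\ot E$. Since $W:\g(H,E)\to L^2(\Om;E)$ is an isometry (Proposition \ref{prop:Ito}), the operators $x\s:\g(H,E)\to H$ and $W:H\to L^2(\Om)$ are bounded, and the pointwise action of $x\s$ is a contraction (up to $\n x\s\n$) from $L^2(\Om;E)$ to $L^2(\Om)$, all four maps in the square are continuous; commutativity on the dense subspace $H\ot E$ therefore propagates to all of $\g(H,E)$. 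I do not expect a serious obstacle here: the entire argument turns on the single observation that composing the $\g(H,E)$-norm with $x\s$ collapses it to a scalar Gaussian variance, after which density handles the rest.
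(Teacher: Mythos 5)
Your proposal is correct, and both of your verifications (the norm estimate on $H\ot E$ and commutativity on elementary tensors) are sound; the point worth noting is that you arrange the logic in the opposite order from the paper. The paper first checks $W\circ x\s = x\s\circ W$ on elementary tensors and then obtains the boundedness of $x\s:\g(H,E)\to H$ as a \emph{consequence} of the commuting square, using that both $W:H\to L^2(\Om)$ and $W:\g(H,E)\to L^2(\Om;E)$ are isometric embeddings: for $T\in H\ot E$ one has $\n x\s T\n_H = \n W(x\s T)\n_{L^2(\Om)} = \n \lb W(T),x\s\rb\n_{L^2(\Om)} \le \n x\s\n\,\n W(T)\n_{L^2(\Om;E)} = \n x\s\n\,\n T\n_{\g(H,E)}$. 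You instead prove the bound directly from Definition \ref{def:g-rad}, via the identity $\sum_{n=1}^N|\lb x_n,x\s\rb|^2 = \E\bigl|\bigl\lb \sum_{n=1}^N \g_n x_n,x\s\bigr\rb\bigr|^2$ for an orthonormal representation, and only afterwards propagate commutativity by density and the continuity of all four maps. The two arguments rest on the same computation --- your Gaussian-variance identity is exactly what the isometry of $W:H\to L^2(\Om)$ encodes, since $\E|W(x\s T)|^2 = \n x\s T\n_H^2$ --- so the difference is one of packaging: the paper's route is more economical because the It\^o isometry (Proposition \ref{prop:Ito}) does the bookkeeping, while yours is slightly more self-contained in that the boundedness step never mentions the isonormal process and makes the constant $\n x\s\n$ visible by hand. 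One small point you handle implicitly but should state: every element of $H\ot E$ admits a representation with orthonormal $h_1,\dots,h_N$ (Gram--Schmidt), so estimating on such representations does suffice; with that remark your proof is complete.
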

\begin{proof}
For elementary tensors we have
$$W(x\s(h\ot x)) = \lb x,x\s\rb W(h) = \lb W(h\ot x),x\s\rb .$$
By linearity this proves that $W\circ x\s = x\s\circ W$ on $H\ot E$.
That $x\s$ extends to a bounded operator from
$\g(H,E)\to H$ now follows from the fact that both $W:H\to L^2(\Om)$ and 
$W:\g(H,E)\to L^2(\Om;E)$ are isometric embeddings.
\end{proof} 

In particular it follows, for $T\in \g(H,E)$, that the $E$-valued random
variables $W(T)$ are {\em Gaussian} (cf. Definition \ref{def:Gaussian}). This
point will be taken up in more detail in Section \ref{sec:Gaussian}. 

So far we have treated $H\ot E$ as an abstract tensor product of $H$ and $E$.   
The elements of $ H \otimes E$ define bounded linear operators from $ H $ to $E$
by the formula
$$ (h\otimes x)h' := [h,h']x, \quad h'\in H,$$
and we have 
$$
\bal
 \Big\n 
\sum_{n=1}^N h_n\otimes x_n\Big\n_{\calL(H,E)}^2
& = \sup_{\n h\n\le 1} \Big\n\sum_{n=1}^N [h_n,h]x_n\Big\n^2 
= \sup_{\n (a_n)_{n=1}^N\n_2\le 1} \Big\n \sum_{n=1}^N a_n x_n\Big\n^2 
\\ & \le \E \Big\n
\sum_{n=1}^N \g_n x_n\Big\n^2
= \Big\n \sum_{n=1}^N h_n\otimes x_n\Big\n_{\g(H,E)}^2 
\eal
$$ 
where the inequality follows from the fact that for any $x\s\in E\s$ of norm one
and any choice $ (a_n)_{n=1}^N \in \ell_N^2$ of norm $\le 1$ we have
$$
\bal\Big| \sum_{n=1}^N a_n\lb x_n,x\s\rb\Big|^2 
& \le \sum_{n=1}^N |a_n|^2 
\sum_{n=1}^N |\lb x_n,x\s\rb|^2  \le \sum_{n=1}^N |\lb x_n,x\s\rb|^2 
\\ & = \E \Big| \sum_{n=1}^N \g_n \lb x_n,x\s\rb\Big|^2 \le \E \Big\n
\sum_{n=1}^N \g_n x_n\Big\n^2.
\eal$$
This shows that the identity map on $H\ot E$ has a unique extension to a
continuous and contractive linear operator 
$$ j:\g( H ,E)\to \calL( H ,E).$$
To prove that $j$ is injective let $W: H \to L^2(\O)$ be an isonormal process. 
For all $T \in  H \ot E$, say $T = \sum_{n=1}^N h_n \ot x_n$ as before,
the adjoint operator $(jT)\s\in\calL(E\s, H )$ is given by $(jT)\s x\s =
\sum_{n=1}^N \lb x_n,x\s\rb h_n$, so
$$   \E |\lb W(T),x\s\rb|^2 
= \E \Big|\sum_{n=1}^N \g_n \lb x_n,x\s\rb\Big|^2 
= \sum_{n=1}^N |\lb x_n,x\s\rb|^2 =
\n (jT)\s x\s\n^2.$$
By approximation, the identity of the left- and right-hand sides extends to
arbitrary $T\in\g( H ,E)$.  
Now if $jT=0$ for some $T\in \g( H ,E)$, then    
$$ \E |\lb W(T),x\s\rb|^2 = \n (jT)\s x\s\n^2 = 0$$
for all $x\s\in E\s$,
so $W(T) = 0$ and therefore $T=0$. 

\begin{definition} An operator $T\in \calL( H ,E)$ is called
{\em $\g$-radonifying} if it belongs to $\g( H ,E)$.
\end{definition}

From now on we shall always identify $\g(H,E)$ with a linear subspace of
$\calL(H,E)$. 

\begin{proposition}\label{prop:compact}
Every operator $T\in\g(H,E)$ is compact.
\end{proposition}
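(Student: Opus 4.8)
The plan is to run a straightforward density-plus-closedness argument, exploiting the contractive embedding $j:\g(H,E)\to\calL(H,E)$ established immediately before the statement. Recall from that discussion that for every $T\in\g(H,E)$ one has $\n T\n_{\calL(H,E)}\le \n T\n_{\g(H,E)}$; in other words, convergence in the $\g$-norm implies convergence in the operator norm.

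First I would observe that every element of $H\ot E$ defines a finite rank operator from $H$ to $E$, and finite rank operators are compact. Next I would recall the standard fact that the set $\mathscr{K}(H,E)$ of compact operators is a closed linear subspace of $\calL(H,E)$ with respect to the operator norm. Finally, since by Definition \ref{def:g-rad} the space $\g(H,E)$ is by construction the completion of $H\ot E$ in the $\g$-norm, every $T\in\g(H,E)$ is the $\g$-norm limit of a sequence $(T_n)_{n\ge 1}$ in $H\ot E$.

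Combining these ingredients finishes the proof: from $\n T_n-T\n_{\calL(H,E)}\le \n T_n-T\n_{\g(H,E)}\to 0$ we conclude that $T$ is the operator-norm limit of the finite rank (hence compact) operators $T_n$, and therefore $T$ is compact by the closedness of $\mathscr{K}(H,E)$.

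There is essentially no hard step here; the entire content is packaged into the inequality $\n\cdot\n_{\calL(H,E)}\le\n\cdot\n_{\g(H,E)}$ proved just above, together with the definition of $\g(H,E)$ as a completion. The only point requiring the tiniest care is to invoke the correct norm domination in the right direction, so that $\g$-norm Cauchy sequences remain Cauchy (indeed convergent) in operator norm; once that is in place, the classical fact that compact operators form an operator-norm-closed subspace delivers the conclusion immediately.
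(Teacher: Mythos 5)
Your proposal is correct and follows essentially the same argument as the paper: approximate $T$ in the $\g$-norm by finite rank operators from $H\ot E$, use the contractivity of the embedding $j:\g(H,E)\to\calL(H,E)$ to upgrade this to operator-norm convergence, and conclude by the operator-norm closedness of the compact operators. No gaps to report.
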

\begin{proof}
Let $\limn T_n = T$ in $\g(H,E)$ with each operator $T_n$ of finite rank. Then
$\limn T_n = T$ in $\calL(H,E)$ and therefore $T$ is compact, it being the
uniform limit of a sequence of 
compact operators.
\epf

The degree of compactness of an operator can be quantified by its entropy
numbers. Proposition \eqref{prop:compact} can be refined accordingly; see
Section \ref{sec:conditions}.

Under the identification of $\g(H,E)$ with a linear subspace of $\calL(H,E)$,
Proposition \ref{prop:xs} states that if $W$ is an $H$-isonormal process, then
for all $T\in \g(H,E)$ and $x\s\in E\s$ we have 
$$\lb W(T),x\s\rb = W(T\s x\s).$$ 
Similarly, Proposition \ref{prop:proj} states that for all $T\in \g(H,E)$ and
orthogonal projections $P$ from $H$ onto a closed subspace $H_0$ we have
$T|_{H_0}\in \g(H_0,E)$ and 
$$\n T|_{H_0}\n_{\g(H_0,E)}\le \n T\n_{\g(H,E)}.$$

As an application we deduce a representation for the norm of $\g(H,E)$ in terms
of finite orthonormal systems.

\begin{proposition}\label{prop:norm-gamma}
For all $T\in \g(H,E)$ we have
$$ \n T\n_{\g(H,E)}^2 = \sup_{h} \ \E \Big\n \sum_{n=1}^N \g_n Th_n\Big\n^2\,$$
where the supremum is over all finite orthonormal systems
${h} = \{h_{1},\dots,h_{N}\}$ in $H$.
\end{proposition}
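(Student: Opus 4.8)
The plan is to prove the two inequalities separately, after first recording a bookkeeping identity. For a finite orthonormal system $h = \{h_1,\dots,h_N\}$ write $H_0 = \mathrm{span}\{h_1,\dots,h_N\}$ and let $P_0$ be the orthogonal projection of $H$ onto $H_0$. Viewed as an element of $H_0\ot E$, the restriction $T|_{H_0} = P_0 T$ is exactly $\sum_{n=1}^N h_n\ot Th_n$: both operators send $h'\in H_0$ to $\sum_{n=1}^N [h_n,h']\,Th_n = Th'$. Since $h_1,\dots,h_N$ are orthonormal in $H_0$, Definition \ref{def:g-rad} gives $\n T|_{H_0}\n_{\g(H_0,E)}^2 = \E\n\sum_{n=1}^N \g_n Th_n\n^2$. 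Thus the supremum in the statement is nothing but $\sup_{H_0}\n P_0 T\n_{\g(H_0,E)}^2$ taken over all finite-dimensional subspaces $H_0$.

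The easy inequality $\sup_h \E\n\sum_{n=1}^N \g_n Th_n\n^2 \le \n T\n_{\g(H,E)}^2$ is then immediate. By the contraction property recorded after Proposition \ref{prop:proj} we have $\n T|_{H_0}\n_{\g(H_0,E)}\le \n T\n_{\g(H,E)}$ for every $H_0$; combining this with the identity of the previous paragraph and taking the supremum over all finite orthonormal systems $h$ yields the claim.

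The reverse inequality $\n T\n_{\g(H,E)}^2 \le \sup_h \E\n\sum_{n=1}^N \g_n Th_n\n^2$ is the main point. Fix $\e>0$. Since $\g(H,E)$ is the completion of $H\ot E$, choose a finite-rank $S = \sum_{n=1}^N h_n\ot x_n$, with $h_1,\dots,h_N$ orthonormal, such that $\n T-S\n_{\g(H,E)} < \e$, and set $H_0 = \mathrm{span}\{h_1,\dots,h_N\}$. Because $Sh_n = x_n$, Definition \ref{def:g-rad} gives $\n S\n_{\g(H,E)} = \big(\E\n\sum_{n=1}^N \g_n Sh_n\n^2\big)^{1/2}$, while the restriction-contraction bounds the discrepancy on the same subspace: $\big(\E\n\sum_{n=1}^N \g_n (T-S)h_n\n^2\big)^{1/2} = \n (T-S)|_{H_0}\n_{\g(H_0,E)} \le \n T-S\n_{\g(H,E)} < \e$. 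Writing $\sum_n \g_n Th_n = \sum_n \g_n Sh_n + \sum_n \g_n (T-S)h_n$ and applying the triangle inequality in $L^2(\O;E)$, together with $\n S\n_{\g(H,E)} \ge \n T\n_{\g(H,E)} - \e$, gives $\big(\E\n\sum_{n=1}^N \g_n Th_n\n^2\big)^{1/2} \ge \n T\n_{\g(H,E)} - 2\e$. As $h$ is an admissible system in the supremum, letting $\e\downarrow 0$ and squaring completes the proof.

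The only step needing care is this last one: the orthonormal system witnessing the supremum must be chosen as the one representing the approximant $S$, so that $S$ is supported on $H_0$ and its full $\g(H,E)$-norm is recovered there. The contraction from Proposition \ref{prop:proj} is precisely what makes $T$ and $S$ comparable on that common subspace, and the triangle inequality in $L^2(\O;E)$ then converts the $\g(H,E)$-closeness of $T$ and $S$ into closeness of the two Gaussian sums. Beyond this bookkeeping I expect no genuine obstacle, since everything reduces to finite-dimensional computations where no measurability or separability issues intervene.
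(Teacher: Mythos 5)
Your proof is correct and follows essentially the same route as the paper: the inequality $\sup_h \E\n\sum_{n=1}^N \g_n Th_n\n^2 \le \n T\n_{\g(H,E)}^2$ via the restriction contraction from Proposition \ref{prop:proj}, and the reverse inequality by approximating $T$ with a finite-rank $S\in H\ot E$ and testing against the orthonormal system supporting $S$. You merely supply the $\e$/triangle-inequality details that the paper's one-line remark (``approximating $T$ with elements from $H\otimes E$'') leaves implicit, and these details are sound.
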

\begin{proof}
The inequality `$\le$' is obtained by approximating $T$ with elements from
$H\otimes E$. For the inequality `$\ge $' we note that for all
finite-dimensional subspaces $H_0$ of $H$ we have
$\n T\n_{\g(H,E)}\ge \n T|_{H_0}\n_{\g(H_0,E)}$. 
The operator $T|_{H_0}$, being of finite rank from $H_0$ to $E$, may be
identified with an element of $H_0\ot E$, and the desired inequality follows
from this.
\end{proof}

\begin{definition}
An operator $T\in\calL(H,E)$ satisfying
$$\sup_{h} \ \E \Big\n \sum_{n=1}^N \g_n Th_n\Big\n^2 <\infty,$$ 
where the supremum is over all finite orthonormal systems
${h} = \{h_{1},\dots,h_{N}\}$ in $H$, is called {\em $\g$-summing}.
\end{definition} 

The class of $\g$-summing operators was introduced by {\sc Linde} and {\sc
Pietsch} \cite{LinPie}. 

\begin{definition}
 The space of all $\g$-summings operator from $H$ to $E$ is denoted by $\g_\infty(H,E)$.
\end{definition}

With respect to the norm
$$ \n T\n_{\g_\infty(H,E)}^2 := \sup_{h} \ \E \Big\n \sum_{n=1}^N \g_n Th_n\Big\n^2,$$
$\g_\infty(H,E)$ is easily seen to be a Banach space. 
Proposition \ref{prop:norm-gamma} asserts that every
$\g$-radonifying operator $T$ is $\g$-summing and 
$$\n T\n_{\g_\infty(H,E)} = \n T\n_{\g(H,E)}.$$
Stated differently, $\g(H,E)$ is isometrically contained in $\g_\infty(H,E)$ as a closed subspace.
In the next section we shall prove that if $E$ does not contain a closed subspace isomorphic to $c_0$,
then $$\g_\infty(H,E)=\g(H,E),$$ that is, every $\g$-summing operator is $\g$-radonifying.

The next proposition is essentiall due to {\sc Kalton} and {\sc Weis} \cite{KalWei07}.

\begin{proposition}[$\g$-Fatou lemma]\label{prop:g-Fatou} 
Consider a bounded sequence $(T_n)_{n\ge 1}$ in $\g_\infty(H,E)$.
If $T\in\calL(H,E)$ is an operator such that $$\limn \lb T_n h, x\s\rb = \lb Th,
x\s\rb\quad  h\in H, \ x\s\in E\s,$$ then $T\in \g_\infty(H,E)$ and 
$$\n T\n_{\g_\infty(H,E)}\le \liminf_{n\to\infty}\n T_n\n_{\g_\infty(H,E)}.$$
\end{proposition}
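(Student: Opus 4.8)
The plan is to estimate the $\g$-summing norm of $T$ directly from its definition, one finite orthonormal system at a time. Fix a finite orthonormal system $h=\{h_1,\dots,h_N\}$ in $H$ and a Gaussian sequence $(\g_k)_{k\ge 1}$, and set $S_n := \sum_{k=1}^N \g_k T_n h_k$ and $S := \sum_{k=1}^N \g_k T h_k$. These are $E$-valued random variables (finite sums of scalar random variables times fixed vectors), and by the definition of the $\g_\infty$-norm one has $\E\n S_n\n^2 \le \n T_n\n_{\g_\infty(H,E)}^2$ for every $n$. The goal is therefore to show that $\E\n S\n^2 \le \liminf_{n\to\infty}\E\n S_n\n^2$; taking the supremum over all finite orthonormal systems $h$ will then yield both the membership $T\in\g_\infty(H,E)$ (using that the sequence is bounded, so that the $\liminf$ is finite) and the asserted norm inequality.

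The key observation is that the hypothesis forces $S_n\to S$ weakly, \emph{pointwise in} $\om$. Indeed, for fixed $\om$ the coefficients $\g_k(\om)$ are just scalars, so for every $x\s\in E\s$ the assumed convergence $\lb T_n h_k, x\s\rb \to \lb T h_k, x\s\rb$ gives $\lb S_n(\om), x\s\rb = \sum_{k=1}^N \g_k(\om)\lb T_n h_k, x\s\rb \to \sum_{k=1}^N \g_k(\om)\lb T h_k, x\s\rb = \lb S(\om), x\s\rb$. I would then invoke the weak lower semicontinuity of the norm: by the Hahn--Banach theorem one selects a norming functional for $S(\om)$ and passes to the limit to obtain $\n S(\om)\n \le \liminf_{n\to\infty}\n S_n(\om)\n$ for every $\om$.

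It remains to integrate this pointwise inequality. Squaring and applying the classical Fatou lemma yields $\E\n S\n^2 \le \E\liminf_{n\to\infty}\n S_n\n^2 \le \liminf_{n\to\infty}\E\n S_n\n^2 \le \liminf_{n\to\infty}\n T_n\n_{\g_\infty(H,E)}^2$, which is exactly what is needed. The only point requiring a word of care is measurability: since each $S_n$ is strongly measurable, $\n S_n\n$ is a genuine measurable function and hence so is $\liminf_{n\to\infty}\n S_n\n^2$, so that Fatou's lemma applies legitimately.

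As for the main difficulty, there is no deep obstacle here: the statement is a soft semicontinuity result, and once one recognises that the right ingredients are weak lower semicontinuity of the norm (to pass from the testing hypothesis to a norm bound for each $\om$) and the scalar Fatou lemma (to pass from a pointwise bound to an integrated one), the argument is essentially mechanical. The only mild subtlety is bookkeeping with the two separate limiting operations, namely the index $n$ of the sequence and the supremum over orthonormal systems; but since the final bound $\liminf_{n\to\infty}\n T_n\n_{\g_\infty(H,E)}^2$ does not depend on the chosen system $h$, these decouple cleanly.
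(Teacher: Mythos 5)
Your proof is correct and takes essentially the same route as the paper's: both fix a finite orthonormal system, exploit the pointwise-in-$\om$ convergence $\lb S_n(\om),x\s\rb\to\lb S(\om),x\s\rb$ of the Gaussian sums, and conclude with the scalar Fatou lemma before taking the supremum over all finite orthonormal systems. The only cosmetic difference is in how the norm $\n S(\om)\n$ is accessed: you invoke weak lower semicontinuity via a pointwise Hahn--Banach selection (harmless, since only the resulting scalar inequality is used and $\liminf_{n\to\infty}\n S_n\n^2$ is measurable regardless), whereas the paper tests against a fixed countable sequence norming the span of $\{Th_1,\dots,Th_K\}$ and lets $M\to\infty$ by monotone convergence --- two interchangeable ways of handling the same duality step.
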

\begin{proof}
Let $h_1,\dots, h_K$ be a finite orthonormal system in $H$.
Let $(x_m\s)_{m\ge 1}$ be a sequence of unit vectors in $E\s$ which is norming 
for the linear span of $\{Th_1,\dots, Th_K\}$.
For all $M\ge 1$ we have, by the Fatou lemma,
$$
\bal
 \E\sup_{m=1,\dots,M}\Big|\Big\lb \sum_{k=1}^{K} \g_k T h_k,
 x_m\s\Big\rb\Big|^2
& \le \liminf_{n\to\infty} 
\E\sup_{m=1,\dots,M}\Big|\Big\lb \sum_{k=1}^{K} \g_k T_n h_k,
x_m\s\Big\rb\Big|^2
\\ & \le \liminf_{n\to\infty}\n T_n\n_{\g_\infty(H,E)}^2.
\eal$$
Taking the limit $M\to\infty$ we obtain, by the monotone convergence theorem,
$$
 \E\Big\n\sum_{k=1}^{K} \g_k T h_k\Big\n^2
\le \liminf_{n\to\infty}\n T_n\n_{\g_\infty(H,E)}^2.
$$
\end{proof}

We continue with a useful criterion for membership of $\g_\infty(H,E)$. 
Its proof stands a bit apart from the main line of development and depends 
on an elementary comparison result in Section \ref{sec:ideal}, but 
for reasons of presentation we prefer to present it here.

\begin{proposition}[Testing against an orthonormal basis]\label{prop:summing-sep}
Let $H$ be a separable Hilbert space with orthonormal basis $(h_n)_{n\ge 1}$.
An operator $T\in \calL(H,E)$ belongs to $\g_\infty(H,E)$ if and only if
$$
 \sup_{N\ge 1}\, \E\Big\n \sum_{n=1}^N \g_n\,
Sh_n\Big\n^2< \infty.$$
In this situation we have $$ \n S\n_{\g_p^\infty(H,E)}^2 =
 \sup_{N\ge 1}\, \E\Big\n \sum_{n=1}^N \g_n\,
Sh_n\Big\n^2.$$
\epr
\bpf
Let $\{h_1',\dots, h_k'\}$ be an orthonormal system in $H$. 
For $K\ge 1$ let $P_K$ denote the orthogonal projection onto the span of 
$\{h_1,\dots,h_K\}$.
For all $x\s\in E\s$ and $K\ge k$ we have
$$
 \sum_{j=1}^{k} \lb S P_K h_j',  x\s\rb^2
\le \n P_K S\s x\s\n^2
 =  \sum_{n=1}^{K} \lb S h_n, x\s\rb ^2.
$$
From Lemma \ref{lem:g-compar} below it follows that 
$$  
\E\Big\n \sum_{j=1}^{k} \g_j \,SP_Kh_j'\Big\n^2
\le \E\Big\n \sum_{n=1}^{K} \g_n\,
Sh_n\Big\n^2\le \sup_{N\ge 1}\E\Big\n \sum_{n=1}^N \g_n\,
Sh_n\Big\n^2.$$
Hence by Fatou's lemma,
$$\E\Big\n \sum_{j=1}^{k} \g_j \,Sh_j'\Big\n^2
\le \liminf_{K\to\infty}
\E\Big\n \sum_{j=1}^{k} \g_j\,
SP_K h_j'\Big\n^2\le \sup_{N\ge 1}\E\Big\n \sum_{n=1}^N \g_n\,
Sh_n\Big\n^2.$$
It follows that
$$ \n S\n_{\g_\infty(H,E)}^p\le \sup_{N\ge 1}\, \E\Big\n \sum_{n=1}^N \g_n\,
Sh_n\Big\n^2.$$
The converse inequality trivially holds and the proof is complete.
\epf
 
We continue with two criteria for $\g$-radonification. The first is stated in
terms of maximal orthonormal systems. 

\begin{theorem}[Testing against a maximal orthonormal system]\label{thm:ONS}
Let $H$ be a Hilbert space with a maximal orthonormal system $(h_i)_{i\in I}$
and let $(\g_i)_{i\in I}$ be a family of independent standard Gaussian random
variables with the same index set. 
An operator $T\in\calL(H,E)$ belongs to $\g(H,E)$ if and only if 
$$\sum_{i\in I} \g_i Th_i$$ is summable in $L^2(\Om;E)$. In this situation we have
$$ \n T\n_{\g(H,E)}^2 = \E\Big\n\sum_{i\in I} \g_i  Th_i\Big\n^2.$$
\end{theorem}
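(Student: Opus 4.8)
The plan is to compare two nets indexed by the finite subsets $J$ of $I$: the net $(TP_J)$ in $\g(H,E)$ and the net $(S_J)$ in $L^2(\Om;E)$, where $P_J$ is the orthogonal projection of $H$ onto the span of $\{h_j:\, j\in J\}$, where $S_J:=\sum_{j\in J}\g_j Th_j$, and where $TP_J=\sum_{j\in J}h_j\ot Th_j$ is regarded as an element of $H\ot E\subseteq\g(H,E)$. The engine of the whole argument is the observation that, since the vectors $h_j$ occurring in $TP_J-TP_{J'}$ are orthonormal, Definition \ref{def:g-rad} yields the exact isometric identity $\n TP_J-TP_{J'}\n_{\g(H,E)}=\n S_J-S_{J'}\n_{L^2(\Om;E)}$ for all finite $J,J'$; the signs produced by the symmetric difference $J\triangle J'$ are harmless, being absorbed by the symmetry of the Gaussians. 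In particular $\n TP_J\n_{\g(H,E)}=\n S_J\n_{L^2(\Om;E)}$, so the two nets are Cauchy simultaneously and have equal norms in the limit. I would also record that $TP_J\to T$ in the strong operator topology, which is immediate from the maximality of $(h_i)_{i\in I}$ via $P_J h\to h$.

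For the implication that summability of $\sum_{i\in I}\g_i Th_i$ forces $T\in\g(H,E)$, suppose $(S_J)$ converges in $L^2(\Om;E)$; then it is Cauchy, so by the isometric identity $(TP_J)$ is Cauchy in the complete space $\g(H,E)$ and converges there to some $\wt T\in\g(H,E)$. Since $\g(H,E)$ embeds contractively into $\calL(H,E)$, one has $TP_J\to\wt T$ in operator norm, hence strongly; comparing with $TP_J\to T$ strongly forces $\wt T=T$, so $T\in\g(H,E)$. This direction uses only completeness, the contractive embedding, and strong convergence.

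The reverse implication is the main obstacle, because membership $T\in\g(H,E)$ must be upgraded from mere strong convergence of the truncations to convergence in the stronger $\g$-norm. Here I would exploit that $H\ot E$ is dense in $\g(H,E)$: given $\e>0$ choose a finite rank $U=\sum_{k=1}^m e_k\ot y_k$ with $\n T-U\n_{\g(H,E)}<\e$, and split $\n T-TP_J\n_{\g(H,E)}\le\n T-U\n_{\g(H,E)}+\n U-UP_J\n_{\g(H,E)}+\n(U-T)P_J\n_{\g(H,E)}$. The first term is $<\e$. For the last term, Propositions \ref{prop:incl} and \ref{prop:proj} identify $(U-T)P_J$ with the image under the isometric inclusion $\g(H_J,E)\to\g(H,E)$ of the restriction $(U-T)|_{H_J}$, where $H_J$ is the range of $P_J$, and bound its norm by $\n U-T\n_{\g(H,E)}<\e$. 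For the middle term, $U-UP_J=\sum_{k=1}^m(e_k-P_J e_k)\ot y_k$, and since $\n g\ot y\n_{\g(H,E)}=\n g\n_H\n y\n_E$ on rank one tensors (Definition \ref{def:g-rad}), the triangle inequality gives $\n U-UP_J\n_{\g(H,E)}\le\sum_{k=1}^m\n e_k-P_J e_k\n\,\n y_k\n$, which tends to $0$ as $J\uparrow I$ because each $e_k=\sum_{i\in I}[e_k,h_i]h_i$ is summable. Hence $TP_J\to T$ in $\g(H,E)$; transferring through the isometric identity shows that $(S_J)$ is Cauchy, so $\sum_{i\in I}\g_i Th_i$ is summable in $L^2(\Om;E)$.

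Finally the norm identity comes for free: from $\n TP_J\n_{\g(H,E)}=\n S_J\n_{L^2(\Om;E)}$ together with the two convergences $TP_J\to T$ in $\g(H,E)$ and $S_J\to\sum_{i\in I}\g_i Th_i$ in $L^2(\Om;E)$, passing to the limit gives $\n T\n_{\g(H,E)}^2=\E\Big\n\sum_{i\in I}\g_i Th_i\Big\n^2$.
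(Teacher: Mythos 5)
Your proof is correct, and it takes a genuinely different route from the paper's in the harder direction. For the `if' part the two arguments essentially coincide: both show the net $T_J=TP_J=\sum_{j\in J}h_j\ot Th_j$ is Cauchy in $\g(H,E)$ via the isometric identity with the partial sums $S_J$ (the paper reads this off through the It\^o isometry of Proposition \ref{prop:Ito} with $\g_i=W(h_i)$, you read it off directly from Definition \ref{def:g-rad}, with the signs on $J\triangle J'$ absorbed by symmetry as you note); you differ only in identifying the limit with $T$ — the paper tests adjoints against the isonormal process ($W(S\s x\s)=W(T\s x\s)$, hence $S=T$), while you use strong operator convergence through the contractive embedding $j:\g(H,E)\to\calL(H,E)$, which is equally valid. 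The real divergence is the `only if' part: the paper first establishes scalar summability $\lb W(T),x\s\rb=\sum_{i\in I}\g_i\lb Th_i,x\s\rb$ for each $x\s\in E\s$ and then upgrades it to summability in $L^2(\O;E)$ by Proposition \ref{prop:IN}, i.e., by a martingale-convergence (It\^o--Nisio-type) theorem, whereas you prove deterministically that $TP_J\to T$ in the $\g(H,E)$-norm by an $\e/3$-split through a finite-rank approximant $U$, bounding $\n(U-T)P_J\n_{\g(H,E)}$ via Propositions \ref{prop:incl} and \ref{prop:proj} — correctly avoiding the ideal property of Theorem \ref{thm:ideal}, which is only proved later, so there is no circularity — and bounding $\n U-UP_J\n_{\g(H,E)}$ by rank-one norms together with the summability of the expansions $e_k=\sum_{i\in I}[e_k,h_i]h_i$ guaranteed by maximality; Cauchyness of $(S_J)$ and the norm identity then transfer through the isometric identity and completeness of $L^2(\O;E)$. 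Your route is more elementary and self-contained (no isonormal process, no probabilistic limit theorem enters), at the modest cost of not producing the identification $\sum_{i\in I}\g_i Th_i=W(T)$, which the paper's proof yields for free and which is convenient in the surrounding development (e.g., in the proof of Theorem \ref{thm:functionals}).
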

\begin{proof}
We may assume that $\g_i = W(h_i)$ for some 
$H$-isonormal process $W$.

We begin with the `if' part and put $X:=\sum_{i\in I} \g_i Th_i$.
Given $\e>0$ choose $i_1,\dots,i_N\in I$ such that for all finite subsets
$J\subseteq I$ containing $i_1,\dots,i_N$ we have
$ \E\n X - X_J\n^2<\e^2,$ where $X_J:= \sum_{j\in J} \g_j Th_j$.
Set $T_J := \sum_{j\in J} h_j\otimes Th_j$. Then for all finite subsets
$J,J'\subseteq I$ containing $i_1,\dots,i_N$
we have $$\n T_J - T_{J'}\n_{\g(H,E)} 
= \n W(T_J) - W(T_{J'})\n_{L^2(\O;E)} = \n X_J - X_{J'}\n_{L^2(\O;E)} < 2\e.$$
It follows that the net $(T_J)_J$ is Cauchy in $\g(H,E)$ and therefore 
convergent to some $S\in \g(H,E)$. 
From $$W(S\s x\s) = \sum_{i\in I} \g_i \lb Th_i,x\s\rb = W(T\s x\s)$$ it follows
that $S\s x\s=T\s x\s$ for all $x\s\in E\s$ and therefore $S=T$.

For the `only if' part we note that
$\sum_i [h_i,T\s x\s]h_i$ is summable to $T\s x\s$ (cf. Example \ref{ex:1}) and
therefore
$$ \lb W(T),x\s\rb =  W(T\s x\s) = \sum_{i\in I} [h_i,T\s x\s]W(h_i)
=  \sum_{i\in I} \g_i [h_i,T\s x\s] 
=
\sum_{i\in I} \g_i \lb Th_i,x\s\rb 
$$
for all $x\s\in E\s$.
Hence by Proposition \ref{prop:IN}, $\sum_{i\in I} \g_i Th_i = W(T)$ in
$L^2(\O;E)$.
Finally, by Proposition \ref{prop:Ito},
$ \E\n\sum_{i\in I} \g_i Th_i\n^2  = \E \n W(T)\n^2 = \n T\n_{\g(H,E)}^2.$
\end{proof}

For operators $T\in \calL(H,E)$ we have an orthogonal decomposition
\begin{equation}\label{eq:sep-supp}
 H = \textrm{ker}(T)\oplus \ov{\textrm{ran}(T\s)}.
\end{equation}
The following argument shows that for all $T\in \g(H,E)$ the subspace
$\ov{\textrm{ran}(T\s)}$ is separable. Let $T_n\to T$ in $\g(H,E)$ with each
$T_n\in H\ot X$. The range of each adjoint operator $T_n\s$ is
finite-dimensional. Therefore the closure of $\bigcup_{n\ge
1}\textrm{ran}(T_n\s)$ is a separable closed subspace $H_0$ of $H$. 
By the Hahn-Banach theorem, $H_0$ is weakly closed. Hence upon passing to the
limit for $n\to \infty$ we infer that $\textrm{ran}(T\s)\subseteq H_0$ and the
claim is proved.

If $(h_n)_{n\ge 1}$ is an orthonormal basis for any separable closed subspace
$H'\subseteq H$ containing $\ov{\textrm{ran}(T\s)}$, then
Theorem \ref{thm:ONS} implies that an operator $T\in \calL(H,E)$ belongs to
$\g(H,E)$ if and only if the sum $\sum_{n\ge 1} \g_n Th_n$ converges in
$L^2(\O;E)$, in which case we have
$$ \n T\n_{\g(H,E)}^2 = \E\Big\n\sum_{n\ge 1} \g_n Th_n\Big\n^2.$$ 
In particular, if $H$ itself is separable this criterion may be applied
for any orthonormal basis $(h_n)_{n\ge 1}$ of $H$ and we have proved:

\begin{corollary}\label{cor:ONB}
If $H$ is a separable  Hilbert space with orthonormal basis $(h_n)_{n\ge 1}$,
and if $(\g_n)_{n\ge 1}$ is a Gaussian sequence, 
then a bounded operator $T:H\to E$ belongs to $\g(H,E)$ if and only if 
$\sum_{n\ge 1} \g_n Th_n$ converges in $L^2(\Om;E)$. In this situation we have
$$ \n T\n_{\g(H,E)}^2 = \E\Big\n\sum_{n\ge 1} \g_n Th_n\Big\n^2.$$
\end{corollary}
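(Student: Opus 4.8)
The plan is to deduce Corollary \ref{cor:ONB} directly from Theorem \ref{thm:ONS} together with the separability discussion immediately preceding the corollary. Since $H$ is assumed separable, any orthonormal basis $(h_n)_{n\ge 1}$ of $H$ is in particular a maximal orthonormal system indexed by the countable set $I = \N$, so Theorem \ref{thm:ONS} applies verbatim with this choice of $(h_i)_{i\in I}$. The theorem tells us that $T\in\g(H,E)$ if and only if $\sum_{i\in I}\g_i Th_i$ is summable in $L^2(\O;E)$, and that in this case $\n T\n_{\g(H,E)}^2 = \E\n\sum_{i\in I}\g_i Th_i\n^2$. Thus the only real work is to identify ``summability of $\sum_{i\in I}\g_i Th_i$ in $L^2(\O;E)$'' with ``convergence of the series $\sum_{n\ge 1}\g_n Th_n$ in $L^2(\O;E)$'' for the countable index set $I=\N$.

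First I would invoke the general equivalence, established in the discussion following the definition of summability, that for a countable index set summability is equivalent to unconditional convergence; in particular summability implies convergence of the series in its natural enumeration. For the reverse passage — from plain convergence of $\sum_{n\ge 1}\g_n Th_n$ to summability — I would appeal to Proposition \ref{prop:IN}, which is exactly tailored to this: the summands $\g_n Th_n$ are independent and symmetric elements of $L^2(\O;E)$, so the summability of $\sum_{n} \g_n Th_n$ in $L^2(\O;E)$ is equivalent to the summability of the scalar sums $\sum_n \g_n\lb Th_n,x\s\rb = \sum_n \g_n[h_n,T\s x\s]$ in $L^2(\O)$ for each $x\s\in E\s$. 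For each fixed $x\s$ this is a Gaussian series with square-summable coefficients (since $(h_n)$ is an orthonormal basis and $T\s x\s\in H$), hence converges in $L^2(\O)$; its unconditional convergence then upgrades to summability, so Proposition \ref{prop:IN} delivers the summability of the $E$-valued sum.

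The main obstacle, to the extent there is one, is purely bookkeeping: one must be careful that ``convergence of the series'' in the statement of the corollary and ``summability of the net'' in Theorem \ref{thm:ONS} genuinely coincide, rather than one being strictly stronger. This is resolved by the two ingredients above — for countable index sets, convergence in $L^2(\O;E)$ of a series of independent symmetric summands is automatically unconditional (by Proposition \ref{prop:IN} reducing to the scalar case, where a convergent Gaussian series is unconditionally convergent), and unconditional convergence is equivalent to summability. Once this identification is in place, the norm identity $\n T\n_{\g(H,E)}^2 = \E\n\sum_{n\ge 1}\g_n Th_n\n^2$ is simply the specialisation of the norm formula in Theorem \ref{thm:ONS} to the enumerated orthonormal basis. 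No further estimates are needed, and the proof reduces to citing Theorem \ref{thm:ONS} and Proposition \ref{prop:IN}.
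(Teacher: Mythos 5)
Your proposal is correct and follows essentially the same route as the paper, which obtains Corollary \ref{cor:ONB} directly from Theorem \ref{thm:ONS} applied to the orthonormal basis viewed as a maximal orthonormal system over the countable index set $I=\N$. Your use of Proposition \ref{prop:IN} to upgrade plain $L^2(\O;E)$-convergence of the series to summability (via the automatically square-summable scalar expansions $\sum_n \g_n[h_n,T\s x\s]$, with the series limit serving as the candidate $S$) merely makes explicit a bookkeeping step the paper leaves implicit in the remark preceding the corollary.
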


In many papers, this result is taken as the definition of the space $\g(H,E)$.
The obvious disadvantage of this approach is that it imposes an unnecessary
separability assumption on the Hilbert spaces $H$.
We mention that  
an alternative proof of the corollary 
could be given along the lines of Proposition \ref{prop:summing-sep}. 

The next criterion for membership of $\g(H,E)$ is phrased in terms of
functionals:

\begin{theorem}[Testing against functionals]\label{thm:functionals}
Let $W: H \to L^2(\O)$ be an isonormal process.
A bounded linear operator $T:  H \to E$ belongs to $\g( H ,E)$
if and only if  there
exists a random variable $X\in L^2(\O;E)$ such that for all $x\s\in E\s$ we have
$$ W(T\s x\s) = \lb X,x\s\rb$$
in $L^2(\O)$. In this situation we have $W(T) = X$ in $L^2(\O;E)$.
\end{theorem}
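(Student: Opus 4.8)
The plan is to treat the two implications separately, with the forward direction being essentially immediate and the reverse direction resting on the maximal-orthonormal-system criterion of Theorem \ref{thm:ONS} combined with the It\^o--Nisio-type Proposition \ref{prop:IN}.

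For the `only if' part, suppose $T\in\g(H,E)$. Then Proposition \ref{prop:Ito} already guarantees that $W(T)\in L^2(\O;E)$, and I would simply take $X:=W(T)$. The required identity is then nothing but the operator-theoretic reformulation of Proposition \ref{prop:xs}, namely $\lb W(T),x\s\rb = W(T\s x\s)$ for all $x\s\in E\s$, which also delivers $W(T)=X$ on the nose.

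For the `if' part, I would fix a maximal orthonormal system $(h_i)_{i\in I}$ in $H$ and set $\g_i:=W(h_i)$; since the $h_i$ are orthonormal, the $\g_i$ are independent standard Gaussian random variables, so $(\g_i\,Th_i)_{i\in I}$ is a family of independent symmetric elements of $L^2(\O;E)$. The aim is to show, via Theorem \ref{thm:ONS}, that $\sum_{i\in I}\g_i\,Th_i$ is summable in $L^2(\O;E)$, and I would obtain this by applying Proposition \ref{prop:IN} with $S:=X$. The decisive step is verifying condition (2) of that proposition: for fixed $x\s\in E\s$ one writes $\lb Th_i,x\s\rb = [h_i,T\s x\s]$, so that $\sum_{i\in I}\g_i\lb Th_i,x\s\rb = \sum_{i\in I}[h_i,T\s x\s]\,W(h_i)$. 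By the orthonormal expansion recorded in Example \ref{ex:1}, $\sum_{i\in I}[h_i,T\s x\s]\,h_i$ is summable to $T\s x\s$ in $H$; applying the bounded linear map $W$ and using its continuity along the net of finite partial sums shows that the image series is summable to $W(T\s x\s)$ in $L^2(\O)$, which by hypothesis equals $\lb X,x\s\rb$. Thus condition (2) holds with limit $\lb X,x\s\rb$, and Proposition \ref{prop:IN} yields that $\sum_{i\in I}\g_i\,Th_i$ is summable to $X$ in $L^2(\O;E)$.

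Invoking Theorem \ref{thm:ONS} then gives $T\in\g(H,E)$, and identifying the summable limit (or, alternatively, a second appeal to Proposition \ref{prop:xs}, which forces $\lb W(T)-X,x\s\rb=0$ for all $x\s$ and hence $W(T)=X$ by the Pettis measurability theorem) establishes the concluding equality $W(T)=X$ in $L^2(\O;E)$. I expect the only genuinely delicate point to be the passage from summability of the orthonormal expansion of $T\s x\s$ in $H$ to summability of the Gaussian series $\sum_{i\in I}[h_i,T\s x\s]\,W(h_i)$ in $L^2(\O)$ together with its identification with $\lb X,x\s\rb$; once the bookkeeping of the net of finite partial sums is in place, continuity of $W$ makes this routine. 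It is worth noting that no separability hypothesis on $H$ or on $\ov{\textrm{ran}(T\s)}$ is required, precisely because both Proposition \ref{prop:IN} and Theorem \ref{thm:ONS} are formulated for arbitrary index sets.
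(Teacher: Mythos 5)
Your proof is correct, and in the `if' direction it takes a genuinely different, somewhat leaner route than the paper's. The paper does not test against an arbitrary maximal orthonormal system of $H$: it first runs Gram--Schmidt inside the Gaussian subspace $G\subseteq L^2(\O)$ spanned by the variables $\lb X,x\s\rb$, obtaining a maximal orthonormal system $g_i=\lb X,x_i\s\rb$ of $G$; it pulls this back to an orthonormal system $h_i=T\s x_i\s$ spanning a closed subspace $H_0\subseteq H$, expands $\lb X,x\s\rb$ against the $g_i$ to verify the hypothesis of Proposition \ref{prop:IN}, concludes $T\in\g(H_0,E)$ from Theorem \ref{thm:ONS}, and finally passes from $H_0$ to $H$ via Proposition \ref{prop:incl}, using that $T$ vanishes on $H_0^\perp$ --- a point left implicit in the paper which itself needs the maximality of $(g_i)$ in $G$: for $h\perp H_0$ one has $\E W(h)g_i=[h,h_i]=0$ for all $i$, hence $W(h)\perp G$, hence $\lb Th,x\s\rb=[h,T\s x\s]=\E W(h)W(T\s x\s)=0$ for all $x\s$. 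You bypass this entire construction: taking an arbitrary maximal orthonormal system of all of $H$ with $\g_i=W(h_i)$, you verify condition (2) of Proposition \ref{prop:IN} directly from the hypothesis by pushing the Parseval expansion of $T\s x\s$ (Example \ref{ex:1}) through the isometry $W$ --- the same computation the paper performs in the `only if' half of Theorem \ref{thm:ONS}, and legitimately available here for any bounded $T$ since it uses nothing about $\g$-radonification. Your route buys brevity and dispenses with Proposition \ref{prop:incl} and the `vanishing on $H_0^\perp$' step; the paper's adapted construction buys structural insight, since the orthonormal system manufactured from $X$ itself foreshadows the space $H_X$, the Karhunen--Lo\`eve theorem (Theorem \ref{thm:KL}), and the RKHS material of Section \ref{sec:domination}. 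Your concluding identification $W(T)=X$ is also sound: Proposition \ref{prop:IN} exhibits $X$ as the $L^2(\O;E)$-limit of the same partial sums whose limit is $W(T)$, and alternatively your appeal to Proposition \ref{prop:xs} together with essential separability of the range settles it.
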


\begin{proof}
To prove the `only if' part, take $X= W(T)$.

For the `if' part we need to work harder.
Let $G$ be the closed subspace in $L^2(\O)$ spanned by the random variables of
the form $\lb X,x\s\rb$, $x\s\in E\s$. By a Gram-Schmidt argument, choose a
maximal orthonormal system
$(g_i)_{i\in I}$ in $G$ of the form $g_i = \lb X,x_i\s\rb$ for suitable
$x_i\s\in E\s$. 
Then $(g_i)_{i\in I}$ is a family of independent standard Gaussian random
variables. Put $h_i = T\s x_i\s$ and $x_i = Th_i$. From
$$[h_i,h_j ] = \E W(h_i)W(h_j) = \E \lb X,x_i\s\rb\lb X,x_j\s\rb= \E g_ig_j =
0\quad (i\not=j)$$ we infer that $(h_i)_{i\in I}$ is a maximal orthonormal
system
for its closed linear span $H_0$ in $H$. Expanding against $(g_i)_{i\in I}$, 
for all $x\s\in E\s$ we have
$ \lb X,x\s\rb = \sum_{i\in I} c_i \lb X,x_i\s\rb = \sum_{i\in I} c_i g_i $
with summability in $L^2(\Om)$ (cf. Example \ref{ex:1}), where
$$c_i =  \E \lb X,x\s\rb\lb X,x_i\s\rb
= [T\s x\s, T\s x_i\s] = \lb Th_i,x\s\rb.$$
Hence,
$ \lb X,x\s\rb = \sum_{i\in I} g_i \lb Th_i,x\s\rb$
with summability in $L^2(\O)$. This being true for all $x\s\in E\s$, 
by Proposition \ref{prop:IN} we then have 
$X = \sum_{i\in I} g_i Th_i$
with summability in $L^2(\O;E)$. Now Theorem \ref{thm:ONS} implies that
$T\in\g(H_0,E)$.
Since $T$ vanishes on $H_0^\perp$, Proposition \ref{prop:incl} implies that
$T\in \g(H,E)$.

The final assertion follows from $\lb W(T),x\s\rb = W(T\s x\s) = \lb X,x\s\rb$.
\end{proof}

A bounded operator $T$ from a separable Hilbert space into another Hilbert space
$E$ is $\g$-radonifying if and only if $T$ is Hilbert-Schmidt, i.e., for all
orthonormal bases $(h_n)_{n\ge 1}$ of $H$ we have $$\sum_{n\ge 1} \n
Th_n\n^2<\infty.$$
The simple proof is contained in Proposition \ref{prop:HS}.
Without proof we mention the following extension of this result to Banach spaces
$E$, due to {\sc Kwapie\'n} and {\sc Szyma\'nski} \cite{KwaSzy} (see also
\cite[Theorem 3.5.10] {Bog}):

\begin{theorem} Let $H$ is a separable Hilbert space and $E$ a Banach space. If
$T\in \g(H,E)$, then there exists an
orthonormal basis $(h_n)_{n\ge 1}$ of $H$ such that $$\sum_{n\ge 1} \n
Th_n\n^2<\infty.$$
\end{theorem}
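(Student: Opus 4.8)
The plan is to build the required basis directly from an arbitrary one, the point being to reconcile the tail decay of the $\g$-convergent expansion of $T$ with square-summability of the images by \emph{rotating} the basis inside finite blocks; neither reduction to $\ker T$ nor passage through the associated Gaussian variable is needed.

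First I would fix an arbitrary orthonormal basis $(h_n)_{n\ge 1}$ of $H$ (if $\dim H<\infty$ there is nothing to prove). By Corollary \ref{cor:ONB} the series $\sum_{n\ge 1}\g_n Th_n$ converges in $L^2(\Om;E)$, so its tails vanish: $\tau_m^2:=\E\n\sum_{n>m}\g_n Th_n\n^2\to 0$ as $m\to\infty$. Writing $W_m$ for the closed span of $\{h_n:n>m\}$, Corollary \ref{cor:ONB} applied to $W_m$ gives $\tau_m=\n T|_{W_m}\n_{\g(W_m,E)}$, while Proposition \ref{prop:proj} yields $\n T|_V\n_{\g(V,E)}\le \n T|_{W_m}\n_{\g(W_m,E)}=\tau_m$ for every closed subspace $V\subseteq W_m$. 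I would then choose $0=N_0<N_1<N_2<\cdots$ with $\tau_{N_{k-1}}\le 2^{-(k-1)}$, set $B_k=\{N_{k-1}+1,\dots,N_k\}$ and $V_k=\mathrm{span}\{h_n:n\in B_k\}$. The subspaces $V_k$ are mutually orthogonal, they span $H$, and $\n T|_{V_k}\n_{\g(V_k,E)}\le \tau_{N_{k-1}}\le 2^{-(k-1)}$.

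The heart of the argument is a block rotation lemma: inside each finite block the basis can be re-chosen so that the squared $E$-norms of the images add up to at most the squared $\g$-norm of the block. Fix $V=V_k$, put $d=|B_k|$ and relabel the block vectors $h_1,\dots,h_d$ with images $x_j=Th_j$. For an orthogonal matrix $O=(o_{ij})$ the vectors $h_i'=\sum_{j=1}^d o_{ij}h_j$ form another orthonormal basis of $V$, with $Th_i'=\sum_{j=1}^d o_{ij}x_j$. Integrating against the normalized Haar measure on the orthogonal group, each row of $O$ is uniform on the unit sphere of $\R^d$, so $\E_O\sum_{i=1}^d\n\sum_{j=1}^d o_{ij}x_j\n^2=d\,\E_u\n\sum_{j=1}^d u_j x_j\n^2$ with $u$ uniform on the sphere. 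Decomposing a standard Gaussian vector $g\in\R^d$ as $g=\n g\n u$ with $\n g\n$ and $u$ independent and $\E\n g\n^2=d$ gives $\E_g\n\sum_{j=1}^d g_j x_j\n^2=d\,\E_u\n\sum_{j=1}^d u_j x_j\n^2$, so the rotation average and the Gaussian average coincide. Since $\E_g\n\sum_{j=1}^d g_j x_j\n^2=\n T|_V\n_{\g(V,E)}^2$ by Definition \ref{def:g-rad}, I conclude $\E_O\sum_{i=1}^d\n Th_i'\n^2=\n T|_V\n_{\g(V,E)}^2$, whence some orthogonal $O$ produces an orthonormal basis $(h_n')_{n\in B_k}$ of $V_k$ with $\sum_{n\in B_k}\n Th_n'\n^2\le \n T|_{V_k}\n_{\g(V_k,E)}^2$.

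Collecting the rotated block bases yields an orthonormal basis $(h_n')$ of $H$, and $\sum_{n\ge 1}\n Th_n'\n^2=\sum_{k\ge 1}\sum_{n\in B_k}\n Th_n'\n^2\le\sum_{k\ge 1}\n T|_{V_k}\n_{\g(V_k,E)}^2\le\sum_{k\ge 1}4^{-(k-1)}<\infty$, as required. The one genuinely nontrivial step, which I expect to be the main obstacle, is the block lemma: compactness of $T$ alone does not suffice — the inclusion into a Hilbert space of an ellipsoid with non-square-summable semi-axes is compact yet forces $\sum_n\n Th_n\n^2=\infty$ for \emph{every} orthonormal basis — so the Gaussian geometry must be exploited. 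The averaging identity over the orthogonal group does exactly this, turning the $\g$-norm of a block into the mean of $\sum_i\n Th_i'\n^2$; establishing it cleanly rests on the radial--angular independence of the Gaussian law on $\R^d$, which makes the two averages literally equal and removes any stray constant.
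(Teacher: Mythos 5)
Your proof is correct, and there is nothing in the paper to compare it against: the paper states this theorem of Kwapie\'n and Szyma\'nski explicitly \emph{without} proof, referring to \cite{KwaSzy} and \cite{Bog}. What you have reconstructed is in essence the classical argument: block the basis so that the $\g$-norms of the tails decay geometrically (your use of Corollary \ref{cor:ONB} for the tail identity $\tau_m=\n T|_{W_m}\n_{\g(W_m,E)}$ and of Proposition \ref{prop:proj} for monotonicity under restriction is exactly right), and then rotate within each finite block. Your block lemma is sound: the radial--angular factorisation $g=\n g\n u$ with $\E\n g\n^2=d$ gives $\E_g\bigl\n\sum_{j=1}^d g_jx_j\bigr\n^2=d\,\E_u\bigl\n\sum_{j=1}^d u_jx_j\bigr\n^2$, while right-invariance of Haar measure on the orthogonal group makes each row of $O$ uniform on the sphere, so $\E_O\sum_{i=1}^d\n Th_i'\n^2$ equals the $\g$-norm squared of the block by Definition \ref{def:g-rad}, and a mean-value choice of $O$ finishes the block. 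Your aside about compactness is also accurate: by Proposition \ref{prop:HS} a compact non-Hilbert--Schmidt diagonal operator on $\ell^2$ has $\sum_n\n Th_n\n^2=\infty$ for \emph{every} orthonormal basis, so the Gaussian averaging is genuinely needed. The only blemish is trivial: with $N_0=0$ your requirement $\tau_{N_{k-1}}\le 2^{-(k-1)}$ at $k=1$ reads $\n T\n_{\g(H,E)}\le 1$, which need not hold; either rescale $T$, or impose the decay only for $k\ge 2$ and bound the first block by $\tau_0^2=\n T\n_{\g(H,E)}^2$, which changes nothing in the conclusion.
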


\section{The theorem of Hoffmann-J{\o}rgensen and Kwapie\'n}
\label{sec:HJ}

In the previous section we have seen that every $\g$-radonifying operator is
$\g$-summing.
The main result of this section is 
the following converse, essentially due to {\sc Hoffmann-J\o rgensen} and {\sc
Kwapie\'n}: if $E$ does not contains a closed subspace isomorphic to $c_0$, then
every $\g$-summing operator is $\g$-radonifying.

We begin with some preparations.
A sequence of $E$-valued random variables $(Y_n)_{n\ge 1}$ is said to be 
{\em bounded in probability}
if for every $\e>0$ there exists an $r\ge 0$ such that
$$ \sup_{n\ge 1}\, \P \,\{\n Y_n\n > r\} <  \e.$$

\begin{lemma}\label{lem:bdd-a.s.}
Let $(X_n)_{n\ge 1}$ be a sequence 
of independent symmetric $E$-valued random variables and let $S_n = \sum_{j=1}^n
X_j$.
The following assertions are equivalent:
\begin{enumerate}
\item[\rm(1)] the sequence $(S_n)_{n\ge 1}$ is bounded almost surely; 
\item[\rm(2)] the sequence $(S_n)_{n\ge 1}$ is bounded in probability.
\end{enumerate}
\end{lemma}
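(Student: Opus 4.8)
The plan is to prove the two implications separately, with essentially all the content residing in (2)$\Ra$(1).

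The implication (1)$\Ra$(2) is immediate. If $\sup_{n\ge 1}\n S_n\n<\infty$ almost surely, then $\P\{\sup_{n\ge 1}\n S_n\n>r\}\to 0$ as $r\to\infty$. Given $\e>0$ I would choose $r$ making this probability smaller than $\e$, and then for every $n$ the monotonicity $\{\n S_n\n>r\}\subseteq\{\sup_{m\ge 1}\n S_m\n>r\}$ gives $\P\{\n S_n\n>r\}<\e$ uniformly in $n$, which is exactly (2).

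For the converse the engine is L\'evy's inequality (Proposition \ref{prop:Levy}), which applies precisely because the $X_j$ are independent and symmetric. Fix $\e>0$ and use boundedness in probability to select $r\ge 0$ with $\sup_{n\ge 1}\P\{\n S_n\n>r\}<\e$. L\'evy's inequality then upgrades this pointwise-in-$N$ tail bound on $\n S_N\n$ to a bound on the running maximum: for every $N$,
$$\P\big\{\max_{1\le n\le N}\n S_n\n>r\big\}\le 2\P\{\n S_N\n>r\}<2\e.$$
The finish is a pair of monotone limits. The events $\{\max_{1\le n\le N}\n S_n\n>r\}$ increase with $N$ to $\{\sup_{n\ge 1}\n S_n\n>r\}$, so continuity from below yields $\P\{\sup_{n\ge 1}\n S_n\n>r\}\le 2\e$. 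Writing $M:=\sup_{n\ge 1}\n S_n\n$, I have thus shown that for each $\e>0$ there is an $r$ with $\P\{M>r\}\le 2\e$; since $r\mapsto\P\{M>r\}$ is nonincreasing, this forces $\lim_{r\to\infty}\P\{M>r\}=0$. Because $\{M=\infty\}=\bigcap_{k\ge 1}\{M>k\}$ is a decreasing intersection, continuity from above gives $\P\{M=\infty\}=0$, i.e.\ $M<\infty$ almost surely, which is (1).

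I do not anticipate a genuine obstacle: the only substantive ingredient is L\'evy's inequality, which is already available, and the rest is bookkeeping with monotone limits of probabilities. The single point requiring care is the quantifier order: $r$ is chosen depending on $\e$ and held fixed through the limit in $N$, and only afterwards is $r\to\infty$ taken. It is exactly this passage --- from ``uniform in $n$'' to ``controlling the supremum over $n$'' and then to ``almost sure finiteness'' --- that converts boundedness in probability into almost sure boundedness, and L\'evy's inequality is what makes the first of these steps possible.
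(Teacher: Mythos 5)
Your proof is correct and follows essentially the same route as the paper's: the easy direction by monotonicity of the tail events, and the converse by fixing $\e$, choosing $r$ from boundedness in probability, applying L\'evy's inequality to control the running maximum uniformly in $N$, and passing to the supremum by monotone limits. Your spelled-out quantifier bookkeeping at the end is in fact slightly more careful than the paper's own wording, which contains a small slip (``for all $r>0$'' where the fixed $r$ is meant), but the argument is identical in substance.
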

\begin{proof}
(1)$\Rightarrow$(2): 
Fix $\e>0$ and choose $r\ge 0$
so that  $\P\{\sup_{n\ge 1} \n S_n\n > r\} < \e.$ Then
$$ \P\{\n S_n\n > r\} \le \P\{\sup_{n\ge 1} \n S_n\n > r\} < \e$$ for all $n\ge
1,$
and therefore  $(S_n)_{n\ge 1}$ is bounded in probability. 
 
(2)$\Rightarrow$(1): Fix $\e>0$ arbitrary
and choose $r\ge 0$ so large that
$\P\{ \n S_n\n  > {r}\} < \e$ for all $n\ge 1$.
By Proposition \ref{prop:Levy}, for all $n\ge 1$ we have
$$ \P\Bigl\{\sup_{1\le k\le n}\n S_k\n > r\Bigr\}
\le 2\, \P\Bigl\{ \n S_n \n> r\Bigr\} < 2\e.
$$
It follows that
$ \P\{\sup_{k\ge 1} \n S_k\n > r\} \le 2\e$ for all $r>0$, so
$\P\{\sup_{k\ge 1} \n S_k\n =\infty\} \le 2\e$.
Since $\e>0$ was arbitrary, this shows that 
$(S_n)_{n\ge 1}$ is bounded almost surely.
\end{proof}

In the proof of the next theorem we shall apply the following criterion, due to
{\sc Bessaga} and {\sc Pelczy\'nski}
(see \cite{AlbKal}), to detect isomorphic copies of the Banach space $c_0$:
if $\seqy$ is a sequence in $E$ such that
\begin{enumerate}
\item[\rm(i)] $\limsup_{n\to\infty} \n y_n\n > 0$;
\item[\rm(ii)] there exists $M\ge 0$ such that 
$\n \sum_{j=1}^k a_j y_j\n \le M$
for all $k\ge 1$ and all $a_1,\dots, a_k\in\{-1,1\}$,
\end{enumerate} then $\seqy$ has a subsequence whose closed linear span is
isomorphic to $c_0$.

\begin{theorem}[{\sc Hoffmann--J\o rgensen} and {\sc Kwapie\'n} \cite{HofJor,
Kwa}]\label{thm:HJ-K}
For a Banach space $E$ the following assertions are equivalent:
\begin{enumerate}
\item[\rm(1)] for all sequences $(X_n)_{n\ge 1}$ of independent symmetric
$E$-valued
random variables, the almost sure boundedness of the partial sum sequence
$(S_n)_{n\ge 1}$ implies the almost sure convergence of 
$(S_n)_{n\ge 1}$;
\item[\rm(2)] the space $E$ contains no closed subspace isomorphic to $c_0$.
\end{enumerate}
\end{theorem}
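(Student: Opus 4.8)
The plan is to prove the two implications separately, disposing of $(1)\Rightarrow(2)$ by contraposition and reserving the real work for $(2)\Rightarrow(1)$. For $(1)\Rightarrow(2)$ I would argue contrapositively: assuming $E$ contains a closed subspace isomorphic to $c_0$, I produce a bounded but non-convergent partial-sum sequence. Fix an isomorphism $T$ of $c_0$ onto such a subspace, let $(e_n)_{n\ge 1}$ be the unit vector basis of $c_0$, let $(r_n)_{n\ge 1}$ be a Rademacher sequence, and set $X_n:=r_nTe_n$. These are independent and symmetric, and since $\n\sum_{n=1}^N \pm e_n\n_{c_0}=1$ we get $\n S_N\n\le \n T\n$ for every $N$ and every $\omega$, so $\seqS$ is bounded almost surely. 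On the other hand $\n S_{N+1}-S_N\n=\n Te_{N+1}\n$ is bounded below by a fixed positive constant, $T$ being bounded below on its range; hence $\seqS$ is nowhere Cauchy and in particular fails to converge almost surely, contradicting $(1)$.

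For $(2)\Rightarrow(1)$, suppose $\seqS$ is bounded almost surely and aim to deduce almost sure convergence. By Lemma \ref{lem:bdd-a.s.} this hypothesis is equivalent to boundedness in probability, and by Proposition \ref{prop:IN1} almost sure convergence is equivalent to convergence in probability. Arguing by contradiction I assume $\seqS$ does not converge in probability; since convergence in probability is given by a complete metric, $\seqS$ is then not Cauchy in probability, so there exist $\e>0$, $\d>0$ and indices $p_1<q_1\le p_2<q_2\le\cdots$ with $\P\{\n S_{q_k}-S_{p_k}\n>\e\}\ge\d$ for all $k$. The block variables $Y_k:=S_{q_k}-S_{p_k}=\sum_{p_k<j\le q_k}X_j$ are then independent (disjoint index blocks), symmetric, and satisfy $\P\{\n Y_k\n>\e\}\ge\d$. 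A crucial point is that every signed block-sum $\sum_{k\le K}a_kY_k$ ($a_k\in\{-1,1\}$) is obtained from an ordinary partial sum by multiplying the increments $X_j$ by coefficients in $\{0,\pm1\}$; together with the elementary inequality $\P\{\n U\n>t\}\le 2\,\P\{\n U+V\n>t\}$ for independent symmetric $U,V$, symmetry yields the uniform tail bound $\P\{\n\sum_{k\le K}a_kY_k\n>t\}\le 2\,\P\{\sup_n\n S_n\n>t\}$ valid for all $K$ and all signs $a$. In particular the block partial sums are again bounded in probability, hence bounded almost surely by Lemma \ref{lem:bdd-a.s.}.

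The objective is now to produce deterministic vectors satisfying the Bessaga--Pelczy\'nski criterion recalled just above, which then embeds $c_0$ into $E$ and contradicts $(2)$. The non-degeneracy condition (i) is the easy half: since the $Y_k$ are independent with $\P\{\n Y_k\n>\e\}\ge\d$, the Borel--Cantelli lemma gives $\limsup_k\n Y_k\n\ge\e>0$ almost surely. The delicate point, and the step I expect to be the main obstacle, is the uniform boundedness of the sign-sums demanded by condition (ii). One cannot simply keep the full sequence: even when the $+1$-sums and the random-sign sums remain bounded, the quantity $\sup_{K,\,a}\n\sum_{k\le K}a_kY_k\n$ may genuinely be infinite, because a ``coherent'' part of the $Y_k$ can accumulate under the worst sign pattern (this is already visible schematically for terms of the form $\tfrac1k\,u+v_k$). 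The remedy is a gliding-hump selection: invoking the almost sure boundedness of the partial sums and L\'evy's inequality (Proposition \ref{prop:Levy}) through the uniform tail estimate above, I would pass to a sufficiently rapidly increasing subsequence $(Y_{k_i})_{i\ge 1}$ and fix a realization $\omega$ in an event of large probability, arranged so that the contributions of widely separated blocks no longer reinforce each other and $\sup_{I,\,a}\n\sum_{i\le I}a_iY_{k_i}(\omega)\n<\infty$ holds while still $\n Y_{k_i}(\omega)\n>\e$. With both criteria verified for the vectors $y_i:=Y_{k_i}(\omega)$, Bessaga--Pelczy\'nski furnishes a further subsequence spanning a copy of $c_0$, the desired contradiction. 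Extracting this single good $\omega$ together with the subsequence from the probabilistic estimates is where the genuine difficulty of the proof lies.
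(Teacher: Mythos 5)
Your easy direction is fine, and your reduction of the hard direction to independent symmetric blocks $Y_k=S_{q_k}-S_{p_k}$ with $\P\{\n Y_k\n>\e\}\ge\d$ and almost surely bounded signed block-sums is sound (it runs parallel to the paper's step (3)$\Rightarrow$(1)). The gap is exactly where you locate it, and it is genuine: you need, for a \emph{single} realization $\om$, the deterministic bound $\sup_{I,a}\bigl\n\sum_{i\le I}a_iY_{k_i}(\om)\bigr\n<\infty$ over \emph{all} sign patterns, and nothing you invoke produces it. Symmetry and L\'evy's inequality give a tail bound for each fixed pattern, but there are uncountably many patterns and no union bound; worse, the event $\bigl\{\sup_{I,a}\n\sum_{i\le I}a_iY_{k_i}\n\le M\bigr\}$ can be null for \emph{every} $M$ (your own coherent-drift example shows this: for $Y_k=r_k(k^{-1}u)$ in a Hilbert space the partial sums converge a.s.\ while the worst-case sign-sum is infinite at every $\om$, since one may take $a_k=r_k(\om)$). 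L\'evy's inequality controls maxima of the given partial sums, not adversarial sign choices adapted to $\om$, so the proposed ``gliding-hump selection'' of one good $\om$ is a placeholder rather than an argument.

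The paper's resolution — Kwapie\'n's conditioning trick — shows that you never need one $\om$ good for all patterns. First it isolates the deterministic pivot (3): \emph{a.s.\ bounded partial sums of $\sum_n r_nx_n$ force $\limn x_n=0$}; your Fubini randomization is exactly what reduces (1) to (3) (freeze $\om$ in the $X$-layer, so that $x_j:=Y_j(\om)$ is deterministic with a.s.\ bounded Rademacher sums over a fresh Rademacher space), and from (3) the paper concludes merely $Y_k\to0$ in probability, contradicting It\^o--Nisio — no Bessaga--Pelczy\'nski at the random level. The $c_0$-construction is confined to the deterministic step (2)$\Rightarrow$(3): choosing $M$ with $\P\{\sup_n\n\sum_{j\le n}r_jx_j\n\le M\}>\frac12$, one selects indices $n_1<n_2<\cdots$ \emph{once}, inductively via $\limn\P(B\cap\{r_n=a\})=\frac12\P(B)$ for $B\in\G$, so that for every pattern $(a_1,\dots,a_k)$ the event $\{\sup_n\n\sum_{j\le n}r_jx_j\n\le M,\ r_{n_1}=a_1,\dots,r_{n_k}=a_k\}$ has probability $>2^{-(k+1)}$; flipping all non-selected signs ($r_j':=-r_j$ off $\{n_i\}_{i\ge1}$, $r_j':=r_j$ on it) is measure-preserving and fixes the constraints, so the analogous event for $r'$ also has probability $>2^{-(k+1)}$, and the two must intersect inside the cube $\{r_{n_i}=a_i,\ i\le k\}$ of probability $2^{-k}$. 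For any $\om'$ in the intersection,
$$\Bigl\n\sum_{j=1}^{k}a_jx_{n_j}\Bigr\n=\Bigl\n\tfrac12\sum_{j=1}^{n_k}r_j(\om')x_j+\tfrac12\sum_{j=1}^{n_k}r_j'(\om')x_j\Bigr\n\le M.$$
The certified inequality concerns the fixed deterministic vectors, so a \emph{different} $\om'$ may be used for each pattern — this is precisely why only positive probability per pattern, not a single uniform realization, is needed. That pattern-by-pattern certification on an auxiliary Rademacher sequence is the idea missing from your proposal; without it, or a worked-out substitute, the hard implication is not proved.
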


\begin{proof}
We shall prove the implications
(1)$\Rightarrow$(3)$\Rightarrow$(2)$\Rightarrow$(3)$\Rightarrow$(1), where

\ben
\item[\rm(3)] {\em for all sequences $(x_n)_{n\ge 1}$ in $E$, the almost sure
boundedness of the partial sums of
$\sum_{n\ge 1} r_n x_n$ implies $\limn x_n = 0$.}
\een

(1)$\Rightarrow$(3):\ This implication  is trivial. 

(3)$\Rightarrow$(2):\ Let $u_n$ denote the $n$-th unit vector of
$c_0$. The sum $\sum_{n\ge 1} r_n(\om) 
u_n$ fails to converge for all $\om\in \O$ while its partial sums are uniformly 
bounded.

(2)$\Rightarrow$(3):\ 
Suppose (3) does not hold. Then there exists a sequence $\seqx$ in $E$
with $\limsup_{n\to\infty}\n x_n\n>0$ such that the partial sums of  $\sumn r_n
x_n$ are 
bounded almost surely.

Let $\G$ denote the $\sigma$-algebra generated by the 
sequence $(r_n)_{n\ge 1}$. We claim that for all $B\in\G$,
$$ \limn \P\bigl(B\cap\{r_n = -1\}\bigr) 
=  \limn \P\bigl(B\cap\{r_n = 1\}\bigr) = \tfrac12\P(B).$$
For all $B\in \G_N$, the $\sigma$-algebra generated
by $r_1,\dots,r_N$, this follows immediately from the fact that 
$r_n$ is independent of
$\G_N$ for all $n > N$. The case for $B\in \G$ 
now follows from the general fact of measure theory that
for any $B\in\G$ and any $\e>0$ there exist $N$ sufficiently large and
$B_N\in\G_N$ such that $\P(B_N\Delta B)<\e$.

Choose $M\ge 0$ in such a way that
$$\P\Bigl\{ \sup_{n\ge 1} \ \Bigl\n \sum_{j=1}^n r_j x_j\Bigr\n \le M \Bigr\}  >
\frac12. $$
By the observation just made we can find an index $n_1\ge 1$ large enough such
that
for all $a_1\in\{-1,1\}$ we have 
$$ \P\Bigl\{ \sup_{n\ge 1}\ \Bigl\n \sum_{j=1}^n r_j x_j\Bigr\n\le M,\ \ r_{n_1}
= a_1 \Bigr\}  > \frac14. $$
Continuing inductively, we find a sequence $1\le n_1 < n_2\dots$
such that for all all choices $a_1,\dots, a_k\in\{-1,1\}$,
$$ \P\Bigl\{ \sup_{n\ge 1}\ \Bigl\n \sum_{j=1}^n r_j x_j\Bigr\n\le M,\ \ r_{n_1}
= a_1, \ \dots\,, \ r_{n_k} = a_k\Bigr\}  > \frac1{2^{k+1}}. $$
Now define 
$$ r_j' := \Bigl\{
\begin{array}{rl}
r_j, & j = n_k\ \hbox{for some $k\ge 1$}, \\
-r_j, & \hbox{else}. 
\end{array}
\Bigr.
$$
Then by symmetry, for all $k\ge 1$ we have
$$ \P\Bigl\{ \sup_{n\ge 1}\ \Bigl\n \sum_{j=1}^n r_j' x_j\Bigr\n\le M,\ \
r_{n_1} = a_1, \ \dots\,,  \ r_{n_k} = a_k\Bigr\}  > \frac1{2^{k+1}}. $$
Since 
$$\P\Bigl\{ r_{n_1} = a_1, \ \dots\,, \ r_{n_k} = a_k\Bigr\}  = \frac1{2^{k}} $$
it follows that for all $k\ge 1$ and all choices $a_1,\dots, a_k\in\{-1,1\}$,
the event
$$ \Big\{ \sup_{n\ge 1}\ \Bigl\n \sum_{j=1}^n r_j x_j\Bigr\n\le M, \quad  
 \sup_{n\ge 1}\ \Bigl\n \sum_{j=1}^n r_j' x_j\Bigr\n\le M, \quad
r_{n_1} = a_1, \ \ \dots\,,\  \ r_{n_k} =a_k\Big\}$$
has positive probability. For any $\om$ in this event,
$$ \Bigl\n \sum_{j=1}^k a_j x_{n_j} \Bigr\n =
\Bigl\n \frac12\sum_{j=1}^{n_k} r_{j}(\om)x_j + \frac12\sum_{j=1}^{n_k}
r_{j}'(\om)x_j \Bigr\n  \le M.$$
Since this holds for all choices $a_1,\dots, a_k\in\{-1,1\}$,
the Bessaga-Pelczy\'nski criterion implies that
the sequence $(x_{n_j})_{j\ge 1}$ has a subsequence whose closed linear 
span is isomorphic to $c_0$.

(3)$\Rightarrow$(1):\ 
Suppose the partial sums of $\sumn X_n$ are bounded almost surely.

Let $1\le n_1 < n_2 < \dots$ be an arbitrary increasing sequence of indices
and let $Y_k := S_{n_{k+1}} - S_{n_k}$.
The partial sums of $\sumk Y_k$ are bounded almost surely.

On a possibly larger probability space, let $(r_n)_{n\ge 1}$ be a Rademacher
sequence independent of $(X_n)_{n\ge 1}$.
By Lemma \ref{lem:bdd-a.s.}, the partial sums of $\sumk Y_k$ are 
bounded in probability on $\Om_X$, and because $(Y_n)_{n\ge 1}$ and
$(r_nY_n)_{n\ge 1}$ are identically distributed the same is true for the 
partial sums of $\sumk r_k Y_k$. Another application of 
Lemma \ref{lem:bdd-a.s.} shows that the partial sums of this sum
are bounded almost surely.
By Fubini's theorem it follows that for almost all $\om\in\Om$, the 
partial sums of $\sumk r_k Y_k(\om)$ are bounded almost surely.
By (3), $\limk Y_k(\om)=0$ for almost all $\om\in\Om$. This
implies that $\limk Y_k = \limk S_{n_{k+1}} - S_{n_k} = 0$ in probability.

Suppose now that the sequence $\seqS$ fails to converge almost surely.
Then by Proposition \ref{prop:IN1} it fails to converge in probability, and
there exists
an $\e>0$ and increasing sequence $1\le n_1 < n_2 < \dots$ such that
$$\P\{\n S_{n_{k+1}} - S_{n_k}\n\ge \e\} \ge \e\qquad\forall k=1,3,5,\dots$$
This contradicts the assertion just proved. 
\end{proof}

Now we are in a position to state and prove a converse to Proposition
\ref{prop:norm-gamma}.

\begin{theorem}\label{thm:c0}
Let $H$ be a Hilbert space and $E$ a Banach space not containing a closed
subspace isomorphic to $c_0$. Then $\g_\infty(H,E) = \g(H,E)$ isometrically.
\end{theorem}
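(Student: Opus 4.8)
Since Proposition \ref{prop:norm-gamma} already gives the isometric inclusion $\g(H,E)\subseteq\g_\infty(H,E)$, and since the two norms agree on $\g(H,E)$ by that same proposition, the entire theorem reduces to the \emph{set-theoretic} inclusion $\g_\infty(H,E)\subseteq\g(H,E)$: once it is established, the isometry is automatic. The plan is to fix $T\in\g_\infty(H,E)$ and, along a suitable orthonormal system, produce a convergent Gaussian series $\sum_i \g_i Th_i$ in $L^2(\Om;E)$, which by the summability criterion of Theorem \ref{thm:ONS} identifies $T$ as an element of $\g(H,E)$. I would first treat the case where $H$ is separable, which carries all the analytic content, and then reduce the general case to it.

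Assume $H$ is separable with orthonormal basis $(h_n)_{n\ge1}$, put $X_n:=\g_n Th_n$ and $S_N:=\sum_{n=1}^N X_n$. The $X_n$ are independent and symmetric, and by Proposition \ref{prop:summing-sep} the $\g$-summing hypothesis says precisely that $\sup_N\E\n S_N\n^2=\n T\n_{\g_\infty(H,E)}^2<\infty$. Thus $(S_N)$ is bounded in $L^2(\Om;E)$ and, by Chebyshev's inequality, bounded in probability. Lemma \ref{lem:bdd-a.s.} upgrades this to almost sure boundedness, and since $E$ contains no closed subspace isomorphic to $c_0$, Theorem \ref{thm:HJ-K} yields that $(S_N)$ converges almost surely, say to $S$.

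The main obstacle is the mismatch of convergence modes: Corollary \ref{cor:ONB} requires convergence of $\sum_n\g_n Th_n$ in $L^2(\Om;E)$, whereas Theorem \ref{thm:HJ-K} delivers only almost sure convergence (equivalently, by Proposition \ref{prop:IN1}, convergence in probability). To close this gap I would invoke the Gaussian Kahane--Khintchine inequality, Proposition \ref{prop:KK-R}(2), with exponents $4$ and $2$ applied to each Gaussian sum $S_N=\sum_{n\le N}\g_n Th_n$: this gives $\E\n S_N\n^4\le (K_{4,2}^\g)^4(\E\n S_N\n^2)^2\le (K_{4,2}^\g)^4\n T\n_{\g_\infty(H,E)}^4$, so $(S_N)$ is bounded in $L^4(\Om;E)$, and by Fatou's lemma $S\in L^4(\Om;E)$ as well. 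Hence $\{\n S_N-S\n^2\}_N$ is bounded in $L^2(\Om)$, therefore uniformly integrable, and together with $\n S_N-S\n^2\to0$ almost surely this forces $S_N\to S$ in $L^2(\Om;E)$ (Vitali). Corollary \ref{cor:ONB} now gives $T\in\g(H,E)$, settling the separable case.

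For general $H$ I would reduce to the separable case through the maximal-orthonormal-system criterion of Theorem \ref{thm:ONS}. Fix a maximal orthonormal system $(h_i)_{i\in I}$; it suffices to prove that the net $X_J:=\sum_{j\in J}\g_j Th_j$ is Cauchy in $L^2(\Om;E)$, i.e. that for every $\e>0$ there is a finite $J_0\subseteq I$ with $\E\n\sum_{j\in K}\g_j Th_j\n^2<\e$ for all finite $K$ disjoint from $J_0$. If this failed, one could extract pairwise disjoint finite sets $K_1,K_2,\dots$ with $\E\n\sum_{j\in K_m}\g_j Th_j\n^2\ge\e$ for all $m$. Let $H_1$ be the separable closed subspace spanned by $\{h_j:\,j\in\bigcup_m K_m\}$; restriction to $H_1$ does not increase the $\g_\infty$-norm (the defining supremum ranges over fewer orthonormal systems), so $T|_{H_1}\in\g_\infty(H_1,E)$ and hence, by the separable case, $T|_{H_1}\in\g(H_1,E)$. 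Applying Corollary \ref{cor:ONB} to an enumeration of $\bigcup_m K_m$ that respects the blocks, the series $\sum_n\g_n Th_n$ converges in $L^2(\Om;E)$, whence its consecutive block sums tend to $0$, contradicting $\E\n\sum_{j\in K_m}\g_j Th_j\n^2\ge\e$. Therefore the net is summable, and Theorem \ref{thm:ONS} gives $T\in\g(H,E)$, with $\n T\n_{\g(H,E)}=\n T\n_{\g_\infty(H,E)}$ by Proposition \ref{prop:norm-gamma}. This completes the argument.
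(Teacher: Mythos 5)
Your proof is correct, and its skeleton coincides with the paper's: both hinge on Proposition \ref{prop:summing-sep}, the passage from boundedness in probability to almost sure boundedness (Lemma \ref{lem:bdd-a.s.}), and the Hoffmann-J\o rgensen--Kwapie\'n theorem (Theorem \ref{thm:HJ-K}) to obtain almost sure convergence of $\sum_{n}\g_n Th_n$. You diverge in two places, both legitimately. First, for the upgrade from almost sure to $L^2$ convergence, the paper applies L\'evy's inequality (Proposition \ref{prop:Levy}) with Fubini and monotone convergence to get the maximal estimate $\E\sup_{1\le n\le N}\n S_n\n^2\le 2\,\E\n S_N\n^2$ and then concludes by dominated convergence; you instead use the Gaussian Kahane--Khintchine inequality to bound $(S_N)_{N\ge 1}$ in $L^4(\O;E)$ and conclude via uniform integrability and Vitali's theorem. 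Both work; the paper's route yields a maximal inequality with explicit constant $2$, while yours is more modular and would give $L^p$ convergence for every $p<\infty$ with no extra effort. Second, for non-separable $H$, the paper first proves the structural fact that $T$ vanishes off a separable closed subspace, by an uncountability argument (uncountably many $i$ with $\n Th_i\n\ge 1/N$) combined with the Gaussian-to-Rademacher comparison of Proposition \ref{prop:Rad-phi} to force $Th_{j_n}\to 0$; you bypass this entirely and verify the summability hypothesis of Theorem \ref{thm:ONS} directly, showing the net $(X_J)_J$ is Cauchy by extracting pairwise disjoint finite blocks of uniformly large $L^2$-mass and feeding their separable closed span back into your separable case. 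Your reduction is self-contained and avoids the Rademacher comparison altogether, at the modest cost of not recording the independently useful byproduct of the paper's argument, namely that every $\g$-summing operator is supported on a separable closed subspace of $H$.
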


This result implies that when $E$ does not contain a copy of $c_0$, results 
involving $\g$-summing operators (such as the $\g$-Fatou lemma (Proposition \ref{prop:g-Fatou})
and the $\g$-multiplier theorem (Theorem \ref{thm:KW}) may be reformulated in terms
of $\g$-radonifying operators.

\begin{proof}
Let $T\in\g_\infty(H,E)$ be given and fixed; we must show that $T\in\g(H,E)$.
Once we know this, the equality of norms $\n T\n_{\g_\infty(H,E)} =\n T\n_{\g(H,E)}$
follows from Proposition \ref{prop:norm-gamma}.
 
We begin by proving that there exists a separable closed subspace $H_1$ of $H$
such that $T$ vanishes on $H_1^\perp$. To this end let $H_0$ be the null space
of $T$ and let $(h_i)_{i\in I}$ be a maximal orthonormal system for 
$H_1:= H_0^\perp$. We want to prove that $H_1$ is separable, i.e., that the
index set $I$ is countable. Suppose the contrary. Then there exists an integer
$N\ge 1$ such that $\n Th_i\n\ge 1/N$ for uncountably many $i\in I$.
Put $J:=\{i\in I: \ \n Th_i\n\ge 1/N\}$. Let $(j_n)_{n\ge 1}$ be any sequence in
$J$ with no repeated entries. For all $N\ge 1$ we have
$$ \E \Big\n \sum_{n=1}^N \g_n Th_{j_n}\Big\n^2\le M,$$ where $M$ is the
supremum in the statement of the theorem. 
This means that the sequence of random variables
$S_N := \sum_{n=1}^N \g_n Th_{j_n}$, $N\ge 1$, is bounded in $L^2(\O;E)$, and
therefore
bounded in probability. By Lemma \ref{lem:bdd-a.s.}, this sequence is bounded
almost surely. An application of Theorem \ref{thm:HJ-K} then shows that the sum
$\sumn \g_n Th_{j_n}$ converges almost surely. Now Proposition
\ref{prop:Rad-phi} can be used to the effect that the Rademacher sum $\sumn r_n
Th_{j_n}$ converges almost surely as well. But this forces $\limn Th_{j_n} = 0$,
contradicting the fact that $j_n\in J$ for all $n\ge 1$. This proves the claim.

By the claim we may assume that $H$ is separable; let $(h_n)_{n\ge 1}$ be an
orthonormal basis for $H$.
Repeating the argument just used, $\sumn \g_n Th_{n}$ converges almost surely.
To prove the $L^2(\O;E)$-convergence of this sum, put $X_N := \sum_{j=1}^N \g_j
Th_{j}$ and $X:= \sum_{n\ge 1} \g_n Th_{n}$.
By Fubini's theorem and Proposition \ref{prop:Levy},
$$
\bal
\E \sup_{1\le n\le N}\n X_{n}\n^2
& =
\int_0^\infty 2r \P\big\{\sup_{1\le n\le N}\n X_{n}\n> r\big\}\,dr
\\ & \le \int_0^\infty 4r \P\{\n X_{N}\n> r\}\,dr
= 2\E \n X_{N}\n^2.
\eal
$$
Hence $\E \sup_{n\ge 1}\n X_{n}\n^2\le 2\sup_{n\ge 1}\E \n X_n\n^2$ by the
monotone convergence theorem, and this supremum is finite by assumption. Hence
$\lim_{n\to\infty} \E\n X_{n} - X\n^2 = 0$ by the dominated convergence theorem.

An appeal to Theorem \ref{thm:ONS} and the remark following it finishes the
proof.
\end{proof}

The assumption that $E$ should not contain an isomorphic copy of 
$c_0$ cannot be omitted, as is shown by the next example due to {\sc Linde} and
{\sc Pietsch} \cite{LinPie}. 

\begin{example}\label{ex:LP}
The multiplication operator 
$T: \ell^2\to c_0$ defined by 
$$ T\bigl((\a_n)_{n\ge 1}\bigr) := (\a_n/\sqrt{\log (n+1)})_{n\ge 1}$$ 
is $\g$-summing but fails to be
$\g$-radonifying.

To prove this we begin with some preliminary estimates. Let $\g$ be a standard
Gaussian random variable and put
$$G(r):= \P\{|\g|^2\le r\} =
\frac1{\sqrt{2\pi}}\int_{-\sqrt{r}}^{\sqrt{r}} e^{-\frac12 x^2}\,dx
= \frac1{\sqrt{2\pi}}\int_{0}^{r} \frac{e^{-\frac12 y}}{\sqrt y}\,dy.
$$
An integrations by parts yields, for all $r>0$,
\beq\label{eq:LP-1}
\bal G(r) & =  1-\frac1{\sqrt{2\pi}}\int_{r}^\infty 
\frac{e^{-\frac12 y}}{\sqrt y}\,dy
\\ & = 1-\frac2{\sqrt{2\pi}}\frac{e^{-\frac12 r}}{\sqrt r}
+\frac1{\sqrt{2\pi}}\int_{r}^\infty 
\frac{e^{-\frac12 y}}{y\sqrt y}\,dy
 \ge  1-\frac2{\sqrt{2\pi}}\frac{e^{-\frac12 r}}{\sqrt r}.
\eal
\eeq
Another integration by parts yields, for $r\ge 2$,
\beq\label{eq:LP-2}
\bal G(r) &
=  1-\frac2{\sqrt{2\pi}}\frac{e^{-\frac12 r}}{\sqrt r}
+\frac2{\sqrt{2\pi}}\frac{e^{-\frac12 r}}{r\sqrt r}
-\frac3{\sqrt{2\pi}}\int_{r}^\infty 
\frac{e^{-\frac12 y}}{y^2\sqrt y}\,dy
\\ & \le 1-\frac2{\sqrt{2\pi}}(1-r^{-1})\frac{e^{-\frac12 r}}{\sqrt r} 
 \le 1-\frac1{\sqrt{2\pi}}\frac{e^{-\frac12 r}}{\sqrt r}.
\eal
\eeq

Let $(u_n)_{n\ge 1}$ be the standard unit basis of $\ell^2$.
We check that the assumptions of Proposition \ref{prop:summing-sep} are satisfied
by showing that
$$
 \sup_{N\ge 1}\, 
 \E\Big\n \sum_{n=1}^N \g_n Tu_n\Big\n_{c_0}^2 =\sup_{N\ge 1}\,\E\Big(
\sup_{1\le n\le N} \frac{|\g_n|^2}{\log(n+1)}\Big) < \infty.
$$
Using \eqref{eq:LP-1} 
we estimate, for $t\ge 4$,
\begin{align*}
 \P\Big\{\sup_{1\le n\le N} \frac{|\g_n|^2}{\log(n+1)} > t\Big\}
& =1-  \prod_{n=1}^N G(t\log(n+1)) 
\\ &  \le 1- \prod_{n=1}^N \Big( 1-\frac2{\sqrt{2\pi}}
\frac{1}{\sqrt{(n+1)^t\,t\log(n+1)}}\Big)
\\ &  \le \frac2{\sqrt{2\pi}} \sum_{n=1}^N\frac{1}{\sqrt{(n+1)^t\,t\log (n+1)}}
\\ &  \le \frac2{\sqrt{2\pi\log 2}} \frac{1}{\sqrt{2^{t-4}\,t}}
\sum_{n\ge 1}\frac{1}{(n+1)^2}.
\end{align*}
In the last line we used that for $t\ge 4$ we have 
$(n+1)^t = (n+1)^{t-4} (n+1)^4 \ge 2^{t-4} (n+1)^4$. Therefore, 
$$ \E\Big( \sup_{1\le n\le N} \frac{|\g_n|^2}{\log(n+1)}\Big) 
\le 4+\frac2{\sqrt{2\pi\log 2}} \sum_{n\ge 1}\frac{1}{(n+1)^2}
\int_4^\infty \!\!\frac{1}{\sqrt{2^{t-4}\,t}}\,dt < \infty.
$$

To prove that $T$ is not $\g$-radonifying we argue by contradiction.
If $T$ is $\g$-radonifying, then the sum $X:= \sum_{n\ge 1} \g_n Tu_n$ converges
in $L^2(\O;c_0)$. 
The relation 
$$ c_0 = \bigcup_{N\ge 1}\bigcap_{n\ge N} \big\{ \seqx\in c_0:\ |x_n|\le
1\big\}
$$
implies 
$$ \sum_{N\ge 1} \prod_{n\ge N} \P
\big\{|\g_n|^2\le \log(n+1)\big\} = 
\sum_{N\ge 1} \P\Big\{\bigcap_{n\ge N} 
\big\{|X_n|\le 1\big\}\Big\}\ge 1.
$$
where $X_n$ is the $n$-th coordinate of $X$. 
But for $N\ge 7$ we have $\log(n+1)\ge 2$ for all $n\ge N$ and \eqref{eq:LP-2}
gives
$$
\prod_{n\ge N} \P \{|\g_n |^2\le \log(n+1)\}
\le \prod_{n\ge N} \Big(
1-\frac1{\sqrt{2\pi}}\frac{1}{\sqrt{(n+1)\log(n+1)}}\Big) =0,
$$
noting that
$$ \sum_{n\ge N}\frac{1}{\sqrt{(n+1)\log(n+1)}} = \infty.$$
This is contradiction concludes the proof.
\end{example}

\section{The $\g$-multiplier theorem}\label{sec:g-boundedness}

The main result of this section states that 
functions with $\g$-bounded range act as multipliers on certain spaces of
$\g$-radonifying operators. This establishes a connection between the notions of
$\g$-radonification and $\g$-boundedness.

\begin{definition}\label{def:g-bounded} Let $E$ and $F$ 
be Banach spaces.
An operator family 
$\mathscr{T}\subseteq\calL(E,F)$ is said to be {\em $\g$-bounded}
if there exists a constant $M\ge 0$ such that
$$
\Bigl(\E\Bigl\|\sum_{n=1}^N \g_n T_n x_n \Bigr\|^2\Bigr)^\frac12 \leq
M\Bigl(\E\Bigl\|\sum_{n=1}^N \g_n x_n \Bigr\|^2\Bigr)^\frac12,
$$
for all $N\ge 1$, all $T_1,\dots, T_N\in \mathscr{T}$, and all
$x_1,\dots, x_N\in E$.
\end{definition}

The least admissible constant $M$ is called
the {\em $\g$-bound} of $\mathscr{T}$, notation: $\g({\mathscr T})$.
Every $\g$-bounded family $\cT$ is uniformly bounded and we have
$$ \sup_{T\in \cT}\n T\n \le \g(\cT).$$

Replacing Gaussian random variables by Rademacher variables in the above
definition we arrive at the related notion of {\em $R$-boundedness}.
By a simple randomization argument, every $R$-bounded family is $\g$-bounded;
the converse holds if $E$ has finite cotype (since in that case Gaussian sums
can be estimated in terms of Rademacher sums; see Proposition
\ref{prop:Rad-phi}).
The notion of $R$-boundedness plays an important role in vector-valued harmonic
analysis as a tool for proving Fourier multiplier theorems; we refer to 
{\sc Cl\'ement, de Pagter, Sukochev, Witvliet} \cite{CPSW} and the lecture notes
of {\sc Denk, Hieber, Pr\"uss} \cite{DHP} and {\sc Kunstmann} and {\sc Weis}
\cite{KunWei} 
for an introduction to this topic and further references.

It is not hard to prove that closure of the convex hull of a $\g$-bounded family
in the strong operator topology is $\g$-bounded with the same $\g$-bounded. 
From this one deduces the useful fact that integral means of $\g$-bounded
families are $\g$-bounded; this does not increase the $\g$-bound.

Let $(A,\calA,\mu)$ be a $\sigma$-finite measure space. With slight abuse of
terminology, a function $\phi: A\to \calL(E,F)$ is called {\em strongly
measurable} if $\phi x: A\to F$ is strongly measurable for all $x\in E$. 
For a bounded and strongly measurable function $\phi: A\to \calL(H,E)$
we define the operator $T_{\phi} \in \calL(L^2(A;H),E)$ by
$$ T_\phi f := \int_A \phi f\,d\mu.$$
Note that if $\phi$ is a simple function with values in $H\ot E$ (such a
function will be called a {\em finite rank simple function}), then 
$T_\phi\in \g(L^2(A;H),E)$.

Now we are ready to state and prove the main result of this section, due to {\sc
Kalton} and {\sc Weis} \cite{KalWei07} in a slightly simpler formulation.

\begin{theorem}[$\g$-Bounded functions as $\g$-multipliers]\label{thm:KW} 
Let $(A,\calA,\mu)$ be a $\sigma$-finite measure space.
Suppose that $M:A\to \calL(E,F)$ is strongly measurable
and has $\gamma$-bounded range $\mathscr{M} := \{M(t): \ t\in A\}$. 
Then for every finite rank simple function $\phi:A\to \ga(H,E)$ the operator 
$T_{M\phi}$ belongs to $\g_\infty(L^2(A;H),F)$ and 
$$
 \n T_{M\phi}\n_{\g_\infty(L^2(A;H),F)} \le \g(\mathscr{M})\,\n
T_\phi\n_{\g(L^2(A;H),E)}.
$$
As a result, the map $\wt M: T_\phi\mapsto T_{M\phi}$ has a unique extension to
a bounded operator
$$ \wt M:  \g(L^2(A;H),E) \to \g_\infty(L^2(A;H),F)$$ of norm $\n \wt M\n 
\le \g(\mathscr{M})$. 
\end{theorem}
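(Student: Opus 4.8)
The plan is to prove the displayed inequality first for finite rank simple $\phi$ and \emph{simple} multipliers $M$, then to pass to general $M$ by approximation together with the $\g$-Fatou lemma (Proposition \ref{prop:g-Fatou}), and finally to obtain $\wt M$ by density. Since $\phi$ is finite rank simple it takes finitely many values, each of finite rank, so there is a finite-dimensional subspace $E_0\subseteq E$ with $\textrm{ran}(\phi(t))\subseteq E_0$ for all $t$; only $M|_{E_0}$ enters $T_{M\phi}$. Viewed as a strongly measurable $\calL(E_0,F)$-valued function (hence essentially separably valued, $E_0$ being finite-dimensional), $M|_{E_0}$ can be approximated, on the finite-measure set $\{\phi\neq 0\}$, by simple functions $M_j=\sum_l \one_{B_l^{(j)}} R_l^{(j)}$ whose values $R_l^{(j)}$ lie in the essential range of $M$, hence in $\mathscr M$, such that $M_j(t)x\to M(t)x$ for a.e. $t$ and all $x\in E_0$. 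Each finite family $\{R_l^{(j)}\}_l\subseteq\mathscr M$ has $\g$-bound at most $\g(\mathscr M)$.

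For the simple case $M=\sum_{l=1}^L \one_{B_l}R_l$ with disjoint $B_l$ and $R_l\in\mathscr M$, a direct computation gives $T_{M\phi}=\sum_l R_l\,T_\phi P_{B_l}$, where $P_{B_l}$ is multiplication by $\one_{B_l}$ on $L^2(A;H)$, an orthogonal projection. To bound $\n T_{M\phi}\n_{\g_\infty(L^2(A;H),F)}$ I take an arbitrary finite orthonormal system $\{g_n\}_{n=1}^N$ in $L^2(A;H)$ and estimate $\E\n\sum_n \g_n T_{M\phi}g_n\n^2$. The main obstacle is that Definition \ref{def:g-bounded} requires \emph{independent} Gaussians attached to each occurrence of an operator $R_l$, whereas substituting the formula produces $\sum_n \g_n\sum_l R_l T_\phi P_{B_l}g_n$, sharing the single Gaussian $\g_n$ across $l$. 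The decoupling is made possible by the mutual orthogonality of the ranges of the $P_{B_l}$: applying Gram--Schmidt to $\{P_{B_l}g_n\}_n$ within each block yields a finite orthonormal system $\{e_{lk}\}_{l,k}$ in $L^2(A;H)$ with $e_{lk}\in\textrm{ran}(P_{B_l})$, and expanding $P_{B_l}g_n=\sum_k [g_n,e_{lk}]e_{lk}$ gives $\sum_n \g_n T_{M\phi}g_n=\sum_n \g_n b_n$ with $b_n:=\sum_{l,k}[g_n,e_{lk}]\,R_l T_\phi e_{lk}$.

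Because $\{e_{lk}\}$ is orthonormal, for each $z\in F^*$, setting $w:=\sum_{l,k}\langle R_l T_\phi e_{lk},z\rangle\, e_{lk}$ one has $\langle b_n,z\rangle=[g_n,w]$, so that $\sum_n|\langle b_n,z\rangle|^2=\n Qw\n^2\le\n w\n^2=\sum_{l,k}|\langle R_l T_\phi e_{lk},z\rangle|^2$, where $Q$ is the orthogonal projection onto $\textrm{span}\{g_n\}$. This is exactly the covariance hypothesis of the comparison Lemma \ref{lem:g-compar}, which therefore allows me to replace the $b_n$ by an independent standard Gaussian family $(\g_{lk})$ and obtain the chain
$$\E\Big\n\sum_n \g_n b_n\Big\n^2\le \E\Big\n\sum_{l,k}\g_{lk}R_l T_\phi e_{lk}\Big\n^2\le \g(\mathscr M)^2\,\E\Big\n\sum_{l,k}\g_{lk}T_\phi e_{lk}\Big\n^2\le \g(\mathscr M)^2\,\n T_\phi\n_{\g(L^2(A;H),E)}^2,$$
the middle step being the $\g$-boundedness of $\mathscr M$ applied with the operators $R_l$ and vectors $T_\phi e_{lk}$, and the last step Proposition \ref{prop:norm-gamma} applied to the orthonormal system $\{e_{lk}\}$. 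Taking the supremum over all $\{g_n\}$ proves $\n T_{M\phi}\n_{\g_\infty(L^2(A;H),F)}\le \g(\mathscr M)\,\n T_\phi\n_{\g(L^2(A;H),E)}$ for simple $M$.

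For general $M$ I apply this to the approximants $M_j$. Dominated convergence (using $\phi(t)f(t)\in E_0$, the bound $\sup_j\n M_j\n\le\sup_t\n M(t)\n<\infty$, and integrability of $\n\phi(\cdot)f\n$ on $\{\phi\neq0\}$) gives $\langle T_{M_j\phi}g,y^*\rangle\to\langle T_{M\phi}g,y^*\rangle$ for all $g\in L^2(A;H)$ and $y^*\in F^*$, while the simple case bounds $\n T_{M_j\phi}\n_{\g_\infty}\le\g(\mathscr M)\n T_\phi\n_\g$ uniformly. Proposition \ref{prop:g-Fatou} then yields $T_{M\phi}\in\g_\infty(L^2(A;H),F)$ with $\n T_{M\phi}\n_{\g_\infty}\le\g(\mathscr M)\n T_\phi\n_\g$, which is the asserted estimate. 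Finally, the operators $T_\phi$ with $\phi$ finite rank simple are precisely the finite linear combinations of $(\one_B\ot h)\ot x$, hence dense in $\g(L^2(A;H),E)$; since testing against $f=\one_B\ot h$ shows that $T_\phi$ determines $\phi$ $\mu$-a.e., the assignment $T_\phi\mapsto T_{M\phi}$ is a well-defined linear map on this dense subspace, bounded by $\g(\mathscr M)$, and therefore extends uniquely to an operator $\wt M$ of norm $\n\wt M\n\le\g(\mathscr M)$.
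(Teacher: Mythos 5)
Your proof is correct, and although it follows the same three-stage skeleton as the paper (simple multipliers first, then the $\g$-Fatou lemma, then extension by density), both technical stages are carried out by genuinely different means. For simple $M=\sum_l \one_{B_l}R_l$ the paper passes to a common refinement of the partitions of $M$ and $\phi$, so that $M\phi$ is again a finite rank simple function, and computes its $\g$-norm directly from Definition \ref{def:g-rad} via the normalised orthonormal functions $\mu(B_j)^{-1/2}\one_{B_j}\ot h_n$; you instead test $T_{M\phi}=\sum_l R_l T_\phi P_{B_l}$ against an \emph{arbitrary} finite orthonormal system in $L^2(A;H)$, manufacture independent Gaussians by Gram--Schmidt inside each block $\mathrm{ran}(P_{B_l})$, and invoke covariance domination (Lemma \ref{lem:g-compar}) — justified by your Parseval computation $\sum_n\langle b_n,z\rangle^2\le\sum_{l,k}\langle R_lT_\phi e_{lk},z\rangle^2$ — to pass to the decoupled sum, after which $\g$-boundedness and Proposition \ref{prop:norm-gamma} finish; this costs a little linear algebra but avoids the common-refinement bookkeeping and makes explicit exactly where independence is created. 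In the approximation stage the paper uses martingale approximants $M_k=\E(M\,\cdot\,|\calA_k)$, whose values are integral means lying in the strongly closed convex hull of $\mathscr{M}$ (hence $\g$-bounded with the same constant, by the remark preceding the theorem), together with the identity $T_{M_k\phi}f=T_{M\phi}\E(f|\calA_k)$; you instead exploit the finite-dimensionality of $E_0$ to upgrade strong measurability of $M|_{E_0}$ to norm measurability and approximate almost everywhere by simple functions — a device unavailable for general integrands but perfectly legitimate here, since the theorem quantifies only over finite rank simple $\phi$. The one loose phrase is that the approximating values lie ``in the essential range of $M$, hence in $\mathscr{M}$'': the essential range is contained only in the \emph{closure} of $\{M(t)|_{E_0}\}$, so either choose the centers of your approximating balls among actual values $M(t_m)|_{E_0}$ (possible because $M|_{E_0}$ is essentially separably valued), or observe that the $\g$-boundedness inequality persists under strong limits, as the paper records; either way the constant $\g(\mathscr{M})$ survives, so this is a wording fix, not a gap.
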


\begin{proof}
The uniqueness part follows from the fact that $(L^2(A)\ot H)\ot E$ is dense in
$\g(L^2(A;H),E)$.

To prove the boundedness of $\widetilde M$ we let $\phi: A\to H\ot E$ be a
finite rank simple function 
which is kept fixed throughout the
proof. Since we are fixing $\phi$ there is no loss of generality if we assume
$H$ to be finite-dimensional, say with orthonormal basis $(h_n)_{n=1}^N$. 
Also, by virtue of the strong measurability of $M$, we may assume that the
$\sigma$-algebra $\calA$ is countably generated. This implies that $L^2(A)$ is
separable, say with orthonormal basis $(g_m)_{m\ge 1}$.

{\em Step 1} --  In this step 
we consider the special case of the theorem where $M$ is a simple function. By
passing to a
common refinement we may suppose that 
$$ \phi = \sum_{j=1}^k \one_{B_j} U_j, \qquad M = \sum_{j=1}^k \one_{B_j}M_j,$$
with disjoint sets $B_j\in \calA$ of finite positive measure; the operators
$U_j\in H\ot E$ are of finite rank and the operators $M_j$ belong to
$\mathscr{M}$. 
Then,
$$ M\phi = \sum_{j=1}^k \one_{B_j} M_jU_j. $$ This is a simple
function with values in $H\otimes F$ which defines an operator 
 $T_{M\phi}\in \g(L^2(A;H),F)$, and 
\begin{align*} \n T_{M\phi}\n_{ \g(L^2(A;H),F)}^2
& = \E \Big\n\sum_{j=1}^k \sum_{n=1}^N\g_{jn} \sqrt{\mu(B_j)} M_j \Phi_j h_n \Big\n^2
\\ & \le (\g(\mathscr{M}))^2 \E \Big\n\sum_{j=1}^k \sum_{n=1}^N\g_{jn} \sqrt{\mu(B_j)} \Phi_j h_n \Big\n^2
\\ & =  (\g(\mathscr{M}))^2\n T_{\phi}\n_{ \g(L^2(A;H),E)}^2.
\end{align*}

{\em Step 2} -- 
Let $(A_j)_{j\ge 1}$ be a generating collection of sets in $\calA$ and let, for all $k\ge
1$, $\calA_k := \sigma(A_1,\dots,A_k)$.
Define the functions $M_k:A\to \calL(E,F)$ by $$M_k x :=
\E(Mx|\calA_k).$$ Since $\calA_k$ is a finite $\sigma$-algebra, $M_k$ is 
a simple function.  
It is easily checked that for all $f\in L^2(A;H)$ we have 
$T_{M_k\phi} f = T_{M\phi} \E(f|\calA_k)$,
and therefore 
$$\limk T_{M_k\phi} f = T_{M\phi} f$$ strongly in $F$.
By the $\g$-Fatou lemma (Proposition \ref{prop:g-Fatou}) it follows that
$T_{M\phi}\in \g_\infty(L^2(A;H),E)$ and 
$$ \n T_{M\phi} \n_{\g_\infty(L^2(A;H),E)} 
\le \liminf_{k\to\infty}\n T_{M\phi}\n_{ \g(L^2(A;H),F)} 
\le \g(\mathscr{M})\n T_{\phi}\n_{ \g(L^2(A;H),E)}.
$$
\end{proof}

It appears to be an open problem whether the operator $\widetilde M$ actually takes 
values in $\g(L^2(A;H),E)$ even in the simplest possible setting 
$A = (0,1)$ and $H=\R$. Of course, an affirmative answer for Banach spaces $E$
not containing an isomorphic copy of $c_0$ is obtained through an application of 
Theorem \ref{thm:HJ-K}.

We continue with some examples of $\g$-bounded families.
The first two results are due to {\sc Weis} \cite{Wei}.

\begin{example}
Let $(A,\calA,\mu)$ be a $\sigma$-finite measure space and let $\cT$ be a
$\g$-bounded subset of $\calL(E,F)$. Suppose $f:A\to \calL(E,F)$
is a function with the following 
properties:
\ben
\item[\rm(i)]
the function $\xi\mapsto f(\xi)x$ is
strongly $\mu$-measurable for all $x\in E$;
\item [\rm(ii)]
we have $f(\xi)\in \cT$
for $\mu$-almost all $\xi\in A$.
\een
For $\phi\in L^1(A)$ define $T_f^\phi\in \calL(E,F)$ by
$$ T_f^\phi x := \int_A \phi(\xi) f(\xi)x\,d\mu(\xi), \qquad x\in E,$$
The family $\cT_f^\phi 
: =\{T_f^\phi: \ \n \phi\n_1 \leq 1\}$ 
is $\g$-bounded and 
$ \g(\cT_f^\phi)\le \g(\cT).$
\end{example}

\begin{example}
Let $f: (a,b) \to \calL(E,F)$ be continuously differentiable with $$\int_a^b
\n{f'(s)}\n\, ds < \infty.$$ 
Then $\cT_f:= \{f(s): \ s\in (a,b)\}$ is $\g$-bounded and
$ \g(\cT_f) \le \n f(a)\n + \int_a^b \n f'(s)\n\,ds.$
\end{example}

The next example is taken from  {\sc Hyt\"onen} and {\sc Veraar}
\cite{HytVer09}. A related example, where Fourier type instead of type is used
and the cotype is not taken into account, is due to {\sc Girardi} and {\sc Weis}
\cite{GirWei03c}.

\begin{example}\label{ex:HV}
If $X$ has type $p$ and cotype $q$, then the range of any function
$f\in B_{r,1}^{d/r}(\R^d;\calL(X,Y))$ is $\g$-bounded. Here
$B_{r,1}^{d/r}(\R^d;\calL(X,Y))$ is the Besov space of exponents $(r,1,d/r)$.
\end{example}

The next example is due to {\sc Kaiser} and {\sc Weis} \cite{KaiWei} (first
part) and {\sc Hyt\"onen} and {\sc Veraar} \cite{HytVer09} (second part).

\begin{example}
Define, for every $h\in H$, the operator $U_h: E\to \g(H,E)$
by
$$ U_h x := h\ot x, \quad x\in E.$$
If $E$ has finite cotype, the family $\{U_h: \ \n h\n\le 1\}$ is $\g$-bounded.
Dually, 
define, for every $h\in H$, the operator $M_h: \g(H,E)\to E$
by
$$ M_h T := Th, \quad T\in \g(H,E).$$
If $E$ has finite type, the family $\{M_h: \ \n h\n\le 1\}$ is $\g$-bounded.
\end{example}
    
The final example is due to {\sc Haak} and {\sc Kunstmann} \cite{HaakKun} and
{\sc van Neerven} and {\sc Weis} \cite{NeeWei07}; it extends a previous result
for $L^p$-spaces of {\sc Le Merdy} \cite{LeM}.

\begin{example} $(A,\calA,\mu)$ be a $\sigma$-finite measure space, let $E$ have
property $(\a)$ (see Definition \ref{def:alpha} below) and let 
 $\phi :A\to\calL(E)$ be a strongly measurable function  
with the property that integral operators with kernel $\phi x$  
belong to $\g(L^2(A),E)$ for all $x\in E$. 
For $g\in L^2(A)$ we may define an operator
$T_g\in\calL(E)$ by
$$T_g x := \int_A g\, \phi x\,d\mu.$$  
Then
the family $\{T_g: \ \n g\n_{L^2(A)}\le 1\}$ is $\g$-bounded.
\end{example}
   
This list of examples could be enlarged {\em ad libitum}. We refrain from doing
so and refer instead to the references cited after Definition
\ref{def:g-bounded}.

\section{The ideal property}\label{sec:ideal}

Our next aim is to prove that $\g(H,E)$ is an operator ideal in
$\calL(H,E)$.
The proof of this fact relies on a classical domination result for finite
Gaussian sums in
$E$. Although a more general comparison principle for Gaussian random variables
will be presented in Section \ref{sec:domination}, we shall give an elementary
proof which is taken from {\sc Albiac} and {\sc Kalton} \cite{AlbKal}. 

\begin{lemma}[Covariance domination I]\label{lem:g-compar}
Let $x_1,\dots,x_M$ and
$y_1,\dots,y_N$ be elements of $E$ satisfying 
$$ \sum_{m=1}^M \lb x_m, x\s\rb^2 \le  \sum_{n=1}^N \lb y_n, x\s\rb^2$$
for all $x\s\in E\s$. Then for all $1\le p<\infty$,
$$ \E\Big\n \sum_{m=1}^M \g_m x_m\Big\n^p \le \E\Big\n\sum_{n=1}^N \g_n
y_n\Big\n^p.$$
\end{lemma}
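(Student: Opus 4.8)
The plan is to read the hypothesis as a domination between the covariances of the two $E$-valued centred Gaussian vectors $X := \sum_{m=1}^M \g_m x_m$ and $Y := \sum_{n=1}^N \g_n y_n$, to realise $Y$ in distribution as $X$ plus an independent Gaussian perturbation, and then to finish with a one-line Jensen inequality exploiting the convexity of $\n\cdot\n^p$. First I would reduce to finite dimensions: all the vectors $x_m,y_n$ lie in the finite-dimensional subspace $G := \mathrm{span}\{x_1,\dots,x_M,y_1,\dots,y_N\}$, so $X$ and $Y$ are centred Gaussian random variables taking values in $G$. For $x\s\in E\s$, the independence and normalisation of the $\g_k$ give
$$ \E\lb X,x\s\rb^2 = \sum_{m=1}^M \lb x_m,x\s\rb^2, \qquad \E\lb Y,x\s\rb^2 = \sum_{n=1}^N \lb y_n,x\s\rb^2, $$
so the assumption says exactly that the covariance quadratic form of $X$ is dominated by that of $Y$.

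Since both forms depend only on the restriction of $x\s$ to $G$, they are represented (after fixing an inner product on $G$) by positive semidefinite operators $R_X \le R_Y$ on the finite-dimensional space $G$. The key step is to use that the difference $R_Y - R_X \ge 0$ is itself a legitimate covariance. On a possibly enlarged probability space I would choose a centred Gaussian random variable $Z$ with values in $G$, independent of $X$, whose covariance operator equals $R_Y - R_X$. Because a centred Gaussian vector in finite dimensions is determined by its covariance, the sum $X+Z$ has covariance $R_X + (R_Y - R_X) = R_Y$ and is therefore equal in distribution to $Y$; consequently $\E\n Y\n^p = \E\n X+Z\n^p$.

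Finally I would invoke convexity. For $1\le p<\infty$ the function $\xi\mapsto\n\xi\n^p$ is convex on $E$, and $\E Z = 0$, so integrating first over the law of $Z$ (this is legitimate by Fubini, using the independence of $X$ and $Z$) and applying Jensen's inequality gives
$$ \E\n X+Z\n^p = \E_X\big(\E_Z\n X+Z\n^p\big) \ge \E_X\n X + \E_Z Z\n^p = \E_X\n X\n^p = \E\n X\n^p. $$
Combining this with $\E\n Y\n^p = \E\n X+Z\n^p$ yields $\E\n X\n^p \le \E\n Y\n^p$, which is the assertion.

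The main obstacle is the middle step: one must justify that the positive semidefinite difference $R_Y - R_X$ genuinely arises as the covariance of some Gaussian $Z$, and that Gaussianity together with matching covariance forces $X+Z \stackrel{d}{=} Y$. Both are standard finite-dimensional Gaussian facts (any positive semidefinite operator is a Gaussian covariance, and centred Gaussians are determined by their covariance), and once they are in place the reduction to finite dimensions, the covariance computation, and the Jensen estimate are all routine.
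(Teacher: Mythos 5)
Your proof is correct and takes essentially the same approach as the paper: both realise the covariance difference as the law of an independent centred Gaussian $Z$ so that $Y \stackrel{d}{=} X+Z$ (the paper does this concretely, producing extra vectors $x_{M+1},\dots,x_{M+k}$ representing the positive symmetric operator $Q$ as a sum of rank-one forms and then comparing Fourier transforms, which is exactly your ``a positive semidefinite operator is a Gaussian covariance, and centred Gaussians are determined by their covariance''). The only divergence is the concluding step, where the paper exploits the symmetry of the perturbation via the triangle inequality in $L^p$ (using that $X-Z$ and $X+Z$ are identically distributed), whereas you condition on $X$ and apply Jensen's inequality, which needs only $\E Z=0$ and is, if anything, slightly more direct.
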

\begin{proof}
Denote by $F$ the linear span of $\{x_1,\dots, x_M, y_1,\dots,y_N\}$ in $E$.
Define $Q\in \calL(F\s,F)$ by
$$ Qz\s:=  \sum_{n=1}^N \lb y_n, z\s\rb y_n - \sum_{m=1}^M \lb x_m, z\s\rb x_m,
\qquad z\s\in F\s.$$
The assumption of the theorem implies that $\lb Qz\s,z\s\rb\ge 0$ for all
$z\s\in F\s$,
and it is clear that $\lb Qz_1\s,z_2\s\rb = \lb Qz_2\s,z_1\s\rb$ for all 
$z_1\s,z_2\s\in F\s$.
Since $F$ is finite-dimensional,
by linear algebra we can find a sequence $(x_j)_{j=M+1}^{M+k}$ in $F$ such that
$Q$ 
is represented as
$$ Qz\s = \sum_{j=M+1}^{M+k} \lb x_j,z\s\rb x_j, \quad z\s\in F\s.$$
Now, 
$$ 
\sum_{m=1}^{M+k} \lb x_m, z\s\rb^2 =  \sum_{n=1}^N \lb y_n, z\s\rb^2, \qquad
z\s\in F\s.
$$
The random variables 
$X:=\sum_{m=1}^{M+k} \g_m x_m$ and $Y:=\sum_{n=1}^N \g_n'  y_n$ 
have Fourier transforms
$$ 
\bal
\E \exp(-i\lb X,x\s\rb) &= 
\exp\big(\!-\frac12\sum_{m=1}^{M+k}\lb x_m,x\s\rb^2\big), \\
\E \exp(-i\lb Y,x\s\rb) &= 
\exp\big(\!-\frac12\sum_{n=1}^{N}\lb y_n,x\s\rb^2\big).
\eal
$$
Hence by the preceding identity and the uniqueness theorem for the Fourier
transform,  
$X$ and $Y$  are identically distributed.
Thus, for all $1\le p<\infty$,
$$
 \E\Big\n \sum_{m=1}^{M+k} \g_m x_m\Big\n^p = \E'\Big\n\sum_{n=1}^N \g_n'
y_n\Big\n^p.
$$
Noting that
$$ \E\Big\n \sum_{m=1}^{M} \g_m x_m\Big\n^p \le \E\Big\n \sum_{m=1}^{M+k} \g_m
x_m\Big\n^p, $$
the proof is complete. This inequality follows, e.g., by noting that if $X$ and
$Y$ are independent $E$-valued random variables, with $Y$ symmetric,
then for all $1\le p<\infty$ we have
$ \E \n X\n^p \le \E \n X+Y\n^p.$
Indeed, since $X-Y$ and $X+Y$ are identically distributed, by the triangle
inequality we have
$(\E \n X\n^p)^\frac1p
\le \tfrac12(\E \n X-Y\n^p)^\frac1p + \tfrac12(\E \n X+Y\n^p)^\frac1p =(\E \n
X+Y\n^p)^\frac1p.$
\end{proof}

We continue with a result which describes what is arguably the most important
property of spaces of $\g$-radonifying operators, the so-called ideal property.
It can be traced back to {\sc Gross} \cite[Theorem 5]{Gro62}. 

\begin{theorem}[Ideal property]\label{thm:ideal}
Let $H$ and $H'$ be Hilbert spaces and $E$ and $E'$
Banach spaces. For all $S\in\calL(H',H)$, $T\in\g_\infty(H,E)$,  
and $U\in\calL(E,E')$ we have $UTS\in\g_\infty(H',E')$ and
$$ \n U TS \n_{\g_\infty(H',E')} \le \n U\n \, \n T\n_{\g_\infty(H,E)} \n
S\n.$$
If $T\in\g(H,E)$, then $UTS\in\g(H',E')$ and
$$ \n U TS \n \le \n U\n \, \n T\n \n S\n.$$
\end{theorem}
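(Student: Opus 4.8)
The plan is to estimate the three factors one at a time, first in the $\g_\infty$-norm and then to transfer the conclusion to $\g$ by a density argument. The outer factor $U$ is immediate: for every finite orthonormal system $\{h_1,\dots,h_N\}$ in $H$ one has $\E\,\Big\n\sum_{n=1}^N\g_n UTh_n\Big\n^2\le\n U\n^2\,\E\,\Big\n\sum_{n=1}^N\g_n Th_n\Big\n^2$, since $U$ is bounded, and taking the supremum over such systems gives $\n UT\n_{\g_\infty(H,E')}\le\n U\n\,\n T\n_{\g_\infty(H,E)}$.

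The crux is the right factor $S$, which in general destroys orthonormality; this is exactly where the covariance domination Lemma \ref{lem:g-compar} is meant to enter. Fix a finite orthonormal system $\{h_1',\dots,h_N'\}$ in $H'$ and put $y_n:=Sh_n'\in H$. The covariance operator $Q:=\sum_{n=1}^N y_n\ot y_n$ on $H$ satisfies $[Qg,g]=\sum_{n=1}^N [h_n',S\s g]_{H'}^2\le\n S\s g\n^2\le\n S\n^2\n g\n^2$ for all $g\in H$, by Bessel's inequality applied to the orthonormal family $\{h_n'\}$. Since $Q$ maps into $V:=\mathrm{span}\{y_1,\dots,y_N\}$ and vanishes on $V^\perp$, this upgrades to $Q\le\n S\n^2 P$, where $P$ is the orthogonal projection of $H$ onto $V$. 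Choosing an orthonormal basis $\{e_1,\dots,e_K\}$ of $V$ and testing the form against $T\s x\s$, I would obtain, for every $x\s\in E\s$, the covariance inequality
$$\sum_{n=1}^N\lb TSh_n',x\s\rb^2=[QT\s x\s,\,T\s x\s]\le\n S\n^2\,\n PT\s x\s\n^2=\sum_{k=1}^K\lb\n S\n\,Te_k,\,x\s\rb^2.$$
Lemma \ref{lem:g-compar} then yields $\E\,\Big\n\sum_n\g_n TSh_n'\Big\n^2\le\n S\n^2\,\E\,\Big\n\sum_k\g_k Te_k\Big\n^2$, and because $\{e_1,\dots,e_K\}$ is orthonormal in $H$ the right-hand side is at most $\n S\n^2\n T\n_{\g_\infty(H,E)}^2$. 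Taking the supremum over the systems $\{h_j'\}$ gives $\n TS\n_{\g_\infty(H',E)}\le\n S\n\,\n T\n_{\g_\infty(H,E)}$; combined with the estimate for $U$ this proves the $\g_\infty$-assertion.

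For the $\g$-case I would argue by density and closedness. Pick finite rank operators $T_j\to T$ in $\g(H,E)$; writing $T_j=\sum_i h_i\ot x_i$ shows $UT_jS=\sum_i S\s h_i\ot Ux_i$ is again finite rank, hence lies in $\g(H',E')$. By the bound just proved, the map $R\mapsto URS$ is bounded on $\g_\infty$ with norm at most $\n U\n\,\n S\n$, so $UT_jS\to UTS$ in $\g_\infty(H',E')$; since $\g(H',E')$ is a closed subspace of $\g_\infty(H',E')$ containing every $UT_jS$, the limit $UTS$ lies in $\g(H',E')$, and the norm estimate passes to the limit using that the two norms agree on $\g$. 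The main obstacle is the right factor $S$: without the covariance domination lemma there is no direct handle on $\E\,\Big\n\sum_n\g_n TSh_n'\Big\n^2$, precisely because the images $Sh_n'$ need not be orthonormal, and the projection argument $Q\le\n S\n^2 P$ is what reduces this to a comparison against an orthonormal system.
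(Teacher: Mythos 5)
Your proof is correct and follows essentially the same route as the paper: the central covariance inequality $\sum_{n}\lb TSh_n',x\s\rb^2\le\n S\n^2\sum_{k}\lb Te_k,x\s\rb^2$, with $(e_k)$ an orthonormal basis of the span of the $Sh_n'$ (which the paper obtains by restricting $T$ and $S$ to finite-dimensional subspaces and estimating $\n\wt S\s\wt T\s x\s\n$, and you obtain via the quadratic-form bound $Q\le\n S\n^2P$ --- a purely cosmetic difference), followed by Lemma \ref{lem:g-compar}, the supremum over finite orthonormal systems, and the same finite-rank approximation plus closedness of $\g(H',E')$ in $\g_\infty(H',E')$ for the $\g$-case. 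No gaps.
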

\begin{proof}
The left ideal property is trivial. Thus the first assertion it suffices to prove that 
if $T\in \g_\infty(H,E)$, then $TS\in\g_\infty(H',E)$ 
and
$ 
 \n TS\n_{\g_\infty(H',E)} \le \n T\n_{\g_\infty(H,E)}\n S\n
$.

Let $(h_j')_{j=1}^k$ be any finite orthonormal system in  $H'$.
Denote by $\wt H'$, $\wt {H}$, $\wt E$ the spans in $H'$, $H$, $E$ 
of $(h_j')_{j=1}^k$,
$(Sh_j')_{j=1}^k$, $(TSh_j')_{j=1}^k$ respectively.
Then $T$ and $S$ restrict to operators $\wt T: \wt H\to \wt E$ and
$\wt S: \wt {H'}\to \wt H$.

Let $(\wt h_m)_{m=1}^M$ be an orthonormal basis for $\wt H$. 
For all $x\s\in \wt E\s$ we have
$$
 \sum_{j=1}^{k} \lb TS h_j', x\s\rb^2
  = \n \wt S\s \wt T\s x\s\n_{\wt H}^2
\le \n \wt S\s \n^2 \, \n \wt T\s x\s\n_{\wt H}^2
=  \n \wt S\n^2\,
 \sum_{m=1}^{M} \lb T\wt h_m, x\s\rb^2.
 $$
Hence, by Lemma \ref{lem:g-compar},
$$ \E\Big\n \sum_{j=1}^{k} \g_j TS h_j'\Big\n^2
\le  \n S\n^2\,\E\Big\n \sum_{m=1}^M \g_m T\wt h_m\Big\n^2
\le \n S\n^2\,\n T\n_{\g(H,E)}^2.
$$ 
The desired inequality follows by taking the supremum over all
finite orthonormal systems in $H'$.

Next let $T\in \g(H,E)$ be given. If
$T\in H\ot E$ is a finite rank operator, say $T= \sum_{n=1}^N h_n\otimes x_n$, 
then $TS = \sum_{n=1}^N S\s h_n\otimes x_n$ belongs to $H'\ot E$.
Hence $TS\in\g(H',E)$, and by Proposition
\ref{prop:norm-gamma} and the estimate above we have 
$  \n TS\n_{\g(H',E)} \le \n T\n_{\g(H,E)}\n S\n$.
For general $T\in \g(H,E)$ the result now follows by approximation.
\end{proof}

As a first application we show that arbitrary bounded Hilbert space operators
$S\in \calL(H_1,H_2)$ extend to bounded operators $\wt S\in
\calL(\g(H_1,E),\g(H_2,E))$ in a natural way.

\begin{corollary}[{\sc Kalton} and {\sc Weis} \cite{KalWei07}]
Let $H_1$ and $H_2$ be Hilbert spaces. For all $S\in \calL(H_1,H_2)$
the mapping $$ \wt S: h\ot x \mapsto Sh \ot x, \quad h\in H_1, \ x\in E,$$
has a unique extension to a bounded operator $\wt S\in
\calL(\g(H_1,E),\g(H_2,E))$
of the same norm.
\end{corollary}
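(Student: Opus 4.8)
The plan is to recognise $\wt S$ as right composition with the Hilbert space adjoint $S\s\in\calL(H_2,H_1)$, and then to read off both the upper and the lower bound for its norm directly.

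First I would observe that on a finite rank operator $T=\sum_{n=1}^N h_n\ot x_n\in H_1\ot E$, viewed as the operator $Th=\sum_n [h_n,h]_{H_1}x_n$, the prescribed action of $\wt S$ coincides with the intrinsic composition $TS\s$. Indeed, for $h''\in H_2$ one has $TS\s h''=\sum_n [h_n,S\s h'']_{H_1}x_n=\sum_n [Sh_n,h'']_{H_2}x_n=\big(\sum_n Sh_n\ot x_n\big)h''$, using the defining adjoint relation $[h_n,S\s h'']_{H_1}=[Sh_n,h'']_{H_2}$. Since $T\mapsto TS\s$ is manifestly well defined on all of $\calL(H_1,E)$, this simultaneously disposes of the well-definedness of $\wt S$ on $H_1\ot E$: there is no representation-dependence left to check.

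Next I would invoke the right ideal property of Theorem \ref{thm:ideal}, applied with the bounded operator $S\s\in\calL(H_2,H_1)$ and with the identity $I_E$ in the role of $U$. For every $T\in\g(H_1,E)$ the composition $TS\s$ then lies in $\g(H_2,E)$ and $\n TS\s\n_{\g(H_2,E)}\le \n T\n_{\g(H_1,E)}\,\n S\s\n=\n S\n\,\n T\n_{\g(H_1,E)}$, where I use $\n S\s\n=\n S\n$ for Hilbert space adjoints. Thus $\wt S$ is defined and bounded on the subspace $H_1\ot E$ with norm at most $\n S\n$; since $H_1\ot E$ is dense in $\g(H_1,E)$ by the very definition of the latter as a completion, $\wt S$ admits a unique bounded extension with $\n \wt S\n\le \n S\n$. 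For the matching lower bound I would test on rank one tensors: for $0\ne h\in H_1$ and $x\in E$ one has $\n h\ot x\n_{\g(H_1,E)}=\n h\n_{H_1}\n x\n$ and $\n\wt S(h\ot x)\n_{\g(H_2,E)}=\n Sh\ot x\n_{\g(H_2,E)}=\n Sh\n_{H_2}\n x\n$, so the operator norm of $\wt S$ dominates $\n Sh\n_{H_2}/\n h\n_{H_1}$; taking the supremum over $h$ gives $\n \wt S\n\ge \n S\n$, and hence equality.

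The main obstacle is conceptual rather than technical: one must notice that $\wt S$ is nothing but precomposition of the \emph{operator} $T$ with $S\s$, so that Theorem \ref{thm:ideal} applies in exactly the correct direction (the adjoint converts a source-space map $S\colon H_1\to H_2$ into a composition acting on operators out of $H_1$). Once this identification is in place, both the boundedness and the exact value of the norm follow with no further estimation.
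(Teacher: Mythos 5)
Your proof is correct and follows essentially the same route as the paper: identify $\wt S T = T\circ S\s$ on elementary tensors and invoke the right ideal property of Theorem \ref{thm:ideal} to get $\n\wt S\n\le\n S\s\n=\n S\n$. The only difference is cosmetic — where the paper dismisses the reverse estimate as trivial, you spell it out correctly by testing on rank-one tensors $h\ot x$, using $\n h\ot x\n_{\g(H_1,E)}=\n h\n_{H_1}\n x\n$.
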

\begin{proof} For rank one operators $T = h\ot x$ we have
$\wt S T h' = [h,S\s h']x = T S\s h'.$ 
By linearity, this shows that for all $T\in H\ot E$ we have $\wt S T = T\circ
S\s$. The boundedness of $\tilde S$ now follows from the right ideal property,
which also gives the estimate $\n\wt S\n\le \n S\n$. The reverse estimate is
trivial.
\end{proof}

If $\mathscr{S}\subseteq \calL(H_1,H_2)$ is a uniformly bounded family of
Hilbert space operators, the family 
$\mathscr{\wh S}\subseteq \calL(\g(H_1,E),\g(H_2,E))$ is uniformly bounded as
well. If $E$ has the so-called property $(\a)$ (see  (see Definition
\ref{def:alpha}), then
$\mathscr{\wh S}$ is actually $\g$-bounded (see Section \ref{sec:g-boundedness}
for the definition). This result is due to {\sc Haak} and {\sc Kunstmann}
\cite{HaakKun}.

We continue with two convergence results, taken from 
{\sc Cox} and {\sc van Neerven} \cite{CoxNee} and {\sc van Neerven, Veraar,
Weis} \cite{NVW07a}.

\begin{corollary}[Convergence by left multiplication]\label{cor:g-cont} 
If $E$ and $F$ are Banach spaces and $U_n, U\in \calL(E,F)$ satisfy $\limn U_n =
U$ strongly, then for all $T\in
\g(H,E)$ we have $U_n T = UT$ in $\g(H,F)$.
\end{corollary}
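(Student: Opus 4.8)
The plan is to prove the convergence $\limn U_n T = UT$ in $\g(H,F)$ by a standard three-term estimate, reducing the general case to finite rank operators and then invoking density of $H\ot E$ in $\g(H,E)$ (Definition \ref{def:g-rad}). Two preliminary facts set the argument up. First, since $U_n\to U$ strongly, the Banach--Steinhaus theorem furnishes a uniform bound $C:=\sup_{n\ge 1}\n U_n\n<\infty$, and clearly $\n U\n\le C$ as well. Second, by the ideal property (Theorem \ref{thm:ideal}), each of $U_nT$ and $UT$ lies in $\g(H,F)$, and for every $S\in\g(H,E)$ one has the uniform estimates $\n U_n S\n_{\g(H,F)}\le C\n S\n_{\g(H,E)}$ and $\n US\n_{\g(H,F)}\le C\n S\n_{\g(H,E)}$. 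These uniform bounds are precisely what makes the approximation step succeed.

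Next I would handle the finite rank case. If $T_0=\sum_{k=1}^N h_k\ot x_k$ with $h_1,\dots,h_N$ orthonormal in $H$, then $(U_n-U)T_0=\sum_{k=1}^N h_k\ot (U_n-U)x_k$ is again of finite rank, so Definition \ref{def:g-rad} gives
$$ \n (U_n-U)T_0\n_{\g(H,F)}^2 = \E\Big\n\sum_{k=1}^N \g_k (U_n-U)x_k\Big\n^2. $$
Because $(U_n-U)x_k\to 0$ in $F$ for each $k$ and the sum is finite, the triangle inequality in $L^2(\O;F)$ yields $\big(\E\n\sum_{k=1}^N \g_k (U_n-U)x_k\n^2\big)^{1/2}\le \sum_{k=1}^N \n (U_n-U)x_k\n\to 0$. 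Hence $\limn (U_n-U)T_0=0$ in $\g(H,F)$ for every finite rank $T_0$.

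Finally I would combine the pieces. Given $\e>0$, pick a finite rank $T_0\in H\ot E$ with $\n T-T_0\n_{\g(H,E)}<\e$, which is possible since $\g(H,E)$ is the completion of $H\ot E$. Splitting $U_nT-UT = U_n(T-T_0) + (U_n-U)T_0 + U(T_0-T)$ and applying the two uniform ideal estimates to the outer terms gives
$$ \n U_nT-UT\n_{\g(H,F)}\le C\e + \n (U_n-U)T_0\n_{\g(H,F)} + C\e. $$
Letting $n\to\infty$ and using the finite rank case yields $\limsup_{n\to\infty} \n U_nT-UT\n_{\g(H,F)}\le 2C\e$, and since $\e>0$ was arbitrary the corollary follows. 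I do not anticipate a genuine obstacle here; the only delicate point is to invoke uniform boundedness at the outset, so that the tail terms $U_n(T-T_0)$ are controlled uniformly in $n$ — without this, strong convergence alone would not bound the operators $U_n$ and the naive termwise argument would break down.
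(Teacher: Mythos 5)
Your proof is correct and follows essentially the same route as the paper: finite rank case via Definition \ref{def:g-rad} and the triangle inequality in $L^2(\O;F)$, then density of $H\ot E$ combined with the ideal-property estimate and Banach--Steinhaus. The paper compresses your three-term $\e$-argument into one sentence, but the ingredients are identical.
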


\begin{proof} 
Suppose first that $T$ is a finite rank operator, say $T = \sum_{j=1}^k
h_j\otimes x_j$ with $h_1,\dots, h_k$ orthonormal in $H$ and $x_1,\dots, x_k$
from $E$.
Then
$$\limn \n U_nT - UT\n_{\g(H,F)}^2 = \limn\E \Big\n \sum_{j=1}^k \g_j
(U_n-U)x_j\Big\n^2 = 0.$$ 
The general case follows from the density of the finite rank operators in
$\g(H,E)$, the norm estimate $\n U_n T- U T\n_{\g(H,F)}\le \n U_n-U\n \n
T\n_{\g(H,E)}$, and the uniform boundedness of the operators $U_n$.
\end{proof} 

\begin{corollary}[Convergence by right multiplication]\label{cor:ga-conv}
If $H$ and $H'$ are Hilbert spaces and $S_n,S\in \calL(H',H)$ satisfy
$\limn S_n^*= S^*$ strongly, 
then for all $T\in \g(H,E)$ we have $\limn T S_n = TS$ in
$\g(H',E)$.
\end{corollary}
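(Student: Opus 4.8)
The plan is to mimic the proof of the preceding corollary (Corollary~\ref{cor:g-cont}): first dispose of the finite rank case by hand, then bootstrap to arbitrary $T\in\g(H,E)$ using density of the finite rank operators together with the right ideal property of Theorem~\ref{thm:ideal}. The point to keep in mind from the outset is that $S_n\s\to S\s$ strongly does \emph{not} entail $\n S_n - S\n\to 0$, so one cannot simply write $\n T(S_n-S)\n_{\g(H',E)}\le \n T\n_{\g(H,E)}\n S_n-S\n$ and appeal to the ideal estimate. The whole difficulty is to route the strong convergence through the finitely many vectors that a finite rank operator actually tests against; this is precisely what makes the finite rank reduction essential.

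For the finite rank step I would take $T=\sum_{j=1}^k h_j\ot x_j$ and use the identity $(h\ot x)S_n h' = [h,S_n h']x = [S_n\s h,h']x$ to record that $TS_n = \sum_{j=1}^k (S_n\s h_j)\ot x_j$, and likewise for $TS$. Hence
$$ TS_n - TS = \sum_{j=1}^k (S_n\s h_j - S\s h_j)\ot x_j. $$
Now I would invoke the triangle inequality in the Banach space $\g(H',E)$ together with the elementary computation $\n g\ot x\n_{\g(H',E)} = \n g\n_{H'}\,\n x\n_E$ (immediate from Definition~\ref{def:g-rad}, since a single nonzero $g$ normalises to a one-element orthonormal system) to obtain
$$ \n TS_n - TS\n_{\g(H',E)} \le \sum_{j=1}^k \n S_n\s h_j - S\s h_j\n_{H'}\,\n x_j\n_E. $$
Each summand tends to $0$ by the hypothesis $S_n\s\to S\s$ strongly, so $\limn\n TS_n - TS\n_{\g(H',E)}=0$ in this case.

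For the general case, given $T\in\g(H,E)$ and $\e>0$ I would pick a finite rank $T_0$ with $\n T-T_0\n_{\g(H,E)}<\e$ and split
$$ \n TS_n - TS\n_{\g(H',E)} \le \n (T-T_0)S_n\n_{\g(H',E)} + \n T_0 S_n - T_0 S\n_{\g(H',E)} + \n (T_0-T)S\n_{\g(H',E)}. $$
The middle term vanishes in the limit by the finite rank case, while the right ideal property of Theorem~\ref{thm:ideal} bounds the outer terms by $\n T-T_0\n_{\g(H,E)}\n S_n\n$ and $\n T-T_0\n_{\g(H,E)}\n S\n$. Here I would use the uniform boundedness principle: since $S_n\s\to S\s$ strongly, $C:=\sup_n\n S_n\s\n=\sup_n\n S_n\n<\infty$. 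Taking $\limsup_{n\to\infty}$ then gives a bound $\e(C+\n S\n)$, and letting $\e\to 0$ finishes the proof. The only genuinely subtle point, the ``main obstacle'', is the one flagged above — one must exploit the finite rank structure so that strong (rather than norm) convergence suffices — after which the uniform bound on $(S_n)$ needed for the density argument comes for free from Banach--Steinhaus.
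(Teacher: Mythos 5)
Your proof is correct and follows essentially the same route as the paper: Banach--Steinhaus for the uniform bound $\sup_n\n S_n\n<\infty$, reduction to finite rank $T$ via the right ideal property, and for finite rank $T$ the triangle-inequality bound $\sum_{j}\n x_j\n\,\n S\s h_j-S_n\s h_j\n$, which is exactly the estimate the paper derives. The only cosmetic difference is that you obtain this bound from the tensor identity $TS_n=\sum_j (S_n\s h_j)\ot x_j$ and the rank-one norm identity $\n g\ot x\n_{\g(H',E)}=\n g\n\,\n x\n$, whereas the paper tests against finite orthonormal systems in $H'$ and invokes Proposition \ref{prop:norm-gamma}; both computations are equivalent.
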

\begin{proof}
By the uniform boundedness principle, the strong convergence $\limn S_n\s = S\s$
implies $\sup_{n\ge 1}\n S_n\n<\infty$. Hence 
by the estimate $\|T \circ (S_n-S)\|_{\g(H',E)}\leq \|T\|_{\g(H,E)}\|S_n-S\|$ 
it suffices to consider finite rank operators $T\in \g(H,E)$, say $T =
\sum_{m=1}^M h_m \otimes x_m$. If $h_1',\dots,h_k'$ are orthonormal in $H'$,
then by the triangle inequality,
\begin{align*}
\Big(\E \Big\n \sum_{j=1}^k \g_j T \circ (S-S_n)h_j'\Big\n^2\Big)^\frac12
& = \Big(\E \Big\n \sum_{m=1}^M \sum_{j=1}^k \g_j\,
[h_m,(S-S_n)h_j']x_m\Big\n^2\Big)^\frac12\\
& \le \sum_{m=1}^M \Big(\E \Big\n \sum_{j=1}^k  \g_j\,
[h_m,(S-S_n)h_j']x_m\Big\n^2\Big)^\frac12
\\ & = \sum_{m=1}^M\n x_m\n \Big(\E \Big| \sum_{j=1}^k  \g_j\, [(S\s-S_n
\s)h_m, h_j']\Big|^2\Big)^\frac12
\\ & \le \sum_{m=1}^M \|x_m\| \|S^* h_m - S_n^* h_m\|.
\end{align*}
Taking the supremum over all finite orthonormal systems in $H'$, from
Proposition \ref{prop:norm-gamma} we obtain
\[\|T \circ (S-S_n)\|_{\g(H',E)} \leq
\sum_{m=1}^M \|x_m\| \|S^* h_m - S_n^* h_m\|.\]
The right-hand side tends to zero as $n\to\infty$.
\end{proof}

Here is a simple illustration:

\begin{example}\label{ex:g-approx} 
Consider an operator $R\in \g(H,E)$ and let
$(h_n)_{n\ge 1}$ be an orthonormal basis for $({\rm ker}(R))^\perp$
(recall that this space is separable; see the discussion preceding Corollary
\ref{cor:ONB}).
Let $P_n$ denote the orthogonal projection in $H$ 
onto the span of $\{h_1,\dots, h_n\}$. 
Then $\limn RP_n =R$ in $\g(H,E)$. 
\end{example}

\begin{corollary}[Measurability]\label{cor:meas}
Let $(A,\calA,\mu)$ be a $\sigma$-finite measure space and $H$ a separable
Hilbert space. For a function $\phi:A\to \g(H,E)$ define $\phi h:A\to E$ by
$(\phi h)(t):=  \phi(t)h$ for $h\in H$. The following assertions are
equivalent:
\ben
\item[\rm(1)] $\phi$ is strongly $\mu$-measurable;
\item[\rm(2)] $\phi h$ is strongly $\mu$-measurable for all $h\in H$.
\een
\end{corollary}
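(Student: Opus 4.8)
The plan is to treat the two implications separately; the first is routine and the second carries the real content.

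For $(1)\Rightarrow(2)$ I would simply compose $\phi$ with an evaluation map. For fixed $h\in H$ the map $M_h\colon\g(H,E)\to E$, $M_h T:=Th$, is bounded: since $\g(H,E)$ is contained contractively in $\calL(H,E)$ we have $\n Th\n\le\n T\n_{\calL(H,E)}\n h\n\le\n T\n_{\g(H,E)}\n h\n$. As $\phi h=M_h\circ\phi$ and a bounded linear operator sends strongly measurable functions to strongly measurable functions, $\phi h$ is strongly measurable whenever $\phi$ is.

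For $(2)\Rightarrow(1)$ I would exploit the separability of $H$ through a finite-rank approximation. Fix an orthonormal basis $(h_n)_{n\ge1}$ of $H$ and let $P_n$ be the orthogonal projection onto $\mathrm{span}\{h_1,\dots,h_n\}$. For each $t\in A$ one computes $\phi(t)P_n=\sum_{k=1}^n h_k\otimes\bigl(\phi(t)h_k\bigr)$, so that, as a $\g(H,E)$-valued function, $\phi P_n=\sum_{k=1}^n h_k\otimes(\phi h_k)$. Each coordinate function $\phi h_k\colon A\to E$ is strongly measurable by hypothesis $(2)$, and the inclusion $E\to\g(H,E)$, $x\mapsto h_k\otimes x$, is isometric by Definition \ref{def:g-rad} (as $h_k$ is a unit vector), hence bounded; therefore $h_k\otimes(\phi h_k)$ is strongly measurable, and so is the finite sum $\phi P_n$.

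It then remains to pass to the limit. Since $(h_n)_{n\ge1}$ is an orthonormal basis we have $P_n\to I_H$ strongly, whence $P_n^{*}=P_n\to I_H^{*}$ strongly, and Corollary \ref{cor:ga-conv} (convergence by right multiplication) gives $\phi(t)P_n\to\phi(t)$ in $\g(H,E)$ for every $t\in A$. Thus $\phi$ is the everywhere pointwise limit of the strongly measurable functions $\phi P_n$, and is therefore strongly measurable: a pointwise limit preserves weak measurability, and preserves essential separable-valuedness since the closed span of the countably many separable ranges is again separable, so the conclusion follows from the Pettis measurability theorem. The one point deserving care, and the essential input of the argument, is that Corollary \ref{cor:ga-conv} upgrades the mere strong operator convergence $\phi(t)P_n\to\phi(t)$ to convergence in the $\g(H,E)$-norm, which is exactly what is needed to inherit strong measurability in $\g(H,E)$ rather than only in $\calL(H,E)$.
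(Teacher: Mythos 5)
Your proposal is correct and follows essentially the same route as the paper: the paper also reduces to the implication (2)$\Rightarrow$(1), writes $\phi(\xi)P_n=\sum_{j=1}^n[\,\cdot\,,h_j]\phi(\xi)h_j$ for the projections onto the span of an orthonormal basis, invokes the norm convergence $\phi(\xi)P_n\to\phi(\xi)$ in $\g(H,E)$ (via Example \ref{ex:g-approx}, i.e.\ Corollary \ref{cor:ga-conv}), and concludes from the measurability of the finite-rank approximants. Your additional details — the isometry $x\mapsto h_k\ot x$, the boundedness of $M_h$ for the easy direction, and the Pettis-theorem justification that pointwise limits preserve strong measurability — are all correct and merely make explicit what the paper leaves implicit.
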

\bpf It suffices to prove that (2) implies (1). If $(h_n)_{n\ge 1}$ is an
orthonormal
basis for $H$, then with the notations of the Example \ref{ex:g-approx} for
all $ \xi\in A$ we have  
$$\phi(\xi) =\limn \phi(\xi) P_n = \limn \sum_{j=1}^n [\,\cdot\,, h_j]\phi(\xi)
h_j,$$
with convergence in the norm of $\g(H,E)$. The result now follows from the
measurability of the right-hand side.
\epf

\section{Gaussian random variables}\label{sec:Gaussian}

An $\R^d$-valued random variable $X=  (X_1,\dots,X_d)$ is called {\em Gaussian}
if every linear combination $\sum_{j=1}^d c_j X_j$ is Gaussian. Noting that
$\sum_{j=1}^d c_j X_j = \lb X,c\rb$ with $c = (c_1,\dots,c_d)$, this suggests
the following definition.

\begin{definition}\label{def:Gaussian}
An $E$-valued random variable is called {\em Gaussian} if the real-valued random
variables $\lb X,x\s\rb$ are Gaussian for all $x\s\in E\s$. 
\end{definition}

Gaussian random variables have good integrability properties:

\begin{proposition}[{\sc Fernique}]\label{prop:fernique}
Let $\mathscr{X}$ a uniformly tight family of $E$-valued Gaussian random
variables.
Then there exists a constant $\b>0$ such that 
$$ \sup_{X\in\mathscr{X}} \E \exp(\b\n X\n^2) <\infty.$$
\end{proposition}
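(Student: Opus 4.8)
The plan is to reduce the assertion to a single scalar estimate furnished by the hypothesis, and then to run the classical iteration of Fernique, taking care that every constant produced depends only on that estimate and is therefore uniform over $\mathscr{X}$. First I would exploit uniform tightness: choose a compact set $K$ with $\P\{X\not\in K\}<\tfrac13$ for all $X\in\mathscr{X}$, and observe that $K$, being compact, is bounded, say $K\subseteq\{\n x\n\le s\}$. This produces a single radius $s>0$ with
\[
 \a:=\inf_{X\in\mathscr{X}}\P\{\n X\n\le s\}\ge \tfrac23>\tfrac12 .
\]
Everything below depends only on $s$ and $\a$, and this is exactly what yields the uniformity.

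The core is the Fernique inequality for a \emph{single} Gaussian $X$, obtained from the rotation invariance of Gaussian laws. Let $X'$ be an independent copy of $X$ on a product space and set $U:=(X-X')/\sqrt2$ and $V:=(X+X')/\sqrt2$. The pair $(X,X')$ is Gaussian as an $E\times E$-valued random variable (testing against $(x\s,y\s)$ gives $\lb X,x\s\rb+\lb X',y\s\rb$, a sum of independent Gaussians), hence so is $(U,V)$; a direct covariance computation, using that $X,X'$ are centred, independent and identically distributed, shows that $\lb U,x\s\rb$ and $\lb V,y\s\rb$ are uncorrelated for all $x\s,y\s\in E\s$ and that each of $U,V$ has the same covariance as $X$. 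Since a centred Gaussian Radon measure is determined by its covariance, $U$ and $V$ are independent and each is distributed as $X$. Consequently, for $t\ge s$, on the event $\{\n X'\n\le s,\ \n X\n>t\}$ one has $\n U\n,\n V\n\ge(\n X\n-\n X'\n)/\sqrt2>(t-s)/\sqrt2$, so that this event is contained in $\{\n U\n>(t-s)/\sqrt2,\ \n V\n>(t-s)/\sqrt2\}$; using the independence of $X,X'$ on the left, of $U,V$ on the right, and $\P\{\n X'\n\le s\}\ge\a$, this gives
\[
 \a\,\P\{\n X\n>t\}\le \P\Big\{\n X\n>\tfrac{t-s}{\sqrt2}\Big\}^2 .
\]

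Next I would iterate. Put $t_0:=s$ and $t_{n+1}:=s+\sqrt2\,t_n$, and set $x_n:=\P\{\n X\n>t_n\}/\a$. Since $(t_{n+1}-s)/\sqrt2=t_n$, dividing the displayed inequality by $\a^2$ yields $x_{n+1}\le x_n^2$, while $x_0=\P\{\n X\n>s\}/\a\le(1-\a)/\a=:\th<1$. Hence $x_n\le\th^{2^n}$, so $\P\{\n X\n>t_n\}\le\th^{2^n}$ uniformly in $X\in\mathscr{X}$. Solving the recursion gives $t_n\le c\,2^{n/2}$ for a constant $c=c(s)$, so $t_{n+1}^2\le c^2 2^{n+1}$. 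Partitioning $E$ into $\{\n X\n\le s\}$ and the annuli $\{t_n<\n X\n\le t_{n+1}\}$, $n\ge0$, I bound
\[
 \E\exp(\b\n X\n^2)\le e^{\b s^2}+\sum_{n\ge0}e^{\b t_{n+1}^2}\,\P\{\n X\n>t_n\}\le e^{\b s^2}+\sum_{n\ge0}\exp\big(2^n(2\b c^2-\ln(1/\th))\big),
\]
which converges as soon as $2\b c^2<\ln(1/\th)$. Choosing such a $\b>0$ gives a bound depending only on $s$ and $\a$, whence $\sup_{X\in\mathscr{X}}\E\exp(\b\n X\n^2)<\infty$.

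The step I expect to be the main obstacle is the rigorous justification of rotation invariance, namely that $U,V$ are genuinely independent copies of $X$: this rests on the uniqueness of a centred Gaussian Radon measure given its covariance — the $E$-valued analogue of the Fourier-uniqueness argument used in the proof of Lemma \ref{lem:g-compar}, now needed for honestly $E$-valued rather than finite-dimensional Gaussians. The remaining bookkeeping, in particular verifying that the radius $s$, the lower bound $\a$, the ratio $\th$ and the sequence $(t_n)$ are all independent of the individual $X\in\mathscr{X}$, is routine, but it is precisely the point at which the uniform tightness hypothesis is indispensable.
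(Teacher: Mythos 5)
Your proof is correct and follows essentially the same route as the paper's: the same reduction of uniform tightness to a single radius $s$ with $\inf_{X\in\mathscr{X}}\P\{\n X\n\le s\}>\tfrac12$, the same symmetrization inequality via an independent copy $X'$ and the rotation $(X\pm X')/\sqrt2$, and the same iteration $t_{n+1}=s+\sqrt2\,t_n$ with squaring of the tail ratios, summed over annuli to bound $\E\exp(\b\n X\n^2)$. The only differences are cosmetic: you contain the un-rotated event in the rotated one (the paper does the reverse inclusion), and you make explicit — via covariance computation and Fourier uniqueness — that $(X\pm X')/\sqrt 2$ form an independent pair of copies of $X$, a fact the paper's displayed chain uses tacitly.
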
 

\begin{proof} We follow {\sc Bogachev} \cite{Bog} and {\sc Fernique} \cite{Fer}.

For each $X\in\mathscr{X}$ let $X'$ be an independent copy of $X$. 
Then $X-X'$ and $X+X'$ are identically distributed.
Hence, for all $t\ge s > 0$,
\beq\label{eq:Fernique}\begin{aligned}
& \P\{\n X\n  \le s\}\cdot\P\{\n X'\n > t\}
\\ & \qquad\qquad =\P\Bigl\{ 
\Big\n \frac{X+X'}{\sqrt{2}}\Big\n\le s\Bigr\}
\cdot \P\Bigl\{\Big\n\frac{X-X'}{\sqrt{2}}\Big\n> t\Bigr\}
\\ & \qquad\qquad\le \P\Bigl\{\bigl|\,\n X\n  -\n X'\n \,\bigr| 
\le s\sqrt 2, \ \n X\n +\n X'\n  >  t \sqrt 2 \Bigr\}
\\ & \qquad\qquad \stackrel{(*)}{\le} \P\Bigl\{\n X\n  > 
\frac{t-s}{ \sqrt 2},\ \n X'\n  >  \frac{t-s}{ \sqrt 2}\Bigr\}
\\ &  \qquad\qquad  = \P\Bigl\{\n X\n  > \frac{t-s}{\sqrt 2}\Bigr\} 
\cdot \P\Bigl\{\n X'\n  > \frac{t-s}{\sqrt 2}\Bigr\},
\end{aligned}
\eeq
where in $(*)$ we used that  
$$\bigl\{|\xi-\eta|\le s\sqrt 2
\ \hbox{ and }\ \xi+\eta> t\sqrt 2  \bigr\}\subseteq \Bigl\{
\xi> \frac{t-s}{\sqrt 2}\ \hbox{ and }\ \eta >
\frac{t-s}{\sqrt 2}
\Bigr\}.
$$

By the uniform tightness of $\mathscr{X}$, there exists $r\ge 0$ such that
$ \P\{\n X\n\le r\} \ge \frac34$ for all $X\in\mathscr{X}.$
Then
$$\a_0 :=   \frac{\P\{\n X\n  > r\}}{ \P\{\n X\n  \le r\}} \le
\frac13.$$
Define $t_0:=r$ and $t_{n+1}:= r+\sqrt 2 t_n$ for $n\ge 0$. 
By induction it is easy to check that
$
t_{n} = r(1+\sqrt 2)\bigl((\sqrt 2)^{n+1}-1\bigr).
$
Put
$$\a_{n+1} :=\frac{\P\{\n X\n  > t_{n+1}\}}{ \P\{\n X\n  \le r\}}, \qquad n\ge
0.$$
By \eqref{eq:Fernique} and the fact that $X$ and $X'$ are identically
distributed,
$$\a_{n+1} 
=\frac{\P\{\n X\n  > r+\sqrt 2 t_n \}}{ \P\{\n X\n  \le r\}}
\le \left(\frac{\P\{\n X\n  > t_n\}}{ \P\{\n X\n  \le r\}}\right)^2 =
\a_n^2, \qquad\forall n\ge 0.
$$
Therefore, $\a_n\le \a_0^{2^n}\le 3^{-2^n}$
and
$
 \P\{\n X\n  > t_n\}
 =\P\{\n X\n   \le r\}\cdot\a_n \le \frac1{3^{2^n}}.
$
With $\b := (1/(24r^2))\log 3$ we have, for any $X\in\mathscr{X}$,
$$\begin{aligned}
  \E \exp(\b \n X\n^2)  & \le \P\{\n X\n  \le t_0\} \cdot \exp(\b t_0^2) 
+\sum_{n\ge 0}\P\{t_n < \n X\n  \le t_{n+1}\} \cdot 
 \exp(\b t_{n+1}^2)
\\ &\le \exp(\e r^2)
+ \sum_{n\ge 0}  \frac1{3^{2^n}}
\exp\Bigl(\b  r^2 (1+\sqrt 2)^2 \bigl((\sqrt 2)^{n+2}-1)^2\bigr)\Bigr)
\\ &\le \exp(\e r^2)+ 
\sum_{n\ge 0} \exp\Bigl(2^{n}\Bigl[-\log 3 + 4\b  r^2 (1+\sqrt 2)^2\Bigr]\Bigr),
\end{aligned}
$$
where we used that $t_0=r$ and $4(1+\sqrt 2)^2 < 24.$ 
By the choice of $\b$, the sum on the right-hand side if finite.
\end{proof}

It is known that 
$$ \E \exp\big(\frac1{2\a^2}\n X\n^2\big)<\infty$$ if and only if
$\a^2>\sigma_X^2$, where 
$$ \sigma_X^2 = \sup_{\n x\s\n\le 1} \E |\lb X,x\s\rb|^2$$
is the {\em weak variance} of $X$; see
{\sc Marcus} and {\sc Shepp} \cite{MarShe} and  {\sc Ledoux} and {\sc Talagrand}
\cite[Corollary 3.2]{LedTal}.

Fernique's theorem (or rather the much weaker statement that $\E\n
X\n^2<\infty$) allows us to define
the {\em covariance operator} of a Gaussian random variable $X$ as the operator
$Q\in\calL(E\s,E)$ by
$$ Q x\s := \E \lb X,x\s\rb X.$$ 
Noting that $\E \lb X,x\s\rb^2 = \lb Qx\s,x\s\rb$, the Fourier transform of $X$
can be expressed in terms of $Q$ by
$$ \E \exp(-i\lb X,x\s\rb) = \exp(-\tfrac12\lb Qx\s,x\s\rb).$$

If $T\in \g(H,E)$ is a $\g$-radonifying operator and $W$ is an $H$-isonormal
 process, then $W(T)$ is a Gaussian random variable. We shall prove next that
every Gaussian random variable $X:\O\to E$ canonically arises in this way.
To this end we define the Hilbert space $H_X$ as the closed linear span in
$L^2(\O)$ of the random variables $\lb X,x\s\rb$. The inclusion mapping $W_X :
H_X\to L^2(\O)$ is an isonormal process.

\begin{theorem}[{\sc Karhunen-Lo\`eve}]\label{thm:KL}
Let $X$ be an $E$-valued Gaussian random variable. Then the linear operator
$T_X: H_X\to E$ defined by
$$ T_X \lb X,x\s\rb := \E \lb X,x\s\rb X,$$
is bounded and belongs to $\g(H_X,E)$, and we have $$W_X(T_X) = X.$$ 
\end{theorem}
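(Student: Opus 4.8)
The plan is to reduce the entire statement to the functional-testing criterion of Theorem \ref{thm:functionals}, using $X$ itself as the witnessing random variable, and to read off the adjoint of $T_X$ explicitly. Before Theorem \ref{thm:functionals} can be invoked, I would first check that $T_X$ is a well-defined bounded operator. Well-definedness is a consequence of covariance domination: writing $Q\in\calL(E\s,E)$ for the covariance operator $Qx\s=\E\lb X,x\s\rb X$, if $\lb X,x\s\rb=0$ in $L^2(\O)$ then for every $y\s\in E\s$ the Cauchy--Schwarz inequality gives $\lb Qx\s,y\s\rb=\E\lb X,x\s\rb\lb X,y\s\rb=0$, so $Qx\s=0$. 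Hence $T_X\lb X,x\s\rb:=Qx\s$ depends only on the element $\lb X,x\s\rb\in H_X$, not on the chosen representative $x\s$.

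First I would establish boundedness. For $x\s\in E\s$ and any $y\s\in E\s$ with $\n y\s\n\le 1$, Cauchy--Schwarz in $L^2(\O)$ together with the definition of the weak variance yields
$$|\lb Qx\s,y\s\rb|=|\E\lb X,x\s\rb\lb X,y\s\rb|\le(\E\lb X,x\s\rb^2)^{\frac12}(\E\lb X,y\s\rb^2)^{\frac12}\le\sigma_X(\lb Qx\s,x\s\rb)^{\frac12}.$$
Taking the supremum over such $y\s$ and recalling that $\n\lb X,x\s\rb\n_{H_X}^2=\E\lb X,x\s\rb^2=\lb Qx\s,x\s\rb$, this gives $\n T_X\lb X,x\s\rb\n=\n Qx\s\n\le\sigma_X\n\lb X,x\s\rb\n_{H_X}$. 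Since the random variables $\lb X,x\s\rb$ are dense in $H_X$, the map $T_X$ extends to a bounded operator with $\n T_X\n\le\sigma_X$. Here $\sigma_X<\infty$, and moreover $X\in L^2(\O;E)$, both being guaranteed by Fernique's theorem (Proposition \ref{prop:fernique}), applied to the (trivially uniformly tight) singleton family $\{X\}$.

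The heart of the argument is the identification of the adjoint $T_X\s:E\s\to H_X$, where $H_X$ is identified with its dual. For $x\s,y\s\in E\s$,
$$[T_X\s x\s,\lb X,y\s\rb]_{H_X}=\lb T_X\lb X,y\s\rb,x\s\rb=\E\lb X,y\s\rb\lb X,x\s\rb=[\lb X,x\s\rb,\lb X,y\s\rb]_{H_X}.$$
Because the vectors $\lb X,y\s\rb$ span a dense subspace of $H_X$, this forces $T_X\s x\s=\lb X,x\s\rb$. As $W_X$ is simply the inclusion $H_X\embed L^2(\O)$, we obtain $W_X(T_X\s x\s)=\lb X,x\s\rb$ in $L^2(\O)$ for every $x\s\in E\s$. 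Theorem \ref{thm:functionals}, applied with the random variable $X\in L^2(\O;E)$, then delivers both $T_X\in\g(H_X,E)$ and $W_X(T_X)=X$ at once. The only genuinely delicate point is recognizing that $T_X\s x\s$ coincides exactly with $\lb X,x\s\rb$; once this is seen, the functional-testing theorem does all the work and no explicit orthonormal (Karhunen--Lo\`eve) expansion is required. I would expect the well-definedness and boundedness checks to be entirely routine, so the substance lies solely in choosing $X$ as the correct witness and reading off the adjoint.
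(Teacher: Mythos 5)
Your proof is correct and takes essentially the same route as the paper's: the identical Cauchy--Schwarz estimate for boundedness (your $\sigma_X$ coincides with the paper's $M_X$), followed by the identification $T_X\s x\s = \lb X,x\s\rb$ and an appeal to Theorem \ref{thm:functionals} with $X$ as the witnessing random variable. Your explicit checks of well-definedness and of the adjoint identity merely spell out steps the paper leaves implicit.
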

\begin{proof}
For all $x\s,y\s\in E\s$ we have
$$\bal |\lb T_X \lb X,x\s\rb, y\s\rb| & \le \E |\lb X,x\s\rb \lb X,y\s\rb| 
\le \n\lb X,x\s\rb\n_{L^2(\O)} \n\lb X,y\s\rb\n_{L^2(\O)}
\\ & =  \n\lb X,x\s\rb\n_{H_X} \n\lb X,y\s\rb\n_{L^2(\O)}\le M_X\n\lb
X,x\s\rb\n_{H_X} \n y\s\n,
\eal
$$
where $M_X$ is the norm of the bounded operator from $E\s$ to $L^2(\O)$ defined
by $x\s\mapsto \lb X,x\s\rb$.
This proves that $T_X$ is a bounded operator of  norm $\n T_X\n\le M_X$. To
prove that $T_X\in\g(H_X,E)$ we check the assumptions of Theorem
\ref{thm:functionals}: for all $x\s\in E\s$ we have $T_X\s x\s = \lb X,x\s\rb$
and therefore
$ W_X (T_X\s x\s) = W_X(\lb X,x\s\rb) = \lb X,x\s\rb.$
\end{proof}

These results are complemented by the next characterisation of $\g$-radonifying
operators in terms of Gaussian random variables.

\begin{theorem}\label{thm:X}
For a bounded linear operator $T\in\calL(H,E)$ the following are equivalent:
\ben
\item[\rm(1)] $T\in \g(H,E)$;
\item[\rm(2)] there exists an $E$-valued Gaussian random variable $X$ satisfying
$$\E \lb X,x\s\rb^2 = \n T\s x\s\n^2, \quad x\s\in E\s.$$
\een
In this situation we have $\n T\n_{\g(H,E)}^2 = \E \n X\n^2.$
\end{theorem}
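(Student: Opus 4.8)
The plan is to prove both implications by passing through the canonical constructions already established. For the implication (1)$\Rightarrow$(2), I would start from $T\in\g(H,E)$ and simply let $W$ be any $H$-isonormal process (for instance the one built in Example \ref{ex:1} from a maximal orthonormal system). Set $X:=W(T)$. By the remark following Proposition \ref{prop:xs}, $X$ is a Gaussian random variable, and by Proposition \ref{prop:xs} we have $\lb X,x\s\rb=\lb W(T),x\s\rb=W(T\s x\s)$ in $L^2(\O)$. Since $W$ is isonormal, property (ii) of Definition \ref{def:isonormal} gives $\E|W(T\s x\s)|^2=[T\s x\s,T\s x\s]_H=\n T\s x\s\n^2$, which is exactly the covariance identity in (2). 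The same computation, combined with the It\^o isometry (Proposition \ref{prop:Ito}), yields $\E\n X\n^2=\E\n W(T)\n^2=\n T\n_{\g(H,E)}^2$, disposing of the final norm identity on this side.

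For the harder implication (2)$\Rightarrow$(1), I would feed the given Gaussian random variable $X$ into the Karhunen--Lo\`eve machinery of Theorem \ref{thm:KL}. That theorem produces the Hilbert space $H_X\subseteq L^2(\O)$, the isonormal inclusion process $W_X:H_X\to L^2(\O)$, and an operator $T_X\in\g(H_X,E)$ with $T_X\s x\s=\lb X,x\s\rb$ and $W_X(T_X)=X$. The key observation is that the hypothesis $\E\lb X,x\s\rb^2=\n T\s x\s\n^2$ says precisely that the two operators $T:H\to E$ and $T_X:H_X\to E$ have \emph{the same covariance}, since $\n T_X\s x\s\n_{H_X}^2=\E\lb X,x\s\rb^2=\n T\s x\s\n_H^2$ for every $x\s\in E\s$. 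I would then argue that two bounded operators into $E$ (from possibly different Hilbert spaces) whose adjoints induce the same quadratic form $x\s\mapsto\n T\s x\s\n^2$ on $E\s$ give rise to the same element of $\calL(H,E)$ up to a Hilbert-space isometric identification, and in particular one is $\g$-radonifying iff the other is, with equal $\g$-norms. Concretely, the map $T\s x\s\mapsto T_X\s x\s$ extends to an isometry $U:\ov{\mathrm{ran}(T\s)}\to\ov{\mathrm{ran}(T_X\s)}=H_X$, and $T=T_X U$ on $\ov{\mathrm{ran}(T\s)}$ while $T$ vanishes on $\ker T=(\ov{\mathrm{ran}(T\s)})^\perp$; since $T_X\in\g(H_X,E)$, the ideal property (Theorem \ref{thm:ideal}) gives $T_X U\in\g(\ov{\mathrm{ran}(T\s)},E)$, and Proposition \ref{prop:incl} promotes this to $T\in\g(H,E)$ with $\n T\n_{\g(H,E)}=\n T_X\n_{\g(H_X,E)}=\E\n X\n^2$ via Theorem \ref{thm:KL}.

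I expect the main obstacle to be the bookkeeping in the covariance-matching step, namely verifying cleanly that equality of the weak covariances $\n T\s x\s\n^2=\n T_X\s x\s\n^2$ really does force a genuine isometric intertwining $T=T_XU$ rather than merely equality of distributions of the associated Gaussian variables. The delicate point is that $U$ is only defined and isometric on $\ov{\mathrm{ran}(T\s)}$, so I must be careful about how $T$ and $T_X$ behave on the orthogonal complements (kernels), and invoke the orthogonal decomposition \eqref{eq:sep-supp} to control the kernel contributions. An alternative, perhaps cleaner route that sidesteps constructing $U$ explicitly is to observe via the Fourier-transform characterisation (as in Lemma \ref{lem:g-compar}) that $W(T)$ and $X$ are \emph{identically distributed} whenever their covariances agree, so that $\E\n W(T)\n^2=\E\n X\n^2$ automatically; but to conclude membership $T\in\g(H,E)$ from hypothesis (2) alone one still needs the existence of \emph{some} $\g$-radonifying operator with the prescribed covariance, and that existence is exactly what Theorem \ref{thm:KL} supplies. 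I would therefore lean on the Karhunen--Lo\`eve construction as the backbone and use Theorem \ref{thm:functionals} as the tool that converts the covariance identity directly into the defining condition $W_X(T\s x\s)=\lb X,x\s\rb$ for $\g$-radonification, which is likely the most economical way to finish.
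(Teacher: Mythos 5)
Your proposal is correct, and the (1)$\Rightarrow$(2) direction coincides with the paper's (take $X=W(T)$, Proposition \ref{prop:xs}, It\^o isometry). For (2)$\Rightarrow$(1), however, you take a genuinely different route. The paper's proof is a direct one-liner: setting $G:=\ov{{\rm ran}(T\s)}$, it \emph{defines} a $G$-isonormal process by $W(T\s x\s):=\lb X,x\s\rb$ (well-definedness and the isonormal property are exactly the polarized form of the covariance hypothesis), applies Theorem \ref{thm:functionals} to get $T\in\g(G,E)$, and uses Proposition \ref{prop:incl} together with $T\equiv 0$ on $G^\perp$; the norm identity is then obtained by observing that $W(T)$ and $X$ have equal covariances, hence are identically distributed by Fourier uniqueness. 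You instead route through the Karhunen--Lo\`eve theorem: build $T_X\in\g(H_X,E)$, match covariances, and transport along a unitary $U:G\to H_X$. This works, and the worry you flag resolves cleanly by polarization: $[T\s x\s,T\s y\s]=\E\lb X,x\s\rb\lb X,y\s\rb=[T_X\s x\s,T_X\s y\s]_{H_X}$ simultaneously gives well-definedness and isometry of $U$ and the identity $TT\s=T_XT_X\s$, whence $T=T_XU$ on $G$ by density of ${\rm ran}(T\s)$; the ideal property (Theorem \ref{thm:ideal}) and Proposition \ref{prop:incl} then finish, and since $U$ is unitary the norms agree. What each approach buys: the paper's is shorter and skips the intertwining bookkeeping entirely, at the modest cost of the identical-distribution step for the norm identity; yours makes explicit the structural fact that operators with equal covariance are unitarily intertwined, and gets the norm identity for free from $W_X(T_X)=X$ without any distributional argument. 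Note also that your closing ``more economical'' variant is the paper's proof in disguise: composing $W_X$ with $U$ yields precisely the $G$-isonormal process satisfying $W(T\s x\s)=\lb X,x\s\rb$, so the Karhunen--Lo\`eve detour can be removed, collapsing your argument to the paper's.
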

\begin{proof}

(1)$\Rightarrow$(2): Take $X = W(T)$, where $W$ is any $H$-isonormal process.

(2)$\Rightarrow$(1): Let $G$ be the closure of the range of $T\s$ in $H$. 
Then  $ W(T\s x\s) := \lb X,x\s\rb$ defines a $G$-isonormal process, 
and Theorem \ref{thm:functionals} implies that $T\in \g(G,E)$. Since $T\equiv 0$
on $G^\perp$ it follows that $T\in \g(H,E)$. 

To prove the final identity we note that for all 
$x\s\in E\s$ we have $\E\lb W(T),x\s\rb^2=\E\lb X,x\s\rb^2$. This implies that
the Gaussian random variables $W(T)$ and $X$ are identically distributed.
Therefore by Proposition \ref{prop:Ito},
$ \E \n X\n^2 = \E\n W(T)\n^2 = \n T\n_{\g(G,E)}^2 = \n T\n_{\g(H,E)}^2.$
\end{proof}

\section{Covariance domination}\label{sec:domination}

Our next aim is to generalise the simple covariance domination inequality of
Lemma \ref{lem:g-compar}. 
 
We begin with a classical inequality for Gaussian random variables
with values in $\R^d$ due to {\sc Anderson} \cite{And}. 
The Lebesgue measure of a Borel subset $B$ of $\R^d$ is denoted by $|B|$. 
 
\begin{lemma}
If $C$ and $K$ are symmetric convex subsets of $\R^d$, 
then for all $x\in\R^d$ we have
$$ |(C-x)\cap K| \le  |C\cap K|.$$
\end{lemma}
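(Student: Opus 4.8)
The plan is to derive this (Anderson's inequality) from the Brunn--Minkowski inequality, which is the one genuinely nontrivial ingredient. First I would record the reduction coming from symmetry: since $C=-C$ and $K=-K$ and Lebesgue measure is invariant under the reflection $y\mapsto -y$, and since this reflection carries $(C-x)\cap K$ onto $(C+x)\cap K$, we get $|(C-x)\cap K| = |(C+x)\cap K|$. In particular the two intersections are empty simultaneously, and it suffices to compare the value at $x$ with the value at $-x$ through their common reference point $C\cap K$.

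The key geometric observation is a convexity inclusion. For $a\in (C+x)\cap K$ and $b\in(C-x)\cap K$ one has $a,b\in K$, so $\tfrac12(a+b)\in K$ by convexity of $K$; and $a-x\in C$ together with $b+x\in C$ give $\tfrac12\big((a-x)+(b+x)\big)=\tfrac12(a+b)\in C$ by convexity of $C$. Hence
$$ \tfrac12\big[(C+x)\cap K\big]+\tfrac12\big[(C-x)\cap K\big]\subseteq C\cap K. $$
All three sets here are convex, being intersections of convex sets, so in particular Lebesgue measurable, and the Minkowski sum on the left is again convex; this is exactly the point that rules out the measurability pathologies that can occur for sums of general measurable sets.

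Applying the Brunn--Minkowski inequality $|A+B|^{1/d}\ge |A|^{1/d}+|B|^{1/d}$ to the two convex sets on the left, together with the scaling identity $|\tfrac12 A| = 2^{-d}|A|$, I would obtain
\[ |C\cap K|^{1/d} \ge \tfrac12\,|(C+x)\cap K|^{1/d} + \tfrac12\,|(C-x)\cap K|^{1/d}. \]
Combining this with the symmetry identity $|(C+x)\cap K| = |(C-x)\cap K|$ from the first step, the right-hand side collapses to $|(C-x)\cap K|^{1/d}$, and since $t\mapsto t^d$ is increasing on $[0,\infty)$ this yields exactly $|(C-x)\cap K|\le |C\cap K|$.

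The only real obstacle is the proper invocation of Brunn--Minkowski and the treatment of degenerate cases. If either intersection is empty then both are, by the symmetry noted above, and the claim is trivial; if $|C\cap K|=\infty$ the inequality is immediate; otherwise both intersections are nonempty convex sets and the additive form of Brunn--Minkowski applies directly. I would simply cite the classical convex-set version rather than reprove it. Unboundedness of $C$ or $K$ causes no difficulty, the displayed inequality being read with the convention $\infty^{1/d}=\infty$.
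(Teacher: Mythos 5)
Your proposal is correct and follows essentially the same route as the paper's proof: the Brunn--Minkowski inequality applied to $\tfrac12[(C+x)\cap K]$ and $\tfrac12[(C-x)\cap K]$, the symmetry identity $|(C-x)\cap K|=|(C+x)\cap K|$ coming from $(C-x)\cap K=-[(C+x)\cap K]$, and the convexity inclusion $\tfrac12[(C+x)\cap K]+\tfrac12[(C-x)\cap K]\subseteq C\cap K$. Your additional verifications (the pointwise check of the inclusion, the empty and infinite-measure cases) are details the paper leaves implicit, but the argument is identical in substance.
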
 
\bpf
By the Brunn-Minkowski inequality (see {\sc Federer} \cite[Theorem
3.2.41]{Fed}),
$$ |\tfrac12(C+x)\cap K + \tfrac12[(C-x)\cap K]|^\frac1d \ge 
\tfrac12 |(C+x)\cap K|^\frac1d + \tfrac12 |(C-x)\cap K|^\frac1d.$$
Now $(C-x)\cap K =-[(C+x)\cap K]$ and therefore $|(C-x)\cap K| =|(C+x)\cap K|.$
Plugging this into the estimate and raising both sides to the power $d$ we
obtain
$$|\tfrac12(C+x)\cap K + \tfrac12[(C-x)\cap K]|\ge |(C-x)\cap K|.$$
Since
$ \tfrac12[(C+x)\cap K] + \tfrac12[(C-x)\cap K] \subseteq C \cap K $
this gives the desired inequality.
\epf

Recall our convention that Gaussian random variables are always centred.

\begin{theorem}[{\sc Anderson}]
Let $X$ be an $\R^d$-valued Gaussian random variable
and let $C\subseteq \R^d $ be a symmetric convex set.
Then for all $x\in \R^d$ we have
$$ \P\{X+x\in C\}\le \P\{X\in C\}.$$ 
\end{theorem}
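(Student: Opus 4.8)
The plan is to derive the probabilistic inequality from the geometric lemma just proved, by decomposing the Gaussian law into the super\nobreakdash-level sets of its density. First I would reduce to the case where $X$ is non-degenerate, i.e.\ where its covariance operator $Q\in\calL(\R^d)$ is strictly positive definite; the degenerate case I postpone, as it is the only genuinely delicate point. For non-degenerate $X$ the argument is then a single clean slicing step together with one application of the lemma.

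So suppose $X\sim N(0,Q)$ with $Q$ strictly positive definite. Then $X$ has density
\[
f(y) = (2\pi)^{-d/2}(\det Q)^{-1/2}\exp\big(-\tfrac12\lb Q^{-1}y,y\rb\big),\qquad y\in\R^d.
\]
Two structural facts are all I need: $f$ is symmetric, $f(-y)=f(y)$, and $f=\psi(\lb Q^{-1}\cdot,\cdot\rb)$ for a strictly decreasing $\psi$, so that each super-level set $E_t:=\{y:\ f(y)>t\}$ is an open solid ellipsoid centred at the origin, hence a \emph{symmetric convex} set. Next I would express both probabilities through the layer-cake (Cavalieri) formula and Tonelli's theorem, noting that convex sets are Lebesgue measurable:
\[
\P\{X+x\in C\} = \P\{X\in C-x\} = \int_0^\infty |(C-x)\cap E_t|\,dt,
\]
\[
\P\{X\in C\} = \int_0^\infty |C\cap E_t|\,dt.
\]
Now the preceding lemma applies inside the integrand with its "$K$" taken to be $E_t$: since $C$ and $E_t$ are both symmetric and convex, $|(C-x)\cap E_t|\le |C\cap E_t|$ for every $t>0$. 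Integrating this pointwise inequality over $t\in(0,\infty)$ yields $\P\{X+x\in C\}\le\P\{X\in C\}$, as claimed.

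The step I expect to be the main obstacle is the degenerate case, where $Q$ is only positive semidefinite and $X$ has no density on $\R^d$. The naive remedy is approximation: set $X_\e\sim N(0,Q+\e^2 I)$, apply the non-degenerate case to get $\P\{X_\e+x\in C\}\le\P\{X_\e\in C\}$, and let $\e\downarrow 0$ using $X_\e\to X$ in distribution. The trouble is that passing limits through $\P\{\,\cdot\in C\,\}$ requires control of $\partial C$, and for a degenerate limit law $\mu$ supported on $V:=\mathrm{ran}(Q)$ the mass $\mu(\partial C)$ need not vanish in general (take $C=V$). The cleaner route, which I would prefer, is to work on the support from the start: $X$ is a non-degenerate Gaussian on the subspace $V$, and writing $x=u+w$ with $u\in V$, $w\in V^\perp$, the in-support shift $u$ is handled by the non-degenerate result applied \emph{inside} $V$ to the symmetric convex set $C\cap V$, while the orthogonal shift $w$ merely replaces $C\cap V$ by the slice $(C-w)\cap V$. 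It then remains to show that this slice has no larger Gaussian measure, i.e.\ $\mu\big((C-w)\cap V\big)\le\mu\big(C\cap V\big)$. This last monotonicity is \emph{not} an instance of the lemma as stated, since the slice is convex but not symmetric; establishing it cleanly — via the central-symmetrization inclusion $C\cap V\supseteq\tfrac12\big((C-w)\cap V\big)+\tfrac12\big((C+w)\cap V\big)$ combined with the Brunn–Minkowski inequality, which is exactly the engine behind the lemma — is where the real care is needed.
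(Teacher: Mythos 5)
Your non-degenerate argument is correct and is essentially the paper's own proof: the layer-cake decomposition over the super-level sets of the density is exactly what the paper does when it approximates the density from below by positive linear combinations of indicators of symmetric convex sets and invokes monotone convergence, each level set being a symmetric convex ellipsoid. (The paper writes only the standard density $\exp(-\frac12|y|^2)$ and is silent about degenerate covariances, so on that point you are being more careful than your source; the general non-degenerate $Q$ is also reachable from the standard case by the substitution $y\mapsto Q^{1/2}y$.)

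However, your plan for the degenerate case contains a step that fails as stated: the two shifts $u\in V$ and $w\in V^{\perp}$ do \emph{not} decouple. The event in question is $\{X+x\in C\}=\{X+u\in (C-w)\cap V\}$, and the slice $(C-w)\cap V$ is convex but in general not symmetric: take $C=\{y\in\R^2:\ |y_1-y_2|\le \d,\ |y_1+y_2|\le 2\}$, $V$ the first coordinate axis and $w=(0,1)$; the slice is an interval centred near $1$, not near $0$. So "the in-support shift $u$ is handled by the non-degenerate result applied inside $V$ to $C\cap V$" is not available — Anderson's inequality genuinely requires symmetry (shifting a non-symmetric convex set towards its centre \emph{increases} its Gaussian measure), and the set to which you would have to apply it is the slice, not $C\cap V$. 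Your remaining claim $\mu\bigl((C-w)\cap V\bigr)\le\mu(C\cap V)$ is correct by the symmetrization you indicate, but it concerns the \emph{centred} law and cannot be composed with the $u$-shift: neither of your two steps, nor their concatenation, bounds $\P\{X+u\in (C-w)\cap V\}$. The repair uses exactly the engine you name, applied to the combined shift at once: put $A:=(C-x)\cap V$, so that $\P\{X+x\in C\}=\mu(A)$; by symmetry of $C$ and $V$ one has $-A=(C+x)\cap V$, and convexity gives $\tfrac12 A+\tfrac12(-A)\subseteq C\cap V$. It then suffices to show $\mu(A)\le \mu\bigl(\tfrac12A-\tfrac12A\bigr)$ for the non-degenerate centred Gaussian $\mu$ on $V$ and convex $A\subseteq V$, and this follows by slicing $\mu$ over its level ellipsoids $K$ and applying Brunn--Minkowski within each slice: $\tfrac12(A\cap K)-\tfrac12(A\cap K)\subseteq \bigl(\tfrac12A-\tfrac12A\bigr)\cap K$ since $K$ is symmetric convex, while $\bigl|\tfrac12 B-\tfrac12 B\bigr|\ge |B|$; this is the non-symmetric strengthening $|A\cap K|\le \bigl|\bigl(\tfrac12A-\tfrac12A\bigr)\cap K\bigr|$ of the paper's lemma, which it also subsumes. (Alternatively, the regularization $X_\e=X+\e Z$ that you dismissed does work for closed $C$ if one fattens: with $C_\d:=C+\d B_{\R^d}$, Fatou and the non-degenerate case give $\P\{X+x\in C\}\le \liminf_\e\P\{X_\e+x\in C_\d\}\le \liminf_\e\P\{X_\e\in C_\d\}\le \P\{X\in C_{\d'}\}$ for every $\d'>\d$, and $C_{\d'}\downarrow C$ as $\d'\downarrow 0$; the boundary-mass worry is thereby bypassed.)
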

\bpf 
If $K$ is symmetric and convex, then by the lemma,
$$\int_{\R^d}\one_{C-x}(y)\one_K(y)\,dy 
\le \int_{\R^d} \one_{C}(y)\one_K(y)\,dy.
$$
Approximating $y\mapsto \exp(-\frac12 y^2)$ from below by positive 
linear combinations of indicators of symmetric convex sets,
with monotone convergence we conclude that
$$
\bal
\P\{X+x\in C\}& =\frac1{\sqrt{(2\pi)^d}} \int_{\R^d}
\one_{C-x}(y)\exp(-\tfrac12 |y|^2)\,dy \\ &\le  \frac1{\sqrt{(2\pi)^d}}
\int_{\R^d}
\one_{C}(y)\exp(-\tfrac12 |y|^2)\,dy  = \P\{X\in C\}.\eal
$$
 \epf

As an application of Anderson's inequality we have the following comparison
result for $E$-valued Gaussian random variables (see {\sc Neidhardt} \cite[Lemma
28]{Nei}).

\begin{theorem}[Covariance domination II]\label{thm:Anderson}
Let $X_1$ and $X_2$ be 
Gaussian random variables with values
in $E$. If for all $x\s\in E\s$ we have
$$\E\lb X_1,x\s\rb^2\le \E\lb X_2,x\s\rb^2,$$
then for all closed convex symmetric sets $C$ in $E$ we have
$$ \P_1\{X_1\not\in C\}\le \P_2\{X_2\not\in C\}.$$
\end{theorem}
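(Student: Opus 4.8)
The plan is to reduce the infinite-dimensional comparison to a sequence of finite-dimensional ones and then invoke Anderson's theorem. First I would restate the claim in its equivalent form $\P_2\{X_2\in C\}\le \P_1\{X_1\in C\}$ and dispose of the trivial cases $C=\emptyset$ and $C=E$. Since $X_1$ and $X_2$ are tight by Proposition \ref{prop:tight}, they are essentially separably valued, so after replacing $E$ by a separable closed subspace $E_0$ carrying both random variables almost surely (and $C$ by $C\cap E_0$, which is again closed, convex and symmetric) there is no loss of generality in assuming that $E$ is separable.

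Second, I would realise $C$ as a \emph{countable} intersection of symmetric slabs. Because $C$ is closed, convex and symmetric, the bipolar theorem gives $C=\{x:\ |\lb x,x\s\rb|\le 1\ \text{for all}\ x\s\in C^\circ\}$, where $C^\circ$ is the polar of $C$. Choosing a weak-$*$ dense sequence $(x_n\s)_{n\ge 1}$ in $C^\circ$ and using that each map $x\mapsto \lb x,x\s\rb$ is weak-$*$ continuous, I obtain $C=\bigcap_{n\ge 1}\{x:\ |\lb x,x_n\s\rb|\le 1\}$. Setting $C_N:=\bigcap_{n=1}^N\{x:\ |\lb x,x_n\s\rb|\le 1\}$ produces a decreasing sequence of closed sets with $\bigcap_{N\ge 1} C_N=C$, so by continuity from above of the finite measures $\mu_{X_1}$ and $\mu_{X_2}$ it suffices to prove $\P_2\{X_2\in C_N\}\le \P_1\{X_1\in C_N\}$ for each fixed $N$.

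Third comes the finite-dimensional core. Each $C_N$ is the cylinder $\phi_N^{-1}([-1,1]^N)$ for the map $\phi_N(x):=(\lb x,x_1\s\rb,\dots,\lb x,x_N\s\rb)$, so $Y_i:=\phi_N(X_i)$ are $\R^N$-valued Gaussian random variables with covariance matrices $Q_i$. Taking $x\s=\sum_{n=1}^N a_n x_n\s$ in the hypothesis shows $\lb Q_1 a,a\rb\le\lb Q_2 a,a\rb$ for all $a\in\R^N$, i.e. $Q_2-Q_1$ is positive semidefinite. Hence I may write $Y_2\stackrel{d}{=}Y_1'+Z$ with $Y_1'$ a copy of $Y_1$ and $Z$ an independent centred Gaussian with covariance $Q_2-Q_1$; conditioning on $Z$ and applying Anderson's theorem to the symmetric convex cube $[-1,1]^N$ gives $\P\{Y_1'+z\in[-1,1]^N\}\le \P\{Y_1'\in[-1,1]^N\}$ for every $z\in\R^N$, whence $\P\{Y_2\in[-1,1]^N\}\le \P\{Y_1\in[-1,1]^N\}$. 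This is precisely $\P_2\{X_2\in C_N\}\le\P_1\{X_1\in C_N\}$, and letting $N\to\infty$ finishes the proof.

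I expect the main obstacle to be the second step, namely the passage from a general closed convex symmetric $C$ to a countable family of defining functionals. The bipolar representation of $C$ is a priori indexed by the whole polar $C^\circ$ and hence furnishes only a net of cylindrical supersets, for which continuity from above of a finite measure can fail; the separability reduction of the first step is exactly what converts this net into a decreasing sequence. Once $C$ is pinned down by countably many slabs, the remaining work is routine: Anderson's theorem does the essential job, and the decomposition $Q_2=Q_1+(Q_2-Q_1)$ together with Fubini is the standard device for exploiting the covariance inequality.
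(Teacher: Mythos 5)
Your proposal is correct and follows essentially the same route as the paper: reduce to separable $E$, write $C$ as a decreasing intersection of countably many symmetric slabs $C_N$, prove the finite-dimensional case via the decomposition $Q_2 = Q_1 + (Q_2 - Q_1)$, Fubini, and Anderson's inequality, and let $N\to\infty$. The only (harmless) variation is in how the countable family of functionals is produced: you invoke the bipolar theorem together with weak$\s$-separability of the polar $C^\circ$ (which does hold, since $C^\circ\cap nB_{E\s}$ sits in the weak$\s$-compact metrizable ball $nB_{E\s}$ when $E$ is separable, a point you should state explicitly), whereas the paper separates each point of $\complement C$ by Hahn--Banach and extracts a countable subcover via the Lindel\"of property of the separable space $E$.
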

\begin{proof}
We proceed in two steps.

{\em Step 1} - 
First we prove the theorem for $E=\R^d$.
Let $Q_1$ and $Q_2$ denote the covariance matrices of $X_1$ and $X_2$. The
assumptions of the theorem imply that the matrix $Q_2-Q_1$ is symmetric and
non-negative definite, and
therefore it is the covariance matrix of some Gaussian random variable $X_3$
with values in $\R^d$.
On a possibly larger probability space $(\tilde\O,\tilde\F,\tilde\P)$ let
$\tilde X_j$ be independent copies of $X_j$.
Then 
$\tilde X_1+\tilde X_3$ has covariance
matrix $Q_1 + (Q_2-Q_1) = Q_2$. Hence, by Fubini's theorem and 
Anderson's inequality,
$$
 \P\{X_2\in C\} 
= \tilde \P\{\tilde X_1+\tilde X_3\in C\} 
\le \P\{X_1\in C\}.
$$

{\em Step 2} - We will reduce the general case to
the finite-dimensional case by a procedure known as {\em cylindrical
approximation}.
Let $X_1$ and $X_2$ be Gaussian random variables with values in a 
real Banach space $E$. By strong measurability, $X_1$ and $X_2$ take their
values in a 
separable closed subspace of $E$ almost surely and therefore we may assume
that $E$ itself is separable. 

For each $u\in\complement C$ there exists an element
$x_u\s\in E\s$ such that  
$\lb u,x_u\s\rb > 1$ and $\lb x,x_u\s\rb \le 1$ for all $x\in C.$ 
Since $C$ is symmetric, we also have 
$-\lb x,x_u\s\rb\le 1$ for all $x\in C.$ 
Choose balls $B_u$ with centres $u$ such that $\lb v,x_u\s\rb > 1$ for all $v\in
B_u$. The family $\{B_u: \ u\in \complement C\}$ is an open cover of
$\complement C$ and by the Lindel\"of property of $E$ it has a countable
subcover $\{B_{u_n}: \ n\ge 1\}$. Let us write $B_n:= B_{u_n}$ and $x_n\s:=
x_{u_n}\s$.

Put $$C_N := \{x\in E: \ |\lb x,x_n\s\rb|\le 1, \ n=1,\dots, N\},\qquad N\ge
1.$$
Each $C_N$ is convex and symmetric, we have
$C_1\supseteq C_2 \supseteq \dots$ and, noting that $u\not\in C_N$ for all $u\in
B_N$,  
$\bigcap_{N\ge 1} C_N = C.$

Define $\R^N$-valued Gaussian variables by
$X_{j,N} := T_N X_j$ for $j=1,2$,
where $T:E\to \R^N$ is given by
$ T_Nx := (\lb x,x_1\s\rb,\dots, \lb x,x_N\s\rb)$.  
The covariances of $X_{j,N}$ are given by
$T_NQ_jT_N\s$, and for all
$\xi\in\R^N$ we have
$$ \lb  T_NQ_1T_N\s \xi,\xi\rb = \lb Q_1 T_N\s \xi,T_N\s\xi\rb
\le \lb Q_2 T_N\s \xi,T_N\s\xi\rb= \lb  T_NQ_2T_N\s \xi,\xi\rb.$$
Hence, by what we have already proved,
$$
\P\{X_2\in C_N\} = \P\{X_{2,N}\in [-1,1]^N\}
\le \P\{X_{1,N}\in [-1,1]^N\} = \P\{X_1\in C_N\}.
$$
Upon letting $N\to\infty$ we obtain
$\P \{X_2\in C\}\le\P\{X_1\in C\}.$
\end{proof}

\begin{corollary}\label{cor:convex-f}
Let $X_1$ and $X_2$ be Gaussian random variables with
values in $E$ and assume that
for all $x\s\in E\s$ we have
$$ \E\lb X_1,x\s\rb^2\le \E\lb X_2,x\s\rb^2.$$ 
Suppose $\phi: E\to [0,\infty)$ is lower semi-continuous, convex and symmetric.
Then,
$$ \E \,\phi(X_1) \le \E \,\phi(X_2).$$
\end{corollary}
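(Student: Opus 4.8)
The plan is to reduce the asserted inequality of expectations to the probabilistic comparison already established in Theorem \ref{thm:Anderson} by means of the layer-cake (Cavalieri) representation of the integral. Since $\phi$ takes values in $[0,\infty)$, both $\E\,\phi(X_1)$ and $\E\,\phi(X_2)$ may be written as
$$ \E\,\phi(X_j) = \int_0^\infty \P\{\phi(X_j) > t\}\,dt, \qquad j=1,2, $$
so it suffices to prove the pointwise-in-$t$ bound $\P\{\phi(X_1) > t\} \le \P\{\phi(X_2) > t\}$ for every $t \ge 0$ and then integrate.

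The key observation is that the sublevel sets of $\phi$ are precisely the sets to which Theorem \ref{thm:Anderson} applies. For $t \ge 0$ I would put
$$ C_t := \{x \in E: \ \phi(x) \le t\}. $$
Because $\phi$ is convex the set $C_t$ is convex; because $\phi$ is lower semi-continuous $C_t$ is closed; and because $\phi$ is symmetric $C_t$ is symmetric. (When $t$ falls below the minimal value $\phi(0)$ the set $C_t$ is empty, a harmless degenerate case.) Moreover one has the identity $\{\phi(X_j) > t\} = \{X_j \not\in C_t\}$, and the lower semi-continuity of $\phi$ — whence its Borel measurability — together with the strong measurability of $X_j$ ensures that $\phi(X_j)$ is a genuine random variable, so all these events are measurable.

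With this in hand the argument closes at once. The covariance hypothesis $\E\lb X_1,x\s\rb^2 \le \E\lb X_2,x\s\rb^2$ is exactly the assumption of Theorem \ref{thm:Anderson}, which therefore yields, for each $t \ge 0$ and the closed convex symmetric set $C_t$,
$$ \P\{\phi(X_1) > t\} = \P\{X_1 \not\in C_t\} \le \P\{X_2 \not\in C_t\} = \P\{\phi(X_2) > t\}. $$
Integrating this inequality over $t \in (0,\infty)$ and reading the layer-cake formula in both directions gives $\E\,\phi(X_1) \le \E\,\phi(X_2)$, as required.

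I do not foresee any serious obstacle, since the substantive content is already carried by Theorem \ref{thm:Anderson} and the present statement is essentially its integrated form. The only points demanding care are the verification that each $C_t$ is closed, convex and symmetric (a direct translation of the three hypotheses on $\phi$) and the measurability of $\phi(X_j)$; both are routine. If one preferred to avoid the layer-cake identity, an alternative route would be to approximate $\phi$ from below by finite suprema of continuous convex symmetric functionals and pass to the limit by monotone convergence, but the sublevel-set argument is the most economical and uses Theorem \ref{thm:Anderson} verbatim.
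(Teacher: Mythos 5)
Your proof is correct and follows essentially the same route as the paper: both apply Theorem \ref{thm:Anderson} to the closed, convex, symmetric sublevel sets $C_t = \{x \in E : \phi(x) \le t\}$ and then integrate the resulting tail-probability comparison via the layer-cake formula. Your additional remarks on measurability and the empty-sublevel-set degeneracy are fine but not needed beyond what the paper records.
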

\begin{proof}
By the assumptions of $\phi$, 
for each $r\ge 0$ the set $C_r := \{x\in E: \ \phi(x)\le r\}$ 
is closed, convex and symmetric.
Therefore, by Theorem \ref{thm:Anderson}, 
$$\P\{ \phi(X_1)\le r\} = \P\{X_1\in C_r\} \ge  \P\{X_2\in C_r\} = 
\P\{ \phi(X_2)\le r\}.$$
Hence,
$$ \E \,\phi(X_1) = \int_0^\infty \P\{ \phi(X_1) > r\}\,dr
\le  \int_0^\infty \P\{ \phi(X_2) > r\}\,dr = \E \,\phi(X_2).
$$
\end{proof}

In particular we obtain that $ \E \n X_1\n^p\le \E \n X_2\n^p$ for all $1\le
p<\infty$; this extends Lemma \ref{lem:g-compar}.

Our next aim is to deduce from Theorem \ref{thm:Anderson} a domination theorem
for Gaussian covariance operators (Theorem \ref{thm:domination} below). 
The proof is based on standard reproducing kernel Hilbert space arguments;
classical references are {\sc Aronszajn} \cite{Aro} and {\sc Schwartz}
\cite{Schw}. We have already employed reproducing kernel arguments implicitly
with the introduction of the space $H_X$ in the course of proving Theorem
\ref{thm:KL}. In the absence of Gaussian random variables $X$, a somewhat more
abstract approach is necessary. 

The starting point is the trivial observation that covariance operators
$Q\in\calL(E\s,E)$ of $E$-valued Gaussian random variables are {\em positive}
and {\em symmetric}, i.e.,
$\lb Qx\s,x\s\rb\ge 0$ for all $x\s\in E\s$ and 
$\lb Qx\s,y\s\rb = \lb Qy\s,x\s\rb$ for all $x\s,y\s\in E\s.$ 

Now let $Q\in\calL(E\s,E)$ be an arbitrary positive symmetric operator. On the 
range of $Q$, the formula
$$
[Qx\s, Qy\s]_{H_Q} := \lb Qx\s, y\s\rb
$$
defines an inner product $[\,\cdot\,,\cdot\,]_{H_Q}$. 
Indeed,
if $Qx\s=0$, then $[Qx\s, Qy\s]_{H_Q} = \lb Qx\s, y\s\rb = 0$, and
if $Qy\s=0$, then $[Qx\s, Qy\s]_{H_Q} = \lb Qx\s, y\s\rb = \lb Qy\s, x\s\rb = 0$
by the symmetry of $Q$. This shows that $[\,\cdot\,,\cdot\,]_{H_Q}$ is well
defined. Moreover, if 
$[Qx\s, Qx\s]_{H_Q} = \lb Qx\s,x\s\rb=0$, then by the Cauchy-Schwarz
inequality we have, for all
$y\s\in E\s$, 
$$|\lb Qx\s,y\s \rb|\le \lb Qx\s,x\s\rb^\frac12 \lb Qy\s,y\s\rb^\frac12=0.$$
Therefore, $Qx\s=0.$

Let $H_Q$ be the real Hilbert space 
obtained by completing the range of
$Q$ with respect to $[\,\cdot\,,\cdot\,]_{H_Q}$. 
From 
$$\n Qx\s\n_{H_Q}^2 = \lb Qx\s,x\s\rb = 
|\lb Qx\s,x\s\rb| \le \n Q\n_{\calL(E\s,E)} \n x\s\n^2
$$  
we see that $Q$ is bounded from $E\s$ into $H_Q$, 
with norm $\le \n Q\n_{\calL(E\s,E)}^{\frac12}$. 
From
$$|\lb Qx\s,y\s\rb| \le \n Qx\s\n_{H_Q}\n Qy\s\n_{H_Q} 
\le \n Qx\s\n_{H_Q}\n Q\n_{\calL(E\s,H_Q)} \n y\s\n$$
it then follows that 
$$\n Qx\s\n \le \n Q\n_{\calL(E\s,H_Q)} \n Qx\s\n_{H_Q}.$$
Thus, the identity mapping $Qx\s\mapsto Qx\s$ on the range of $Q$ 
has a unique extension 
to a bounded linear operator, denoted by $i_Q$, from $H_Q$ into $E$
and its norm satisfies 
$\n i_Q\n \le \n Q\n_{\calL(E\s,H_Q)}$.

The pair $(i_Q,H_Q)$ is the {\em reproducing kernel Hilbert
space (RKHS)} associated with $Q$. 

\begin{remark}
In the special case where $Q$ is the covariance operator of an $E$-valued
Gaussian random variable $X$, then $H_Q$ and the space $H_X$ introduced in the
proof of Theorem \ref{thm:KL} are canonically isometric by means of the mapping
$i_Q\s x\s\mapsto \lb X,x\s\rb$.
\end{remark}

The next proposition has its origins in the work of {\sc Gross} \cite{Gro62,
Gro67}; see also {\sc Baxendale} \cite{Bax}, {\sc Dudley, Feldman, Le Cam}
\cite{DFL}, {\sc Kallianpur} \cite{Kal71}, {\sc Kuelbs} \cite{Kue}, and {\sc
Sat\^o} \cite{Sat}.

\begin{proposition}\label{prop:RKHS1}
Let $(i_Q, H_Q)$ be the RKHS associated with the positive symmetric operator
$Q\in\calL(E\s,E)$. 
The mapping $i_Q: H_Q\to E$ is injective and we have the identity
$$
Q = i_Q\circ i_Q\s.
$$
As a consequence, $Q$ is the covariance operator of an $E$-valued Gaussian
random variable $X$ if and only if $i_Q\in \g(H_Q,E)$. In this situation we have
$$\E \n X\n^2 = \n i_Q\n_{\g(H_Q,E)}^2.$$
\end{proposition}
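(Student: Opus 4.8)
The plan is to base the entire argument on a single computation: the identification of the adjoint $i_Q\s: E\s\to H_Q$. First I would show that $i_Q\s x\s = Qx\s$, where on the right $Q$ is understood as the bounded operator from $E\s$ into $H_Q$ produced in the construction of the RKHS. To see this, fix $x\s\in E\s$ and test against the dense subspace $\textrm{ran}(Q)$ of $H_Q$. For every $y\s\in E\s$, the defining property $i_Q(Qy\s)=Qy\s$ of the inclusion, together with the symmetry of $Q$ and the definition of the $H_Q$-inner product, gives
$$ [i_Q\s x\s, Qy\s]_{H_Q} = \lb i_Q(Qy\s), x\s\rb = \lb Qy\s, x\s\rb = \lb Qx\s, y\s\rb = [Qx\s, Qy\s]_{H_Q}. $$
Since $\textrm{ran}(Q)$ is dense in $H_Q$, this forces $i_Q\s x\s = Qx\s$. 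The factorisation $Q = i_Q\circ i_Q\s$ is then immediate, for $i_Q(i_Q\s x\s) = i_Q(Qx\s) = Qx\s$, now read as an element of $E$.

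For injectivity I would invoke the elementary identity $\textrm{ker}(i_Q) = (\textrm{ran}(i_Q\s))^\perp$, valid for any bounded operator from a Hilbert space into a Banach space: an element $h\in H_Q$ lies in $\textrm{ker}(i_Q)$ if and only if $\lb i_Q h, x\s\rb = [h, i_Q\s x\s]_{H_Q}$ vanishes for every $x\s\in E\s$. By the first step $\textrm{ran}(i_Q\s)=\textrm{ran}(Q)$, which is dense in $H_Q$ by construction, so its orthogonal complement is trivial and $i_Q$ is injective.

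For the characterisation I would apply Theorem \ref{thm:X} to the bounded operator $T=i_Q\in\calL(H_Q,E)$. The point is that, directly from the definition of the $H_Q$-inner product,
$$ \n i_Q\s x\s\n_{H_Q}^2 = \n Qx\s\n_{H_Q}^2 = \lb Qx\s, x\s\rb, \qquad x\s\in E\s. $$
Hence condition (2) of Theorem \ref{thm:X}, namely the existence of a Gaussian random variable $X$ with $\E\lb X,x\s\rb^2 = \n i_Q\s x\s\n_{H_Q}^2$ for all $x\s$, reads $\E\lb X,x\s\rb^2 = \lb Qx\s, x\s\rb$. Because both $Q$ and the covariance operator $Q_X$ of $X$ are symmetric, equality of the associated quadratic forms upgrades by polarisation to $Q=Q_X$; thus this condition is precisely the assertion that $Q$ is the covariance operator of $X$. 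Theorem \ref{thm:X} therefore yields the equivalence $i_Q\in\g(H_Q,E)$ if and only if $Q$ is a Gaussian covariance operator, and its concluding clause delivers $\E\n X\n^2 = \n i_Q\n_{\g(H_Q,E)}^2$.

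The only real subtlety to guard against is the twofold role played by the symbol $Q$ (as an operator into $E$ and as an operator into $H_Q$) and the Riesz identification implicit in writing $i_Q\s: E\s\to H_Q$. Once the adjoint is correctly pinned down as $i_Q\s = Q$, every remaining step is formal, and the bulk of the analytic content is absorbed into Theorem \ref{thm:X}.
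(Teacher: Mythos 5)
Your proof is correct and follows essentially the same route as the paper: you pin down the adjoint via $[i_Q\s x\s, Qy\s]_{H_Q} = \lb Qy\s, x\s\rb$ and density of $\mathrm{ran}(Q)$ in $H_Q$ (the paper writes this as $h_{y\s} = i_Q\s y\s$), deduce the factorisation and injectivity from it, and reduce the covariance characterisation to Theorem \ref{thm:X} using $\n i_Q\s x\s\n_{H_Q}^2 = \lb Qx\s, x\s\rb$. If anything, you are slightly more explicit than the paper, whose printed proof stops at injectivity and leaves the application of Theorem \ref{thm:X} and the polarisation step (upgrading equality of quadratic forms to $Q = Q_X$) implicit.
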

\begin{proof}
Given an element $x\s\in E\s$ we denote by $h_{x\s}$ the element in $H_Q$
represented by $Qx\s$. With this notation we have 
$i_Q(h_{x\s}) = Qx\s$ and 
$$  [h_{x\s}, h_{y\s}]_{H_Q} = \lb Qx\s,y\s\rb.$$
For all $y\s\in E\s$ we then have
$$
[h_{x\s}, h_{y\s}]_{H_Q} = \lb Qx\s,y\s\rb = \lb i_Q(h_{x\s}),y\s\rb
= [h_{x\s}, i_Q\s y\s]_{H_Q}.
$$
Since the elements $h_{x\s}$ span a dense subspace of $H_Q$ it follows that
$h_{y\s} = i_Q\s y\s.$ Therefore,
$$Qy\s = i_Q(h_{y\s}) = i_Q(i_Q\s y\s)$$ for all $y\s\in E\s$,
and the identity $Q = i_Q\circ i_Q\s$ follows. Finally if $i_Q g = 0$ for some
$g\in H_Q$, then for all $y\s\in E\s$ we have
$$[g, h_{y\s}]_{H_Q} =  [g, i_Q\s y\s]_{H_Q} =\lb i_Q g, y\s\rb = 0,$$
and therefore $g = 0$. This proves that $i_Q$ is injective.
\end{proof}

For an interesting addendum to the second part of this theorem we refer to {\sc
Mathieu} and {\sc Fernique} \cite{MatFer}. Using a deep regularity result for
Gaussian processes due to {\sc Talagrand}, they prove that if $Q\in\calL(E\s,E)$
is positive and symmetric, then $i_Q\in\g(H,E)$ if and only if there exists a
sequence $(h_n)_{n\ge 1}$ in $H$ such that the following two conditions are
satisfied:
\begin{enumerate}
\item[\rm(i)] $\limn \n h_n\n^2 \log n = 0$;
\item[\rm(ii)] $\n i_Q h\n \le \sup_{n\ge 1} |[h,h_n]|$ for all $h\in H$.
\end{enumerate}

\begin{proposition}\label{prop:RKHS2}
If $Q,R\in\calL(E\s,E)$ are positive symmetric operators such that
$$ \lb Rx\s,x\s\rb\le \lb Qx\s,x\s\rb, \quad x\s\in E\s,$$
then as subsets of $E$ we have $i_R(H_R)\subseteq i_Q(H_Q)$, and this inclusion
mapping induces a 
contractive embedding $H_R \embed H_Q$.
\end{proposition}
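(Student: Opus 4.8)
The plan is to realise the desired embedding as the Hilbert-space adjoint of a contraction $\phi\colon H_Q\to H_R$ read off directly from the two inner products. Throughout I use Proposition~\ref{prop:RKHS1}: the bounded operators $i_Q\s\colon E\s\to H_Q$ and $i_R\s\colon E\s\to H_R$ send $x\s$ to the classes of $Qx\s$ and $Rx\s$, these classes span dense subspaces of $H_Q$ and $H_R$, one has $\n i_Q\s x\s\n_{H_Q}^2=\lb Qx\s,x\s\rb$ and $\n i_R\s x\s\n_{H_R}^2=\lb Rx\s,x\s\rb$, and both $i_Q$ and $i_R$ are injective.

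First I would define $\phi$ on the (dense) range of $i_Q\s$ by $\phi(i_Q\s x\s):=i_R\s x\s$. Since every element of this range is of the form $i_Q\s z\s$ for a single $z\s\in E\s$, the hypothesis gives at once $\n\phi(i_Q\s z\s)\n_{H_R}^2=\lb Rz\s,z\s\rb\le\lb Qz\s,z\s\rb=\n i_Q\s z\s\n_{H_Q}^2$, so $\phi$ will be a contraction once it is shown to be well defined. Well-definedness is the only real obstacle: if $i_Q\s x\s=i_Q\s y\s$, then with $z\s:=x\s-y\s$ we have $Qz\s=0$, hence $\lb Qz\s,z\s\rb=0$, hence $\lb Rz\s,z\s\rb=0$ by the hypothesis and the positivity of $R$; the Cauchy--Schwarz argument already used to establish that $[\,\cdot\,,\cdot\,]_{H_R}$ is well defined then forces $Rz\s=0$, i.e. $i_R\s x\s=i_R\s y\s$. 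Thus $\phi$ is a well-defined linear contraction on a dense subspace and extends uniquely to a contraction $\phi\colon H_Q\to H_R$ with $\phi\circ i_Q\s=i_R\s$.

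Finally I would pass to the Hilbert-space adjoint $\phi\s\colon H_R\to H_Q$, which is again a contraction. For $g\in H_R$ and $x\s\in E\s$ the identities
$$\lb i_R g,x\s\rb=[g,i_R\s x\s]_{H_R}=[g,\phi(i_Q\s x\s)]_{H_R}=[\phi\s g,i_Q\s x\s]_{H_Q}=\lb i_Q(\phi\s g),x\s\rb$$
hold by the definitions of $i_R\s$, of $\phi$, of $\phi\s$, and of $i_Q\s$; since $x\s$ is arbitrary this yields $i_R g=i_Q(\phi\s g)$ in $E$. This single identity finishes everything: it shows $i_R(H_R)=i_Q(\phi\s(H_R))\subseteq i_Q(H_Q)$, the asserted inclusion of subsets of $E$; it shows $\phi\s$ is injective, since $\phi\s g=0$ forces $i_R g=0$ and hence $g=0$ by injectivity of $i_R$; and it identifies $\phi\s$ with the inclusion mapping, because $\phi\s g$ is precisely the element of $H_Q$ representing $i_R g$ under $i_Q$. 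Being a contraction, $\phi\s$ is the required contractive embedding $H_R\embed H_Q$.
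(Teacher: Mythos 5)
Your proof is correct, and it reaches the paper's embedding by a dual route. The paper applies the Riesz representation theorem directly: for each $x\s\in E\s$ the functional $i_Q\s y\s\mapsto \lb Rx\s,y\s\rb$ is bounded on $H_Q$ with norm at most $\n i_R\s x\s\n_{H_R}$ (this is where the covariance domination enters, via Cauchy--Schwarz), so it is represented by an element $h_{x\s}\in H_Q$ satisfying $i_Q h_{x\s} = Rx\s = i_R i_R\s x\s$, and the embedding is then defined by $i_R\s x\s\mapsto h_{x\s}$ on the dense range of $i_R\s$ and extended. You instead build the comparison contraction $\phi:H_Q\to H_R$, $i_Q\s x\s\mapsto i_R\s x\s$ --- whose contractivity is literally the hypothesis --- and take its Hilbert-space adjoint. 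The two constructions produce the same operator: $[\,i_Q\s y\s,\,\phi\s i_R\s x\s\,]_{H_Q}=[\,i_R\s y\s,\,i_R\s x\s\,]_{H_R}=\lb Rx\s,y\s\rb$, so $\phi\s i_R\s x\s=h_{x\s}$; the Riesz theorem you avoid is of course hidden in the existence of the adjoint. What your version buys: the embedding is globally defined and contractive at once, with no density-extension step and no separate well-definedness check on the $H_R$ side (which the paper leaves implicit), and the identity $i_Q\circ\phi\s=i_R$ on all of $H_R$ makes the injectivity of the embedding explicit --- a point the paper's proof does not spell out, though it follows there too since $i_Q\circ\iota=i_R$ persists under continuous extension and $i_R$ is injective. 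Your well-definedness argument for $\phi$ (from $\lb Qz\s,z\s\rb=0$ deduce $\lb Rz\s,z\s\rb=0$ by positivity, hence $i_R\s z\s=0$) is exactly the one genuinely necessary verification in either approach, and you carry it out correctly.
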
 
\begin{proof}
By the Cauchy-Schwarz inequality, for each $x\s\in E\s$ the mapping
$i_Q\s y\s\mapsto \lb R\s x\s,y\s\rb$
extends to a bounded linear functional $\phi_{x\s}$ on $H_Q$ of norm $\n
\phi_{x\s}\n \le \n i_R\s x\s\n$. By the Riesz representation theorem there
exist a unique element $h_{x\s}\in H_Q$ such that
$ [i_Q\s y\s, h_{x\s}] = \lb Rx\s,y\s\rb$ for all $y\s\in E\s$.
Then
$$ \lb i_Q h_{x\s}, y\s\rb = \lb Rx\s,y\s\rb = \lb i_R i_R\s x\s,y\s\rb.$$
This shows that $i_Q h_{x\s} = i_R i_R\s x\s$. The contractive embedding $H_R
\embed H_Q$ we are looking for is therefore given by $i_R\s x\s\mapsto h_{x\s}$.
\end{proof}

\begin{theorem}[Covariance domination III]\label{thm:domination} Let
$Q\in\calL(E\s,E)$ 
be the covariance operator of an $E$-valued Gaussian random variable $X$.
Let ${\mathcal R}$ be the set of positive symmetric operators  $R\in
\calL(E\s,E)$ satisfying
$$ \lb Rx\s,x\s\rb\le \lb Q x\s,x\s\rb, \quad x\s\in E\s.$$ 
Then each $R\in{\mathcal R}$ is the  covariance operator of a 
$E$-valued Gaussian random variable $X_R$ 
and the family $\{X_R: \ R\in{\mathcal R}\}$ is uniformly tight.
Moreover, for all $R\in{\mathcal R}$ and all $1\le p<\infty$ we have
$$
 \E \n X_R\n^p \le \E \n X\n^p.
$$
\end{theorem}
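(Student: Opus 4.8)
The plan is to split the statement into three parts: existence of each $X_R$, the moment comparison, and uniform tightness. For the first part I would run everything through the reproducing kernel Hilbert space machinery. Fix $R\in{\mathcal R}$. Since $\lb Rx\s,x\s\rb\le\lb Qx\s,x\s\rb$, Proposition \ref{prop:RKHS2} provides a contractive embedding $\iota\colon H_R\to H_Q$, and unwinding its construction (the map $i_R\s x\s\mapsto h_{x\s}$) gives the factorisation $i_R=i_Q\circ\iota$ on the dense range of $i_R\s$, hence on all of $H_R$. Because $Q$ is itself a covariance operator, the forward implication of Proposition \ref{prop:RKHS1} yields $i_Q\in\g(H_Q,E)$. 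The ideal property (Theorem \ref{thm:ideal}), applied to $i_R=i_Q\circ\iota$ with $\n\iota\n\le 1$, then gives $i_R\in\g(H_R,E)$ together with $\n i_R\n_{\g(H_R,E)}\le\n i_Q\n_{\g(H_Q,E)}$. Feeding $i_R\in\g(H_R,E)$ back into the converse half of Proposition \ref{prop:RKHS1} produces the desired Gaussian variable $X_R$ with covariance operator $R=i_R\circ i_R\s$.

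Once existence is in hand, the moment bound $\E\n X_R\n^p\le\E\n X\n^p$ is immediate. By construction $\E\lb X_R,x\s\rb^2=\lb Rx\s,x\s\rb\le\lb Qx\s,x\s\rb=\E\lb X,x\s\rb^2$ for every $x\s\in E\s$, so Corollary \ref{cor:convex-f} applied to the lower semicontinuous, convex, symmetric function $\phi(x):=\n x\n^p$ gives exactly this inequality for each $1\le p<\infty$. Both sides are finite by Fernique's theorem, Proposition \ref{prop:fernique}.

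For uniform tightness I would argue as follows. Given $\e>0$, Proposition \ref{prop:tight} yields a compact set $K_0\subseteq E$ with $\P\{X\notin K_0\}<\e$. Anderson's inequality (Theorem \ref{thm:Anderson}) is only available for closed convex symmetric test sets, so I would replace $K_0$ by the closed absolutely convex hull $C$ of $K_0$, i.e. the closed convex hull of $K_0\cup(-K_0)$. This set is convex, symmetric, and contains $K_0$, so $\P\{X\notin C\}\le\P\{X\notin K_0\}<\e$, and Theorem \ref{thm:Anderson} then gives $\P\{X_R\notin C\}\le\P\{X\notin C\}<\e$ for every $R\in{\mathcal R}$ at once. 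The single compact set $C$ therefore witnesses uniform tightness of the whole family.

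The main obstacle is exactly this last replacement: I need $C$ to be \emph{compact} so that it can serve as the tightness set, and this rests on the Banach space fact that the closed convex hull of a compact set is again compact (applied here to the compact set $K_0\cup(-K_0)$). Apart from this point the argument is a direct assembly of the RKHS factorisation, the ideal property, and the two Anderson-type comparison results.
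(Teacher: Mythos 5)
Your proposal is correct and follows essentially the same route as the paper's proof: the RKHS factorisation $i_R = i_Q\circ i_{R,Q}$ from Propositions \ref{prop:RKHS1} and \ref{prop:RKHS2} combined with the ideal property for existence of $X_R$, Corollary \ref{cor:convex-f} with $\phi(x)=\n x\n^p$ for the moment bound, and Theorem \ref{thm:Anderson} applied to a single compact convex symmetric set for uniform tightness. The one point you flag---that the closed convex symmetric hull of a compact set is compact---is exactly the standard Banach-space fact the paper invokes in the same place, so there is no gap.
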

\begin{proof}
By the second part of Proposition \ref{prop:RKHS1} we have $i_Q\in \g(H_Q,E)$.
By the right ideal property, for all $R\in \mathscr{R}$ we have $i_R = i_Q\circ
i_{R,Q}\in \g(H_R,E)$, where $i_{R,Q}:H_R\embed H_Q$ is the embedding of
Proposition \ref{prop:RKHS2}. Hence by the second part of Proposition
\ref{prop:RKHS1}
there exists an $E$-valued  Gaussian random variables $X_R$ with covariance
operator $R$.

Let $\e>0$ be arbitrary and fixed, and choose a compact set
$K\subseteq E$ such that $\P\{X\in K\}\ge 1-\e$.
By replacing $K$ by its convex symmetric hull, which is still compact,
we may assume that $K$ is convex and symmetric.
In view of 
$$\E \lb X_R,x\s\rb^2 = \lb Rx\s,x\s\rb \le \lb Qx\s,x\s\rb = \E \lb
X,x\s\rb^2,$$
from Theorem 
\ref{thm:Anderson} we obtain that
$\P\{X_R\in K\} \ge \P\{X\in K\}\ge 1-\e$. 
\end{proof}

\section{Compactness}\label{sec:compactness}

Recall that a sequence of $E$-valued random variables $(X_n)_{n\ge 1}$
is said to {\em converge in distribution} to an $E$-valued random variable $X$
if $\limn \E f(X_n) = \E f(X)$
for all $f\in C_{\rm b}(E)$ (see Section \ref{sec:preliminaries}). 
As it turns out, it is possible to allow certain unbounded functions $f$.

\begin{lemma}\label{lem:sq-w-conv} Let $(X_n)_{n\ge 1}$ be a sequence 
of $E$-valued random variables converging in distribution to a random variable
$X$.
Let $\phi:E\to [0,\infty)$ be a Borel function
with the property that
$$ \sup_{n\ge 1}\, \E \phi(X_n) < \infty.$$ 
If $f:E\to\R$ is a continuous function with the property that
$$
|f(x)| \le c(\n x\n)\phi(x), \qquad x\in E,$$
where $c(r)\downarrow 0$ as $r\to\infty$,
then
$$ \lim_{n\to\infty} \E f(X_n) =  \E f(X).$$
\end{lemma}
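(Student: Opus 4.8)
The plan is to approximate $f$ by bounded continuous functions, for which convergence in distribution applies directly, and to control the approximation error uniformly in $n$ using the decay of $c$ together with the $L^2$--type bound $M:=\sup_{n}\E\phi(X_n)$.

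First I would fix a continuous norm cutoff. For $R>0$ let $\psi_R:E\to[0,1]$ be continuous with $\psi_R\equiv 1$ on $\{\n x\n\le R\}$ and $\psi_R\equiv 0$ on $\{\n x\n\ge 2R\}$, say $\psi_R(x)=\bigl(2-\n x\n/R\bigr)^+\wedge 1$; note that $\psi_R$ is nondecreasing in $R$ and $\psi_R(x)\uparrow 1$ pointwise. Put $f_R:=f\psi_R$. Since $1-\psi_R$ vanishes on $\{\n x\n\le R\}$ and $c$ is nonincreasing, the pointwise hypothesis gives $|f(x)-f_R(x)|=|f(x)|(1-\psi_R(x))\le c(R)\phi(x)$, whence for every $n$
$$\bigl|\E f(X_n)-\E f_R(X_n)\bigr|\le c(R)\,\E\phi(X_n)\le c(R)M.$$
As $f_R$ is bounded continuous, convergence in distribution yields $\lim_n\E f_R(X_n)=\E f_R(X)$ for each fixed $R$.

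Combining these two facts, I would take $\limsup$ and $\liminf$ in $n$: the error bound places both $\limsup_n\E f(X_n)$ and $\liminf_n\E f(X_n)$ within $c(R)M$ of $\E f_R(X)$, so their difference is at most $2c(R)M$. Letting $R\to\infty$ and using $c(R)\downarrow 0$ shows that $L:=\lim_n\E f(X_n)$ exists and that $\E f_R(X)\to L$. It then remains to identify $L$ with $\E f(X)$.

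The main obstacle is exactly this last identification, i.e.\ the integrability of $f(X)$ and the passage to the limit on the $X$-side, since $\phi$ is only Borel and merely bounded in $L^1$. I would resolve it through the auxiliary bounded continuous functions $|f|\psi_R$: convergence in distribution gives $\E\bigl(|f|\psi_R\bigr)(X)=\lim_n\E\bigl(|f|\psi_R\bigr)(X_n)\le\sup_n\E|f(X_n)|\le c(0)M$, using $|f|\le c(0)\phi$ with $c(0)=\sup_{r\ge 0}c(r)<\infty$. Since $|f|\psi_R\uparrow|f|$, monotone convergence yields $\E|f(X)|\le c(0)M<\infty$; then $f_R(X)\to f(X)$ pointwise with $|f_R|\le|f|$, so dominated convergence gives $\E f_R(X)\to\E f(X)$, and therefore $\E f(X)=L=\lim_n\E f(X_n)$. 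The one technical point to watch throughout is that $f_R$ and $|f|\psi_R$ be genuinely \emph{bounded}, so that the $C_{\mathrm b}(E)$-convergence applies; this holds whenever $f$ is bounded on bounded sets, in particular when $E$ is finite-dimensional, which is precisely the situation in the central-limit-theorem application to finite Gaussian sums alluded to after Proposition~\ref{prop:KK-R}.
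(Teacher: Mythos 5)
Your proof is correct and follows the same two-step scheme as the paper: approximate $f$ by bounded continuous truncations, apply convergence in distribution to each truncation, and control the truncation error uniformly in $n$ via the decay of $c$ and the bound $M=\sup_{n\ge 1}\E\phi(X_n)$. The difference lies in the truncation device. The paper cuts off in the \emph{range}, setting $f_R=(f\wedge R)\vee(-R)$, and then converts the event $\{|f(X_n)|>R\}$ into a norm event by means of the quantity $\delta(R):=\sup\{\delta\ge 0:\ |f(x)|\le R\ \text{for all}\ \|x\|\le\delta\}$, estimating the error by $c(\delta(R))\,M$ and invoking $\lim_{R\to\infty}\delta(R)=\infty$; you cut off in the \emph{domain} with the norm cutoff $\psi_R$, which makes the error bound $c(R)M$ immediate and dispenses with $\delta(R)$ entirely. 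Two points in your write-up are in fact more complete than the paper's. First, you carry out the passage $\E f_R(X)\to\E f(X)$ explicitly (integrability of $f(X)$ via $|f|\psi_R\uparrow|f|$ and monotone convergence, then dominated convergence), a step the paper leaves implicit after establishing $\lim_n\E f_R(X_n)=\E f_R(X)$ and the uniform error bound. Second, you isolate the hypothesis that $f$ be bounded on bounded sets, needed so that $f_R$ is genuinely in $C_{\rm b}(E)$; in the paper the very same hypothesis is hidden in the unproved assertion $\lim_{R\to\infty}\delta(R)=\infty$, which holds precisely when $f$ is bounded on bounded sets --- automatic in the paper's applications (e.g.\ $f(x)=\|x\|^2$ in Theorem \ref{thm:g-rad-conv}, or the moment functions in the central limit argument after Proposition \ref{prop:KK-R}), but not a consequence of the stated hypotheses in infinite dimensions, where $\phi$, and hence $f$, may be unbounded on the unit ball. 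So your caveat is not a defect of your argument but an accurate diagnosis of an implicit assumption shared by the paper's own proof.
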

\begin{proof}
Put
$$
f_R(x): =\left\{
\begin{array}{rl}
R, & \hbox{if } f(x)> R, \\
f(x), & \hbox{if } -R\le f(x)\le  R, \\
-R, & \hbox{if } f_R(x)< - R, \\
\end{array}\right.
$$ Then $f_R\in C_{\rm b}(E)$ and
\beq\label{eq:fR}
\lim_{n\to\infty} \E f_R(X_n) = \E f_R(X).
\eeq
We also deduce that
\beq\label{eq:est-d(R)}
\bal 
  \lim_{R\to\infty} \Bigl(
\sup_{n\ge 1}\E |f(X_n) - f_R(X_n)|
\Bigr)
 & \le \lim_{R\to\infty} \Bigl(
\sup_{n\ge 1}\E \big(\one_{\{|f(X_n)|> R\}} |f(X_n)|\big)
\Bigr)
\\ & \le \lim_{R\to\infty} c(\delta(R))
\sup_{n\ge 1} \E \big(\one_{\{|f(X_n)|> R\}} 
\phi(X_n)\big),
\eal 
\eeq
where $$\delta(R):= \sup\{\delta\ge 0: \ |f(x)|\le R \ \hbox{ for all $\n
 x\n\le \d$}\}.$$
From $\lim_{R\to\infty} \delta(R) =\infty$
we see that the right-hand side of \eqref{eq:est-d(R)} tends to $0$ as
$R\to\infty$. 
Combined with \eqref{eq:fR}, this gives the desired result.
\end{proof}

The main result of this section gives a necessary and sufficient
condition for relative compactness in the space $\ga(H,E)$. In a rephrasing in
terms of sequential convergence in $\ga(H,E)$, this result is due to {\sc
Neidhardt} \cite{Nei}.

\begin{theorem}\label{thm:g-rad-conv}
Let $W$ be an $H$-isonormal process. For a subset $\cT$ of $\ga(H,E)$ the
following assertions are equivalent:
\begin{enumerate}
\item[\rm(1)] the set $\cT$ is relatively compact in $\ga(H,E)$;
\item[\rm(2)] the set $\{W(T):\  T\in\cT\}$ is relatively compact in
$L^2(\O;\E)$;
\item[\rm(3)] the set $\{W(T):\  T\in\cT\}$ is uniformly tight and for all
$x\s\in
E\s$ the set $\{T\s x\s:\   T\in\cT\}$ is relatively compact in $H$.
\end{enumerate}
\end{theorem}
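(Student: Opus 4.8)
The plan is to establish the cycle $(1)\LRa(2)$ together with $(2)\Ra(3)\Ra(1)$, the point being that $(1)\LRa(2)$ and $(2)\Ra(3)$ are soft consequences of the It\^o isometry and Prokhorov's theorem, whereas the genuine content is in $(3)\Ra(1)$ (which, combined with $(1)\LRa(2)$, also yields $(3)\Ra(2)$). For $(1)\LRa(2)$ I would simply invoke Proposition \ref{prop:Ito}: $W$ is an isometry from $\ga(H,E)$ onto a complete, hence closed, subspace of $L^2(\O;E)$, and such an isometric embedding both preserves and reflects relative compactness. For $(2)\Ra(3)$, relative compactness in $L^2(\O;E)$ forces relative compactness in distribution (as $L^2$-convergence implies convergence in distribution), so uniform tightness follows from the nontrivial implication of Prokhorov's theorem; the second half of (3) I would get by pushing $\{W(T)\}$ forward under the bounded maps $Y\mapsto\lb Y,x\s\rb$, which gives relative compactness of $\{W(T\s x\s)\}=\{\lb W(T),x\s\rb\}$ in $L^2(\O)$ via Proposition \ref{prop:xs}, and then transporting this back through the isometry $W\colon H\to L^2(\O)$ (whose range is closed) to conclude that $\{T\s x\s\}$ is relatively compact in $H$.

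The heart of the argument is $(3)\Ra(1)$. Fixing a sequence $(T_n)$ in $\cT$, I would first reduce to the separable case: the closed span of $\bigcup_n \ov{\mathrm{ran}(T_n\s)}$ is a separable subspace $H_0$ of $H$ on whose orthogonal complement every $T_n$ vanishes (cf. the discussion preceding Corollary \ref{cor:ONB}), and each $W(T_n)$ takes values almost surely in a common separable subspace $E_0$ of $E$, so I may fix a countable sequence $(x_m\s)$ in $E\s$ that is norming for $E_0$. The technical engine I would isolate is the following lemma: \emph{if $(Y_m)$ is a uniformly tight sequence of centred $E$-valued Gaussian random variables whose weak variances $\E\lb Y_m,x\s\rb^2$ tend to $0$ for every $x\s$ in a norming set, then $\E\n Y_m\n^2\to 0$.} To prove it, uniform tightness yields uniform exponential integrability by Fernique's theorem (Proposition \ref{prop:fernique}), hence uniform integrability of $\{\n Y_m\n^2\}$; simultaneously, every subsequential distributional limit (existing by Prokhorov) is a centred Gaussian with vanishing weak variances along the norming set, hence is $0$ almost surely, so $Y_m\to 0$ in distribution. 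Combining these two facts, e.g.\ through Lemma \ref{lem:sq-w-conv}, gives $\E\n Y_m\n^2\to 0$.

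I would then apply this lemma to differences. Since $\{T_n\s x_m\s\}$ is relatively compact in $H$ for each $m$, a diagonal extraction produces a subsequence along which $T_n\s x_m\s$ is Cauchy for every $m$. Writing $Y=W(T_k-T_l)=W(T_k)-W(T_l)$, this family of differences is again uniformly tight (taking the compact witnessing set $K$ convex and symmetric, $K-K$ witnesses tightness of the differences), and its weak variances $\E\lb Y,x_m\s\rb^2=\n T_k\s x_m\s-T_l\s x_m\s\n^2$ tend to $0$ along the subsequence. The lemma then forces $\E\n W(T_k)-W(T_l)\n^2\to 0$, which by the It\^o isometry is exactly $\n T_k-T_l\n_{\ga(H,E)}^2$; hence the subsequence is Cauchy and, by completeness of $\ga(H,E)$, convergent, establishing relative compactness.

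The step I expect to be the main obstacle is precisely the upgrade from convergence in distribution to genuine $L^2(\O;E)$-convergence of the random variables: condition (3) controls the laws of the $W(T)$ and the $H$-behaviour of the $T\s x\s$, but not the realisations. The device that overcomes this is to work throughout with differences $W(T_k-T_l)$ realised inside the \emph{same} isonormal process, so that vanishing weak variances together with uniform tightness pin the differences to $0$ in $L^2$. A secondary subtlety is that (3) supplies only relative compactness, not convergence, of $\{T_n\s x\s\}$; I would neutralise this by extracting diagonally along the countable norming sequence, so that the weak variances entering the lemma are genuinely Cauchy.
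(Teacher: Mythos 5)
Your proposal is correct, and its core — the implication (3)$\Rightarrow$(1) — is essentially the paper's argument: the same reduction to a separable range space, the same diagonal extraction along a countable family of functionals, the same device of forming differences $W(T_k)-W(T_l)$ inside one fixed isonormal process, and the same upgrade from distributional convergence to $L^2$-convergence via Fernique's theorem fed into Lemma \ref{lem:sq-w-conv}. The only substantive difference is organisational and lies in the other half. The paper proves (1)$\Rightarrow$(3) directly: given a sequence converging in $\g(H,E)$, it constructs the witnessing compact set by hand (Chebyshev on $W(T_n-T_{N_m})$, compact sets $K_m$ for finitely many indices, enlargements $V_m$, and $K=\ov{\bigcap_m V_m}$), so that tightness comes with an explicit net construction. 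You instead prove (2)$\Rightarrow$(3) by invoking the nontrivial converse direction of Prokhorov's theorem ($L^2$-convergent subsequences give distributionally convergent subsequences, hence uniform tightness); since the paper states Prokhorov as an unproved equivalence and already uses the forward direction in (3)$\Rightarrow$(1), your shortcut is legitimate and buys brevity, while the paper's construction keeps that implication self-contained and quantitative. Two further cosmetic variances: where the paper kills the subsequential limit $Y$ of the differences by computing Fourier transforms against a weak$\s$-dense sequence in the dual unit ball of $E_0$, you phrase the same fact as a standalone lemma (tight centred Gaussians with weak variances vanishing along a norming set converge to $0$ in $L^2$) and argue $Y=0$ from vanishing weak variances on a countable norming set — equivalent, and arguably a cleaner encapsulation; and your observation that tightness of $\{W(T)\}$ passes to differences (via $K-K$, or the closed convex symmetric hull) is needed and correct, a point the paper handles implicitly when it extracts a distributionally convergent subsequence of $(X_{n_k}-X_{m_k})_{k\ge1}$.
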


\begin{proof}

(1)$\Leftrightarrow$(2): \ This is immediate from the fact that $W$ is
isometric.
 
(1)$\Rightarrow$(3): 
By the continuity of $T\mapsto T\s x\s$,  $\{T\s x\s:\   T\in\cT\}$ is
relatively compact in $H$.
It remains to prove that the set $\{W(T):\   T\in\cT\}$ is uniformly tight.
For this it suffices to prove that every sequence in this set has a subsequence
which is uniformly tight. Let $(T_n)_{n\ge 1}$ be a sequence in $\cT$ and set $
X_n := W(T_n)$. By passing to a subsequence we may assume that $(T_n)_{n\ge 1}$
is convergent in $\g(H,E)$. 

We shall prove that the sequence $(X_n)_{n\ge 1}$ is uniformly tight.
Fix $\e>0$ and choose $m_0\ge 1$ so large that $2^{2-2m_0} < \e$. 
For every $m\ge m_0$ we choose $N_m\ge 1$ so large that 
$$\n T_n-T_{N_m}\n_{\g(H,E)}\le 2^{-{2}m}\qquad\forall n\ge N_m.$$
Let $X_{n,m}:= W(T_n-T_{N_m})$.
By Chebyshev's inequality, for $n\ge N_m$ we have
$$ 
\P\{\n X_{n,m}\n \ge 2^{-m}\}) 
\le 2^{2m} \E \n X_{n,m}\n^2 =  2^{2m}\n T_n-T_{N_m}\n_{\g(H,E)}^2\le 2^{-2m}.
$$
For $m\ge m_0$ we also choose compact sets $K_m\subseteq E$ such that
$$ \P\{X_n\in K_m\} \ge 1-2^{-2m}, \qquad 1\le n\le N_m,$$
and let
$ V_m := \{x\in E: \ d(x,K_m) < 2^{-m}\}.$
For $n\ge N_m$ we have
$$
 \P\{X_n\not\in V_m\}
 \le \P\{\n X_n-X_{N_m}\n\ge 2^{-m}\} + \P\{X_{N_m}\not\in K_{m}\}
 \le 2^{-2m}+2^{-2m} = 2^{1-2m}.
$$
On the other hand, for $1\le n\le N_m$ we have
$$ \P\{X_n\not\in  V_m\}\le \P\{X_n\not\in K_m\}\le 2^{-2m} \le  2^{1-2m}.$$
It follows that the estimate $\P\{X_n\not\in V_m\}\le 2^{1-2m}$ holds for all
$n\ge 1$. 

Let
$$ K := \overline{\bigcap_{m\ge m_0} V_m}.$$
If finitely many open balls $B(x_i,2^{-m})$ cover $K_m$, then the open balls
$B(x_i,3\cdot 2^{-m})$ cover $\overline{V_m}$.
Hence $K$ is totally bounded and therefore compact.
For all $n\ge 1$,
$$ \P\{X_n\not\in K\} \le \sum_{m\ge m_0} \P\{X_n \not\in V_m\}
\le \sum_{m\ge m_0} 2^{1-2m} < 2^{2-2m_0} < \e.
$$
This proves that $(X_n)_{n\ge 1}$ is uniformly tight.

(3)$\Rightarrow$(1): Let $(T_n)_{n\ge 1}$ be a sequence in  $\cT$.
We must show that its contains a Cauchy subsequence. 

Choose a separable closed subspace $E_0$ of $E$ such that each $X_n = W(T_n)$
takes values in $E_0$ almost surely. 
Noting that the weak$\s$-topology of the closed unit ball in $E_0\s$ is
metrisable, we can choose a sequence $(x_j\s)_{j\ge 1}$ in $E\s$ whose
restrictions to $E_0$ are weak$\s$-dense in the closed unit ball of $E_0\s$.
After passing to a subsequence we may assume that for all $j\ge 1$ the sequence
$(T_n\s x_j\s)_{j\ge 1}$ converges in $H$
and that the sequence $(X_n)_{n\ge 1}$ converges in distribution. We claim that
$\lim_{n,m\to\infty} X_{n}-X_m = 0$ in distribution. To see this fix arbitrary
sequences $n_k\to \infty$ and $m_k\to\infty$.
After passing to a subsequence of the indices $k$
we may assume that $(X_{n_k} - X_{m_k})_{k\ge 1}$ 
converges in distribution to some $E_0$-valued random variable $Y$. 
Taking Fourier transforms
we see that for all $j\ge 1$,
$$
\bal
\E \exp(-i\lb Y, x_j\s\rb ) & = \lim_{k\to\infty}\E \exp(-i\lb
X_{n_k}-X_{m_k},x_j\s\rb)
\\ &  = \lim_{k\to\infty}\exp(-\tfrac12\n T_{n_k}\s x_j\s -T_{m_k}\s x_j\s\n) =
1.
\eal
$$
It follows that 
$ \exp(-i\lb Y, x\s\rb) = 1$ for all $x\s\in E\s$, and therefore $Y=0$ by the
uniqueness theorem for the Fourier transform.
This proves the claim.

Thus, for all $f\in C_{\rm b}(E)$ we obtain
$$ \lim_{m,n\to\infty} \E f( X_n-X_m) = \E f(0).$$
By Lemma \ref{lem:sq-w-conv} combined with Proposition \ref{prop:fernique}
and Theorem \ref{thm:X},  
$$\lim_{m,n\to\infty} \n T_n-T_m\n_{\g(H,E)}^2 = \lim_{m,n\to\infty} \E\n
X_n-X_m\n^2 = 0.
$$
\end{proof}

Here is a simple application:

\begin{theorem}\label{thm:dom}
Let $\cT$ be a subset of $\calL(H,E)$ which is dominated in covariance by some
fixed
element $S\in \ga(H,E)$, in the sense that for all $T\in \cT$ and $x\s\in E\s$,
$$ \n T\s x\s\n \leq \n S\s x\s\n.$$
Then the following assertions are equivalent:
\ben
\item[\rm(1)] the set $\cT$ is relatively compact in $\ga(H,E)$;
\item[\rm(2)] the set $\{T\s x\s: T\in \cT\}$ is relatively compact in $H$ for
all $x\s\in E\s$. \een
\end{theorem}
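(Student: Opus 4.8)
The plan is to read the result directly off the compactness criterion of Theorem~\ref{thm:g-rad-conv}. That theorem characterises relative compactness of $\cT$ in $\g(H,E)$ by two conditions on the Gaussian variables $W(T)$: uniform tightness of $\{W(T):\ T\in\cT\}$ together with relative compactness in $H$ of each set $\{T\s x\s:\ T\in\cT\}$. Condition~(2) of the present theorem is precisely the second of these, so the entire content is that \emph{the covariance domination hypothesis forces the first condition to hold automatically}. Accordingly I would isolate the following claim and prove it from the domination assumption alone: every $T\in\cT$ belongs to $\g(H,E)$, and the family $\{W(T):\ T\in\cT\}$ is uniformly tight.

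To prove the claim, first fix an $H$-isonormal process $W$ and let $Q:=SS\s\in\calL(E\s,E)$, which is the covariance operator of the Gaussian variable $W(S)$ since $\lb Q x\s,x\s\rb=\n S\s x\s\n^2=\E|W(S\s x\s)|^2=\E\lb W(S),x\s\rb^2$. For each $T\in\cT$ set $R_T:=TT\s\in\calL(E\s,E)$; this operator is positive and symmetric with $\lb R_T x\s,x\s\rb=\n T\s x\s\n^2$, and the domination hypothesis gives $\lb R_T x\s,x\s\rb\le\lb Qx\s,x\s\rb$ for all $x\s\in E\s$. Thus every $R_T$ lies in the class $\mathcal R$ of Theorem~\ref{thm:domination}. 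That theorem then tells us that each $R_T$ is the covariance operator of some $E$-valued Gaussian variable $X_{R_T}$ and that the family $\{X_{R_T}:\ T\in\cT\}$ is uniformly tight. Since $\E\lb X_{R_T},x\s\rb^2=\lb R_T x\s,x\s\rb=\n T\s x\s\n^2$, Theorem~\ref{thm:X} yields $T\in\g(H,E)$; moreover $W(T)$ and $X_{R_T}$ are centred Gaussian with the same covariance, hence identically distributed, so $\{W(T):\ T\in\cT\}$ inherits the uniform tightness of $\{X_{R_T}\}$. This proves the claim.

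With the claim in hand the equivalence is immediate. For (1)$\Rightarrow$(2) one does not even need it: by Proposition~\ref{prop:xs} the map $T\mapsto T\s x\s$ is bounded from $\g(H,E)$ into $H$, so it carries a relatively compact set to a relatively compact one. For (2)$\Rightarrow$(1), the claim first guarantees $\cT\subseteq\g(H,E)$ (so that relative compactness in $\g(H,E)$ is meaningful) and supplies the uniform tightness of $\{W(T):\ T\in\cT\}$, while (2) supplies the relative compactness of each $\{T\s x\s:\ T\in\cT\}$; together these are exactly assertion~(3) of Theorem~\ref{thm:g-rad-conv}, whose equivalence with assertion~(1) gives the relative compactness of $\cT$ in $\g(H,E)$.

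The only real obstacle is the passage from covariance domination to uniform tightness, and this is precisely what Theorem~\ref{thm:domination} (Anderson's inequality together with the reproducing kernel Hilbert space machinery) was set up to deliver. Once $R_T=TT\s$ is recognised as a dominated covariance operator, everything else is bookkeeping through Theorems~\ref{thm:X} and~\ref{thm:g-rad-conv}.
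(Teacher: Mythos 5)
Your proof is correct and takes essentially the same route as the paper's, which likewise deduces uniform tightness of $\{W(T):\ T\in\cT\}$ from Theorem~\ref{thm:domination} and then concludes by the equivalence (1)$\Leftrightarrow$(3) of Theorem~\ref{thm:g-rad-conv}. The only difference is that you spell out the bookkeeping the paper leaves implicit in its two-line argument: recognising $R_T=TT\s$ as a positive symmetric operator dominated by $Q=SS\s$, using Theorem~\ref{thm:X} to conclude $\cT\subseteq\g(H,E)$, and noting that $W(T)$ and $X_{R_T}$ are identically distributed.
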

\begin{proof}
By Theorem \ref{thm:domination} the family $\{W(T):\  T\in \cT\}$
is uniformly tight and therefore the result follows from Theorem
\ref{thm:g-rad-conv}.
\end{proof}

\begin{corollary}[$\g$-Dominated convergence]\label{cor:dom-conv}
Suppose $\limn T_n\s x\s = T\s x\s$ in $H$ for all $x\s\in E\s$.
If there exists $S\in \g(H,E)$ such that $$ 0\leq \n T_n\s x\s\n_H \leq \n S\s
x\s\n_H$$
for all $n\ge 1$ and $x\s\in E\s$, then 
$\limn T_n =T$ in $\g(H,E)$.
\end{corollary}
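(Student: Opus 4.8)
The plan is to deduce this from Theorem~\ref{thm:dom}, which is tailor-made for exactly this situation, and then to upgrade the relative compactness it provides to genuine convergence by a standard subsequence argument. I would apply Theorem~\ref{thm:dom} to the family $\cT := \{T_n : n\ge 1\}$. The covariance domination hypothesis $\n T_n\s x\s\n_H \le \n S\s x\s\n_H$ is precisely the domination required there, with the fixed dominating operator $S\in\g(H,E)$; in particular this already forces each $T_n$ to lie in $\g(H,E)$ (via covariance domination, Theorems~\ref{thm:domination} and~\ref{thm:X}), so that the conclusion about convergence in $\g(H,E)$ is meaningful.

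To invoke Theorem~\ref{thm:dom} I would verify its condition~(2): for each fixed $x\s\in E\s$ the set $\{T_n\s x\s : n\ge 1\}$ must be relatively compact in $H$. But by assumption $T_n\s x\s \to T\s x\s$ in $H$, and a convergent sequence together with its limit has compact closure, so this is immediate. Theorem~\ref{thm:dom} then yields that $\cT$ is relatively compact in $\g(H,E)$.

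It remains to turn relative compactness into the assertion $\limn T_n = T$. I would argue that every subsequential limit of $(T_n)$ in $\g(H,E)$ must equal $T$. Indeed, suppose $T_{n_k}\to R$ in $\g(H,E)$ along some subsequence. By Proposition~\ref{prop:xs} each functional $x\s$ induces a \emph{bounded} operator $x\s:\g(H,E)\to H$ acting by $T\mapsto T\s x\s$; hence $\g$-convergence gives $T_{n_k}\s x\s\to R\s x\s$ in $H$. Comparing with the hypothesis $T_{n_k}\s x\s\to T\s x\s$, we obtain $R\s x\s = T\s x\s$ for all $x\s\in E\s$, so $R=T$ (and, in passing, $T\in\g(H,E)$). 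Thus every subsequence of $(T_n)$ has a further subsequence converging in $\g(H,E)$ to the \emph{same} limit $T$, and the standard metric-space principle then forces the whole sequence to converge: $\limn T_n = T$ in $\g(H,E)$.

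The genuine content is packaged into Theorem~\ref{thm:dom}; the only step requiring care is the last one, where the continuity of the maps $T\mapsto T\s x\s$ furnished by Proposition~\ref{prop:xs} is what guarantees that the (a priori only relatively compact) sequence cannot accumulate at two different points, so that relative compactness together with a unique candidate limit yields honest convergence.
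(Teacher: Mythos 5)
Your proof is correct and takes exactly the route the paper intends: the corollary is stated there without a separate proof, as an immediate consequence of Theorem \ref{thm:dom}, and your argument supplies precisely the omitted details. In particular, you rightly note that the domination by $S$ forces each $T_n$ into $\g(H,E)$ (via Theorems \ref{thm:domination} and \ref{thm:X}), that the convergence $T_n\s x\s\to T\s x\s$ verifies condition (2) of Theorem \ref{thm:dom}, and that the continuity of $T\mapsto T\s x\s$ (Proposition \ref{prop:xs}) pins down $T$ as the unique subsequential limit, so the standard subsequence principle upgrades relative compactness to $\limn T_n=T$ in $\g(H,E)$.
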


\section{Trace duality}\label{sec:trace_duality}

In this section we investigate duality properties of the spaces $\g(H,E)$.
As we shall see we have a natural identification $(\g(H,E))\s = \g(H,E\s)$ if
$E$ is a $K$-convex Banach space. In order to define the notion of $K$-convexity
we start with some preliminaries.

For a Gaussian sequence $\g = (\g_n)_{n\ge 1}$ we define projections 
$\pi_N^\g$ in $L^2(\O;E)$ by
\beq\label{eq:proj1} 
\pi_N^\g X := \sum_{n=1}^N \g_n\,\E (\g_n X).
\eeq
Identifying $L^2(\O;E\s)$ isometrically with a norming subspace of
$(L^2(\O;E))\s$, for all $X\s\in L^2(\O;E\s)$ we have 
\beq\label{eq:proj2}(\pi_N^\g)\s X\s = \sum_{n=1}^N \g_n\,\E (\g_n X\s).
\eeq

\begin{lemma}\label{lem:change-g}
If $\g = (\g_n)_{n\ge 1}$ and $\g' = (\g_n')_{n\ge 1}$ 
are Gaussian sequences, then for all $N\ge 1$ we have
$$ \n \pi_N^\g\n = \n \pi_N^{\g'}\n.$$
\end{lemma}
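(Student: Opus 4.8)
The plan is to show that $\n\pi_N^\g\n$ can be rewritten as a quantity depending only on the Banach space $E$ and on the law of the finite vector $(\g_1,\dots,\g_N)$, which for \emph{every} standard Gaussian sequence is the standard Gaussian measure $\mu_N$ on $\R^N$. Once $\n\pi_N^\g\n$ is exhibited as a functional of $\mu_N$ and $E$ alone, the equality $\n\pi_N^\g\n=\n\pi_N^{\g'}\n$ is immediate, since $(\g_1,\dots,\g_N)$ and $(\g_1',\dots,\g_N')$ share this law.

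First I would observe that $\pi_N^\g$ factors through conditioning on $\F_N:=\sigma(\g_1,\dots,\g_N)$. Since each $\g_n$ with $1\le n\le N$ is $\F_N$-measurable, for any $X\in L^2(\O;E)$ one has $\E(\g_n X)=\E\bigl(\g_n\,\E(X\mid\F_N)\bigr)$, and therefore $\pi_N^\g X=\pi_N^\g\,\E(X\mid\F_N)$. Because the conditional expectation operator is a contraction on $L^2(\O;E)$ (as recalled in the proof of Proposition \ref{prop:proj}), replacing $X$ by $\E(X\mid\F_N)$ leaves $\pi_N^\g X$ unchanged while not increasing $\n X\n$; hence the supremum defining $\n\pi_N^\g\n$ is unaffected if we restrict to $\F_N$-measurable $X$. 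Next I would write such an $X$ as $X=g(\g_1,\dots,\g_N)$ for a strongly measurable $g:\R^N\to E$ (the vector-valued Doob--Dynkin representation), so that $\E(\g_n X)=\int_{\R^N} t_n\,g(t)\,d\mu_N(t)=:c_n$, while $\E\n X\n^2=\int_{\R^N}\n g\n^2\,d\mu_N$ and $\n\pi_N^\g X\n^2=\E\bigl\|\sum_{n=1}^N\g_n c_n\bigr\|^2=\int_{\R^N}\bigl\|\sum_{n=1}^N s_n c_n\bigr\|^2\,d\mu_N(s)$. All three quantities are integrals against $\mu_N$ only, so $\n\pi_N^\g\n^2$ equals the supremum of $\int_{\R^N}\bigl\|\sum_{n} s_n\bigl(\int_{\R^N}t_n g\,d\mu_N\bigr)\bigr\|^2\,d\mu_N(s)$ over all $g$ with $\int\n g\n^2\,d\mu_N\le 1$, an expression built solely from $E$ and $\mu_N$.

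The main obstacle I anticipate is the reduction in the preceding step: one must verify carefully that $\pi_N^\g$ depends on $X$ only through $\E(X\mid\F_N)$ and that passing to this conditional expectation does not enlarge the supremum, which is exactly where the $L^2(\O;E)$-contractivity of conditional expectation is essential. The representation $X=g(\g_1,\dots,\g_N)$ should cause no trouble, since $X$ is essentially separably valued by the Pettis measurability theorem, so the scalar Doob--Dynkin argument applies coordinatewise after choosing a separable closed subspace containing the range of $X$.
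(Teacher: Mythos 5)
Your proof is correct, but it takes a genuinely different route from the paper's. The paper argues by duality: it introduces the mixed operator $\pi_N X := \sum_{n=1}^N \g_n'\,\E(\g_n X)$ (tacitly placing both sequences on a common probability space), identifies its adjoint on the norming subspace $L^2(\O;E\s)$ of $(L^2(\O;E))\s$ as $\pi_N\s X\s = \sum_{n=1}^N \g_n\,\E(\g_n' X\s)$, and then, choosing a nearly norming $Y\s\in L^2(\O;E\s)$, estimates $\n \pi_N^\g X\n = \n \pi_N X\n \le (1+\e)\n X\n\,\n (\pi_N^{\g'})\s Y\s\n \le (1+\e)\n \pi_N^{\g'}\n\,\n X\n$, which gives $\n\pi_N^\g\n\le\n\pi_N^{\g'}\n$ and the reverse inequality by symmetry. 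You instead exhibit $\n\pi_N^\g\n$ as an intrinsic functional of $E$ and the law $\mu_N$ of $(\g_1,\dots,\g_N)$: the identity $\pi_N^\g X=\pi_N^\g\,\E(X\mid\F_N)$ (valid by the pull-out property, checked by testing against $x\s\in E\s$ with Cauchy--Schwarz supplying integrability) together with the $L^2(\O;E)$-contractivity of conditional expectation lets you restrict the supremum to $\F_N$-measurable $X$, and the vector-valued Doob--Dynkin representation $X=g(\g_1,\dots,\g_N)$, $g\in L^2(\mu_N;E)$, turns $\E\n X\n^2$, the coefficients $\E(\g_n X)$, and $\n\pi_N^\g X\n_{L^2(\O;E)}^2$ into integrals against $\mu_N$ alone; since $g$ ranges over the same unit ball of $L^2(\mu_N;E)$ for both sequences, the two operator norms agree. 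Your approach is more measure-theoretic but also more self-contained: it uses no duality or norming subspaces, and it works without comment when $\g$ and $\g'$ live on different probability spaces, where the paper's mixed operator requires a common space or an enlargement. What the paper's route buys is the explicit formula for $(\pi_N^\g)\s$ on $L^2(\O;E\s)$, a by-product that is reused immediately afterwards (in Lemma \ref{lem:sup} and Proposition \ref{prop:K-convex-dual}); your law-invariance argument does not produce it. The two technical points you flag -- the pull-out step and the essential separable-valuedness needed to apply Doob--Dynkin coordinatewise -- both go through as you describe.
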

\begin{proof}
Define the bounded operator
$\pi_N$ on $L^2(\O;E)$ by
\beq\label{eq:proj3}
\pi_N X :=  \sum_{n=1}^N \g_n'\E(\g_n X), \quad X\in L^2(\O;E).
\eeq
On the closed subspace $L^2(\O;E\s)$ of $(L^2(\O;E))\s$, the adjoint operator
$\pi_N\s$ is given by
\beq\label{eq:proj4}\pi_N\s X\s = \sum_{n=1}^N \g_n \E(\g_n' X\s), 
 \quad X\s\in L^2(\O;E\s).
\eeq
Now let $X\in L^2(\O;E)$ be given.
Given $\e>0$ choose $Y\s\in L^2(\O;E\s)$ 
of norm one such that $(1+\e)|\lb \pi_N X,Y\s\rb|\ge \n \pi_N X\n_{L^2(\O;E)}$.
Then, first comparing \eqref{eq:proj1} and \eqref{eq:proj3}, and then 
\eqref{eq:proj2} and \eqref{eq:proj4},
\begin{align*}
\n \pi_N^{\g} X\n_{L^2(\O;E)} 
 = \n \pi_N X\n_{L^2(\O;E)}
 & \le (1+\e)|\lb \pi_N X,Y\s\rb|
\\ & = (1+\e)|\lb X,\pi_N\s Y\s\rb|
\\ & \le (1+\e) \n X\n_{L^2(\O;E)}\n \pi_N\s Y\s\n_{L^2(\O;E\s)} 
\\ & =(1+\e) \n X\n_{L^2(\O;E)}\n (\pi_N^{\g'})\s Y\s\n_{L^2(\O;E\s)} 
\\ & \le(1+\e)\n \pi_N^{\g'}\n  \n X\n_{L^2(\O;E)}.
\end{align*}
Since $\e>0$ was arbitrary this shows that $\n \pi_N^\g\n \le \n \pi_N^{\g'}\n$.
By reversing the roles of $\g$ and $\g'$ we also obtain the converse inequality
$\n \pi_N^{\g'}\n \le\n \pi_N^{\g}\n$.
\end{proof}

This allows us to define $$K_N(E):= \n \pi_N^\g\n.$$
Clearly, the numbers $K_N(E)$ are increasing with $N$.

\begin{lemma}\label{lem:sup} For any closed norming subspace $F$ of $E\s$ we
have
\begin{align*}
\ & \E\Big\n \sum_{n=1}^N \g_nx_n\Big\n^2 
\\ & \quad\le K_N^2(E)\,
\sup\Big\{ \Big|\sum_{n=1}^N \lb x_n,x_n\s\rb\Big|^2: \, x_1\s,\dots,x_N\s\in F,
\ \E\Big\n \sum_{n=1}^N \g_nx_n\s\Big\n^2 \le 1 \Big\}.
\end{align*}
\end{lemma}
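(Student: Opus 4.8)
The plan is to compute $\n X\n_{L^2(\O;E)}$ for $X := \sum_{n=1}^N \g_n x_n$ by duality against the subspace $L^2(\O;F)\subseteq (L^2(\O;E))\s$, to turn each test functional into a family $(x_n\s)_{n=1}^N$ by averaging it against the Gaussians, and then to read off the constant $K_N(E)$ as the norm of the adjoint projection $(\pi_N^\g)\s$ acting on these averages.

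The preliminary step I would isolate is that $L^2(\O;F)$ is norming for $L^2(\O;E)$ whenever $F$ is norming for $E$. Both $\n\cdot\n_{L^2(\O;E)}$ and the seminorm $q(X):=\sup\{|\lb X,Y\s\rb|:\ Y\s\in L^2(\O;F),\ \n Y\s\n_{L^2(\O;E\s)}\le 1\}$ are $1$-Lipschitz with respect to $\n\cdot\n_{L^2(\O;E)}$ (the latter by Cauchy--Schwarz), so it suffices to check that they coincide on the dense set of finite rank simple functions. For $X=\sum_i \one_{A_i}\ot x_i$ with the $A_i$ disjoint of finite measure one simply tests against $Y\s=c^{-1}\sum_i \one_{A_i}\ot(\n x_i\n\,x_i\s)$, where $x_i\s\in F$ nearly norms $x_i$ and $c=\n X\n_{L^2(\O;E)}$; this makes $q(X)\ge(1-\e)c$ for every $\e>0$.

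With this in hand I would fix $Y\s\in L^2(\O;F)$ with $\n Y\s\n_{L^2(\O;E\s)}\le 1$ and set $x_n\s:=\E(\g_n Y\s)$; since $F$ is closed, these Bochner integrals again lie in $F$. Pulling the expectation through the pairing gives
\[ \lb X,Y\s\rb = \sum_{n=1}^N \lb x_n,\E(\g_n Y\s)\rb = \sum_{n=1}^N \lb x_n,x_n\s\rb, \]
while \eqref{eq:proj2} identifies $\sum_{n=1}^N \g_n x_n\s = (\pi_N^\g)\s Y\s$, so that
\[ \Big(\E\Big\n\sum_{n=1}^N \g_n x_n\s\Big\n^2\Big)^{1/2} = \n (\pi_N^\g)\s Y\s\n_{L^2(\O;E\s)} \le K_N(E). \]
Since $K_N(E)=\n\pi_N^\g\n\ge 1>0$, the vectors $x_n\s/K_N(E)$ form an admissible choice in the supremum defining the right-hand side, whence $|\lb X,Y\s\rb|$ is at most $K_N(E)$ times the square root of that supremum. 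Taking the supremum over all such $Y\s$ and invoking the norming step gives $\n X\n_{L^2(\O;E)}\le K_N(E)\,(\,\cdots\,)^{1/2}$, and squaring yields the assertion.

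The only genuinely delicate point is the norming step; everything afterwards is bookkeeping, provided I remain careful that averaging against $\g_n$ keeps the functionals inside the closed subspace $F$, and that the restriction of $(\pi_N^\g)\s$ to $L^2(\O;E\s)$ has norm at most $\n\pi_N^\g\n=K_N(E)$ — the latter being immediate from the isometric embedding $L^2(\O;E\s)\hookrightarrow(L^2(\O;E))\s$ already used in \eqref{eq:proj2}.
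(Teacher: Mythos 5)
Your proof is correct and follows essentially the same route as the paper: test $X=\sum_{n=1}^N\g_n x_n$ against a nearly norming $Y\s\in L^2(\O;F)$, set $x_n\s:=\E(\g_n Y\s)$ so that $\lb X,Y\s\rb=\sum_{n=1}^N\lb x_n,x_n\s\rb$, and bound $\E\n\sum_{n=1}^N\g_n x_n\s\n^2=\n(\pi_N^\g)\s Y\s\n_{L^2(\O;E\s)}^2$ by $K_N^2(E)$ via \eqref{eq:proj2}. The only differences are cosmetic: you prove the preliminary fact that $L^2(\O;F)$ is norming for $L^2(\O;E)$ (which the paper asserts without proof) and make explicit the rescaling $x_n\s/K_N(E)$ that the paper leaves implicit.
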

\begin{proof}
Put $X:= \sum_{n=1}^N \g_n x_n.$ Since $L^2(\O;F)$ is isometric to a norming
closed subspace of $(L^2(\O;E))\s$,
given $\e>0$ we may choose $X\s\in L^2(\O;F)$ 
of norm one such that $(1+\e)|\lb X,X\s\rb|\ge \n X\n_{L^2(\O;E)}$.
Noting that $\pi_N^\g X = X$ and putting $x_n\s := \E (\g_n X\s)$ we obtain
$$ 
\bal
 \n X\n_{L^2(\O;E)}
& \le (1+\e)|\lb X,X\s\rb| = (1+\e)|\lb \pi_N^\g X,X\s\rb|
\\ & = (1+\e)|\lb X,(\pi_N^\g)\s X\s\rb| = (1+\e)\Big|\sum_{n=1}^n \lb x_n,
x_n\s\rb\Big|.
\eal
$$
Since $\e>0$ was arbitrary, the proof is concluded by noting that $x_n\s\in F$
and 
$$ \E\Big\n \sum_{n=1}^N \g_n x_n\s\Big\n^2 = \E\Big\n \sum_{n=1}^N \g_n \E
(\g_n X\s)\Big\n^2 = \E\n (\pi_N^\g)\s X\s\n^2 \le \n \pi_N^\g\n^2 = K_N^2(E).  
$$
\end{proof}

\begin{definition}
A Banach space $E$ is called {\em $K$-convex} if 
$$K(E):= \sup_{N\ge 1} K_N(E)$$
is finite. 
\end{definition}

Closed subspaces of $K$-convex spaces are $K$-convex. The next result shows that
$K$-convexity is a self-dual property:

\begin{proposition}\label{prop:K-convex-dual} 
A Banach space $E$ is $K$-convex if and only if its dual 
$E\s$ is $K$-convex, in which case we have $K(E) = K(E\s)$.
\end{proposition}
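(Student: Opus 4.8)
The plan is to prove the sharper quantitative statement $K_N(E) = K_N(E\s)$ for every $N\ge 1$; taking the supremum over $N$ then yields $K(E) = K(E\s)$ at once, and in particular finiteness of one side is equivalent to finiteness of the other. Throughout I would fix a single Gaussian sequence $\g = (\g_n)_{n\ge 1}$, which is legitimate because $K_N$ does not depend on this choice by Lemma \ref{lem:change-g}, and I would keep in mind that $L^2(\O;E\s)$ sits isometrically as a norming closed subspace of $(L^2(\O;E))\s$. The one structural fact I lean on is \eqref{eq:proj2}: for $X\s\in L^2(\O;E\s)$ the Banach space adjoint $(\pi_N^\g)\s$ of $\pi_N^\g\in\calL(L^2(\O;E))$ acts by $(\pi_N^\g)\s X\s = \sum_{n=1}^N \g_n\,\E(\g_n X\s)$, which is exactly the projection $\pi_N^\g$ computed in $L^2(\O;E\s)$. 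Thus $(\pi_N^\g)\s$ leaves the norming subspace $L^2(\O;E\s)$ invariant and restricts there to the $E\s$-version of $\pi_N^\g$.

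For the inequality $K_N(E\s)\le K_N(E)$ I would argue directly from this observation. Since the restriction of an operator to an invariant subspace does not increase its norm, and since passing to the adjoint preserves the operator norm, one gets $K_N(E\s) = \n \pi_N^\g\n_{\calL(L^2(\O;E\s))} = \n (\pi_N^\g)\s|_{L^2(\O;E\s)}\n \le \n (\pi_N^\g)\s\n = \n\pi_N^\g\n_{\calL(L^2(\O;E))} = K_N(E)$.

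For the reverse inequality $K_N(E)\le K_N(E\s)$ I would exploit that $L^2(\O;E\s)$ is norming. Given $X\in L^2(\O;E)$ with $\n X\n \le 1$, set $Y := \pi_N^\g X$, and for $\e>0$ choose $Y\s\in L^2(\O;E\s)$ of norm one with $(1+\e)|\lb Y, Y\s\rb| \ge \n Y\n$. Moving the projection across the pairing and invoking \eqref{eq:proj2} gives $\lb Y, Y\s\rb = \lb \pi_N^\g X, Y\s\rb = \lb X, (\pi_N^\g)\s Y\s\rb = \lb X, \pi_N^\g Y\s\rb$, where the last $\pi_N^\g$ acts in $L^2(\O;E\s)$. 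The Cauchy--Schwarz inequality then yields $|\lb Y, Y\s\rb| \le \n X\n\,\n \pi_N^\g Y\s\n_{L^2(\O;E\s)} \le K_N(E\s)$, so $\n \pi_N^\g X\n \le (1+\e)K_N(E\s)$; letting $\e\da 0$ and taking the supremum over $X$ in the unit ball gives $K_N(E)\le K_N(E\s)$. This last step is essentially the content already isolated in Lemma \ref{lem:sup} applied with $F = E\s$, so I could alternatively just cite that lemma.

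Combining the two inequalities gives $K_N(E) = K_N(E\s)$ for all $N$, and the proposition follows upon taking $\sup_N$. I expect the only real subtlety --- the ``obstacle'' --- to be bookkeeping rather than analysis: one must keep straight the three roles played by the symbol $\pi_N^\g$ (as an operator on $L^2(\O;E)$, as its adjoint on $(L^2(\O;E))\s$, and as the analogous projection on $L^2(\O;E\s)$) and verify through \eqref{eq:proj2} that these are compatible under the norming embedding $L^2(\O;E\s)\embed (L^2(\O;E))\s$. Once that identification is in place, both inequalities reduce to one-line duality arguments.
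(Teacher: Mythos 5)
Your proof is correct, and it sharpens the statement: you establish $K_N(E)=K_N(E\s)$ for every $N$, whereas the paper only records the equality $K(E)=K(E\s)$. Both arguments pivot on the same structural fact \eqref{eq:proj2}, namely that $(\pi_N^\g)\s$ leaves the norming subspace $L^2(\O;E\s)\subseteq (L^2(\O;E))\s$ invariant and restricts there to the $E\s$-version of the projection; your inequality $K_N(E\s)\le \n(\pi_N^\g)\s\n = K_N(E)$ is exactly the paper's ``immediate consequence.'' Where you genuinely diverge is in the converse direction. The paper deduces it by biduality: if $E\s$ is $K$-convex then so is $E^{**}$, hence so is its closed subspace $E$ (invoking the hereditarity of $K$-convexity to closed subspaces, stated just before the proposition). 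You instead prove the reverse inequality $K_N(E)\le K_N(E\s)$ directly, by picking a near-norming $Y\s\in L^2(\O;E\s)$ for $\pi_N^\g X$ and moving the projection across the pairing --- as you note, this is precisely Lemma \ref{lem:sup} with $F=E\s$, and it is the same device the paper uses inside Lemma \ref{lem:change-g}. The trade-off: the paper's route is shorter given the pre-packaged subspace fact, while yours buys the level-$N$ identity, avoids any mention of $E^{**}$, and makes the equality of constants transparent rather than implicit (the paper's one-line claim of $K(E)=K(E\s)$ in the forward direction in fact tacitly needs your norming argument, so your write-up is arguably the more complete one). Your closing caution about the three incarnations of $\pi_N^\g$ is exactly the right bookkeeping point, since $(L^2(\O;E))\s$ may be strictly larger than $L^2(\O;E\s)$ and one must only ever restrict, never identify, the adjoint with the $E\s$-projection.
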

\begin{proof}
The identity \eqref{eq:proj2} shows that
$(\pi_N^E)\s = \pi_N^{E\s}$. As an immediate consequence we see
that if $E$ is $K$-convex, then $E\s$ is $K$-convex
and $K(E) = K(E\s)$. 
If $E\s$ is $K$-convex, then $E^{**}$ is $K$-convex, and
therefore its closed subspace $E$ is $K$-convex. 
\end{proof}

The notion of $K$-convexity has been introduced by {\sc Maurey} and {\sc Pisier}
\cite{MauPis} and was studied thoroughly in {\sc Pisier} \cite{Pis82,Pis89}.
Usually this notion is defined using Rademacher variables rather than Gaussian
variables. In fact, both definitions are equivalent. In fact, one may use an
argument similar to the one employed in Lemma \ref{lem:change-g} to pass from
the Gaussian definition to the Rademacher definition, and a central limit
theorem argument allows one to pass from the Rademacher definition to the
Gaussian definition. For the details we refer to {\sc Figiel} and {\sc
Tomczak-Jaegermann} \cite{FigTom} and {\sc Tomczak-Jaegermann} \cite{Tom89}.

\begin{example}
Every Hilbert space $E$ is $K$-convex and $K(E)=1$.
\end{example}

\begin{example}
Let $(A,\calA,\mu)$ be a $\sigma$-finite measure space and let $1<p<\infty$.
Then $L^p(A)$ is $K$-convex, and more generally if $E$ is $K$-convex then
then $L^p(A;E)$ is $K$-convex and $$K(L^p(A;E))\le \left\{
\begin{array}{ll}
K_{p,2}^\g K(E), & \hbox{if $2\le p<\infty$}, \\
K_{q,2}^\g K(E), & \hbox{if $1< p\le 2$ and $\tfrac1p+\frac1q=1$}
\end{array}
\right.$$ 
Here $K_{p,2}^\g$ and $K_{q,2}^\g$ are the Gaussian Kahane-Khintchine
constants. 

First let $2\le p<\infty$. The projections defined by \eqref{eq:proj1}
in $E$ and $L^p(A;E)$ will be denoted by $\pi_N^\g$ and $\pi_n^{\g,L^p(A;E)}$,
respectively. For $X\in L^2(\O;L^p(A;E))$ we obtain, using
Jensen's inequality,
Fubini's theorem, the Kahane-Khintchine inequality, and the $K$-convexity of
$E$,
\begin{align*}
 \E\n \pi_N^{\g,L^p(A;E)} X\n_{L^p(A;E)}^2 & 
=  \E \Big(\int_{A}
\Big\n\sum_{n=1}^N \g_n 
\E(\g_n X(\xi))\Big\n^p d\mu(\xi)\Big)^\frac{2}{p}
\\ &  \le \Big(\E\int_{A}
\Big\n\sum_{n=1}^N \g_n 
\E (\g_n X(\xi))\Big\n^p d\mu(\xi)\Big)^\frac{2}{p}
\\ & \le  (K_{p,2}^\g)^2\Big(\int_{A}
\Big(\E\Big\n\sum_{n=1}^N \g_n 
\E (\g_n X(\xi))\Big\n^2\Big)^\frac{p}{2}\,d\mu(\xi)\Big)^\frac{2}{p}
\\ & \le (K_{p,2}^\g)^2\n \pi_N^\g\n^2 \Big(\int_{A}
\big(\E\n  X(\xi)\n^2\big)^\frac{p}{2}\,d\mu(\xi)\Big)^\frac{2}{p}
\\ & = (K_{p,2}^\g)^2\n \pi_N^\g\n^2 \Big\n \E\n 
X(\xi)\n^2\Big\n_{L^{\frac{p}{2}}(A)}
\\ & \le (K_{p,2}^\g)^2\n \pi_N^\g\n^2 \E \Big\n \n 
X(\xi)\n^2\Big\n_{L^{\frac{p}{2}}(A)}
\\ & = (K_{p,2}^\g)^2\n \pi_N^\g\n^2\E \Big(\int_A \n X(\xi)\n^p\,d\mu(\xi)
\Big)^{\frac2p}
\\ &  = (K_{p,2}^\g)^2\n\pi_N^\g\n^2 \E\n X\n_{L^p(A;E)}^2.
\end{align*}
This proves the result for $2\le p< \infty$.

Next let $1<p<2$. We can identify $(L^q(A;E\s))$ isometrically with a closed
subspace of $(L^p(A;E))\s$, $\frac1p+\frac1q=1$.
Since $E\s$ is $K$-convex, by what we just proved the space $L^q(A;E\s)$ is
$K$-convex. 
Hence $L^p(A;E)$, being isometrically contained in the dual of $L^q(A;E\s)$, 
is $K$-convex, and $K(L^p(A;E)) \le K(L^q(A;E\s)) \le K_{q,2}^\g K(E\s)
= K_{q,2}^\g K(E\s)$. 
\end{example}

\begin{example}\label{ex:c0}
The space $c_0$ fails to be $K$-convex. 
To see this, let $(\g_n)_{n\ge 1}$ be a Gaussian sequence
and let  $(u_n)_{n\ge 1}$ be the standard basis of $c_0$. 
Set $$ X_N := \sum_{n=1}^N \sgn(\g_n)u_n.$$ 
We have $\n X_N\n_{L^2(\O;c_0)} =1 $ and
$\E(\g_nX_N) = \E |\g_n|u_n  = \sqrt{{\pi/2}}\,u_n$, so $$ \n \pi_N^\g X_N
\E\n_{c_0}^2 =\frac{\pi}{2} \E\Big\n\sum_{n=1}^N \g_n u_n \Big\n_{c_0}^2.$$
Arguing as in Example \ref{ex:LP}, the right hand side can be bounded from below
by a term which grows asymptotically like $\log N$. It follows that 
$\n \pi_N^\g\n\ge C \log N$.
\end{example}

A deep theorem of {\sc Pisier} \cite{Pis82} states that a Banach space $E$ is
$K$-convex if and only if $E$ has non-trivial type (the notion of type is
discussed in the next section). 
The following simple proof that every Banach space with type $2$ is $K$-convex
was given by {\sc Blasco, Tarieladze, Vidal} \cite{BTV}; see also {\sc
Chobanyan} and {\sc Tarieladze} \cite{ChoTar} and {\sc Maurey} and {\sc Pisier}
\cite{MauPis}.

\begin{proposition}\label{prop:type2-Kconvex}
If $E$ has type $2$, then $E$ is $K$-convex and $K^\g(E)\le T_2^\g(E)$.
\end{proposition}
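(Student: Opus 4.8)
The plan is to identify $\pi_N^\g X$ with a $\g$-radonifying operator and to estimate its $\g$-norm by factoring through the isometric copy of $\ell^2_N$ that $\g_1,\dots,\g_N$ span inside $L^2(\O)$. The decisive input is that type $2$ makes the canonical inclusion of $L^2(\O;E)$ into $\g(L^2(\O),E)$ bounded with constant $T_2^\g(E)$; I will establish exactly the instance of this that is needed, so that no forward reference to the section on type is required.

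By definition $\pi_N^\g X=\sum_{n=1}^N\g_n x_n$ with $x_n:=\E(\g_n X)$, and it suffices to show $\n\pi_N^\g X\n_{L^2(\O;E)}\le T_2^\g(E)\,\n X\n_{L^2(\O;E)}$ for all $X$ and all $N$, since taking the supremum over $N$ then gives $K^\g(E)\le T_2^\g(E)$. Let $A\in\calL(\ell^2_N,E)$ be the operator carrying the $n$-th unit vector to $x_n$; Corollary \ref{cor:ONB} identifies $\n\pi_N^\g X\n_{L^2(\O;E)}=\n A\n_{\g(\ell^2_N,E)}$. Next I would introduce $R\in\calL(L^2(\O),E)$ given by $Rf:=\E(fX)$, together with the isometric embedding $\iota:\ell^2_N\to L^2(\O)$ carrying the $n$-th unit vector to $\g_n$ (isometric precisely because the $\g_n$ are orthonormal in $L^2(\O)$). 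A direct check of values on unit vectors shows $A=R\circ\iota$.

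The heart of the matter is the claim that $R\in\g(L^2(\O),E)$ with $\n R\n_{\g(L^2(\O),E)}\le T_2^\g(E)\,\n X\n_{L^2(\O;E)}$. I would prove this by density from the case of a simple function $X=\sum_i\one_{B_i}\ot v_i$ with the $B_i$ disjoint of measure $p_i>0$: there $R=\sum_i(\one_{B_i}/\sqrt{p_i})\ot(\sqrt{p_i}\,v_i)$ is a finite rank operator whose defining functions $\one_{B_i}/\sqrt{p_i}$ are orthonormal, so the definition of the $\g$-norm together with a single application of the type $2$ inequality (to the vectors $\sqrt{p_i}\,v_i$) gives $\n R\n_{\g(L^2(\O),E)}^2\le(T_2^\g(E))^2\sum_i p_i\n v_i\n^2=(T_2^\g(E))^2\n X\n_{L^2(\O;E)}^2$. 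Granting this, the right ideal property (Theorem \ref{thm:ideal}, with $\n\iota\n=1$) yields $\n A\n_{\g(\ell^2_N,E)}=\n R\iota\n_{\g(\ell^2_N,E)}\le\n R\n_{\g(L^2(\O),E)}\le T_2^\g(E)\,\n X\n_{L^2(\O;E)}$, which is exactly the desired bound.

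The step to get right — and the reason one cannot argue more crudely — is this embedding estimate for $R$. The naive route applies type $2$ directly to $\pi_N^\g X=\sum_n\g_n x_n$ to obtain $\n\pi_N^\g X\n_{L^2(\O;E)}^2\le(T_2^\g(E))^2\sum_n\n x_n\n^2$, and then needs $\sum_n\n x_n\n^2\le\n X\n_{L^2(\O;E)}^2$; but this last inequality is false in general (it is a cotype $2$ statement, and already fails for $E=\ell^\infty_2$, $X=(\g_1,\g_2)$, where $\sum_n\n x_n\n^2=2$ while $\n X\n_{L^2(\O;E)}^2=\E\max(\g_1^2,\g_2^2)<2$). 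Routing through $R$ repairs this because type $2$ is then applied against the orthonormal system $\{\one_{B_i}/\sqrt{p_i}\}$ coming from $X$ itself rather than against the coordinate vectors $x_n$; this retains the operator structure that the crude bound discards and produces the sharp constant $T_2^\g(E)$.
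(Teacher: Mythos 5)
Your proof is correct, but it is packaged differently from the paper's. The paper argues directly with random variables: for simple $X=\sum_j \one_{\O_j}x_j$ it sets $y_j:=\sqrt{\P(\O_j)}\,x_j$ and $z_n:=\E(\g_n X)$, observes via Bessel's inequality that $\sum_{n=1}^N\lb z_n,x\s\rb^2\le \E\lb X,x\s\rb^2=\sum_j\lb y_j,x\s\rb^2$, and then applies covariance domination (Lemma \ref{lem:g-compar}) followed by one application of type $2$ to the $y_j$. You instead recast $\pi_N^\g X$ as the $\g$-norm of the operator $A=\sum_n e_n\ot x_n$, factor $A=R\circ\iota$ through $Rf:=\E(fX)$ and the isometry $\iota:e_n\mapsto\g_n$, bound $\n R\n_{\g(L^2(\O),E)}\le T_2^\g(E)\n X\n_{L^2(\O;E)}$ for simple $X$ by reading off the orthonormal representation $R=\sum_i(\one_{B_i}/\sqrt{p_i})\ot(\sqrt{p_i}\,v_i)$, and finish with the right ideal property (Theorem \ref{thm:ideal}). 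All steps check out, including the density passage and the fact that for $H=\R$ and block-diagonal $R$ no Rademacher randomization is needed, so the forward reference to Theorem \ref{thm:type2}(1) is genuinely avoided; you have in effect proved the relevant special case of that embedding and composed it with restriction to the span of $\g_1,\dots,\g_N$. It is worth noting that the two proofs run on the same engine: the ideal-property step $\n R\iota\n_\g\le\n R\n_\g$ is proved in the paper precisely via Lemma \ref{lem:g-compar}, and the covariance-domination content it encapsulates here is exactly the Bessel inequality $\n\iota\s R\s x\s\n\le\n R\s x\s\n$ that the paper writes out by hand. What your route buys is conceptual clarity — the proposition is revealed as an instance of the type-$2$ embedding $L^2(\O;E)\embed\g(L^2(\O),E)$ — at the cost of invoking the $\g$-space machinery; the paper's version is more elementary and self-contained at that point of the development. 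Your closing diagnosis of why the naive bound $\sum_n\n x_n\n^2\le\n X\n^2$ fails (it is a cotype $2$ statement, with the $\ell^\infty_2$ counterexample) is accurate and explains why both proofs must route the type-$2$ inequality through the orthogonal system coming from $X$ rather than through the coordinates $x_n$.
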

\begin{proof}
Let $X = \sum_{j=1}^k 1_{\O_j}x_j$ be simple, with the measurable sets $\O_j$
disjoint and of positive probability.
Let $y_j:= \sqrt{\P(\O_j)}x_j$, so $$\E\n X\n^2 = \sum_{j=1}^k \n y_j\n^2$$
and $$\E\lb X,x\s\rb^2 = \sum_{j=1}^k \lb y_j,x\s\rb^2, \qquad x\s\in E\s.$$
Let $z_n:= \E (\g_n X)$. Then, by the orthonormality of Gaussian sequences in
$L^2$,
$$ \sum_{n=1}^N \lb z_n, x\s\rb^2 = \sum_{n=1}^N \E (\g_n \lb X,x\s\rb)^2
\le \E\lb X,x\s\rb^2 = \sum_{j=1}^k \lb y_j,x\s\rb^2.
$$
Hence by covariance domination,
$$
\bal
  \E \n \pi_N^\g X \n^2 
  & = \E \Big\n \sum_{n=1}^N \g_n z_n\Big\n^2
 \le  \E \Big\n \sum_{k=1}^k \g_j y_j\Big\n^2 
\\ & \le (T_2^\g(E))^2 \sum_{j=1}^k \n y_j\n^2 = (T_2^\g(E))^2 \E \n X\n^2.
\eal
$$
It follows that $\n \pi_N^\g\n \le T_2^\g(E).$ Since $N\ge 1$ was arbitrary this
gives $K(E)\le T_2^\g(E)$.
\end{proof}

The next result is essentially due to {\sc Pisier} \cite{Pis89}; its present
formulation was stated by {\sc Kalton} and {\sc Weis} \cite{KalWei07}. It
describes a natural pairing between 
$\g(H,E)$ and $\g(H,E\s)$, the so-called {\em trace duality}.

\begin{theorem}[Trace duality]\label{thm:Kconvex-gamma-dual}
For all $T\in H\ot E$ and $S\in H\ot E\s$
we have
$$ |{\rm tr} (S\s T)| \le \n T\n_{\g(H,E)}\n S\n_{\g(H,E\s)}.$$
As a consequence, for all $S\in \g(H,E\s)$ the mapping $\phi_S: T \mapsto 
{\rm tr} (S\s T)$ defines an element $\phi_S\in (\g(H,E))\s$ of norm 
$$\n \phi_S\n\le \n S\n_{\g(H,E\s)}.$$ If $E$ is $K$-convex, 
the mapping $\phi: S\mapsto \phi_S$ is an isomorphism of $\g(H,E\s)$ onto
$(\g(H,E))\s$ and $$\n S\n_{\g(H,E\s)} \le K(E)\n \phi_S\n.$$
\end{theorem}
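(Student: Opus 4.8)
The plan is to prove the trace inequality first on the finite-dimensional algebraic tensor products and then use $K$-convexity to invert the pairing. First I would fix $T = \sum_{m=1}^M h_m\ot x_m$ and $S = \sum_{n=1}^N k_n\ot x_n\s$ in $H\ot E$ and $H\ot E\s$ respectively. By orthonormalizing (using an orthonormal basis of the finite-dimensional subspace of $H$ spanned by all the $h_m$ and $k_n$) I may assume both sums are written over a common orthonormal system $(e_j)_{j=1}^K$, say $T = \sum_j e_j\ot u_j$ and $S = \sum_j e_j\ot u_j\s$ with $u_j\in E$, $u_j\s\in E\s$. A direct computation gives $\mathrm{tr}(S\s T) = \sum_{j=1}^K \lb u_j, u_j\s\rb$. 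Introducing an independent Gaussian sequence $(\g_j)_{j=1}^K$ and using $\E\g_i\g_j = \d_{ij}$, I would rewrite this as
$$ \mathrm{tr}(S\s T) = \E\Big\lb \sum_{j=1}^K \g_j u_j, \sum_{j=1}^K \g_j u_j\s\Big\rb. $$
By the Cauchy--Schwarz inequality in $L^2(\O;E)$ paired against $L^2(\O;E\s)$ this is bounded in absolute value by $\big(\E\n\sum_j \g_j u_j\n^2\big)^{1/2}\big(\E\n\sum_j \g_j u_j\s\n^2\big)^{1/2}$, which by Definition \ref{def:g-rad} equals $\n T\n_{\g(H,E)}\n S\n_{\g(H,E\s)}$. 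This establishes the trace inequality and, since $H\ot E$ is dense in $\g(H,E)$, shows that each $S\in\g(H,E\s)$ induces a functional $\phi_S$ with $\n\phi_S\n\le\n S\n_{\g(H,E\s)}$; thus $\phi$ maps $\g(H,E\s)$ contractively into $(\g(H,E))\s$.

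The substantive direction, which I expect to be the main obstacle, is surjectivity of $\phi$ together with the lower bound $\n S\n_{\g(H,E\s)}\le K(E)\n\phi_S\n$ under $K$-convexity. I would start from an arbitrary $\Lambda\in(\g(H,E))\s$ and aim to produce an $S\in\g(H,E\s)$ with $\phi_S=\Lambda$. The natural candidate arises by restricting $\Lambda$ to finite-dimensional pieces: for a fixed orthonormal system $(e_j)_{j=1}^N$ in $H$, the map $(u_j)_{j=1}^N\mapsto\Lambda(\sum_j e_j\ot u_j)$ is a bounded functional on $E^N$ normed by $\E\n\sum_j\g_j u_j\n^2$, so by duality it is represented by vectors $(u_j\s)_{j=1}^N$ in $E\s$, giving a candidate partial sum $S_N := \sum_j e_j\ot u_j\s$. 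The crux is to bound $\n S_N\n_{\g(H,E\s)}$ uniformly by $K(E)\n\Lambda\n$, and here Lemma \ref{lem:sup} is exactly the right tool: applied with the norming subspace $F = E$ of $E^{**}$ (identifying $E$ canonically inside $E^{**}$ and $E\s$ with $(E\s)$), it states
$$ \E\Big\n\sum_{j=1}^N \g_j u_j\s\Big\n^2 \le K_N^2(E)\,\sup\Big\{\Big|\sum_{j=1}^N\lb x_j, u_j\s\rb\Big|^2 : x_j\in E,\ \E\Big\n\sum_{j=1}^N\g_j x_j\Big\n^2\le 1\Big\}. $$
Recognizing the inner sum as $\mathrm{tr}(S_N\s T)$ for $T=\sum_j e_j\ot x_j$ and invoking the definition of $\Lambda$ through its representing vectors, the supremum is at most $\n\Lambda\n^2$, whence $\n S_N\n_{\g(H,E\s)}\le K_N(E)\n\Lambda\n\le K(E)\n\Lambda\n$.

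Finally I would pass to the limit. Using the identification $\g(H,E\s)\subseteq\calL(H,E\s)$ and the separability of $\ov{\mathrm{ran}}(T\s)$ established after Theorem \ref{thm:ONS}, I would fix an orthonormal basis $(e_j)_{j\ge1}$ of the relevant separable subspace and let $S_N$ be the partial sums above. The uniform bound $\sup_N\n S_N\n_{\g(H,E\s)}\le K(E)\n\Lambda\n$ together with Corollary \ref{cor:ONB} (the convergence criterion for $\g$-radonifying operators via orthonormal bases) would yield that $S:=\sum_j e_j\ot u_j\s$ defines an element of $\g(H,E\s)$ with the same norm bound; here the no-copy-of-$c_0$ consequences are not needed because $K$-convex spaces have nontrivial type and hence contain no copy of $c_0$, so $\g_\infty=\g$ by Theorem \ref{thm:c0}. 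By construction $\phi_S$ agrees with $\Lambda$ on every finite-rank operator, and density gives $\phi_S=\Lambda$. This proves surjectivity and the estimate $\n S\n_{\g(H,E\s)}\le K(E)\n\phi_S\n$, completing the isomorphism. The delicate point to handle carefully is the consistency of the representing vectors $u_j\s$ as $N$ grows, which I would ensure by defining them once and for all through $\Lambda$ restricted to the full basis rather than re-choosing them at each stage.
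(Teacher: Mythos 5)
Your proposal is correct and follows essentially the same route as the paper: the trace inequality via the Gaussian pairing and Cauchy--Schwarz, the lower bound $\n S\n_{\g(H,E\s)}\le K(E)\n\phi_S\n$ via Lemma \ref{lem:sup} applied to $E\s$ with norming subspace $E\subseteq E^{**}$, and surjectivity by defining $S$ coordinatewise from $\Lambda$, bounding its $\g_\infty$-norm by $K(E)\n\Lambda\n$ over all finite orthonormal systems, and concluding with Theorem \ref{thm:c0}. The only cosmetic difference is that you exclude a copy of $c_0$ in $E\s$ via Pisier's nontrivial-type theorem, whereas the paper gets this more directly from Example \ref{ex:c0} (where $c_0$ is shown to fail $K$-convexity) together with Proposition \ref{prop:K-convex-dual}; also note that your detour through a fixed basis of a separable subspace is unnecessary, since the uniform bound over arbitrary finite orthonormal systems already puts $S$ in $\g_\infty(H,E\s)$, exactly as Theorem \ref{thm:c0} requires.
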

\begin{proof}
For the proof of the first assertion we
may assume that $T = \sum_{n=1}^N h_n\otimes x_n$ and $S = \sum_{n=1}^N
h_n\otimes x_n\s$ with $h_1,\dots,h_N$ orthonormal in $H$.
Then,
$$
\bal
 |{\rm tr} (S\s T)| 
& = \Big|{\rm tr} \sum_{m=1}^N\sum_{n=1}^N \lb x_m,x_n\s\rb h_m\otimes h_n\Big|
 = \Big|\sum_{n=1}^N \lb x_n,x_n\s\rb\Big| 
\\ &  =  \Big| \E \Big\lb \sum_{m=1}^N \g_m x_m,  \sum_{n=1}^N \g_n
x_n\s\Big\rb\Big|
\le \n T\n_{\g(H,E)}\n S\n_{\g(H,E\s)}.
\eal
$$
Lemma \ref{lem:sup}, applied to the Banach spaces $E\s$ and the norming subspace
$E\subseteq E^{**}$, shows that 
$$ \n S\n_{\g(H,E\s)} \le K(E)\ \sup\Big\{ |\textrm{tr}(S\s T)| : \ \n
T\n_{\g(H,E)}\le 1\Big\} = K(E)\n \phi_S\n.$$
This shows that $\phi$ is an isomorphic embedding of $\g(H,E\s)$ into
$(\g(H,E))\s$. 

It remains to prove that $\phi$ is surjective.
To this end let $\Lambda\in (\g(H,E))\s$ be given. 
We claim that the bounded operator $S: H\to E\s$ defined by
$\lb x, Sh\rb = \lb h\ot x,\Lambda\rb $
belongs to $\g(H,E\s)$ and that $S = \Lambda$ in $(\g(H,E))\s$.  
Fix any finite orthonormal system $(h_n)_{n=1}^N$ in $H$. 
By Lemma \ref{lem:sup}, applied to $E\s$ and the norming subspace $E\subseteq
E^{**}$,
$$ 
\bal
\E \Big\n \sum_{n=1}^N \g_n Sh_n\Big\n^2 
&  \le K^2(E)
\sup
  \Big|\sum_{n=1}^N \lb x_n, S h_n\rb\Big|^2
\\ & = K^2(E) \sup 
  \Big|\sum_{n=1}^N \lb h_n\ot x_n,\Lambda\rb\Big|^2 = K^2(E) \n \Lambda\n^2.  
\eal
$$
Example \ref{ex:c0} shows that a $K$-convex subspace cannot contain an
isomorphic copy of $c_0$, and therefore an appeal to Theorem \ref{thm:c0}
finishes the proof.
 \end{proof}

Our final result relates the notion of $K$-convexity to isonormal processes.

\begin{theorem}
Let $E$ be $K$-convex and let $W:H\to L^2(\O)$ be an isonormal process. The 
closure of the range of the induced mapping $W: \g(H,E)\to L^2(\O;E)$ is the
range of a projection $P^W$ in $L^2(\O;E)$ of norm  $\n P^W\n \le K(E)$.
\end{theorem}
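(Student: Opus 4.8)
The plan is to realise $P^W$ through the $K$-convexity projections $\pi_N^\g$ of Section~\ref{sec:trace_duality}. First observe that, since $\g(H,E)$ is complete and $W:\g(H,E)\to L^2(\O;E)$ is isometric (Proposition~\ref{prop:Ito}), the range $\mathscr{G}:=W(\g(H,E))$ is already closed, so ``the closure of the range'' is just $\mathscr{G}$. Fix a maximal orthonormal system $(h_i)_{i\in I}$ in $H$ and put $\g_i:=W(h_i)$; these are independent standard Gaussians. For finite $F\subseteq I$ set $\pi_F X:=\sum_{i\in F}\g_i\,\E(\g_i X)$. Exactly as for $\pi_N^\g$ one checks that $\pi_F$ is a projection, and by Lemma~\ref{lem:change-g} its norm depends only on $|F|$, so that $\n\pi_F\n=K_{|F|}(E)\le K(E)$.

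The core step is to attach to each $X\in L^2(\O;E)$ a $\g$-radonifying operator. I would define $T_X\in\calL(H,E)$ by $T_X h:=\E(W(h)X)$ (a Bochner integral, well defined since $W(h)\in L^2(\O)$ and $X\in L^2(\O;E)$); then $\n T_X\n\le\n X\n_{L^2(\O;E)}$ and $T_X h_i=\E(\g_i X)$, so that $\sum_{i\in F}\g_i T_X h_i=\pi_F X$. Next I would localise $T_X$ to a separable part of $H$: testing against $x\s\in E\s$ gives $\lb T_X h,x\s\rb=[W(h),\lb X,x\s\rb]_{L^2(\O)}$, and since $X$ is essentially separably valued the random variables $\lb X,x\s\rb$ span a separable subspace of $L^2(\O)$. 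Projecting this subspace onto the first chaos $\mathscr{H}_1:=\overline{W(H)}$ produces a countable $I_0\subseteq I$ such that $T_X$ vanishes on $H_0^\perp$, where $H_0:=\overline{\mathrm{span}}\{h_i:i\in I_0\}$. On the separable space $H_0$ Proposition~\ref{prop:summing-sep} applies: taking an orthonormal basis $(h_n)_{n\ge1}$ of $H_0$ with $\g_n=W(h_n)$, each partial sum $\sum_{n=1}^N\g_n T_X h_n$ is a projection $\pi_F X$ with $|F|=N$, whence
$$\n T_X\n_{\g_\infty(H,E)}^2=\sup_{N\ge1}\E\Big\n\sum_{n=1}^N\g_n T_X h_n\Big\n^2\le K(E)^2\,\n X\n_{L^2(\O;E)}^2.$$
Thus $T_X\in\g_\infty(H,E)$ with $\n T_X\n_{\g_\infty(H,E)}\le K(E)\n X\n_{L^2(\O;E)}$.

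Here $K$-convexity is decisive: Example~\ref{ex:c0} shows that a $K$-convex space contains no closed subspace isomorphic to $c_0$, so Theorem~\ref{thm:c0} upgrades $T_X\in\g_\infty(H,E)$ to $T_X\in\g(H,E)$ with equal norm, giving $\n T_X\n_{\g(H,E)}\le K(E)\n X\n_{L^2(\O;E)}$. I would then set $P^W X:=W(T_X)$. Boundedness with $\n P^W\n\le K(E)$ is immediate from the isometry of $W$, and $P^W$ visibly takes values in $\mathscr{G}$. Finally, for $X=W(T)$ with $T\in\g(H,E)$ one computes, using Proposition~\ref{prop:xs} and the isonormal property, $\lb T_X h,x\s\rb=\E\big(W(h)\,W(T\s x\s)\big)=[h,T\s x\s]_H=\lb Th,x\s\rb$, so $T_X=T$ and $P^W X=X$. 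Hence $P^W$ is the identity on $\mathscr{G}$, is therefore idempotent, and has range exactly $\mathscr{G}$.

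The main obstacle is precisely the passage from $\g$-summing to $\g$-radonifying. Producing the bound $\n T_X\n_{\g_\infty(H,E)}\le K(E)\n X\n$ is where the $K$-convexity constant enters (through Lemma~\ref{lem:change-g}), and the separability reduction needed to invoke Proposition~\ref{prop:summing-sep} must be handled with some care, as the random variables $\lb X,x\s\rb$ span a separable subspace only because $X$ is essentially separably valued. Everything downstream---that $P^W$ is a norm-$\le K(E)$ projection onto $\mathscr{G}$---is then purely formal.
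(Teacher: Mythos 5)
Your proof is correct, but it takes a genuinely different route from the paper's. The paper defines $P^W$ as the strong operator limit, along the net of finite subsets $J\subseteq I$ of a maximal orthonormal system, of the Gaussian projections $P_J^W X=\sum_{j\in J}\g_j\,\E(\g_j X)$ with $\g_j=W(h_j)$: the uniform bound $\n P_J^W\n\le K(E)$ (your $\pi_F$ observation via Lemma \ref{lem:change-g}) reduces existence of the limit to simple functions $X=\one_A\ot x$, where it follows from Hilbert-space facts about $\lim_J P_J^W\one_A$ in $L^2(\O)$; and since each approximant $P_J^W X=W\big(\sum_{j\in J}h_j\ot\E(\g_j X)\big)$ already lies in the range of $W$, the limit lands in the closed range and no $\g_\infty$-to-$\g$ upgrade is ever needed. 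You instead exhibit the projection in closed form, $P^W X=W(T_X)$ with $T_X h=\E(W(h)X)$ (a Karhunen--Lo\`eve-type operator relative to $W$, cf.\ Theorem \ref{thm:KL}), bound $\n T_X\n_{\g_\infty(H,E)}\le K(E)\n X\n_{L^2(\O;E)}$, and then invoke Theorem \ref{thm:c0} --- legitimately, since $K$-convexity passes to closed subspaces and is isomorphically invariant, so Example \ref{ex:c0} rules out copies of $c_0$, exactly as the paper itself argues in the proof of Theorem \ref{thm:Kconvex-gamma-dual}. What your route buys: an explicit formula for $P^W$ and explicit verifications of idempotency and of the identification of the range (including the correct remark that the range of $W$ is already closed because $W$ is isometric and $\g(H,E)$ complete), all of which the paper leaves implicit in its limit construction; the cost is the heavier Hoffmann--J{\o}rgensen--Kwapie\'n machinery, which the paper's soft limit argument avoids. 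One simplification you could make: the separability reduction and Proposition \ref{prop:summing-sep} are unnecessary (and strictly speaking your appeal to that proposition only controls the $\g_\infty(H_0,E)$-norm, needing Proposition \ref{prop:incl} to return to $H$). Indeed, for \emph{any} finite orthonormal system $e_1,\dots,e_k$ in $H$ the variables $\g_j'=W(e_j)$ form a Gaussian sequence and $\sum_{j=1}^k\g_j' T_X e_j=\pi_k^{\g'}X$, so Lemma \ref{lem:change-g} gives $\E\big\n\sum_{j=1}^k\g_j T_X e_j\big\n^2\le K(E)^2\,\n X\n_{L^2(\O;E)}^2$ directly, yielding $T_X\in\g_\infty(H,E)$ over all of $H$ in one stroke.
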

\begin{proof}
Let $(h_i)_{i\in I}$ be a maximal orthonormal system in $H$. We claim that the
projection $P_W$ is given as the strong operator limit 
$\lim_{J} P_J^W$, where 
$$ P_J^W X : =  \sum_{j\in J} \g_j \E(\g_j X),$$
with $\g_j = W(h_j)$. Here the limit is taken along the net of all finite
subsets $J$ of $I$.

To see that the strong limit exists, 
recall that 
every $X\in L^2(\O;E)$ can be approximated by
simple functions of the form $X=\sum_{n=N}^k \one_{A_n}\ot x_n$.
By the uniform boundedness of the projections $P_J^W$ and linearity 
it suffices to show that
the limit $\lim_J P_J^W X_n$ exists for each $X_n := \one_{A_n}\ot x_n$. 
But in $L^2(\O)$, the limit $\lim_J P_J^W \one_{A_n}$ exists by standard facts
about orthogonal projections in Hilbert spaces.

From $\n P_J^W\n \le K(E)$ for all finite subsets $J\subseteq I$ 
we infer $\n P^W\n\le K(E)$.
\end{proof}

\section{Embedding theorems}\label{sec:embedding}

As we have seen in Example \ref{ex:SI}, if $W: L^2(\R_+;H)\to L^2(\O)$ is an
isonormal process, then the induced isometric mapping
$$ W: \g(L^2(\R_+;H),E)\to L^2(\O;E)$$
can be interpreted as a stochastic integral. Indeed,
the stochastic integral of the $H\ot E$-valued function
$f\ot (h\ot x)$ can be defined by
$$ \int_0^\infty f\ot (h\ot x)\,dW:= W((f\ot h)\ot x),$$
and this definition extends by linearity to functions $\phi \in L^2(\R_+)\ot
(H\ot E)$. 
The isometric property of the induced mapping $W$ then expresses that
$$ \E \Big\n \int_0^\infty \phi\,dW \Big\n^2 = \n
T\phi\n_{\g(L^2(\R_+;H),E)}^2,$$
where $T: L^2(\R_+)\ot (H\ot E) \to (L^2(\R_+)\ot H)\ot E$ is the linear mapping
$$T (f\ot (h\ot x)):= (f\ot h)\ot x.$$
Since $(L^2(\R_+)\ot H)\ot E$ is dense in $\g(L^2(\R_+;H),E)$, the stochastic
integral has a unique isometric extension to $\g(L^2(\R_+;H),E)$.
It is therefore of considerable interest to investigate the structure of the
space $\g(L^2(\R_+;H),E)$. In this section we shall prove various embedding
theorems which show that suitable Banach spaces of $\g(H,E)$-valued functions
embed in $\g(L^2(\R_+;H),E)$. 

The simplest example of such an embedding occurs when $E$ has type $2$. 

\begin{definition}
A Banach space $E$ is said to have {\em type $p\in [1,2]$}
if there exists
a constant $C_p\ge 0$ such that for all finite sequences $x_1,\dots,x_N$ in $E$
we have
$$
\Big(\E \Big\n \sum_{n=1}^N r_n x_n\Big\n^2\Big)^\frac12
\le C_p \Big(\sum_{n=1}^N \n x_n\n^p\Big)^\frac1p.
$$
The space $E$ is said to have {\em cotype
$q\in [2,\infty]$}  if there exists
a constant $C_q\ge 0$ such that for all finite sequences $x_1,\dots,x_N$ in $E$
we have
$$ \Big(\sum_{n=1}^N \n x_n\n^q\Big)^\frac1q 
 \le C_q\Big(\E \Big\n \sum_{n=1}^N r_n x_n\Big\n^2\Big)^\frac12.
$$
For $q=\infty$ we make the obvious adjustment in this definition.
\end{definition}

The least constants in the above definitions are denoted by $T_p(E)$ and
$C_q(E)$, respectively, and are called the {\em type} and {\em cotype constant}
of $E$. 

\begin{remark}\label{rem:Gaussian-type-cotype}
In the definitions of type and cotype, the Rademacher variables may be replaced
by Gaussian random variables; this only affects the numerical values of the type
and cotype constants. 
The Gaussian type and cotype constants of a Banach space $E$ are denoted by 
$T_p^\g(E)$ and $C_q^\g(E)$, respectively.  
\end{remark}

It is easy to check that the inequalities defining 
type and cotype cannot be satisfied
for any $p>2$ and $q<2$, respectively, even in one-dimensional spaces $E$. 
This explains the restrictions imposed on
these numbers.

\begin{example}
Every Banach space has type $1$ and cotype $\infty$.
\end{example}

\begin{example}
Every Hilbert space has type $2$ and cotype $2$. A deep result
of {\sc Kwapie\'n} \cite{Kwa72} states that, conversely, every Banach space
with type $2$ and cotype $2$ is isomorphic to a Hilbert space.
\end{example}

\begin{example}
Let $(A,\calA,\mu)$ be a 
$\sigma$-finite measure space and let $1\le r<\infty$.
If $E$ has type $p$ (cotype $q$),  
then  $L^{r}(A;E)$ has type $\min\{p,r\}$ (cotype $\max\{q,r\}$).
In particular,
$L^r(A)$ has type $\min\{2,r\}$ and cotype $\max\{2,r\}$.

Let us prove this for the case of type, the case of cotype being similar.
If $r<p$ we may replace $p$ by $r$ and thereby assume that 
$1\le p \le r <\infty$; we shall prove that $L^r(A;E)$ has type $p$, with
$$T_p(L^r(A;E)) \le 
K_{2,r}K_{r,2}T_p(E)
=\left\{
\begin{array}{rl}
K_{2,r}T_p(E), & \hbox{ if $1\le r < 2$};\\
       T_p(E), & \hbox{ if $r=2$}; \\
K_{r,2}T_p(E), & \hbox{ if $2< r < \infty$}.
\end{array}
\right.
$$
Here $K_{2,r}$ and $K_{r,2}$ are the Kahane-Khintchine constants.
 
Let $f_1,\dots, f_N\in L^{r}(A;E)$.
By
using the Fubini theorem, the Kahane-Khintchine inequality,
type $p$,  H\"older's inequality, and 
the triangle inequality in $L^{\frac{r}{p}}(A)$, 
$$\bal 
\Big( \E \Big\n \sum_{n=1}^N r_n
f_n\Big\n_{L^{r}(A;E)}^{r}\Big)^\frac1{r}
& = \Big(\int_A \E \Big\n \sum_{n=1}^N r_n
 f_n(\xi)\Big\n^{r}\,d\mu(\xi)\Big)^\frac1{r} 
\\ & \le K_{r\!,2}\Big(\int_A \Big(\E \Big\n \sum_{n=1}^N r_n
 f_n(\xi)\Big\n^{2}\Big)^\frac{r}{2} \,d\mu(\xi)\Big)^\frac1{r} 
\\ & \le K_{r\!,2}T_p(E)\Big(\int_A \Big( \sum_{n=1}^N 
 \n f_n(\xi)\n^p\Big)^\frac{r}{p} \,d\mu(\xi)\Big)^\frac1{r}
\\ & = K_{r\!,2} 
T_p(E)\Big\n \sum_{n=1}^N \n f_n\n^p\Big\n_{L^\frac{r}{p}(A)}^\frac{1}{p} 
\\ & \le K_{r\!,2}
T_p(E)\Big(\sum_{n=1}^N\Big\n  \n
f_n\n^p\Big\n_{L^\frac{r}{p}(A)}\Big)^\frac{1}{p}
\\ & = 
K_{r\!,2} T_p(E) \Big(\sum_{n=1}^N \n f_n\n_{L^{r}(A;E)}^p\Big)^\frac1p.
\eal
$$
An application of the Kahane-Khintchine inequality to change moments in the
left hand side finishes the proof of the first assertion.
\end{example}

If a Banach space has type $p$ for some $p\in [1,2]$,  then it has type
$p'$ for all $p'\in [1,p]$;
if a Banach space has cotype $q$ for some $q\in [2,\infty]$,  then it has cotype
$q'$ for all $q'\in
[q,\infty]$. A simple duality argument shows that if $E$ has type $p$, then the
dual space $E\s$ has cotype $p'$, $\frac1p+\frac1{p'}=1$. 
If $E$ is $K$-convex and has cotype $p$, then the
dual space $E\s$ has type $p'$, $\frac1p+\frac1{p'}=1$. 
The $K$-convexity assumption cannot be omitted: $\ell^1$ has cotype $2$ while
its dual 
 $\ell^\infty$ fails to have non-trivial type.

The next theorem goes back to {\sc Hoffmann-J\o rgensen} and {\sc Pisier}
\cite{HofPis} and {\sc Rosi\'nski} and {\sc Suchanecki} \cite{RosSuc}; in its
present formulation it can be found in
{\sc van Neerven} and {\sc Weis} \cite{NeeWei05b}.

\begin{theorem}\label{thm:type2}
Let $(A,\calA,\mu)$ be a $\sigma$-finite measure space.
\begin{enumerate}
\item[\rm(1)]
If $E$ has type $2$, then the
mapping $(f\otimes h)\otimes x \mapsto f\otimes (h\otimes x)$ has a unique
extension to a continuous embedding
$$L^2(A;\g(H,E))\embed \g(L^2(A;H), E)$$
of norm at most $T_2(E)$.
Conversely, if the identity mapping $f\ot x \mapsto f\ot x$ extends to a bounded
operator from 
$L^\infty(0,1;E)$ to $\g(L^2(0,1), E)$, then $E$ has type $2$.
\item[\rm(2)] 
If $E$ has cotype $2$, then the
mapping $f\otimes (h\otimes x) \mapsto (f\otimes h)\otimes x$ has a unique
extension to a continuous embedding
$$\g(L^2(A;H), E)\embed L^2(A;\g(H,E))$$
of norm at most $C_2(E)$.
Conversely, if the identity mapping $f\otimes x \mapsto f\otimes $ extends to a
bounded operator from $\g(L^2(0,1), E)$ to
$L^1(0,1;E)$, then $E$ has cotype $2$.
\een
\end{theorem}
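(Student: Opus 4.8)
The plan is to prove both parts by reducing, via density, to finite rank simple functions and then writing out the two Hilbertian norms explicitly in a common orthonormal coordinate system; the decisive inequalities then become \emph{type $2$} (respectively \emph{cotype $2$}) estimates for sums of \emph{independent $E$-valued Gaussian} summands, which follow from the scalar definitions by a randomisation argument. The two converses are obtained by feeding the assumed boundedness a single, cleverly weighted elementary tensor.

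For the forward implication of (1), I would fix a finite rank simple function $\Phi = \sum_j \one_{B_j}\ot T_j$, with the $B_j\in\calA$ disjoint of finite positive measure and each $T_j\in H\ot E$; such functions are dense in $L^2(A;\g(H,E))$. Choosing a single finite orthonormal system $(e_m)$ in $H$ whose span contains $\bigcup_j \ov{\mathrm{ran}(T_j\s)}$, write $T_j = \sum_m e_m\ot x_{jm}$. Since $g_j := \one_{B_j}/\sqrt{\mu(B_j)}$ is orthonormal in $L^2(A)$, the family $(g_j\ot e_m)$ is orthonormal in $L^2(A;H)$, and a direct computation gives
$$\n\Phi\n_{L^2(A;\g(H,E))}^2 = \sum_j \mu(B_j)\,\E\Big\n\sum_m \g_m x_{jm}\Big\n^2, \qquad \n\Phi\n_{\g(L^2(A;H),E)}^2 = \E\Big\n\sum_{j,m}\g_{jm}\sqrt{\mu(B_j)}\,x_{jm}\Big\n^2,$$
where $(\g_{jm})$ is a doubly indexed Gaussian sequence. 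Setting $Z_j := \sqrt{\mu(B_j)}\sum_m \g_m x_{jm}$, the first quantity is $\sum_j\E\n Z_j\n^2$, while the second is $\E\n\sum_j Z_j'\n^2$ with the $Z_j'$ independent copies of the $Z_j$. The required bound thus reduces to $\E\n\sum_j Z_j'\n^2 \le T_2(E)^2\sum_j\E\n Z_j'\n^2$, which I would prove by randomisation: as the $Z_j'$ are independent and symmetric, $\E\n\sum_j Z_j'\n^2 = \E\n\sum_j r_j Z_j'\n^2$, and conditioning on the $Z_j'$ and applying the definition of type $2$ pointwise in the Rademacher variables yields exactly the constant $T_2(E)$. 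Part (2) is entirely parallel: the same two norm identities hold, and one instead applies $\sum_j\E\n Z_j'\n^2\le C_2(E)^2\,\E\n\sum_j r_j Z_j'\n^2$ pointwise, again after randomisation, to obtain the reverse inequality with constant $C_2(E)$. Boundedness of the unique extensions follows; injectivity (so these are genuine embeddings) is checked by testing the image operator against $f\ot h\in L^2(A;H)$ and $x\s\in E\s$.

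For the two converses the idea is a weighting that realises the Cauchy--Schwarz equality case. Given $y_1,\dots,y_N\in E$, not all zero, put $S := \sum_n\n y_n\n^2$ and $t_n := \n y_n\n^2/S$, choose disjoint sets $B_n\subseteq(0,1)$ with $|B_n| = t_n$, and set $f := \sum_n \one_{B_n}\ot (y_n/\sqrt{t_n}) = \sum_n (\one_{B_n}/\sqrt{t_n})\ot y_n$. Then $\n f\n_{\g(L^2(0,1),E)}^2 = \E\n\sum_n\g_n y_n\n^2$, while the weighting forces both $\n f\n_{L^\infty(0,1;E)} = \sqrt S$ and $\n f\n_{L^1(0,1;E)} = \sqrt S$. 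For (1) the hypothesis $\n f\n_{\g}\le C\n f\n_{L^\infty}$ gives $\E\n\sum_n\g_n y_n\n^2\le C^2 S$, i.e. Gaussian type $2$; for (2) the hypothesis $\n f\n_{L^1}\le C\n f\n_{\g}$ gives $S\le C^2\,\E\n\sum_n\g_n y_n\n^2$, i.e. Gaussian cotype $2$. By Remark \ref{rem:Gaussian-type-cotype} these are equivalent to type $2$ and cotype $2$ respectively, completing both converses.

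The only genuinely delicate point I expect is bookkeeping rather than ideas: one must choose the orthonormal system $(e_m)$ simultaneously for all the finitely many $T_j$, and verify that the regrouping of elementary tensors $f\ot h\ot x$ is consistent across the two dense subspaces (the finite rank simple functions on one side, the algebraic tensor product $(L^2(A)\ot H)\ot E$ on the other), so that the two computed norm identities refer to one and the same element. Once these identities are in place, the reduction to a Gaussian type/cotype inequality for independent summands is routine, and the randomisation step is standard.
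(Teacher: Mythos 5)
Your proposal is correct. In the forward directions of (1) and (2) you are in substance doing exactly what the paper does: the paper likewise reduces to step functions $\sum_{m,n}(f_m\ot h_n)\ot x_{mn}$ with the $f_m$ normalised indicators of disjoint sets of finite measure, writes the $\g(L^2(A;H),E)$-norm as a doubly indexed Gaussian sum, and compares it with the $L^2(A;\g(H,E))$-norm by inserting an independent Rademacher sequence $(r_m')$ and averaging; the paper packages the decisive step as ``the type $2$ property of $L^2(\O;E)$'', which unwound is precisely your randomise-then-condition argument via Fubini, with the same constant $T_2(E)$ (resp.\ $C_2(E)$). Where you genuinely depart from the paper is in the two converses, and your route is cleaner. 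The paper argues by contraposition: it first proves, following James, that the type $2$ inequality may be tested on vectors of norm one (via a doubly indexed Gaussian splitting of vectors with integer norms, then rational norms, then approximation), and then from failure of type $2$ builds $\sum_{m}\one_{I_m}\ot x_m$ with intervals of \emph{equal} length to contradict boundedness from $L^\infty(0,1;E)$, leaving the converse in (2) as ``very similar''. Your weighting $|B_n|=\Vert y_n\Vert^2/S$ realises the equality case: it makes $\Vert f(t)\Vert\equiv\sqrt{S}$ almost everywhere, so $\Vert f\Vert_{L^\infty(0,1;E)}=\Vert f\Vert_{L^1(0,1;E)}=\sqrt{S}$ simultaneously, and the assumed boundedness yields the Gaussian type (resp.\ cotype) $2$ inequality \emph{directly}, with no normalisation lemma and with both converses handled by one construction; your appeal to Remark \ref{rem:Gaussian-type-cotype} to pass back to Rademacher type and cotype is the same appeal the paper makes at the corresponding point. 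The paper's detour buys nothing extra here (its interval bookkeeping even contains a small slip: $|I_m|=1/M^2$ is inconsistent with the orthonormality of $\sqrt{M}\one_{I_m}$ used in the display), while your version buys directness and symmetry between (1) and (2). The bookkeeping issues you flag at the end --- a common orthonormal system for the finitely many $T_j$, consistency of the regrouping on the two dense subspaces, injectivity of the extensions tested against $f\ot h$ and $x\s$ --- are exactly the points the paper also dispatches in one sentence, and they are routine.
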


\begin{proof} 
We shall prove (1); the proof of (2) is very similar.

Let $(f_m)_{m=1}^M$ and $(h_n)_{n=1}^N$ be orthonormal systems in $L^2(A)$ and
$H$, respectively, with $f_m = c_m \one_{A_m}$ for suitable disjoint sets
$A_m\in \calA$; here $c_m := 1/\sqrt{\mu(A_m)}$ is a normalising constant.
Let $(\g_{mn})_{m,n\ge 1}$ be a Gaussian sequence on $(\O,\F,\P)$ and let
$(r_m')_{m\ge 1}$ be a Rademacher sequence on a second probability space
$(\O',\F',\P')$.
For each $\om'\in\O'$ the Gaussian sequences $(\g_{mn})_{m,n\ge 1}$ and
$(r_m(\om')\g_{mn})_{m,n\ge 1}$ are identically distributed. Averaging over
$\O'$, using Fubini's theorem and the type $2$ property of $L^2(\O;E)$,  we
obtain
\begin{align*}
\ & \Big\n \sum_{m=1}^M \sum_{n=1}^N (f_m\ot h_n)\ot
x_{mn}\Big\n_{\g(L^2(A;H),E)}^2
\\ & \qquad\qquad =
\E \Big\n \sum_{m=1}^M \sum_{n=1}^N \g_{mn} x_{mn}\Big\n^2  
\\ &  \qquad\qquad =
\E \E'\Big\n \sum_{m=1}^M r_m' \sum_{n=1}^N \g_{mn} x_{mn}\Big\n^2 
\\ &  \qquad\qquad\le T_2^2(E) \sum_{m=1}^M \E \Big\n \sum_{n=1}^N \g_{mn}
x_{mn}\Big\n^2 
\\ &  \qquad\qquad =  T_2^2(E)\sum_{m=1}^M \E \Big\n \sum_{n=1}^N \g_{n}
x_{mn}\Big\n^2
\\ &  \qquad\qquad =  T_2^2(E)\sum_{m=1}^M c_m^2\mu(A_m)
\E \Big\n \sum_{n=1}^N \g_{n} x_{mn}\Big\n^2
\\ &  \qquad\qquad =  T_2^2(E) \Big\n \sum_{m=1}^M \sum_{n=1}^N f_m\ot (h_n\ot
x_{mn})\Big\n_{L^2(A;\g(H,E))}^2.
\end{align*}
It is easy to check that elements of the form $\sum_{m=1}^M \sum_{n=1}^N (f_m\ot
h_n)\ot x_{mn}$ and $\sum_{m=1}^M \sum_{n=1}^N(f_m\ot (h_n\ot x_{mn})$ are dense
in $\g(L^2(A;H),E)$ and $L^2(A;\g(H,E))$, respectively. This gives the first
assertion.

The proof of the converse relies on the preliminary observation that in the
definition of type $2$ {\em we may restrict ourselves to vectors of norm one}.
To prove this we follow {\sc James} \cite{Jam}. Keeping in mind Remark
\ref{rem:Gaussian-type-cotype}, suppose there is a constant $C$ such that for
all $N\ge 1$ and all $x_1,\dots,x_N\in E$ of norm one we have $$
\Big(\E \Big\n \sum_{n=1}^N \g_n x_n\Big\n^2\Big)^\frac12
\le C_p \Big(\sum_{n=1}^N \n x_n\n^p\Big)^\frac1p.
$$
Now let $x_1,\dots,x_N\in E$ have integer norms, say $\n x_n\n = M_n,$
and let $(\g_{mn})_{m,n\ge 1}$ be a doubly indexed Gaussian sequence. Since
$\sum_{m=1}^{M_n^2} \g_{mn}$ and $M_n \g_n$ are identically distributed,
we have
$$ \E \Big\n \sum_{n=1}^N \g_n x_n\Big\n^2
= \E \Big\n \sum_{n=1}^N \sum_{m=1}^{M_n^2} \g_{mn} \frac{x_n}{\n x_n\n}\Big\n^2
\le C_2 \sum_{n=1}^N \sum_{m=1}^{M_n^2} \big\n \frac{x_n}{\n x_n\n}\big\n^2 
= C_2 \sum_{n=1}^N \n x_n\n^2.
$$
Upon dividing by a large common integer, this inequality extends
 to $x_1,\dots,x_N\in E$ having rational norms, and the general case follows
from this by approximation.
 
Suppose now that $E$ fails type $2$, and let $N\ge 1$ be fixed. By the
observation (and Remark \ref{rem:Gaussian-type-cotype}), there exist 
$x_1,\dots, x_M\in E$ of norm one such that
$$ \E\Big\n \sum_{m=1}^M \g_m x_m\Big\n^2 \ge N^2 \sum_{m=1}^M \n x_m\n^2.$$ 
Let $I_1,\dots,I_M$ be disjoint intervals in $(0,1)$ of measure $|I_m| = 1/M^2$.
 
Then, using that the functions $\sqrt{M}\one_{I_m}$ are orthonormal in
$L^2(0,1)$, 
\begin{align*}
 \Big\n \sum_{m=1}^M \one_{I_m}\ot x_{m}\Big\n_{\g(L^2(0,1),E)}^2
& =  \frac1M \E\Big\n \sum_{m=1}^M \g_m {x_m}\Big\n^2 
 \ge \frac{CN^2}{M}
\sum_{m=1}^M {\n x_m\n^2} 
\\ & = N^2
 = N^2\Big\n \sum_{m=1}^M \one_{I_m}\ot x_{m}\Big\n_{L^\infty(0,1;E)}.
\end{align*}
This shows that the identity mapping on $L^2(0,1)\ot E$
does not extend to a bounded operator from $L^\infty(0,1;E)$ into
$\g(L^2(0,1),E)$.
\end{proof}

Note that if $\phi:= f\ot (h\ot x)$, then $T_\phi:= f\ot(h\ot x)$ is the
operator given by
\begin{equation}\label{eq:Tphi}  T_\phi g =  \int_A \phi g\,d\mu, \quad g\in
L^2(A;H).
\end{equation}

\begin{corollary}
If the identity mapping
$f\ot x \mapsto f\ot x$ extends to an isomorphism $$ L^2(\R_+;E) \simeq
\g(L^2(\R_+),E),$$ 
then $E$ is isomorphic to a Hilbert space
\end{corollary}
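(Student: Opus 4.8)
The plan is to show that the hypothesis forces $E$ to have both type $2$ and cotype $2$, and then to invoke Kwapie\'n's theorem, according to which a Banach space with type $2$ and cotype $2$ is isomorphic to a Hilbert space. Both type and cotype will be read off from the converse halves of Theorem \ref{thm:type2}. Those converses are stated over the finite measure space $(0,1)$, and they use finite measure in an essential way (through the inclusions $L^\infty\embed L^2\embed L^1$, which fail on $\R_+$); so the first task is to transport the assumed isomorphism from $\R_+$ down to $(0,1)$.

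To carry this out, let $\Phi\colon L^2(\R_+;E)\to \g(L^2(\R_+),E)$ be the isomorphism extending $f\ot x\mapsto f\ot x$. Regarding $L^2(0,1)$ as a closed subspace of $L^2(\R_+)$ via extension by zero, Proposition \ref{prop:incl} provides an isometric embedding $\g(L^2(0,1),E)\embed \g(L^2(\R_+),E)$, and $L^2(0,1;E)$ sits isometrically inside $L^2(\R_+;E)$ in the same manner. On a finite-rank simple function supported in $(0,1)$ the map $\Phi$ is the identity, and the associated operator $L^2(\R_+)\to E$ annihilates $L^2(0,1)^\perp=L^2((1,\infty))$; hence, by Propositions \ref{prop:incl} and \ref{prop:proj}, it lies in $\g(L^2(0,1),E)$. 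By density and boundedness $\Phi$ maps $L^2(0,1;E)$ into $\g(L^2(0,1),E)$, and the same reasoning applied to $\Phi^{-1}$ gives the reverse inclusion. Thus $\Phi$ restricts to an isomorphism $L^2(0,1;E)\simeq \g(L^2(0,1),E)$ still extending $f\ot x\mapsto f\ot x$.

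Now I would exploit that $(0,1)$ has finite measure. The inclusion $L^\infty(0,1;E)\embed L^2(0,1;E)$ is bounded, so composing it with $\Phi$ shows that $f\ot x\mapsto f\ot x$ extends to a bounded operator $L^\infty(0,1;E)\to \g(L^2(0,1),E)$; by the converse in Theorem \ref{thm:type2}(1), $E$ has type $2$. Dually, the inclusion $L^2(0,1;E)\embed L^1(0,1;E)$ is bounded, so composing $\Phi^{-1}$ with it shows that $f\ot x\mapsto f\ot x$ extends to a bounded operator $\g(L^2(0,1),E)\to L^1(0,1;E)$; by the converse in Theorem \ref{thm:type2}(2), $E$ has cotype $2$. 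Having both, Kwapie\'n's theorem \cite{Kwa72} gives that $E$ is isomorphic to a Hilbert space.

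The genuinely routine parts are the two continuous inclusions of Bochner spaces over the finite measure space $(0,1)$ and the verification that each composite operator really does extend the identity on $L^2(0,1)\ot E$. The one step that needs a little care---the main obstacle, such as it is---is the restriction of $\Phi$ from $\R_+$ to $(0,1)$: one must confirm that the isomorphism carries the two distinguished subspaces onto one another, which rests on the compatibility of the identity map with the isometric embedding of Proposition \ref{prop:incl} and on the fact that an operator in $\g(L^2(\R_+),E)$ vanishing on $L^2((1,\infty))$ already belongs to $\g(L^2(0,1),E)$.
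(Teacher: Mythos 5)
Your proof is correct and takes essentially the same route as the paper: both deduce type $2$ and cotype $2$ from the converse halves of Theorem \ref{thm:type2} and conclude with Kwapie\'n's theorem. The paper's proof is a one-liner that leaves the localization from $\R_+$ to $(0,1)$ implicit; your restriction argument via Propositions \ref{prop:incl} and \ref{prop:proj} merely makes that routine step explicit.
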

\begin{proof}
By Theorem \ref{thm:type2}, $E$ has type $2$ and cotype $2$ and $E$ is
isomorphic to a Hilbert space by {\sc Kwapie\'n}'s theorem cited earlier.
\end{proof}

We continue with an example of {\sc van Neerven} and {\sc Weis} \cite{NeeWei05b}
which shows that in certain spaces without cotype $2$
there exist bounded strongly measurable functions $\phi:(0,1)\to\calL(H;E)$
such that the operator $T_\phi$ defined by \eqref{eq:Tphi} belongs to
$\g(L^2(0,1;H),E)$, even though $\phi(t)\not\in \g(H,E)$ for all $t\in (0,1)$.
 
\begin{example}\label{ex1} Let $H=\ell^2$ and $E=\ell^p$ with $2<p<\infty$.
For $k=1,2,\dots$ choose sets $A_k\subseteq (0,1)$
of measure $\frac1k$ in such a way that for all $t\in (0,1)$ we have
\begin{equation}\label{number}
 \#\{k\ge 1: \ t\in A_k\} = \infty.
\end{equation}
Define the operators $\phi(t): \ell^2\to \ell^p$ as coordinate-wise
multiplication
with the sequence $(a_1(t),a_2(t), \dots)$, where
\begin{equation}\label{infinite}
 a_k(t) = \left\{
\begin{array}{ll}
1, & \hbox{if} \ t\in A_k, \\
0, & \hbox{otherwise}.
\end{array}
\right.
\end{equation}
Then $\n \phi(t)\n =1$ for all $t\in (0,1)$ and none of the operators
$\phi(t)$ is $\gamma$-radonifying. 
Indeed, this follows from Proposition \ref{prop:sq-fc-Lp} below, according to
which we have $\phi(t)\in\g(\ell^2,\ell^p)$ if and only if
$$\sum_{k\ge 1}\n \phi\s(t) e_k\s\n_{\ell^2}^p < \infty,$$
where $e_k\s$ denote the $k$-th unit vector of
$l^q$ $(\frac1p+\frac1q =1$). 
By \eqref{number} and \eqref{infinite}, 
the sum $\sum_{k=1}^\infty \n \phi\s(t) e_k\s\n_{\ell^2}^p $ diverges for all
$t\in [0,1]$.

The associated operator $T_\phi: L^2(0,1;\ell^2)\to \ell^p$  
is well-defined and bounded, and we have
$$
\n T_\phi u_k\s\n_{\ell^2}^2 
 = \int_0^1 a_k^2(t)\,dt = |A_k| = \frac1k.
$$
Consequently,
$$ \sumk \n T_\phi u_k\s\n_{\ell^2}^p
= \sumk \frac1{k^\frac{p}{2}} < \infty
$$ 
and $T_\phi$ is $\g$-radonifying.
\end{example}

Using the scale of Besov spaces, a version of Theorem \ref{thm:type2}(1) can be
given for Banach spaces $E$ having type $p\in [1,2]$.
In {\sc van Neerven, Veraar, Weis} \cite{NVW07c}, it is shown by elementary
methods that if $E$ has type $p$, then for all Hilbert spaces $H$ the mapping
$f\ot (h\ot x) \mapsto (f\ot h)\ot x$ 
extends to a continuous embedding
$$ B_{p,p}^{\frac1p-\frac12}(0,1;\g(H,E))\embed \g(L^2(0,1;H),E).$$  
Conversely, by a result of {\sc Kalton, van Neerven, Veraar, Weis} \cite{KNVW},
if the identity mapping $f\ot x \mapsto f\ot x$ 
extends to a continuous embedding
$$ B_{p,1}^{\frac1p-\frac12}(0,1;E)\embed \g(L^2(0,1),E),$$ 
then $E$ has type $p$.

The first assertion is a special case of the main result of  {\sc Kalton, van
Neerven, Veraar, Weis} \cite{KNVW}, where arbitrary smooth bounded domains
$D\subseteq \R^d$ are considered. In this setting, the exponent 
 $\frac1p-\frac12$ has to be be replaced by $\frac{d}{p}-\frac{d}{2}$. It is
deduced from a corresponding result for $D=\R^d$ which is proved using 
Littlewood-Paley decompositions. This approach is less elementary but it leads
to stronger results. It also yields dual a characterization of spaces with
cotype $q\in [2,\infty]$. 

\section{$p$-Absolutely summing operators} 

Let $1\le p<\infty$. A bounded operator $T:E\to F$ is called 
{\em $p$-absolutely summing} if
if there exists a constant $C\ge 0$
such that for all finite sequences $x_1,\dots, x_N$ in $ E$
we have 
$$ \sum_{n=1}^N \n Tx_n \n^p
\le C^p\sup_{\n x\s\n\le 1} \sum_{n=1}^N |\lb
x_n,x\s\rb|^p.
$$
The least admissible constant $C$ is called the {\em $p$-absolutely summing
norm} of $T$, notation $\n T\n_{\pi_p(E,F)}$. 
 
It follows in a straightforward way from the definition that 
the space $\pi_p(E,F)$ of all $p$-absolutely summing operators from $E$ to $F$
is a Banach space with respect to the norm $\n \cdot\n_{\pi_p(E,F)}$.
We have the following two-sided ideal property:
if $S: E'\to E$ is bounded, 
$T: E\to F$ is $p$-absolutely summing, and $U:F\to F'$ is bounded,
then $UTS: E'\to F'$ is $p$-absolutely summing and
$$ \n UTS\n_{\pi_p(E',F')}\le \n U\n\n T\n_{\pi_p(E,F)}\n
S\n.$$

We shall prove next that $p$-absolutely summing operators
are $\g$-radonifying. The proof is an application of the 
{\em Pietsch factorisation theorem} (see {\sc Diestel, Jarchow, Tonge}
\cite{DJT})
which states that if $T$ is $p$-absolutely summing from $E$ to another Banach
space $F$, then there exists a Radon probability measure
$\nu$ on $(B_{E\s},{\rm weak}\s)$ such that for all $x\in E$ we have
$$ \n Tx\n^p \le \n T\n_{\pi_p(E,F)}^p
\int_{B_{E\s}}|\lb x, x\s\rb|^pd\nu(x\s).$$
  
Recall that $K_{p,q}^\g$ denote the Gaussian Kahane-Khintchine constants.
  
\begin{proposition}[{\sc Linde and Pietsch} \cite{LinPie}]\label{thm:thmLP}
If $T\in \pi_p(H,E)$ for some $1\le p<\infty$,
then $T\in \g(H,E)$ and 
$$\n T\n_{\g(H,E)}\le \max\{ K_{2,p}^\g, K_{p,2}^\g\} \n T\n_{\pi_p(H,E)}$$
\end{proposition}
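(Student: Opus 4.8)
The plan is to run the standard Pietsch-measure argument, the point being that a \emph{single} Gaussian averaging converts the Pietsch domination into a bound on $\g$-norms, after which the Kahane--Khintchine inequalities of Proposition~\ref{prop:KK-R} produce the stated constant. Since $H\s=H$ under the Riesz identification, the Pietsch factorisation theorem quoted above furnishes a Radon probability measure $\nu$ on $(B_H,\mathrm{weak})$ with $\n Th\n^p\le \n T\n_{\pi_p(H,E)}^p\int_{B_H}|[h,g]|^p\,d\nu(g)$ for all $h\in H$.

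The key computation I would carry out is as follows. Given a finite orthonormal system $h_1,\dots,h_N$ in $H$, set $G:=\sum_{n=1}^N\g_n h_n$, so that by linearity $\sum_{n=1}^N\g_n Th_n=TG$. Applying the Pietsch estimate for a.e.\ fixed $\om$ and then taking expectations and using Tonelli gives $\E\n\sum_{n=1}^N\g_n Th_n\n^p\le \n T\n_{\pi_p}^p\int_{B_H}\E|[G,g]|^p\,d\nu(g)$. For fixed $g\in B_H$ the variable $[G,g]=\sum_{n=1}^N\g_n[h_n,g]$ is a centred real Gaussian of variance $\sum_{n=1}^N|[h_n,g]|^2\le\n g\n^2\le1$, so $\E|[G,g]|^p=(\E|\g|^p)\big(\sum_{n}|[h_n,g]|^2\big)^{p/2}\le\E|\g|^p$; as $\nu$ is a probability measure this yields $\E\n\sum_n\g_n Th_n\n^p\le(\E|\g|^p)\,\n T\n_{\pi_p}^p$. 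Passing from the $p$-th to the second moment by Proposition~\ref{prop:KK-R}(2) and splitting into the cases $p\ge2$ and $p\le2$ then gives the clean constant: for $p\ge2$ one has $K_{2,p}^\g=1$ (first index $\le$ second) and $(\E|\g|^p)^{1/p}\le K_{p,2}^\g$ (the one-term case of the Gaussian Kahane--Khintchine inequality), while for $p\le2$ one has $(\E|\g|^p)^{1/p}\le1$ by Jensen and the surviving factor is $K_{2,p}^\g$. In either case $\big(\E\n\sum_n\g_n Th_n\n^2\big)^{1/2}\le\max\{K_{2,p}^\g,K_{p,2}^\g\}\,\n T\n_{\pi_p}$, which is exactly the supremum defining $\n T\n_{\g_\infty(H,E)}$; hence $T\in\g_\infty(H,E)$ with the asserted bound.

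The main obstacle is the \emph{radonifying} conclusion: the estimate above only shows $T\in\g_\infty(H,E)$, and I want to avoid invoking Theorem~\ref{thm:c0}, which would impose a hypothesis on $c_0$. Here the $p$-summing structure gives more than a bare $\g$-summing bound. Repeating the computation for a tail $\sum_{n=M+1}^{N}\g_n h_n$ yields $\E\n\sum_{n=M+1}^N\g_n Th_n\n^p\le(\E|\g|^p)\,\n T\n_{\pi_p}^p\int_{B_H}\big(\sum_{n=M+1}^N|[h_n,g]|^2\big)^{p/2}\,d\nu(g)$, and for each fixed $g$ the integrand is the tail of the convergent series $\sum_n|[h_n,g]|^2\le1$, hence tends to $0$ while being dominated by $1\in L^1(\nu)$; dominated convergence makes the right-hand side vanish. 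I would use this first to exclude an uncountable orthonormal system in $(\ker T)^\perp=\ov{\mathrm{ran}(T\s)}$: as in the proof of Theorem~\ref{thm:c0}, such a system would supply a sequence with $\n Th_{j_n}\n\ge1/N$, yet the tail estimate (returned to the second moment via Proposition~\ref{prop:KK-R}(2)) forces $\sum_n\g_n Th_{j_n}$ to converge in $L^2(\O;E)$ and hence $\n Th_{j_n}\n\to0$, a contradiction. Thus $(\ker T)^\perp$ is separable; taking an orthonormal basis $(h_n)_{n\ge1}$ for it, the same tail estimate shows $\sum_n\g_n Th_n$ is Cauchy, hence convergent, in $L^2(\O;E)$. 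Theorem~\ref{thm:ONS} (and the remark following it, applied to the subspace $(\ker T)^\perp\supseteq\ov{\mathrm{ran}(T\s)}$ on which $T$ is supported) then gives $T\in\g(H,E)$ with $\n T\n_{\g(H,E)}^2=\E\n\sum_n\g_n Th_n\n^2$, and letting $N\to\infty$ in the bound of the second paragraph finishes the estimate.
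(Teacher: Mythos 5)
Your proof is correct, and its first half is essentially the paper's argument in different bookkeeping: the paper also tests against a finite orthonormal system, applies the Pietsch domination under the Gaussian expectation via Fubini, and changes moments with the Kahane--Khintchine inequalities, arriving at the bound $K_{2,p}^\g K_{p,2}^\g \n T\n_{\pi_p(H,E)}$ and then noting that the product equals the maximum because one of the two factors is $1$; your variant, which computes $\E|[G,g]|^p = (\E|\g|^p)\big(\sum_n [h_n,g]^2\big)^{p/2}$ exactly and splits the cases $p\ge 2$ and $p\le 2$ at the end, yields the same constant. Where you genuinely diverge is the passage from $\g$-summing to $\g$-radonifying. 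The paper settles this in one line by invoking the density of the finite rank operators in $\pi_p(H,E)$, so that the a priori estimate exhibits $T$ as a limit in $\g(H,E)$ of finite rank operators; you instead avoid that (unproved) density fact and rerun the Pietsch computation on tails, getting $\E\n\sum_{n=M+1}^N \g_n Th_n\n^p \le (\E|\g|^p)\,\n T\n_{\pi_p}^p \int_{B_H}\big(\sum_{n>M}[h_n,g]^2\big)^{p/2}\,d\nu(g)$, which tends to $0$ as $M\to\infty$ by Bessel's inequality and dominated convergence. This single estimate does double duty, exactly as you say: it rules out an uncountable orthonormal system in $\ov{\mathrm{ran}(T\s)}$ (the pigeonhole argument you transplant from the proof of Theorem \ref{thm:c0}), and it makes $\sum_n \g_n Th_n$ Cauchy in $L^p$, hence in $L^2(\O;E)$ by Proposition \ref{prop:KK-R}(2) -- indeed, since your bound holds for arbitrary finite index sets beyond $M$, you even get the summability that Theorem \ref{thm:ONS} is phrased in, so the appeal to that theorem together with Proposition \ref{prop:incl}, and letting $N\to\infty$ in your main estimate, closes the proof. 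The trade-off: the paper's route is shorter but rests on the density of finite ranks in $\pi_p(H,E)$, which it asserts without proof; yours is self-contained modulo results already established in the paper, gives quantitative tail decay as a by-product, and, as you intended, never needs Theorem \ref{thm:c0} and hence imposes no $c_0$-hypothesis.
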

\begin{proof}
Let $h_1,\dots,h_N$ be an orthonormal system in $H$.
Then, by the Pietsch factorisation theorem and the Fubini theorem,
\begin{align*}
\ &  \Big(\E\Big\n \sum_{n=1}^N \g_n Th_n\Big\n^2\Big)^\frac12 
 \le K_{2,p}^\g \Big(\E\Big\n \sum_{n=1}^N \g_n Th_n\Big\n^p\Big)^\frac1p 
\\ & \qquad  \le  K_{2,p}^\g \n T\n_{\pi_p(H,E)}\Big(\E\int_{B_{H}}\Big|\Big[
\sum_{n=1}^N 
\g_n h_n,h\Big]_H\Big|^pd\nu(h)\Big)^\frac1p 
\\ & \qquad  \le  K_{2,p}^\g K_{p,2}^\g\n T\n_{\pi_p(H,E)}\Big(\int_{B_{H}}
\Big(\sum_{n=1}^N 
|[h_n, h]_H|^2\Big)^{\frac{p}{2}}\,d\nu(h)\Big)^\frac1p 
\\ & \qquad  \le  K_{2,p}^\g K_{p,2}^\g\n T\n_{\pi_p(H,E)}\sup_{\n h\n_H\le 1}
\Big(\sum_{n=1}^N |[ h_n,h]_H|^2\Big)^{\frac{1}{2}}
\\ & \qquad  = K_{2,p}^\g K_{p,2}^\g\n T\n_{\pi_p(H,E)}.
\end{align*}
Since the finite rank operators are dense in $\pi_p(H,E)$,
this estimate implies that $T$ is $\g$-radonifying with 
$\n T\n_{\g^\infty(H,E)}\le  K_{2,p}^\g K_{p,2}^\g \n T\n_{\pi_p(H,E)}.$
Finally observe that $K_{2,p}^\g K_{p,2}^\g = \max\{ K_{2,p}^\g, K_{p,2}^\g\}$
such at least one of these numbers equals $1$.
\end{proof}

We also have a `dual' version:

\begin{proposition}\label{prop:dual} If $T\in \g(H,E)$, then
$T^*\in\pi_2(E\s,H)$ and
$$ \n T^*\n_{\pi_2(E\s,H)} \le \n T\n_{ \g(H,E)}.$$
\end{proposition}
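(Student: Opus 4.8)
The plan is to verify the defining inequality of the $2$-absolutely summing norm of $T\s$ directly, using the variational formula for the $\g$-norm from Proposition \ref{prop:norm-gamma} together with a single Gaussian random variable manufactured from $T$. First I would fix an arbitrary finite sequence $x_1\s,\dots,x_N\s\in E\s$. Since the dual of $E\s$ is $E^{**}$ and $E$ is weak$\s$-dense in its bidual by Goldstine's theorem, while $z\mapsto \sum_{n=1}^N |\lb x_n\s,z\rb|^2$ is weak$\s$-continuous (each summand is evaluation at $x_n\s\in E\s$), the supremum over the unit ball of $E^{**}$ occurring in the definition of $\n T\s\n_{\pi_2(E\s,H)}$ coincides with $C^2 := \sup_{\n x\n\le 1}\sum_{n=1}^N |\lb x,x_n\s\rb|^2$, the supremum now taken over the unit ball of $E$ itself. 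It therefore suffices to prove $\sum_{n=1}^N \n T\s x_n\s\n_H^2 \le \n T\n_{\g(H,E)}^2\, C^2$.

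Next I would set $g_n := T\s x_n\s\in H$ and take an orthonormal basis $(e_j)_{j=1}^M$ for the finite-dimensional subspace $G\subseteq H$ spanned by $g_1,\dots,g_N$. The crucial observation is the reproducing identity $[g_n,e_j] = [T\s x_n\s,e_j] = \lb Te_j,x_n\s\rb$, so that expanding each $g_n$ against the basis $(e_j)$ yields
$$ \sum_{n=1}^N \n g_n\n^2 = \sum_{n=1}^N \sum_{j=1}^M |[g_n,e_j]|^2 = \sum_{n=1}^N \sum_{j=1}^M |\lb Te_j,x_n\s\rb|^2. $$
Now introduce the Gaussian random variable $X := \sum_{j=1}^M \g_j\, Te_j$, where $(\g_j)$ is a Gaussian sequence. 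Because $(e_j)_{j=1}^M$ is a finite orthonormal system in $H$, Proposition \ref{prop:norm-gamma} gives $\E\n X\n^2\le \n T\n_{\g(H,E)}^2$, and this bound holds for arbitrary $T\in\g(H,E)$, so no finite-rank approximation is needed. On the other hand, independence and $\E\g_i\g_j=\d_{ij}$ yield $\E|\lb X,x_n\s\rb|^2 = \sum_{j=1}^M |\lb Te_j,x_n\s\rb|^2$, whence the right-hand side of the display above equals $\sum_{n=1}^N \E|\lb X,x_n\s\rb|^2 = \E\sum_{n=1}^N |\lb X,x_n\s\rb|^2$.

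The proof then closes with a pointwise estimate: for each $\om$ the vector $X(\om)\in E$ satisfies $\sum_{n=1}^N |\lb X(\om),x_n\s\rb|^2 \le C^2\,\n X(\om)\n^2$ by the definition of $C$ (dividing by $\n X(\om)\n$ when nonzero). Integrating this and combining with the two identities produces
$$ \sum_{n=1}^N \n T\s x_n\s\n_H^2 = \E\sum_{n=1}^N |\lb X,x_n\s\rb|^2 \le C^2\,\E\n X\n^2 \le C^2\,\n T\n_{\g(H,E)}^2, $$
which is exactly the asserted inequality. The computation is essentially routine once the Gaussian variable $X$ is chosen; the only delicate point, and the step I would treat most carefully, is the reduction of the supremum in the $\pi_2$-norm from the unit ball of $E^{**}$ to that of $E$, where Goldstine's theorem and weak$\s$-continuity of the relevant functional must be invoked correctly.
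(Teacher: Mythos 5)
Your proof is correct, and it runs on the same engine as the paper's: manufacture a Gaussian vector $X$ from $T$ applied to an orthonormal system so that $\E\lb X,x\s\rb^2$ reproduces $\n T\s x\s\n_H^2$, then apply the pointwise estimate $\sum_{n=1}^N\lb X(\om),x_n\s\rb^2\le C^2\n X(\om)\n^2$ and integrate. The genuine difference is the choice of orthonormal system. The paper takes a full orthonormal basis $(h_j)_{j\ge1}$ of $(\ker T)^\perp$, which relies on the separability of $\ov{{\rm ran}(T\s)}$ (the discussion around \eqref{eq:sep-supp}) and on the $L^2(\O;E)$-convergence of the infinite series $\sum_{j}\g_j Th_j$ via Theorem \ref{thm:ONS}; in return it gets the exact identity $\E\n X\n^2=\n T\n_{\g(H,E)}^2$. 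You instead truncate to an orthonormal basis of the finite-dimensional span of $T\s x_1\s,\dots,T\s x_N\s$, so $X$ is a finite sum, no convergence or separability questions arise, and Proposition \ref{prop:norm-gamma} supplies the one-sided bound $\E\n X\n^2\le\n T\n_{\g(H,E)}^2$, which is all that is needed. This is more elementary and in fact slightly more general: since you only invoke the supremum formula, your argument proves the estimate for every $\g$-summing operator $T\in\g_\infty(H,E)$, not just for $T\in\g(H,E)$. One small remark: the Goldstine/weak$\s$-continuity step you single out as delicate is superfluous for the stated inequality. The $\pi_2$-norm only requires domination by the supremum over the unit ball of $E^{**}$, and your constant $C^2$, a supremum over the unit ball of $E$, is trivially no larger --- which is exactly how the last line of the paper's proof handles the same point. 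Your claimed equality of the two suprema is true, but only the easy inequality is used.
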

\begin{proof} \ 
Let $(h_j)_{j\ge 1}$ be an orthonormal basis for the separable closed subspace
$({\rm ker}(T))^\perp$ of $H$. 
For all
$x_1^*,\dots, x_N^*$ in $E^*$ we have
\[\begin{aligned}
\sum_{n=1}^N \|T^* x^*_n\|^2
& = \sum_{n=1}^N  \sumj \lb T h_j, x_n^*\rb^2
  = \E \sum_{n=1}^N \Big\lb \sumj \g_j T h_j, x_n^*\Big\rb^2
\\ &  \leq   \E \Big\| \sumj \g_j T h_j\Big\|^2 
\sup_{\n x\n\le 1}\sum_{n=1}^N \lb x, x_n^*\rb^2
\\ & \leq \|T\|_{\g(H,E)}^2 \sup_{\n x^{**}\n\le 1}\sum_{n=1}^N \lb
x_n\s,x^{**}\rb^2.
\end{aligned}\]
\epf

Our next aim is to prove that, roughly speaking, a converse of Proposition
\ref{prop:dual} holds if and only 
if $E$ has type $2$, and to formulate a similar characterisation of spaces with
cotype $2$. These results are due to {\sc Chobanyan} and {\sc Tarieladze} 
\cite{ChoTar}; see also {\sc Diestel}, {\sc Jarchow}, {\sc Tonge} \cite[Chapter
12, Corollaries 12.7 and 12.21]{DJT}. 
For further refinements we refer to {\sc K\"uhn} \cite{Kuhn81}.

\begin{theorem}
For a Banach space $E$ the following two assertions are equivalent:
\ben
\item[\rm(1)] $E$ has type $2$;
\item[\rm(2)] whenever $H$ is a Hilbert space and $T\in \calL(H,E)$ satisfies
$T\s\in \pi_2(H,E)$, then
$T\in \g(H,E)$.
\een
In this situation one has
$$
\|T\|_{\g(H,E)}\leq K(E) T_2^\g(E) \|T^*\|_{\pi_2(E^*, H)},
$$
where $T_2^\g(E)$ is the Gaussian type $2$ constant of $E$.
\end{theorem}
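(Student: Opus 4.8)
The substance of the statement is the implication (1)$\Rightarrow$(2); the converse is a soft closed graph argument, which I would dispose of first.

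\emph{The implication (2)$\Rightarrow$(1).} I would test the hypothesis on the finite rank operators $T\colon\ell_2^N\to E$ given by $Te_n=x_n$. By Corollary \ref{cor:ONB}, $\n T\n_{\g(\ell_2^N,E)}^2=\E\n\sum_{n=1}^N\g_n x_n\n^2$, while a direct estimate gives $\n T\s\n_{\pi_2(E\s,\ell_2^N)}^2\le\sum_{n=1}^N\n x_n\n^2$: for any finite family $(x_j\s)$ in $E\s$ one has $\sum_j\n T\s x_j\s\n^2=\sum_n\sum_j\lb x_n,x_j\s\rb^2$, and since $\sum_j\lb x_n,x_j\s\rb^2=\n x_n\n^2\sum_j\lb x_n/\n x_n\n,x_j\s\rb^2\le\n x_n\n^2\sup_{\n x^{**}\n\le1}\sum_j\lb x_j\s,x^{**}\rb^2$, summation over $n$ yields $\n T\s\n_{\pi_2}^2\le\sum_n\n x_n\n^2$. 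Now (2) together with Proposition \ref{prop:dual} exhibits the identity $T\mapsto T$ as a linear map from $\{T\in\calL(H,E):T\s\in\pi_2(E\s,H)\}$, normed by $\n T\s\n_{\pi_2}$, into $\g(H,E)$; the domain is a Banach space (a $\pi_2$-limit of adjoints is again an adjoint, $\pi_2$-convergence being stronger than norm convergence) and the map has closed graph, so it is bounded, say by $C$. Feeding in the operators above gives $\E\n\sum_n\g_n x_n\n^2\le C^2\sum_n\n x_n\n^2$, i.e. Gaussian — hence, by Remark \ref{rem:Gaussian-type-cotype}, ordinary — type $2$.

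\emph{The implication (1)$\Rightarrow$(2).} Since type $2$ forces $K$-convexity (Proposition \ref{prop:type2-Kconvex}), $E$ contains no copy of $c_0$ (Example \ref{ex:c0}) and $\g_\infty(H,E)=\g(H,E)$ by Theorem \ref{thm:c0}; it therefore suffices to bound $\E\n\sum_{n=1}^N\g_n Th_n\n^2$ for every finite orthonormal system, i.e. to control the $\g$-norm of each finite rank truncation $T_0:=\sum_n h_n\ot Th_n$, for which $\n T_0\s\n_{\pi_2}\le\n T\s\n_{\pi_2}$ by the ideal property of $\pi_2$. The plan is to route $T_0$ through the type $2$ embedding theorem. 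Applying the Pietsch factorisation theorem to $T_0\s\in\pi_2(E\s,H)$ produces a probability measure $\nu$ and a factorisation $T_0\s=A\circ J$ through $L^2(\nu)$, with $J\colon E\s\to L^2(\nu)$ the canonical map $Jx\s=\lb x\s,\cdot\rb$ and $\n A\n\le\pi_2(T_0\s)$. Dualising, $T_0=J\s\circ A\s$ with $\n A\s\n\le\pi_2(T_0\s)$ and $J\s f=\int f(\xi)\,\xi\,d\nu(\xi)=T_\phi f$, where $\phi(\xi):=\xi$ takes values in the unit ball of the bidual. Thus $\phi\in L^2(\nu;E^{**})$ with $\n\phi\n_{L^2(\nu;E^{**})}\le1$, and since type $2$ passes to the bidual with the same constant (by local reflexivity), Theorem \ref{thm:type2}(1), applied with the scalar Hilbert space and to the space $E^{**}$, gives $J\s=T_\phi\in\g(L^2(\nu),E^{**})$ of norm at most the type $2$ constant of $E$. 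The right ideal property (Theorem \ref{thm:ideal}) then yields $T_0=J\s A\s\in\g(H,E^{**})$ with $\n T_0\n_{\g(H,E^{**})}\le T_2^\g(E)\,\n T\s\n_{\pi_2}$; as $T_0$ is finite rank with range in $E$, this equals $\n T_0\n_{\g(H,E)}$. Taking the supremum over all orthonormal systems gives $T\in\g(H,E)$ with the stated estimate (in fact with $T_2^\g(E)$ in place of $K(E)T_2^\g(E)$, which is admissible as $K(E)\ge1$).

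\emph{The main obstacle.} The one genuinely delicate point is that the Pietsch measure lives on $(B_{E^{**}},\mathrm{weak}^*)$, so $\phi(\xi)=\xi$ need not be strongly $\nu$-measurable into $E^{**}$, whereas Theorem \ref{thm:type2} requires a Bochner integrand. I expect the whole difficulty to concentrate here, and would resolve it using the finite rank of $T_0$: for finite rank operators the Pietsch measure can be taken with finite, or at least separable, support, so that $\phi$ is a simple (respectively strongly measurable) $E^{**}$-valued function; failing that, one approximates $T_\phi$ by finite rank simple integrands and passes to the limit with the $\g$-Fatou lemma (Proposition \ref{prop:g-Fatou}). The only other bookkeeping, the passage between $E$ and $E^{**}$, is harmless, the $\g$-norm of a finite rank operator being unchanged under the isometric embedding $E\hookrightarrow E^{**}$. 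Once the measurability reduction is in place, the chain Pietsch factorisation $\to$ type $2$ embedding $\to$ ideal property is routine.
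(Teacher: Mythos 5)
Your proof is correct, but your route for (1)$\Rightarrow$(2) is genuinely different from the paper's. The paper argues by trace duality: since type $2$ makes $E$, hence $E\s$, $K$-convex, Theorem \ref{thm:Kconvex-gamma-dual} identifies $\g(H,E^{**})$ with $(\g(H,E\s))\s$, and one shows that $S\mapsto {\rm tr}(T\s S)$ is bounded on $\g(H,E\s)$ by combining the cotype-$2$ inclusion $\g(H,E\s)\subseteq \pi_2(H,E\s)$ of Theorem \ref{thm:CT2} with the fact that a composition of two $2$-summing operators is nuclear, with trace controlled by the product of the $\pi_2$-norms; this is precisely where the factor $K(E)$ enters. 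Your chain --- Pietsch factorisation of $T_0\s$ through $L^2(\nu)$, the type-$2$ embedding $L^2(\nu;E^{**})\embed \g(L^2(\nu),E^{**})$ of Theorem \ref{thm:type2}, the ideal property (Theorem \ref{thm:ideal}), and $\g_\infty(H,E)=\g(H,E)$ via absence of $c_0$ (Theorem \ref{thm:c0}) --- bypasses trace duality and Theorem \ref{thm:CT2} altogether and buys the sharper constant $T_2^\g(E)$ in place of $K(E)T_2^\g(E)$. You correctly isolate the one delicate point: the Pietsch integrand $\phi(\xi)=\xi$ on $(B_{E^{**}},{\rm weak}\s)$ need not be strongly measurable. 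I would not rely on your first suggested fix --- that for finite rank operators the Pietsch measure can be taken with finite or separable support is not a standard fact and is doubtful as stated --- but your fallback is sound and closes the gap: setting $\phi_n:=\E(\phi|\calA_n)$ for finite $\sigma$-algebras $\calA_n$ (the weak\s{} barycenters lie in $B_{E^{**}}$, so $\phi_n$ is a finite rank simple function and $T_{\phi_n}f=J\s(\E(f|\calA_n))$), the simple-function case of Theorem \ref{thm:type2} gives $\n T_{\phi_n}\n_{\g(L^2(\nu),E^{**})}\le T_2^\g(E^{**})$, while $\lb T_{\phi_n}A\s h,x\s\rb = \int \E(A\s h|\calA_n)\,\lb x\s,\cdot\rb\,d\nu \to \lb T_0 h,x\s\rb$ once the $\calA_n$ are chosen so that $\E(A\s h|\calA_n)\to A\s h$ in $L^2(\nu)$ for $h$ in the finite-dimensional domain --- this requires only the separability generated by finitely many $L^2$-functions, not of $\nu$ itself --- and Proposition \ref{prop:g-Fatou} yields the uniform bound for $T_0$; since $T_0$ is finite rank with range in $E$ its $\g$-norms over $E$ and $E^{**}$ agree, as you note. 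For (2)$\Rightarrow$(1) you and the paper both run a closed graph/Baire category argument, but your direct estimate $\n T\s\n_{\pi_2(E\s,\ell_2^N)}^2\le \sum_{n=1}^N\n x_n\n^2$ is simpler than the paper's detour through the characterisation of $2$-summing operators into $\ell^2$ via Hilbert--Schmidt compositions $T\s U$.
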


\begin{proof}
Suppose first that $E$ has type $2$ and let $T\in\calL(H,E)$ be as stated. 
The dual space $E\s$ is 
$K$-convex by Propositions \ref{prop:K-convex-dual} and
\ref{prop:type2-Kconvex}, 
and therefore by Theorem 
\ref{thm:Kconvex-gamma-dual} we have a natural isomorphism
$ (\g(H,E\s))\s \simeq \g(H,E^{**})$ given by trace duality. 
The idea of the proof is now to show that $T$ defines an 
element of $(\g(H,E\s))\s$ via trace duality. Once we know this
it is immediate that $T\in \g(H,E)$. 

Given $S\in \g(H,E\s)$, define 
\begin{align}\label{eq:trace}
\phi_T(S):= {\rm tr}(T\s S) = \sum_{n=1}^N [T\s S h_n, h_n].
\end{align}
Since $E\s$ has cotype $2$, the implication (1)$\Rightarrow$(2) of Theorem
\ref{thm:CT2} below shows that $S$ is $2$-absolutely summing
and  
$$\n S\n_{\pi_2(H,E\s)} \le C_2^\g(E\s)\n S\n_{\g(H,E\s)}
\le T_2^\g(E)\n S\n_{\g(H,E\s)}.
$$
It follows that $T\s S$, being the composition of two $2$-absolutely
summing operators, is nuclear and therefore the sum in \eqref{eq:trace}
is absolutely convergent and 
$$
\bal  |\phi_T(S)| \le \sum_{n=1}^N |[T\s S h_n, h_n]|
& \le \n T\s\n_{\pi_2(E\s,H)}  \n S\n_{\pi_2(H,E\s)} 
\\ & \le  T_2^\g(E)\n S\n_{\g(H,E\s)} \n T\s\n_{\pi_2(E\s,H)} .
\eal
$$
This shows that $\phi_T$ is a bounded linear functional on $\g(H,E\s)$\
of norm $\n \phi_T\n \le   T_2^\g(E) \n T\s\n_{\pi_2(E\s,H)}$.
This proves the implication (1)$\Rightarrow$(2) and the norm estimate.

The proof of  the implication (2)$\Rightarrow$(1)
is based on the observation that a bounded operator
$S:F\to \ell^2$, where $F$ is a Banach space, is $2$-absolutely summing if for
all bounded operators $U:\ell^2\to F$ the composition
$SU:\ell^2\to \ell^2$ is Hilbert-Schmidt. 
To prove this,  given a sequence $\seqx$ in $F$ which satisfies 
$\sumn \lb x_n,x\s\rb^2<\infty$ for all $x\s\in
F\s$ we need to show that $\sumn \n Sx_n\n_{\ell^2}^2<\infty$.
An easy closed graph argument then shows that $S \in\pi_2(F,\ell^2)$.

Let $(u_n)_{n\ge 1}$ de the standard unit basis of
$\ell^2$ and consider the operator $U:\ell^2\to F$ defined by $Uu_n:= x_n$.
The operator $U$ is bounded; this follows from
$$
\bal \n Uh\n^2 & = \sup_{\n x\s\n\le 1} \lb Uh,x\s\rb^2
\\ & =\sup_{\n x\s\n\le 1} \sumn [h,u_n]^2 \lb Uu_n,x\s\rb^2 
\le \n h\n_{\ell^2}^2 \sup_{\n x\s\n\le 1} \sumn \lb x_n, x\s\rb^2.
\eal$$
By assumption, $SU$ is Hilbert-Schmidt, so 
$ \sumn \n SU u_n\n_{\ell^2}^2 =\sumn \n Sx_n\n_{\ell^2}^2 <\infty$ as desired,
and we conclude that $S\in \pi_2(F,\ell^2)$. 

By the closed graph theorem, there is a constant $K\ge 0$ such that
$$ \n S\n_{\pi_2(F,\ell^2)}\le 
 K\sup_{\n U\n\le 1} \n SU\n_{\calL_2(\ell^2,\ell^2)}.$$

Now assume that for all $T\in \calL(\ell^2,E)$ with $T\s\in \pi_2(E\s,\ell^2)$
we have $\g(\ell^2,E)$.
By a Baire category argument we find a constant $C\ge 0$ such that 
$\n T\n_{\g(\ell^2,E)}\le C\n T\s\n_{\pi_2(E\s,\ell^2)}$.
Let 
$x_1,\dots, x_N$ in $E$ be arbitrary and given, and define $T_N u_n = x_n$ and
$T_N u=0$ if $u\perp
u_n$ for all $n=1,\dots,N$.  Then,
\begin{align*} 
\E \Big\n \sum_{n=1}^N \g_n x_n\Big\n^2
& = \E \Big\n \sum_{n=1}^N \g_n T_N u_n\Big\n^2 
 = \n T_N\n_{\g(\ell^2,E)}^2
\\ & \le C^2 \n T_N\s\n_{\pi_2(E\s,\ell^2)}^2
 \le C^2 K^2  \sup_{\n U\n\le 1} \n T_N\s U\n_{\calL_2(\ell^2, \ell^2)}^2
\\ & = C^2K^2 \sup_{\n U\n\le 1} \n U\s T_N\n_{\calL_2(\ell^2, \ell^2)}^2
 = C^2K^2 \sup_{\n U\n\le 1}\sum_{n=1}^N \n U\s T_N u_n\n_{\ell^2}^2
\\ & = C^2K^2 \sup_{\n U\n\le 1}\sum_{n=1}^N \n U\s x_n\n_{\ell^2}^2
 \le C^2K^2 \sum_{n=1}^N \n x_n\n^2.
 \end{align*}
This shows that $E$ has type $2$ with constant $T_2^\g(E)\le CK$.
\end{proof}

\begin{theorem}\label{thm:CT2}
For a Banach space $E$ the following two assertions are equivalent:
\ben
\item[\rm(1)] $E$ has cotype $2$;
\item[\rm(2)] whenever $H$ is a Hilbert space, $T\in \g(H,E)$ implies
$T\in \pi_2(H,E)$.
\een
In this situation one has
$$
\|T\|_{\pi_2(H,E)}\le C_2^\g(E) \|T\|_{\g(H,E)},
$$
where $C_2^\g(E)$ is the Gaussian cotype $2$ constant of $E$.
\end{theorem}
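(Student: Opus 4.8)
The plan is to prove the two implications directly, using the ideal property (Theorem \ref{thm:ideal}) together with the Gaussian reformulation of cotype (Remark \ref{rem:Gaussian-type-cotype}). I would not route through trace duality here, since the cotype side is more elementary than the type side treated in the previous theorem.

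For (1)$\Rightarrow$(2), fix $T\in\g(H,E)$ and a finite sequence $h_1,\dots,h_N\in H$; I must bound $\sum_{n=1}^N \n Th_n\n^2$ by $\n T\n_{\g(H,E)}^2$ times $a^2:=\sup_{\n h\n\le 1}\sum_{n=1}^N |[h_n,h]|^2$. First, Gaussian cotype $2$ gives $\sum_{n=1}^N \n Th_n\n^2 \le (C_2^\g(E))^2\,\E\n\sum_{n=1}^N \g_n Th_n\n^2$. Next I introduce the operator $R:\ell_N^2\to H$, $Re_n:=h_n$, whose adjoint is $R\s h=([h_n,h])_{n=1}^N$; then $RR\s=\sum_{n=1}^N h_n\ot h_n$, so $\n R\n^2=\n RR\s\n=a^2$. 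Since $(e_n)_{n=1}^N$ is orthonormal in $\ell_N^2$ and $TRe_n=Th_n$, Definition \ref{def:g-rad} gives $\E\n\sum_{n=1}^N \g_n Th_n\n^2=\n TR\n_{\g(\ell_N^2,E)}^2$, and the right ideal property of Theorem \ref{thm:ideal} yields $\n TR\n_{\g(\ell_N^2,E)}\le \n T\n_{\g(H,E)}\n R\n=a\,\n T\n_{\g(H,E)}$. Combining, $\sum_{n=1}^N\n Th_n\n^2\le (C_2^\g(E))^2\,\n T\n_{\g(H,E)}^2\, a^2$, which is exactly the $2$-summing inequality with constant $C_2^\g(E)\n T\n_{\g(H,E)}$.

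For (2)$\Rightarrow$(1) I would specialise to $H=\ell^2$ and first upgrade the set-theoretic inclusion in (2) to a bounded inclusion via the closed graph theorem: both $\g(\ell^2,E)$ and $\pi_2(\ell^2,E)$ embed continuously into $\calL(\ell^2,E)$, so the identity map $\g(\ell^2,E)\to\pi_2(\ell^2,E)$, which is everywhere defined by hypothesis, has closed graph and is therefore bounded, say $\n T\n_{\pi_2(\ell^2,E)}\le C\,\n T\n_{\g(\ell^2,E)}$. Now, given $x_1,\dots,x_N\in E$, define $T_N\in\calL(\ell^2,E)$ by $T_N u_n:=x_n$ for $n\le N$ and $T_N:=0$ on the orthogonal complement of the span of $\{u_1,\dots,u_N\}$, where $(u_n)_{n\ge 1}$ is the standard basis. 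Then $\n T_N\n_{\g(\ell^2,E)}^2=\E\n\sum_{n=1}^N\g_n x_n\n^2$, while the $2$-summing inequality tested on $u_1,\dots,u_N$ gives $\sum_{n=1}^N\n x_n\n^2\le \n T_N\n_{\pi_2(\ell^2,E)}^2\sup_{\n u\n\le 1}\sum_{n=1}^N|[u_n,u]|^2=\n T_N\n_{\pi_2(\ell^2,E)}^2$, the supremum being $1$ by Bessel's inequality. Chaining the two estimates yields $\sum_{n=1}^N\n x_n\n^2\le C^2\,\E\n\sum_{n=1}^N\g_n x_n\n^2$, the Gaussian cotype $2$ inequality, so $E$ has cotype $2$ with $C_2^\g(E)\le C$.

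The only genuinely nontrivial point is the closed graph step in (2)$\Rightarrow$(1), where I must verify that each of the two norms dominates the operator norm, so that a sequence convergent in both spaces has the same limit in $\calL(\ell^2,E)$; everything else is a bookkeeping combination of the ideal property, the definition of the $\g$-norm, and Bessel's inequality. I expect no real obstacle, the argument being a clean dual of the computation for the type $2$ theorem.
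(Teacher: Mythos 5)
Your proof is correct and takes essentially the same route as the paper's: in (1)$\Rightarrow$(2) you use Gaussian cotype $2$ to dominate $\sum_{n=1}^N\n Th_n\n^2$ by the Gaussian sum, identify that sum with the $\g$-norm of $T$ composed with a factoring operator, and invoke the right ideal property, while your (2)$\Rightarrow$(1) (closed graph on the inclusion $\g(\ell^2,E)\subseteq\pi_2(\ell^2,E)$, then testing the finite-rank operator $T_Nu_n=x_n$ against the unit basis via Bessel's inequality) is the paper's argument verbatim. The only cosmetic difference is that you factor through $R:\ell^2_N\to H$ with $\n R\n=a$ instead of the paper's operator $U$ defined on an orthonormal basis of a (WLOG separable) $H$, which slightly streamlines matters by avoiding the separability reduction and the Rademacher randomization step.
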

\begin{proof}
(1)$\Rightarrow$(2): \ We may assume that $H$ is separable. 
Let $(h_n)_{n\ge 1}$ be an orthonormal basis for $H$,
let $(\g_n)_{n\ge 1}$ be  a Gaussian
sequence on a probability space $(\O,\F,\P)$, and 
let $(r_n')_{n\ge 1}$ be a Rademacher
sequence on another probability space $(\O',\F',\P')$. 
Fix vectors $x_1,\dots, x_N\in H$ and
define $U:H\to H$ by $Uh_n = x_n$ for $n=1,\dots,N$ and $Uh_n=0$ for $n\ge
N+1$. Then,
$$
\begin{aligned} \sum_{n=1}^N \n Tx_n\n^2
& =  \E \sum_{n=1}^N \n \g_n Tx_n\n^2
\\ & \le (C_2^\g(E))^2 \E \E'\Big\n  \sum_{n=1}^N r_n' \g_n Tx_n\Big\n^2
 = (C_2^\g(E))^2 \E\Big\n  \sum_{n\ge 1} \g_{n} T Uh_n\Big\n^2
\\ & = (C_2^\g(E))^2 \n T U\n_{\g(H,E)}^2
 \le  (C_2^\g(E))^2 \n T\n_{\g(H,E)}^2 \n U\n^2.
\end{aligned}
$$
Moreover,
$$
\begin{aligned}
\n U\n
 & = \sup_{\n h\n, \n h'\n\le 1} [Uh,h']
 = \sup_{\n h\n,\n h'\n\le 1} \sum_{n=1}^N [h,h_n] [Uh_n,h']
\\ & \le \sup_{\n h\n\le 1} \Big( \sum_{n=1}^N [h,h_n]^2\Big)^\frac12
 \sup_{\n h'\n\le 1}  \Big( \sum_{n=1}^N [Uh_n,h']^2\Big)^\frac12
 \le \sup_{\n h'\n \le 1} \Big(\sum_{n=1}^N [x_n,h']^2\Big)^\frac12.
\end{aligned}
$$
Combining the estimates we arrive at
$$
\sum_{n=1}^N \n Tx_n\n^2
\le (C_2^\g(E))^2\n T\n_{\g(H,E)}^2
\sup_{\n h'\n\le 1} \sum_{n=1}^N [ x_n,h']^2.
$$

(2)$\Rightarrow$(1): \  If $T\in \g(\ell^2,E)$ implies $T\in\pi_2(\ell^2,E)$,
then a closed graph argument produces a constant $C\ge 0$ such that
$ \n T\n_{\pi_2(\ell^2,E)} \le C   \n T\n_{\g(\ell^2,E)}$ for all $T\in
\g(\ell^2,E)$.
Now let $x_1,\dots,x_N\in E$ be arbitrary and define $T\in\g(H,E)$ by
$Tu_n:=x_n$ for $n=1,\dots,N$ and $Tu_n:=0$ for $n\ge N+1$. Then
$$
\begin{aligned}
 \sum_{n=1}^N \n x_n\n^2 
& = \sum_{n=1}^N \n Tu_n\n^2
 \le \n T\n_{\pi_2(\ell^2,E)}^2
\\ & \le C^2  \n T\n_{\g(\ell^2,E)}^2 = C^2\E\Big\n \sum_{n=1}^N \g_n
Tu_n\Big\n^2
= C^2\E\Big\n \sum_{n=1}^N \g_n x_n\Big\n^2.
\end{aligned}
$$ 
Thus $E$ has cotype $2$ with constant $C_2^\g(E)\le C.$ 
\end{proof}

\section{Miscellanea}
\label{sec:conditions}

In this final section we collect miscellaneous results given conditions for -
and examples of - $\g$-radonification.

\medskip\noindent
{\bf Hilbert sequences.} \ We have introduced $\g$-radonifying operators in
terms of their action on 
finite orthonormal systems and obtained characterisations in terms of 
summability properties on orthonormal bases. 
In this section we show that if one is only interested in {\em sufficient}
conditions for
$\g$-radonification, the role of orthonormal systems may be replaced by that of
so-called Hilbert sequences. This
provides a more flexible tool to check that certain operators
are indeed $\g$-radonifying.

Let $H$ be a Hilbert space. A sequence $h=(h_n)_{n\ge 1}$ in $H$ is said
to be a {\em Hilbert sequence} if there exists a constant $C\ge 0$ such that for
all scalars $\a_1,\dots,\a_N$,
\[ \Big\n \sum_{n=1}^N \a_n h_n \Big\n_H
\leq C \Big(\sum_{n=1}^N |\a_n|^2\Big)^{\frac12}.
\]
The infimum of all admissible constants will be denoted by 
$C_h$. 

\begin{theorem}\label{thm:Hilbert-sequence-estimate}
Let $(h_n)_{n\geq
1}$ be a Hilbert sequence in $H$.
If $T\in \g(H,E)$, then $\sum_{n\ge 1} \g_n T h_n$ converges in
$L^2(\O;E)$ and
$$
\E\Big\n \sumn \g_n T h_n \Big\n^2 \leq C_h^2 
\n T\n _{\g(H,E)}^2.
$$
\end{theorem}
\begin{proof}
Let $(\wt h_n)_{n\ge 1}$ be an
orthonormal basis for the closed linear space $H_0$ 
of $(h_n)_{n\ge 1}$. Since $(h_n)_{n\ge 1}$ is a 
Hilbert sequence there
is a unique $S\in \calL(H_0)$ such that $S \wt h_n = h_n$ for all $n\ge 1$.
Moreover, $\n S\n \le C_h$.
Indeed, for $\wt h = \sum_{n=1}^N a_n \wt h_n$ we have
\[
  \n S\wt h\n_H^2
= \Big\n\sum_{n=1}^N a_n h_n  \Big\n_H^2\leq C_h^2 \sum_{n=1}^N
    |a_n|^2
= C_h^2 \n \wt h\n_H^2,
\]
and the claim follows from this.

By the right ideal property we have $T\circ S\in \g(H_0,E)$ and
\[
\bal
\E\Big\n \sum_{n\ge 1} \g_n T h_n \Big\n^2 
& = \E\Big\n \sum_{n\ge 1}\g_n TS \wt h_n \Big\n^2
 \leq \n T\circ S\n_{\g(H_0,E)}^2\leq C_h^2 
\n T\n_{\g(H_0,E)}^2.
\eal \]
\end{proof}

A sequence is a Hilbert sequence if it is {\em almost orthogonal}:

\begin{proposition}\label{prop:working-horse}
Let $(h_n)_{n\in \Z}$ be a sequence in $H$. If there exists a function
$\phi: \N\to \R_+$ such that for all $n\geq m \in \Z$ we have $\bigl| 
[h_n,
h_m] \bigr| \leq \phi(n-m)$ and $\sum_{j\ge 0} \phi(j) <\infty$, then
$(h_n)_{n\in \Z}$ is a Hilbert sequence. 
\end{proposition}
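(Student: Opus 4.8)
The plan is to expand the squared norm via the inner product and then to dominate the resulting double sum by a Schur-type estimate. Concretely, for scalars $\a_1,\dots,\a_N$ we have
$$ \Big\n \sum_{n=1}^N \a_n h_n\Big\n_H^2 = \sum_{n=1}^N\sum_{m=1}^N \a_n\a_m\, [h_n,h_m],$$
so it suffices to bound $\sum_{n,m}|\a_n|\,|\a_m|\,|[h_n,h_m]|$ by a fixed constant times $\sum_n|\a_n|^2$, and to read off the admissible constant $C_h$ from that bound.

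The first step is to upgrade the one-sided hypothesis to the symmetric estimate $|[h_n,h_m]|\le \phi(|n-m|)$ valid for all pairs $n,m$. This is immediate because the inner product of a real Hilbert space is symmetric, so $[h_n,h_m]=[h_m,h_n]$; applying the assumed inequality to whichever of the two indices is the larger gives the bound with $\phi(|n-m|)$ in either case.

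Next I would reorganise the double sum along the diagonals $k=n-m$. Extending the sequence by $\a_j:=0$ for $j\notin\{1,\dots,N\}$, this reads
$$ \sum_{n,m}|\a_n|\,|\a_m|\,\phi(|n-m|) = \sum_{k\in\Z}\phi(|k|)\sum_{m}|\a_{m+k}|\,|\a_m|,$$
all sums being finite. For each fixed $k$, the Cauchy--Schwarz inequality gives $\sum_m|\a_{m+k}|\,|\a_m|\le \big(\sum_m|\a_{m+k}|^2\big)^{1/2}\big(\sum_m|\a_m|^2\big)^{1/2}\le \sum_m|\a_m|^2$, since an index shift does not increase the $\ell^2$ norm of a finitely supported sequence. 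Summing over $k$ pulls out the factor $\sum_{k\in\Z}\phi(|k|)=\phi(0)+2\sum_{j\ge1}\phi(j)\le 2\sum_{j\ge0}\phi(j)$, which is finite by hypothesis. This yields the Hilbert-sequence inequality with $C_h^2\le \phi(0)+2\sum_{j\ge1}\phi(j)$.

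I do not anticipate any genuine obstacle; the only points needing care are the index bookkeeping in the reindexing by diagonals and the use of symmetry to pass from the one-sided assumption for $n\ge m$ to the two-sided bound $\phi(|n-m|)$. One could phrase the whole argument as a Schur test for the symmetric Toeplitz kernel $K_{nm}=\phi(|n-m|)$, whose row sums are uniformly bounded by $2\sum_{j\ge0}\phi(j)$, but the elementary reindexing together with Cauchy--Schwarz above is self-contained and gives an explicit constant.
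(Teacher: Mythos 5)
Your proposal is correct and follows essentially the same route as the paper: expand the square, reduce to the two-sided bound $|[h_n,h_m]|\le\phi(|n-m|)$, reindex the double sum along diagonals, and apply Cauchy--Schwarz on each diagonal to obtain $C_h^2\le\phi(0)+2\sum_{j\ge1}\phi(j)$. The paper merely phrases the same computation by splitting off the diagonal terms and summing over $n<m$ with a factor of $2$, which is equivalent to your symmetrized sum over $k\in\Z$.
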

\begin{proof}
Let $(\a_n)_{n\in \Z}$ be scalars. 
Then
\begin{align*}
   \Big\n\sum_{n=-N}^N \a_n h_n \Big\n^2
 &=   \sum_{n=-N}^N |\a_n|^2 \n h_n\n^2
       + 2 \sum_{-N\le n<m\le N} \a_n {\a_m}
       [ h_n, h_m]\\
 &\leq \phi(0) \sum_{n\in \Z} |\a_n|^2
       + 2 \sum_{n<m} |\a_n| |\a_m| \phi(n-m) \\
 &=  \phi(0) \sum_{n\in \Z} |\a_n|^2
       + 2 \sum_{j\geq 1} \phi(j) \sum_{n\in \Z} |\a_n|\, |\a_{n+j}| \\
 &\leq \Big(\phi(0) + 2  \sum_{j\geq 1} \phi(j)\Big) \sum_{n\in \Z}
       |\a_n|^2,
\end{align*}
where the last estimate follows from the Cauchy-Schwarz inequality.
\end{proof}

For some applications see {\sc Haak, van Neerven} \cite{HaaNee} and {\sc Haak,
van Neerven, Veraar} \cite{HNV}.  We continue with some explicit examples of
Hilbert sequences. The first is due to {\sc Casazza, Christensen, Kalton}
\cite{CCK}.

\begin{example}\label{ex:CCK}
Let $\phi\in L^2(\R)$ and define the sequence $(h_n)_{n\in \mathbb{Z}}$ in
$L^2(\R)$ by $h_n(t) = e^{2\pi  n i t} \phi(t)$. Let $\mathbb{T}$ be the unit
circle in $\mathbb{C}$ and define $f:\mathbb{T}\to [0,\infty]$ as
\[
f(e^{2\pi i t}) := \sum_{k\in \mathbb{Z}} |\phi(t+ k)|^2.
\]  
From
$$\bal
    \Big\n\sum_{n\in \Z} a_n h_n\Big\n^2
&= \sum_{k\in \Z} \int_k^{k+1} \Big| \sum_{n\in \Z} a_n
      e^{2\pi   i nt} \phi(t)\Big|^2 \, dt
\\ &=  \sum_{k\in \Z} \int_0^{1} \Big| \sum_{n\in \Z} a_n e^{2\pi int}
      \phi(t+k)\Big|^2 \, dt
 = \int_0^{1} \Big| \sum_{n\in \Z} a_n e^{2\pi int}\Big|^2
    f(e^{2\pi i t}) \, dt
\eal
$$
we infer that $(h_n)_{n\in \mathbb{Z}}$ is a Hilbert sequence in $L^2(\R)$ if
and only if
there exists a finite constant $B$ such that $f(e^{2\pi i t} ) \leq B$ for
almost all $t\in [0,1]$.
In this situation we have $C_h^2= {\rm ess\,sup}(f)$.
\end{example}

\begin{example}\label{ex:JZ}
Let $(\lambda_n)_{n\geq 1}$ be a sequence in $\C_+$ which is {\em properly
  spaced} in the sense that $$\inf_{m\not=n} \Big|\frac{\lambda_m -
\lambda_n}{\lambda_m + \overline\lambda_n} \Big| > 0.$$ Then the functions
$$f_n(t):= \sqrt{\Re\lambda_n}
e^{-\lambda_n t}, \quad n\ge 1,$$ define a Hilbert sequence in
$L^2(\R_+)$; see {\sc Nikol$'$ski\u{\i}} and {\sc Pavlov} \cite{NikPav} or {\sc
Jacob} and {\sc Zwart} \cite[Theorem 1, proof of (3)$\Rightarrow$(5)]{JacZwa}.
From this one easily deduces that for
 any $a>0$ and $\rho\in [0,1)$ the
functions $$f_n(t) := e^{-at + 2\pi i(n+\rho)t}, \quad n\in\Z,$$
define a Hilbert sequence in $L^2(\R_+)$. The following direct proof of this
fact is taken from {\sc Haak, van Neerven, Veraar} \cite[Example 2.5]{HNV}.

For all $t\in [0,1)$,
\[
   F(e^{2\pi i t}) = \sum_{k\in \mathbb{Z}} |f(t+ k)|^2
 = \sum_{k\ge 0} e^{-2a(t+k)}
 = \frac{e^{2a(1-t)}}{e^{2a}-1}.
\]
Now Example \ref{ex:CCK} implies the result, with constant $C_h =
{1}/{\sqrt{1-e^{-2a}}}$.
\end{example}

More on this topic can be found in {\sc Young} \cite{You}.

\medskip\noindent
{\bf Conditions on the range space.}
For certain range spaces, a complete 
characterisation of $\g$-radon\-ify\-ing
operators can be given in non-probabilistic terms.
The simplest example occurs when the range space is a Hilbert space.

If $H$ and $E$ are Hilbert spaces, we denote by $\calL_2(H,E)$ the space of all
{\em Hilbert-Schmidt operators}
from $H$ to $E$, that is, the completion of the finite rank operators 
with respect to the norm 
$$ \Big\n \sum_{n=1}^N h_n\otimes x_n\Big\n_{\calL_2(H,E)}^2 :=  \sum_{n=1}^N \n
x_n\n^2,$$ where $h_1, \dots, h_N$ are taken orthonormal in $H$.
 
\begin{proposition}[Operators into Hilbert spaces]\label{prop:HS}
If $E$ is a Hilbert space, then $T\in \g(H,E)$ if and only if
$T\in \calL_2(H,E)$, and in this case we have
$$ \n T\n_{\g(H,E)} = \n T\n_{\calL_2(H,E)}.$$
\end{proposition}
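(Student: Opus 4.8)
The plan is to reduce everything to a one-line computation on finite rank operators, where both norms are given by explicit formulas, and then to invoke the fact that $\g(H,E)$ and $\calL_2(H,E)$ are by construction the completions of the \emph{same} space $H\ot E$ of finite rank operators. No genuinely hard step is involved; the entire content is a Gaussian second-moment calculation together with a density argument.

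First I would fix a finite rank operator $T = \sum_{n=1}^N h_n\ot x_n$ with $h_1,\dots,h_N$ orthonormal in $H$ and compute its $\g$-norm from Definition \ref{def:g-rad}. Since $E$ is a Hilbert space, the squared norm on the right-hand side expands through the inner product, and using that a Gaussian sequence satisfies $\E(\g_m\g_n) = \delta_{mn}$ one obtains
$$ \n T\n_{\g(H,E)}^2 = \E\Big\n\sum_{n=1}^N \g_n x_n\Big\n^2 = \sum_{m,n=1}^N \E(\g_m\g_n)\,[x_m,x_n] = \sum_{n=1}^N \n x_n\n^2. $$
By the definition of the Hilbert--Schmidt norm this is exactly $\n T\n_{\calL_2(H,E)}^2$. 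The two essential inputs are the orthonormality of $(h_n)$, which is precisely the hypothesis under which the $\g$-norm of Definition \ref{def:g-rad} is computed, and the Hilbert space structure of $E$, which is what lets the cross terms be read off from the inner product rather than merely estimated.

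It then remains to pass from finite rank operators to the completions. Both $\g(H,E)$ and $\calL_2(H,E)$ are by definition completions of $H\ot E$ with respect to the respective norms, and these norms coincide on $H\ot E$ by the computation above; hence the identity map on $H\ot E$ extends uniquely to an isometric isomorphism of the two completions. Moreover both norms dominate the operator norm (for $\g(H,E)$ this is the estimate established in the construction of the embedding $j:\g(H,E)\to\calL(H,E)$, and for $\calL_2(H,E)$ it is elementary), so both completions embed continuously into $\calL(H,E)$ by maps that agree on the dense subspace $H\ot E$. Consequently they determine the same subspace of $\calL(H,E)$ carrying the same norm, which yields the equivalence $T\in\g(H,E) \Leftrightarrow T\in\calL_2(H,E)$ together with $\n T\n_{\g(H,E)} = \n T\n_{\calL_2(H,E)}$.

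The only point requiring a little care — and it is the closest thing to an obstacle — is this last identification: one must check that the two completions coincide as the same subspace of $\calL(H,E)$, not merely as abstract isometric copies of one another. This is guaranteed because the identifying maps into $\calL(H,E)$ agree on the common dense subspace $H\ot E$ and are both continuous, so they agree on its closure.
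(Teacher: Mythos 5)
Your proof is correct and takes essentially the same route as the paper, whose entire proof is the identity $\E \bigl\n\sum_{n=1}^{N} \g_n x_n\bigr\n^2 = \sum_{m,n=1}^{N} \E(\g_m\g_n)[x_m,x_n] = \sum_{n=1}^{N} \n x_n\n^2$ on finite rank operators. Your additional paragraph making explicit that the two completions of $H\ot E$ coincide as the same subspace of $\calL(H,E)$ (via the continuous embedding $j$ and agreement on the dense subspace) is a correct elaboration of what the paper leaves implicit.
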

 
\begin{proof}
This follows from the identity
$$ \E \Big\n\sum_{n=1}^{N} \g_n x_n\Big\n^2 = 
 \E\sum_{m,n=1}^{N} \g_m\g_n [x_m, x_n] = \sum_{n=1}^{N} 
\n x_n\n^2.$$
\end{proof}

The next two results are taken from {\sc van Neerven, Veraar, Weis}
\cite{NVW07a}.

\begin{theorem}[Operators into $L^p(A;E)$]\label{thm:gFub}
For all $1\le p<\infty$ the mapping
$h\ot (f \ot x)\mapsto f \ot (h\ot x)$
defines an isomorphism of Banach spaces
$$\g(H,L^p(A;E))\simeq L^p(A;\g(H,E)).$$
For $p=2$ this isomorphism is isometric.
\end{theorem}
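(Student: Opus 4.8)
The plan is to establish the norm equivalence on a suitable dense subspace and then pass to the completions. First I would record the two density facts that make the stated map meaningful. The algebraic span of the elements $h\ot(f\ot x)$ with $f=\one_B$ a simple function is dense in $\g(H,L^p(A;E))$: the finite rank operators $H\ot L^p(A;E)$ are dense by Definition \ref{def:g-rad}, and each $h\ot g$ is approximated by $h\ot g_k$ with $g_k$ simple, using $\n h\ot(g-g_k)\n_{\g(H,L^p(A;E))}=\n h\n_H\,\n g-g_k\n_{L^p(A;E)}$. Symmetrically, the simple finite rank functions $\sum_j \one_{B_j}\ot T_j$ with $T_j\in H\ot E$ are dense in $L^p(A;\g(H,E))$, and these are exactly the images of the former under the map $h\ot(f\ot x)\mapsto f\ot(h\ot x)$. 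Hence it suffices to show that the two norms are comparable on such elements, and coincide when $p=2$.

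So fix $\Theta=\sum_{n=1}^N h_n\ot g_n$ with $h_1,\dots,h_N$ orthonormal in $H$ and $g_n\in L^p(A;E)$, and set $\Psi(\xi):=\sum_{n=1}^N h_n\ot g_n(\xi)\in\g(H,E)$ and $\Phi(\om,\xi):=\sum_{n=1}^N\g_n(\om)g_n(\xi)$. Unwinding Definition \ref{def:g-rad} on each side gives
\begin{align*}
\n\Theta\n_{\g(H,L^p(A;E))}^2 &= \E\Big(\int_A\Big\n\sum_{n=1}^N\g_n g_n(\xi)\Big\n_E^p\,d\mu(\xi)\Big)^{2/p}=\n\Phi\n_{L^2(\O;L^p(A;E))}^2,\\
\n\Psi\n_{L^p(A;\g(H,E))}^p &= \int_A\Big(\E\Big\n\sum_{n=1}^N\g_n g_n(\xi)\Big\n_E^2\Big)^{p/2}\,d\mu(\xi)=\n\Phi\n_{L^p(A;L^2(\O;E))}^p.
\end{align*}
The problem is thereby reduced to comparing $\n\Phi\n_{L^2(\O;L^p(A;E))}$ with $\n\Phi\n_{L^p(A;L^2(\O;E))}$ for this finite Gaussian sum.

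To bridge the two I would introduce the intermediate quantity $\n\Phi\n_{L^p(\O\times A;E)}$, which by Fubini's theorem equals both $\big(\E\n\sum_n\g_n g_n\n_{L^p(A;E)}^p\big)^{1/p}$ and $\big(\int_A\E\n\sum_n\g_n g_n(\xi)\n_E^p\,d\mu(\xi)\big)^{1/p}$. Applying the Gaussian Kahane--Khintchine inequality (Proposition \ref{prop:KK-R}(2)) in the Banach space $L^p(A;E)$ compares the $L^2(\O)$- and $L^p(\O)$-moments of the $L^p(A;E)$-valued Gaussian sum, and so relates $\n\Phi\n_{L^2(\O;L^p(A;E))}$ to $\n\Phi\n_{L^p(\O\times A;E)}$; applying the same inequality pointwise in $\xi\in A$ and then integrating the resulting estimate in $L^p(A)$ relates $\n\Phi\n_{L^p(A;L^2(\O;E))}$ to the identical quantity. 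Each comparison costs only the finite constants $K_{2,p}^\g$ and $K_{p,2}^\g$, so the two norms are equivalent and the map extends to an isomorphism of Banach spaces. When $p=2$ all three quantities coincide, since $K_{2,2}^\g=1$ turns both Kahane--Khintchine steps into equalities (equivalently, Fubini applies directly to the $L^2$-norm), yielding an isometry.

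The real content is just this double use of Kahane--Khintchine through the Fubini bridge; the only points requiring care are the two density statements and the strong measurability of $\xi\mapsto\Psi(\xi)$ as a $\g(H,E)$-valued function, which follows from the strong measurability of the $g_n$ together with the boundedness of $x\mapsto h\ot x$ from $E$ into $\g(H,E)$ (whose norm is $\n h\n_H$). I expect the main obstacle to be purely organisational rather than analytic: one must choose the intermediate space $L^p(\O\times A;E)$ so that in each application the two moments being compared are exactly an $L^2(\O)$- and an $L^p(\O)$-moment, ensuring that Proposition \ref{prop:KK-R}(2) is invoked in the correct direction.
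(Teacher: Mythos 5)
Your proposal is correct and takes essentially the same route as the paper's proof: both reduce to finite Gaussian sums over a dense class of finite rank simple elements and pass between $\bigl(\E\n\sum_n\g_n g_n\n_{L^p(A;E)}^2\bigr)^{1/2}$ and $\bigl(\int_A(\E\n\sum_n\g_n g_n(\xi)\n^2)^{p/2}\,d\mu\bigr)^{1/p}$ by applying the Gaussian Kahane--Khintchine inequality twice (once in $L^p(A;E)$, once pointwise in $\xi$) with Fubini's theorem as the bridge, giving the isometry at $p=2$ since both applications become equalities there. The only cosmetic difference is that you name the intermediate quantity $\n\Phi\n_{L^p(\O\times A;E)}$ explicitly and spell out the density and measurability points in more detail.
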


\begin{proof} 
Let $f\in L^p(A)\ot (H\ot E)$, say $f=
\sum_{m=1}^M \phi_m\otimes T_m$. By a Gram-Schmidt argument may assume that the
operators $T_m \in H\ot E$ are of the form
$\sum_{n=1}^N h_n \otimes x_{mn}$ for some fixed
orthonormal systems $\{h_1,\dots,h_N\}$ in $H$. 
Denoting by $U$ the mapping $f \ot (h\ot x)\to h\ot (f \ot x)$ 
from the
Kahane-Khintchine inequalities and Fubini's theorem
we obtain, writing $fh_n = \sum_{m=1}^M \phi_m\otimes x_{mn}$,  
\begin{align*}
 \|Uf\|_{\g(H, L^p(A; E))}
& = \Big(\E\Bigl\|\sum_{n=1}^N\g_n f h_n \Bigr\|_{L^p(A;E)}^2\Big)^\frac12
\\ & \eqsim_{p} \Big(\E\Bigl\|\sum_{n=1}^N\g_n 
f h_n \Bigr\|_{L^p(A; E)}^p\Big)^\frac1p
\\ & = \Big(\int_A \E \Big\n \sum_{n=1}^N \g_n 
fh_n\Big\n^p\,d\mu \Big)^\frac1p
\\ &  \eqsim_{p}
\Big( \int_A \Big(\E\Bigl\|\sum_{n=1}^N\g_n  f
h_n \Bigr\|^2\Bigr)^\frac{p}{2}\,d\mu\Bigr)^\frac1p
\\ &  
=\Big( \int_A \n f\n_{\g(H,E)}^p \,d\mu\Bigr)^\frac1p
 = \|f \|_{L^p(A; \g(H,E))}.
\end{align*}
The result now follows by observing that the functions $f$ of the above
form are dense in $ L^p(A;\g(H,E))$ and that their images under $U$ are dense 
in $\g(H,L^p(A;E))$.
\end{proof}

The equivalence (1)$\Leftrightarrow$(3) of the next result shows that an
operator from a Hilbert space
into an $L^p$-space is $\g$-radonifying if and only if it satisfies a 
{\em square function estimate}. The equivalence (1)$\Leftrightarrow$(2)
was noted in {\sc Brze\'zniak} and {\sc van Neerven} \cite{BrzNee03}.

\begin{proposition}[Operators into $L^p(A)$]\label{prop:sq-fc-Lp}
Let $(A,\calA)$ be a  
$\sigma$-finite measure space and let
$1\le p<\infty$. Let $(h_i)_{i\in I}$ be a maximal orthonormal system in $H$. 
For an  operator $T\in\calL(H,L^p(A))$ the following assertions are equivalent:
 
\begin{enumerate}
\item[\rm(1)] $T\in \g(H,L^p(A))$;
\item[\rm(2)] there exists a function $f\in L^p(A;H)$ such that $Th = [f,h]$ for
all $h\in H$.
\item[\rm(3)] 
$\big(\sum_{i\in I} |T h_i|^2\big)^{\frac12}$ is summable in $L^p(A)$.
\end{enumerate}
In this case we have
$$\|T\|_{\g(H,L^p(A))} \eqsim_p \Big\|\Big(\sum_{i\in I} |T
h_i|^2\Big)^{\frac12}\Big\|.
$$
\end{proposition}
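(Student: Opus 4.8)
The plan is to make condition (3) the hub and prove (1)$\Leftrightarrow$(3) and (2)$\Leftrightarrow$(3) separately. Write $g := \big(\sum_{i\in I}|Th_i|^2\big)^{1/2}$, understood as the pointwise (a.e.) limit of the increasing net $g_F := \big(\sum_{i\in F}|Th_i|^2\big)^{1/2}$ indexed by the finite subsets $F\subseteq I$. The engine for (1)$\Leftrightarrow$(3) is a pointwise Fubini computation: for a finite orthonormal set $\{h_i:i\in F\}$, since for fixed $\xi$ the scalar variable $\sum_{i\in F}\g_i (Th_i)(\xi)$ is centred Gaussian with variance $\sum_{i\in F}|(Th_i)(\xi)|^2$, I would obtain
$$\E\Big\n\sum_{i\in F}\g_i\,Th_i\Big\n_{L^p(A)}^p = \int_A \E\Big|\sum_{i\in F}\g_i (Th_i)(\xi)\Big|^p d\mu(\xi) = c_p \int_A g_F^{\,p}\,d\mu, \qquad c_p := \E|\g_1|^p.$$
Combining this with the Gaussian Kahane--Khintchine inequalities (Proposition \ref{prop:KK-R}(2)) to trade the exponent $p$ for $2$ gives $\big(\E\|\sum_{i\in F}\g_i Th_i\|_{L^p}^2\big)^{1/2}\eqsim_p \|g_F\|_{L^p(A)}$, uniformly in $F$. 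By Theorem \ref{thm:ONS}, $T\in\g(H,L^p(A))$ is exactly the summability of $\sum_i \g_i Th_i$ in $L^2(\O;L^p(A))$, with $\|T\|_{\g(H,L^p(A))}^2=\E\|\sum_i\g_i Th_i\|^2$. Since $(g_F)$ is nonnegative and increasing, monotone convergence converts the uniform bound $\sup_F\|g_F\|_{L^p}<\infty$ into $g\in L^p(A)$, which is (3), and lets the displayed equivalence pass to the limit to give $\|T\|_{\g(H,L^p(A))}\eqsim_p\|g\|_{L^p(A)}$. For the reverse Cauchy criterion I would use that for finite $G$ disjoint from $F_0$ one has $\sum_{i\in G}|Th_i|^2\le g^2-g_{F_0}^2$ pointwise, and $\int_A(g^2-g_{F_0}^2)^{p/2}\,d\mu\to0$ as $F_0\uparrow I$ by dominated convergence (dominant $g^p$); through the displayed equivalence this shows the Gaussian net is Cauchy whenever $g\in L^p(A)$.

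For (2)$\Leftrightarrow$(3) I would use Parseval's identity for the maximal system $(h_i)_{i\in I}$. If $f\in L^p(A;H)$ satisfies $Th=[f,h]$, then $(Th_i)(\xi)=[f(\xi),h_i]$ and hence $g(\xi)^2=\sum_i|[f(\xi),h_i]|^2=\|f(\xi)\|_H^2$, so $g=\|f(\cdot)\|_H\in L^p(A)$. Conversely, assuming (3): by the equivalence just established $T\in\g(H,L^p(A))$, so $\ov{\mathrm{ran}(T^*)}$ is separable (as shown before Corollary \ref{cor:ONB}); for each basis vector of this space only countably many $i$ satisfy $[h_i,\cdot]\neq0$ by Bessel's inequality, whence $\{i:Th_i\neq0\}$ is countable and all sums over $I$ reduce to ordinary series. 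I would then set $f(\xi):=\sum_i (Th_i)(\xi)\,h_i$: this series converges in $H$ for a.e.\ $\xi$ because $\sum_i|(Th_i)(\xi)|^2=g(\xi)^2<\infty$, the partial sums are strongly measurable $H$-valued functions so $f$ is strongly measurable as their a.e.\ limit, $\|f(\xi)\|_H=g(\xi)$ gives $f\in L^p(A;H)$, and $[f,h_i]=Th_i$ extends by linearity and continuity of $T:H\to L^p(A)$ to $Th=[f,h]$ for all $h\in H$.

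The genuinely routine parts are the scalar Gaussian moment identity and the Parseval computation. The steps needing care are the net bookkeeping: the main obstacle will be correctly matching the two distinct notions of convergence — summability of the Gaussian net $\sum_i\g_i Th_i$ in $L^2(\O;L^p(A))$ versus $L^p$-convergence of the increasing scalar net $(g_F)$ — and handling a possibly uncountable index set $I$. Exploiting the monotonicity of $(g_F)$ (so that its convergence collapses to the single condition $g\in L^p$), the tail domination $\sum_{i\in G}|Th_i|^2\le g^2-g_{F_0}^2$, and the separability of $\ov{\mathrm{ran}(T^*)}$ (which restores countability precisely when it is needed to build $f$) is what makes all three conditions line up with the stated norm equivalence $\|T\|_{\g(H,L^p(A))}\eqsim_p\big\|\big(\sum_{i\in I}|Th_i|^2\big)^{1/2}\big\|$.
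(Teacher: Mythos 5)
Your treatment of (1)$\Leftrightarrow$(3) is in essence the paper's own proof: the paper runs exactly the chain you describe (the scalar Gaussian identity at fixed $\xi$, Fubini, and the Kahane--Khintchine inequality in $L^p(A)$ to pass between second and $p$-th moments, giving the uniform two-sided estimate over finite orthonormal sections) and then invokes Theorem \ref{thm:ONS}. Where you genuinely diverge is condition (2): the paper obtains (1)$\Leftrightarrow$(2) as the scalar case $E=\R$ of Theorem \ref{thm:gFub}, using $\g(H,\R)\cong H$ so that $L^p(A;\g(H,\R))=L^p(A;H)$, whereas you prove (2)$\Leftrightarrow$(3) by hand: Parseval for the maximal orthonormal system in one direction, and in the other an explicit construction $f(\xi)=\sum_i (Th_i)(\xi)\,h_i$, with the separability of $\ov{\textrm{ran}(T\s)}$ and a Bessel argument reducing the index set to a countable one. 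Your route is more elementary and self-contained (no vector-valued $\g$-Fubini isomorphism needed) and it exhibits the representing function concretely; the paper's route is shorter and reuses machinery already established. Your explicit hub-and-spoke structure through (3) is a legitimate reorganisation.

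One step, as literally written, is unjustified: in (3)$\Rightarrow$(1) you apply dominated convergence ``as $F_0\uparrow I$'' along the net of \emph{all} finite subsets, and you define $g$ as an a.e.\ pointwise limit of that net. For uncountable $I$ this fails in general -- monotone/dominated convergence does not hold for uncountable nets (the increasing net $\one_F$, $F$ finite, $F\uparrow[0,1]$, has $\int\one_F\equiv 0$ while the pointwise limit is $\one_{[0,1]}$) -- and at this stage you cannot invoke separability of $\ov{\textrm{ran}(T\s)}$, since (1) is precisely what you are proving. The repair is short and stays within your argument: work directly from the $L^p$-Cauchy property of the net $(g_F)$ that condition (3) provides. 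For finite $G$ disjoint from $F_0$ one has pointwise
$$ g_G=\big(g_{F_0\cup G}-g_{F_0}\big)^{\frac12}\big(g_{F_0\cup G}+g_{F_0}\big)^{\frac12},$$
so by the Cauchy--Schwarz inequality in the form $\n u^{1/2}v^{1/2}\n_p\le \n u\n_p^{1/2}\n v\n_p^{1/2}$,
$$ \n g_G\n_p \le \n g_{F_0\cup G}-g_{F_0}\n_p^{\frac12}\,\Big(2\sup_F \n g_F\n_p\Big)^{\frac12},$$
which makes the Gaussian net Cauchy via your finite-section equivalence without any pointwise limit or convergence theorem for nets. (Alternatively, one can first show that summability of $(g_F)$ in $L^p$ forces all but countably many $Th_i$ to vanish, after which everything reduces to sequences and your MCT/DCT steps are valid; the same countability remark also cleans up the direction (1)$\Rightarrow$(3), where it \emph{is} legitimate to use separability of $\ov{\textrm{ran}(T\s)}$.) With this adjustment your proof is correct.
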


\begin{proof}
The equivalence (1)$\Leftrightarrow$(2) is a special case of Theorem
\ref{thm:gFub}. To prove the equivalence (1)$\Leftrightarrow$(3)
we apply the identity $$ \E\Big|\sum_{n=1}^N c_{n} \g_{n}\Big|^2 = \sum_{n=1}^N
|c_{n}|^2 $$
with $c_n = f_n(\xi)$, $\xi\in A$. Combined with the Khintchine
inequality, Fubini's theorem, and finally
the Kahane-Khintchine inequality in $L^p(A)$,
for all $f_1,\dots,f_N\in L^p(A)$ we obtain
\[
\bal
 \Big\|\Big(\sum_{n=1}^N |f_n|^2\Big)^{\frac12}\Big\|_{p} 
 &  = \Big\|\Big(\E\Big|\sum_{n=1}^N \g_{n}
f_n\Big|^2\Big)^{\frac12}\Big\|_{p}
\eqsim_p\Big\|\Big(\E\Big|\sum_{n=1}^N \g_{n}
f_n\Big|^p\Big)^{\frac1p}\Big\|_{p}
\\ &  = \Big(\E\Big\n\sum_{n=1}^N \g_{n} f_n\Big\|_{L^p(A)}^p\Big)^{\frac1p}
  \eqsim_p 
\Big(\E\Big\|\sum_{n=1}^N \g_{n} f_n\Big\|_{p}^2\Big)^{\frac12}.
\eal\]
The equivalence as well as the final two-sided estimate now follow by
taking $f_n := Th_{i_n}$
and invoking Theorem \ref{thm:ONS}.
\end{proof}

Here is a neat application, which is well-known when $p=2$.

\begin{corollary}\label{cor:Linfty}
Let $(A,\calA)$ be a finite measure space. 
For all $T\in\calL(H,L^\infty(A))$ and $1\le p<\infty$ we have  
$T\in \g(H,L^p(A))$ and
$$\n T\n_{\g(H,L^p(A))} \lesssim_p \n T\n_{\calL(H,L^\infty(A))}.$$
\end{corollary}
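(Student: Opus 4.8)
The plan is to deduce the result from the square function characterisation in Proposition \ref{prop:sq-fc-Lp}. Since $(A,\calA)$ is finite, $L^\infty(A)$ embeds continuously into $L^p(A)$, with $\n f\n_{L^p(A)}\le\mu(A)^{1/p}\n f\n_{L^\infty(A)}$, so any $T\in\calL(H,L^\infty(A))$ is also a bounded operator $T\in\calL(H,L^p(A))$ and the proposition applies. Fixing a maximal orthonormal system $(h_i)_{i\in I}$ of $H$ and writing $M:=\n T\n_{\calL(H,L^\infty(A))}$, it then suffices to show that $\big(\sum_{i\in I}|Th_i|^2\big)^{1/2}$ is summable in $L^p(A)$ and that $\Big\n\big(\sum_{i\in I}|Th_i|^2\big)^{1/2}\Big\n_{L^p(A)}\le M\,\mu(A)^{1/p}$; the equivalence $\n T\n_{\g(H,L^p(A))}\eqsim_p\Big\n\big(\sum_{i\in I}|Th_i|^2\big)^{1/2}\Big\n$ of that proposition then yields both the membership $T\in\g(H,L^p(A))$ and the asserted estimate.

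The heart of the matter is a pointwise bound of the square function by $M$. First I would fix a finite subset $F\subseteq I$ and note that for scalars $(a_i)_{i\in F}$ with $\sum_{i\in F}|a_i|^2\le1$ the vector $h_a:=\sum_{i\in F}a_i h_i$ satisfies $\n h_a\n_H\le1$, whence $\n Th_a\n_{L^\infty(A)}\le M$ and $\big|\sum_{i\in F}a_i Th_i(\xi)\big|\le M$ for almost every $\xi$. Choosing a countable set $D$ dense in the closed unit ball of $\ell^2(F)$, the associated null sets combine into a single null set, outside of which the identity $\big(\sum_{i\in F}|Th_i(\xi)|^2\big)^{1/2}=\sup_{a\in D}\big|\sum_{i\in F}a_i Th_i(\xi)\big|$ gives $\big(\sum_{i\in F}|Th_i|^2\big)^{1/2}\le M$ almost everywhere. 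I expect this measure-theoretic step to be the main obstacle: the null set on which the Cauchy--Schwarz-type bound fails depends on the coefficient vector $a$, and it is the separability of $\ell^2(F)$ that allows one to pass to a single exceptional set.

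Finally I would treat the possibility that $I$ is uncountable. Integrating the bound just obtained gives $\sum_{i\in F}\n Th_i\n_{L^2(A)}^2=\int_A\big(\sum_{i\in F}|Th_i|^2\big)\,d\mu\le M^2\mu(A)$ for every finite $F$, hence $\sum_{i\in I}\n Th_i\n_{L^2(A)}^2\le M^2\mu(A)<\infty$. Therefore $Th_i=0$ almost everywhere for all but countably many $i$, so the square function is carried by a countable subfamily $(h_{i_k})_{k\ge1}$. For this sequence the increasing partial sums $g_n:=\big(\sum_{k=1}^n|Th_{i_k}|^2\big)^{1/2}$ all satisfy $g_n\le M$ almost everywhere---now a single null set suffices, being a countable union---so the pointwise limit $g$ obeys $g\le M$ a.e.; as $\mu(A)<\infty$, dominated convergence gives $g_n\to g$ in $L^p(A)$ with $\n g\n_{L^p(A)}\le M\mu(A)^{1/p}$. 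This verifies condition (3) of Proposition \ref{prop:sq-fc-Lp} together with the required norm bound, completing the proof, the implicit constant in $\lesssim_p$ being $C_p\,\mu(A)^{1/p}$ with $C_p$ the constant in the equivalence of that proposition.
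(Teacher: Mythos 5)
Your proof is correct and follows essentially the same route as the paper: a pointwise bound $\big(\sum_{i\in F}|Th_i|^2\big)^{1/2}\le \n T\n_{\calL(H,L^\infty(A))}$ almost everywhere, obtained by testing against a countable dense subset of the unit ball of $\ell^2(F)$ to control the exceptional null sets, followed by an appeal to Proposition \ref{prop:sq-fc-Lp}. Your additional reduction to a countable subfamily and the monotone/dominated convergence step merely make explicit the summability verification that the paper compresses into ``Now apply Proposition \ref{prop:sq-fc-Lp}.''
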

\begin{proof}
Let $(h_i)_{i\in I}$ be a maximal orthonormal system in $H$.
For any choice of finitely many indices $i_1,\dots,i_N\in I$ and $c \in
\ell_N^2$,
for $\mu$-almost all $\xi\in A$ we have
$$\bal
 \Big|\sum_{n=1}^N c_n (Th_{i_n})(\xi) \Big|
 & \le \Big\n\sum_{n=1}^N c_n Th_{i_n}\Big\n_{\infty} 
 \\ & \le \n T\n_{\calL(H,L^\infty(A))}  
\Big\n  \sum_{n=1}^N c_n h_{i_n}\Big\n
=  \n T\n_{\calL(H,L^\infty(A))} \n c\n.
\eal
$$
Taking the supremum over a countable dense set in the unit ball of 
$\R^d$ we obtain the following
estimate, valid for $\mu$-almost all $\xi\in A$:
$$ \Big(\sum_{n=1}^N |(Th_{i_n})(\xi)|^2 \Big)^\frac12 \le \n
T\n_{\calL(H,L^\infty(A))}.
$$
Now apply Proposition \ref{prop:sq-fc-Lp}.
\end{proof}

\medskip\noindent
{\bf New $\g$-radonifying operators from old.} \ 
The next proposition is a minor extension of a result of {\sc Kalton} and {\sc
Weis} \cite{KalWei07}. 

\begin{proposition}\label{prop:C1-derivative}
Let $(a,b)$ be an interval and $\phi: (a,b) \to \g(H,E)$ be continuously
differentiable with $$\int_a^b (s-a)^{\frac12}
\n{\phi'(s)}\n_{\g(H,E)}\, ds < \infty.$$ 
Define $T_\phi: L^2(a,b;H)\to E$ by
$$ T_\phi f:= \int_a^b \phi(t)f(t)\,dt.$$
Then $T_\phi \in \gamma (L^2(a,b;H),E)$ and
\[ \n T_\phi\n_{\gamma (L^2(a,b;H),E)} \leq
(b-a)^\frac12\n{\phi(b)}\n_{\g(H,E)}+\int_a^b(s-a)^\frac12
 \n \phi'(s)\n _{\g(H,E)}\, ds. 
\]
\end{proposition}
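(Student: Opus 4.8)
The plan is to reduce the statement to the right ideal property (Theorem \ref{thm:ideal}) after rewriting $T_\phi$ by means of the fundamental theorem of calculus. The essential device is the family of averaging operators $J_s\in\calL(L^2(a,b;H),H)$ defined for $s\in(a,b]$ by
\[ J_s f := \int_a^s f(t)\,dt. \]
By the Cauchy--Schwarz inequality $\n J_s f\n_H \le (s-a)^\frac12 \n f\n_{L^2(a,b;H)}$, so $\n J_s\n \le (s-a)^\frac12$, and in particular $\n J_b\n\le (b-a)^\frac12$; moreover $\n J_s - J_{s'}\n\le |s-s'|^\frac12$, so $s\mapsto J_s$ is norm-continuous. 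Before anything else I would note that the hypothesis forces $\int^b \n\phi'(s)\n_{\g(H,E)}\,ds<\infty$ near $b$ (there the weight $(s-a)^\frac12$ is bounded below), so $\phi$ extends continuously to $b$ and $\phi(b)\in\g(H,E)$ is well defined.

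Next I would write $\phi(t)=\phi(b)-\int_t^b\phi'(s)\,ds$, substitute into the definition of $T_\phi$, and interchange the order of integration over the triangle $\{a\le t\le s\le b\}$. This yields the key identity
\[ T_\phi f = \phi(b)J_b f - \int_a^b \phi'(s)J_s f\,ds. \]
Reading $T_\phi$ through the right-hand side also disposes of the well-definedness issue near the endpoint $a$ (where $\phi$ may blow up): the integrand is controlled by $(s-a)^\frac12\n\phi'(s)\n_{\g(H,E)}\n f\n$, which is integrable by assumption, whereas the original Bochner integral $\int_a^b\phi(t)f(t)\,dt$ need not converge absolutely. On functions $f$ supported away from $a$ the two expressions agree by Fubini, so the formula above is the correct reading of $T_\phi$.

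The main step is to recognise each piece as a $\g$-radonifying operator. By the right ideal property, $\phi(b)\circ J_b\in\g(L^2(a,b;H),E)$ with
\[ \n \phi(b)\circ J_b\n_{\g(L^2(a,b;H),E)} \le \n\phi(b)\n_{\g(H,E)}\,\n J_b\n \le (b-a)^\frac12\n\phi(b)\n_{\g(H,E)}, \]
and similarly, for each $s$, $\phi'(s)\circ J_s\in\g(L^2(a,b;H),E)$ with norm at most $(s-a)^\frac12\n\phi'(s)\n_{\g(H,E)}$. Since $\phi'$ is continuous into $\g(H,E)$ and $s\mapsto J_s$ is norm-continuous, the map $s\mapsto \phi'(s)\circ J_s$ is a continuous, hence strongly measurable, $\g(L^2(a,b;H),E)$-valued function, and its norm is integrable by hypothesis. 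Therefore the Bochner integral $S:=\int_a^b \phi'(s)\circ J_s\,ds$ exists in the Banach space $\g(L^2(a,b;H),E)$ and satisfies $\n S\n_{\g(L^2(a,b;H),E)}\le \int_a^b (s-a)^\frac12\n\phi'(s)\n_{\g(H,E)}\,ds$.

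Finally, because the $\g$-norm dominates the operator norm, evaluation $R\mapsto Rf$ is a bounded linear map from $\g(L^2(a,b;H),E)$ into $E$, and bounded maps commute with Bochner integrals; hence $Sf=\int_a^b\phi'(s)J_s f\,ds$, matching the identity above. Consequently $T_\phi=\phi(b)J_b - S\in\g(L^2(a,b;H),E)$, and the triangle inequality gives the asserted bound. The two points needing genuine care are precisely (i) the passage to the $J_s$-representation, which simultaneously secures well-definedness and performs the Fubini interchange, and (ii) the justification that the $\g$-valued Bochner integral commutes with evaluation at $f$; both are routine once the dominating integrability is in place, so I expect no serious obstacle beyond this bookkeeping.
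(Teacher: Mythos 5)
Your proof is correct and takes essentially the same route as the paper: the paper likewise writes $\phi(t)=\phi(b)-\int_a^b \one_{(a,s)}(t)\phi'(s)\,ds$ and Bochner-integrates the slices in $\g(L^2(a,b;H),E)$, the only cosmetic difference being that it obtains the slice norm $(s-a)^{\frac12}\n\phi'(s)\n_{\g(H,E)}$ as an exact identity via Example \ref{ex:f-ot-T}, whereas you get the same bound from the right ideal property together with $\n J_s\n\le (s-a)^{\frac12}$, and you are more explicit about the extension of $\phi$ to $b$ and about evaluation commuting with the Bochner integral. One aside is false though harmless: by Minkowski's integral inequality, $\n\phi(t)\n_{\g(H,E)}\le\n\phi(b)\n_{\g(H,E)}+\int_t^b\n\phi'(s)\n_{\g(H,E)}\,ds$ lies in $L^2(a,b)$ under the stated hypothesis, so $\int_a^b\phi(t)f(t)\,dt$ does converge absolutely for every $f\in L^2(a,b;H)$ and your detour through functions supported away from $a$ could be replaced by a direct application of Fubini's theorem.
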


\begin{proof}For notational simplicity we shall identify $\g(H,E)$-valued
functions on $(a,b)$ with the induced operators in $\calL(L^2(a,b;H),E)$.

The integrability condition implies that $\phi'$ is integrable on every interval
$(a',b)$ with $a<a'<b$. 
Put $\psi(s,t) := \one_{(t,b)}(s)\phi'(s)$ for $s,t \in (a,b)$. Then, by the
observations just made, 
$$\phi(t) = \phi(b) - \int_a^b\psi(s,t)\, ds$$ for all $t\in (a,b)$. 
By Example \ref{ex:f-ot-T}, for all $s\in (a,b)$ the function $t\mapsto
\psi(s,t) = \one_{(t,b)}(s)\phi'(s) = \one_{(a,s)}(t)\phi'(s)$ belongs to
$\gamma (L^2(a,b;H),E)$ with norm
\[ \n \one_{(\cdot,b)}(s)\phi'(s) \n_{\gamma (L^2(a,b;H),E)} = \n
\one_{(a,s)}\n_2\n \phi'(s)\n_{\gamma(H,E)} = (s-a)^\frac12\n
\phi'(s)\n_{\gamma(H,E)}. \]
It follows that the $\gamma (L^2(a,b;H),E)$-valued function $s \mapsto \psi(s,
\cdot )$ is
Bochner integrable. Identifying the operator $\phi(b)\in \gamma(H,E)$ with the
constant function $\one_{(a,b)}\phi(b)\in \gamma(L^2(a,b;H),E)$, we find that
$\phi \in \gamma (L^2(a,b;H),E)$ and
\begin{align*}
\n{\phi}\n_{\gamma (L^2(a,b;H),E)} &  \leq 
(b-a)^\frac12\n{\phi(b)}\n_{\g(H,E)} + \int_a^b\n{\psi(s,\cdot )}\n_{\gamma
(L^2(a,b;H),E)}\, ds\\
& = (b-a)^\frac12\n{\phi(b)}\n_{\g(H,E)} +
\int_a^b(s-a)^\frac12\n{\phi'(s)}\n_{\g(H,E)}\, ds.
\end{align*}
\end{proof}

The next result is due to {\sc Chevet} \cite{Che77}; see also {\sc Carmona}
\cite{Car}. We state it without proof; a fuller discussion would require a
discussion of injective tensor norms (see {\sc Diestel} and {\sc Uhl}
\cite{DieUhl} for an introduction to this topic). 

\begin{proposition} For all $T_1\in \g(H_1,E_1)$ and $T_2\in \g(H_2,E_2)$ we
have
$$ T_1\ot T_2 \in \g(H_1\wh\ot H_2, E_1\wh\ot_\e E_2),$$ 
where $H\wh\ot H'$ denotes the Hilbert space completion of $H\ot H'$ and
$E_1\wh\ot_\e E_2$ denotes the injective tensor product of $E_1$ and $E_2$. 
\end{proposition}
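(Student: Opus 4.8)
The plan is to verify the summability criterion of Corollary~\ref{cor:ONB} for the operator $T_1\ot T_2$ on a suitable separable subspace, and to control the resulting Gaussian series by Chevet's inequality. First I would reduce to the separable case. Since $\g$-radonifying operators are compact (Proposition~\ref{prop:compact}), the subspaces $\ov{\mathrm{ran}(T_1\s)}$ and $\ov{\mathrm{ran}(T_2\s)}$ are separable, hence so is $H_0 := \ov{\mathrm{ran}(T_1\s)}\wh\ot\ov{\mathrm{ran}(T_2\s)}$. Because $(T_1\ot T_2)\s = T_1\s\ot T_2\s$ maps $E_1\s\ot E_2\s$ into $H_0$, the operator $T_1\ot T_2$ vanishes on $H_0^\perp$, so by Proposition~\ref{prop:incl} it suffices to prove $T_1\ot T_2\in\g(H_0, E_1\wh\ot_\e E_2)$. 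Fixing orthonormal bases $(e_i)_{i\ge1}$ of $\ov{\mathrm{ran}(T_1\s)}$ and $(f_j)_{j\ge1}$ of $\ov{\mathrm{ran}(T_2\s)}$, the family $(e_i\ot f_j)_{i,j\ge1}$ is an orthonormal basis of $H_0$, and I enumerate it by square shells (in order of increasing $\max(i,j)$) to obtain an orthonormal basis $(h_n)_{n\ge1}$. By Corollary~\ref{cor:ONB} the claim is equivalent to the convergence in $L^2(\O;E_1\wh\ot_\e E_2)$ of the Gaussian series $\sum_n \g_n\,(T_1\ot T_2)h_n = \sum_{i,j}\g_{ij}\,w_{ij}$, where $w_{ij} := (T_1 e_i)\ot(T_2 f_j)$.

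Next I would establish the Cauchy property of the partial sums. Along the shell enumeration, the increment between two partial sums is supported on a finite subset $A$ of an $L$-shaped region $L_{m,m'} := [1,m']^2\setminus[1,m]^2$. Since for $(i,j)\notin A$ the variable $\g_{ij}$ is centred and independent of $\sigma(\g_{kl}:(k,l)\in A)$, the conditioning identity $\sum_{(i,j)\in A}\g_{ij}w_{ij} = \E\big(\sum_{(i,j)\in L_{m,m'}}\g_{ij}w_{ij}\,\big|\,(\g_{ij})_{(i,j)\in A}\big)$ together with the contractivity of conditional expectation (as used for Proposition~\ref{prop:IN}) gives
\[
\E\Big\n\sum_{(i,j)\in A}\g_{ij}w_{ij}\Big\n^2 \le \E\Big\n\sum_{(i,j)\in L_{m,m'}}\g_{ij}w_{ij}\Big\n^2,
\]
so it suffices to bound the full $L$-shape. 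I split $L_{m,m'}$ into the rectangles $\{m<i\le m'\}\times[1,m']$ and $[1,m]\times\{m<j\le m'\}$ and apply Chevet's inequality to each (the passage between moments being harmless by Proposition~\ref{prop:KK-R}). Writing $\beta_1(S) = \sup_{\n x_1\s\n\le1}\big(\sum_{i\in S}[e_i,T_1\s x_1\s]^2\big)^{1/2} = \n T_1 P_S\n$ with $P_S$ the coordinate projection, and $\beta_2$ analogously, the first rectangle contributes
\[
\beta_1(\{i>m\})\,\Big(\E\Big\n\sum_{j\le m'}\g_j T_2 f_j\Big\n^2\Big)^{1/2} + \n T_2\n\,\Big(\E\Big\n\sum_{m<i\le m'}\g_i T_1 e_i\Big\n^2\Big)^{1/2},
\]
and the second the symmetric expression.

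Now each term tends to $0$ as $m\to\infty$, uniformly in $m'$. The moments $\E\n\sum_{j\le m'}\g_j T_2 f_j\n^2$ and $\E\n\sum_{i\le m'}\g_i T_1 e_i\n^2$ stay bounded because $T_1,T_2\in\g$, while the tail moments $\E\n\sum_{m<i\le m'}\g_i T_1 e_i\n^2$ and $\E\n\sum_{m<j\le m'}\g_j T_2 f_j\n^2$ vanish as $m\to\infty$; and, crucially, $\beta_1(\{i>m\}) = \n P_{\{i>m\}}T_1\s\n\to0$ and $\beta_2(\{j>m\})\to0$ since $T_1,T_2$ are compact and $P_{\{i>m\}}\to0$ strongly (a compact operator composed with strongly null projections converges to $0$ in operator norm, the convergence being uniform on the relatively compact image of the unit ball). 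Hence every increment tends to $0$, the partial sums are Cauchy, and the series converges. Corollary~\ref{cor:ONB} then gives $T_1\ot T_2\in\g(H_0,E_1\wh\ot_\e E_2)$, and Proposition~\ref{prop:incl} lifts this to $\g(H_1\wh\ot H_2, E_1\wh\ot_\e E_2)$.

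The main obstacle is Chevet's inequality itself, which carries the genuine content of the proposition and is not among the results of this excerpt: it rests on a Gaussian comparison principle (of Slepian--Sudakov--Fernique type) that is strictly stronger than the Anderson-type covariance domination of Theorems~\ref{thm:Anderson} and~\ref{thm:domination}. The latter controls tightness and integrability but cannot by itself produce convergence of the series in a range space that may contain a copy of $c_0$ (as $E_1\wh\ot_\e E_2$ typically does), which is precisely why the compactness observation and the $L$-shape splitting are needed once Chevet's inequality is available. Supplying a self-contained proof of Chevet's inequality would therefore be the principal additional work.
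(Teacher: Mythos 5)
The paper offers no internal proof to compare against: the proposition is stated explicitly ``without proof'' (with references to {\sc Chevet} and {\sc Carmona}), the author noting that a full treatment would require a discussion of injective tensor norms. Judged on its own terms, your reduction is the standard route and its bookkeeping is sound: the separable reduction via $H_0=\ov{\mathrm{ran}(T_1\s)}\wh\ot\,\ov{\mathrm{ran}(T_2\s)}$ and Proposition \ref{prop:incl} is legitimate (though you should add the one-line check that $T_1\ot T_2$ is bounded from $H_1\wh\ot H_2$ into $E_1\wh\ot_\e E_2$ at all, e.g.\ by viewing $u\in H_1\ot H_2$ as a Hilbert--Schmidt operator $\tilde u$ and noting that $T_1\ot T_2(u)$ corresponds to $T_2\,\tilde u\, T_1\s$, whose operator norm is at most $\n T_1\n\,\n T_2\n\,\n u\n_{H_1\wh\ot H_2}$); the contraction-by-conditioning step passing from a subset $A$ of the $L$-shape to the full $L$-shape is correct for independent centred Gaussians; and the uniform vanishing of the rectangle bounds does follow from compactness of $T_1\s,T_2\s$ (so $\n P_{\{i>m\}}T_1\s\n\to 0$) together with the $L^2$-convergence of the one-parameter Gaussian series guaranteed by Corollary \ref{cor:ONB}. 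Your observation that one cannot instead verify $\g$-summability and invoke Theorem \ref{thm:c0} is also apt, since $E_1\wh\ot_\e E_2$ typically contains a copy of $c_0$ (e.g.\ $\ell^2\wh\ot_\e\ell^2$ is the space of compact operators on $\ell^2$).

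The genuine gap is the one you name yourself: Chevet's inequality, which carries the entire analytic content of the proposition and is neither proved nor obtainable from the paper's toolkit. Covariance domination (Lemma \ref{lem:g-compar}, Theorem \ref{thm:Anderson}, Theorem \ref{thm:domination}) compares two $E$-valued Gaussian vectors whose covariances are ordered against the \emph{same} functional; in Chevet's inequality the dominating process $a\sum_j\g_j\lb y_j,y\s\rb+b\sum_i\g_i'\lb x_i,x\s\rb$, indexed by pairs $(x\s,y\s)$, is not linear in the tensor functional $x\s\ot y\s$, so it is not the image of a single Gaussian vector dominated in that sense, and one genuinely needs a Slepian--Sudakov--Fernique comparison of increments of Gaussian processes. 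Until that inequality is proved or properly cited as an external result, what you have is a correct and well-organised reduction of the proposition to Chevet's inequality rather than a proof of it --- which, as it happens, mirrors the paper's own decision to state the result without proof precisely because that machinery lies outside its scope.
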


In view of the identity $C[0,1]\wh\ot_\e E = C([0,1];E)$, the interest of this
example lies in the special case where one of the operators is the indefinite
integral from $L^2(0,1)$ to $C[0,1]$ (see Proposition \ref{prop:WM}). 

The final result of this subsection is a Gaussian version of the Fubini theorem.
For its statement we need to introduce another Banach space property. Let
$(\g_m')_{m\ge 1}$ and $(\g_n'')_{n\ge 1}$
be Gaussian sequences on probability spaces $(\O',\F',\P')$ and 
$(\O'',\F'',\P'')$, and let $(\g_{mn})_{m,n\ge 1}$
be a doubly indexed Gaussian sequence on a probability space $(\O,\F,\P)$.

It is easy to check that $(\g_m'\g_n'')_{m,n\ge 1}$ is not
a Gaussian sequence. The following definition singles out a class of Banach
spaces in which it is possible to compare double Gaussian sums with single
Gaussian sums.

\begin{definition}\label{def:alpha}
A Banach space $E$ is said to have {\em property $(\a)$} if there exists a
constant $0<C<\infty$ such that for all finite sequences $(x_{mn})_{1\le m\le
M,\, 1\le n\le N}$ in $E$ we have
$$ \frac1{C^2}\E \Big\n \sum_{m=1}^M \sum_{n=1}^N \g_{mn} x_{mn}\Big\n^2
\!\le \E' \E'' \Big\n \sum_{m=1}^M \sum_{n=1}^N \g_m' \g_n'' x_{mn}\Big\n^2
\!\le C^2 \E \Big\n \sum_{m=1}^M \sum_{n=1}^N\g_{mn} x_{mn}\Big\n^2.
$$
\end{definition}
In an equivalent formulation, this property was introduced by {\sc Pisier}
\cite{Pis}.
The least possible constant $C$ is
called the {\em property $(\a)$ constant} of $E$, notation $\a(E)$.

Let $1\le p<\infty$. From 
\begin{align*}
 \E' \E'' \Bigl\n \sum_{m=1}^M \sum_{n=1}^N \g_m' \g_n'' y_{mn} \Bigr\n
 & \le\Big( \E' \E'' \Bigl\n \sum_{m=1}^M \sum_{n=1}^N \g_m'
\g_n'' y_{mn} \Bigr\n^p\Big)^\frac1p
\\ & \le K_{p,1}^\g \Big(\E' \Big(\E'' \Bigl\n \sum_{m=1}^M \sum_{n=1}^N \g_m'
\g_n'' y_{mn}
  \Bigr\n\Big)^p\Big)^\frac1p 
\\ & = K_{p,1}^\g\Bigg\n \E'' \Bigl\n \sum_{m=1}^M \sum_{n=1}^N  \g_m' \g_n''
y_{mn}
  \Bigr\n\, \Bigg\n_{L^p(\O')}
\\ & \le K_{p,1}^\g\E''\Bigl\n \sum_{m=1}^M \sum_{n=1}^N \g_m' \g_n'' y_{mn}
  \Bigr\n_{L^p(\O';E)}
\\ & \le (K_{p,1}^\g)^2\E''\Bigl\n \sum_{m=1}^M \sum_{n=1}^N \g_m' \g_n'' y_{mn}
  \Bigr\n_{L^1(\O';E)}
\\ & = (K_{p,1}^\g)^2 \E' \E'' \Bigl\n \sum_{m=1}^M \sum_{n=1}^N \g_m' \g_n''
y_{mn} \Bigr\n
\end{align*}
and another application of the Kahane-Khintchine inequalities (in order to prove
similar estimates for 
the sums $\sum_{m=1}^M\sum_{n=1}^N  \g_{mn} x_{mn}$), we see that the moments of
order $2$ in the Definition \ref{def:alpha}
may be replaced by moments of any order $p$. The resulting constants
will be denoted by $\a_p(E)$. Thus, $\a(E)=\a_2(E)$.

\begin{example}\label{ex:Hilbert-alpha}
Every Hilbert space $H$ has property $(\a)$, with constant $\a(H)=1$. This is
clear by writing out
the square norms as inner products.
\end{example}

\begin{example}\label{ex:Lp-alpha} Let $(A,\calA,\mu)$ be a $\sigma$ measure
space 
and let $1\le p<\infty$. The space
$L^p(A)$ has property $(\a)$, and more generally if $E$ has property $(\a)$
then $L^p(A;E)$ has property $(\a)$, with constant
$$ \a_p(L^p(A;E)) = \a_p(E).$$
Indeed,
for $f_{mn}\in L^p(A;E)$, $m=1,\dots,M$, $n=1,\dots,N$, we have 
\begin{align*}
 \E \Bigl\n \sum_{m=1}^M \sum_{n=1}^N \g_{mn} f_{mn}
\Bigr\n_{L^p(A;E)}^p
 & =
\int_A \E \Big\n \sum_{m=1}^M \sum_{n=1}^N \g_{mn} 
 f_{mn}(\xi)\Big\n^p\,d\mu(\xi) 
\\ &   \le \a_p^p(E) \int_A \E'\E''\Big\n
\sum_{m=1}^M \sum_{n=1}^N \g_m'\g_n''
 f_{mn}(\xi)\Big\n^p d\mu(\xi) 
\\ & = \a_p^p(E) \E' \E'' \Bigl\n \sum_{m=1}^M \sum_{n=1}^N \g_m' \g_n'' f_{mn}
\Bigr\n^p.
\end{align*}
The other bound is proved in the same way.
This gives $ \a_p(L^p(A;E)) \le \a_p(E)$; the opposite inequality
is trivial.
\end{example} 

The next result is due to {\sc Kalton} and {\sc Weis} \cite{KalWei07}. For
further results and refinements see {\sc van Neerven} and {\sc Weis}
\cite{NeeWei07}.

\begin{proposition}[$\g$-Fubini theorem]
Let $E$ have property $(\a)$.
Then the mapping
$h\otimes (h'\otimes x)\mapsto (h\ot h')\ot x$
extends uniquely to an isomorphism of Banach spaces
$$ \g(H,\g(H',E))\simeq \g(H\widehat\ot H',E).$$
\end{proposition}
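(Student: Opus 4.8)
The plan is to show that the regrouping map $U: h\ot(h'\ot x)\mapsto (h\ot h')\ot x$ is, up to the constant $\a(E)$, a norm isomorphism between dense subspaces, and then to extend it by density. The linear span of the elementary tensors $h\ot(h'\ot x)$ is dense in $\g(H,\g(H',E))$: indeed $H\ot\g(H',E)$ is dense by definition of the completion, while $H'\ot E$ is dense in $\g(H',E)$. Likewise the images $(h\ot h')\ot x$ span a dense subspace of $\g(H\wh\ot H',E)$, since the algebraic tensor product $H\ot H'$ is dense in $H\wh\ot H'$ and hence $(H\ot H')\ot E$ is dense in $\g(H\wh\ot H',E)$. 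So it suffices to establish a two-sided norm estimate on these dense subspaces.

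First I would normalise the representation. Given a finite-rank element $w=\sum_{m,n} f_m\ot(h_n'\ot x_{mn})$, a Gram--Schmidt argument (as in the proof of Theorem \ref{thm:gFub}) lets me assume that $(f_m)_{m=1}^M$ is orthonormal in $H$ and $(h_n')_{n=1}^N$ is orthonormal in $H'$, with a single pair of systems common to all terms. Writing $T_m:=\sum_n h_n'\ot x_{mn}\in\g(H',E)$ and applying the definition of the $\g$-norm twice, I would unroll the nested norm: since $(f_m)$ is orthonormal,
$$\n w\n_{\g(H,\g(H',E))}^2=\E'\Big\n\sum_m \g_m' T_m\Big\n_{\g(H',E)}^2,$$
and for each fixed realisation $\sum_m \g_m' T_m=\sum_n h_n'\ot\big(\sum_m \g_m' x_{mn}\big)$, so orthonormality of $(h_n')$ gives
$$\n w\n_{\g(H,\g(H',E))}^2=\E'\E''\Big\n\sum_{m,n}\g_m'\g_n'' x_{mn}\Big\n_E^2.$$
On the other side, the family $(f_m\ot h_n')_{m,n}$ is orthonormal in $H\wh\ot H'$ because $[f_m\ot h_n',f_{m'}\ot h_{n'}']=[f_m,f_{m'}][h_n',h_{n'}']=\d_{mm'}\d_{nn'}$, whence directly from Definition \ref{def:g-rad},
$$\n Uw\n_{\g(H\wh\ot H',E)}^2=\E\Big\n\sum_{m,n}\g_{mn}x_{mn}\Big\n_E^2.$$

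These are precisely the two quantities compared in Definition \ref{def:alpha}, so property $(\a)$ yields $\a(E)^{-1}\n w\n\le\n Uw\n\le\a(E)\n w\n$ on the dense subspace. Consequently $U$ is bounded with bounded inverse and extends uniquely to a topological isomorphism of the completions, which is the assertion. The main obstacle is the bookkeeping in the three displays above: correctly reducing an arbitrary finite-rank element to the common-orthonormal-system form and then unrolling the iterated $\g$-norm so that the double Gaussian average $\E'\E''\g_m'\g_n''$ emerges on the left while the single doubly-indexed average $\E\,\g_{mn}$ emerges on the right. Once both norms are in this shape, property $(\a)$ closes the argument mechanically; and it is exactly the two-sidedness of $(\a)$ that makes both $U$ and $U^{-1}$ bounded, so the hypothesis cannot be dropped.
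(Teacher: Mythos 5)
Your proposal is correct and follows essentially the same route as the paper, whose proof consists precisely of the remark that on algebraic tensors the norm equivalence is a restatement of Definition \ref{def:alpha} followed by an approximation argument. You have simply written out those two steps in full detail — the reduction to common orthonormal systems, the unrolling of the iterated $\gamma$-norm into the double Gaussian average $\E'\E''\bigl\|\sum_{m,n}\g_m'\g_n''x_{mn}\bigr\|^2$ versus the single average $\E\bigl\|\sum_{m,n}\g_{mn}x_{mn}\bigr\|^2$, and the density extension with two-sided bounds — all of which is sound.
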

\begin{proof}
For elements in the algebraic tensor products, the equivalence of norms is
merely a restatement of the definition of property $(\a)$. The general result
follows from it by approximation.
\end{proof}

\medskip\noindent
{\bf Entropy numbers.}
Following {\sc Pietsch} \cite[Chapter 12]{Pie-OI}, the {\em entropy numbers}
$e_n(T)$ of 
a bounded operator $T\in \calL(E,F)$ are defined as the infimum of all $\e>0$
such that there are $x_1,\dots,x_{2{n-1}}\in T(B_E)$ such that
$$ T(B_E)\subseteq \bigcup_{j=1}^{2^{n-1}} (x_j + \e B_F).$$
Here $B_E$ and $B_F$ denote the closed unit balls of $E$ and $F$.
Note that $T$ is compact if and only if $\limn e_n(T) = 0.$
Thus the entropy numbers $e_n(T)$ 
measure the degree of compactness of an operator $T$.

The following result is due to {\sc K\"uhn} \cite{Kuhn82}. Parts (1) and (2) of
can be viewed as a reformulation, in operator theoretical language, of  a
classical result due to Dudley \cite{Dud} and the Gaussian minoration principle
due to {\sc Sudakov} \cite{Sud}, respectively.

\begin{theorem}\label{thm:entropy} Let $T\in \calL(H,E)$ be a bounded operator.
\begin{enumerate}
\item[\rm(1)]
If $\sum_{n=1}^\infty n^{-\frac12} e_n(T^*)<\infty$, then $T\in \g(H,E)$;
\item[\rm(2)]
If $T\in \g(H,E)$, then $\sup_{n\ge 1} n^{\frac12} e_n(T^*)<\infty$.
\end{enumerate}
\end{theorem}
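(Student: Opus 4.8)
The plan is to recognise the two implications as the operator-theoretic shadows of the chaining (Dudley) upper bound and of the Sudakov minoration for Gaussian processes. Fix an $H$-isonormal process $W$ and consider the centred Gaussian process $(X_{x\s})_{x\s\in E\s}$ given by $X_{x\s}:=W(T\s x\s)=\lb W(T),x\s\rb$ (Proposition \ref{prop:xs}). Its canonical pseudometric is $d(x\s,y\s)=(\E|X_{x\s}-X_{y\s}|^2)^{1/2}=\n T\s x\s-T\s y\s\n_H$, so the $d$-covering numbers of the unit ball $B_{E\s}$ coincide with the covering numbers $N(\e)$ of the set $T\s(B_{E\s})$ in $H$. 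By the very definition of the entropy numbers, $T\s(B_{E\s})$ is covered by $2^{n-1}$ balls of radius slightly larger than $e_n(T\s)$; hence at the scale $\e=e_n(T\s)$ one has $N(\e)\le 2^{n-1}$, i.e. $\sqrt{\log N(\e)}\lesssim\sqrt n$, while conversely $\log N(e_n(T\s))\gtrsim n$. A routine Abel summation then shows that the Dudley entropy integral $\int_0^\infty\sqrt{\log N(\e)}\,d\e$ and the series $\sum_{n\ge1}n^{-1/2}e_n(T\s)$ are comparable. This is the bridge between the analytic hypotheses and Gaussian process theory.

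For part (1) I would first note that $\sum_n n^{-1/2}e_n(T\s)<\infty$ forces $e_n(T\s)\to0$, so $T\s$, and hence $T$, is compact (cf.\ Proposition \ref{prop:compact}); thus $H_1:=\ov{\mathrm{ran}(T\s)}$ is separable and, since $T$ vanishes on $H_1^\perp$ by \eqref{eq:sep-supp}, Proposition \ref{prop:incl} reduces matters to proving $T|_{H_1}\in\g(H_1,E)$. Fix an orthonormal basis $(h_n)_{n\ge1}$ of $H_1$, let $P_N$ be the orthogonal projection onto the span of $h_1,\dots,h_N$, and study the tail process $Y^{(N)}_{x\s}:=\sum_{n>N}\g_n\lb Th_n,x\s\rb$. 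Its canonical metric $d_N(x\s,y\s)=\n(I-P_N)T\s(x\s-y\s)\n_H$ satisfies $d_N\le d$, so its covering numbers are at most $N(\e)$, and its diameter $\mathrm{diam}_N=\sup_{\n x\s\n\le1}\n(I-P_N)T\s x\s\n$ tends to $0$ because $T\s$ is compact and $P_N\to I$ strongly. The chaining bound applied to $Y^{(N)}$ then gives $\E\n\sum_{n>N}\g_n Th_n\n_E=\E\sup_{x\s}|Y^{(N)}_{x\s}|\lesssim\int_0^{\mathrm{diam}_N}\sqrt{\log N(\e)}\,d\e$, which tends to $0$ as $N\to\infty$ since the full integral converges. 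Hence the partial sums $\sum_{n=1}^N\g_n Th_n$ are Cauchy in $L^1(\O;E)$, and by the Kahane--Khintchine inequalities (Proposition \ref{prop:KK-R}) also in $L^2(\O;E)$; Corollary \ref{cor:ONB} now yields $T\in\g(H,E)$. Throughout one takes the suprema defining $\n\cdot\n_E$ over a countable weak$\s$-dense subset of $B_{E\s}$ restricted to the separable space $\ov{\mathrm{ran}(T)}$, so that all the random variables in sight are genuinely measurable.

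For part (2) I would invoke Sudakov minoration for $(X_{x\s})$: there is a universal constant $c>0$ with $c\,\e\sqrt{\log N(\e)}\le\E\sup_{\n x\s\n\le1}X_{x\s}$ for every $\e>0$. By symmetry of the index set the right-hand side equals $\E\sup_{\n x\s\n\le1}|\lb W(T),x\s\rb|=\E\n W(T)\n_E$, which by the It\^o isometry (Proposition \ref{prop:Ito}) and $\n\cdot\n_{L^1}\le\n\cdot\n_{L^2}$ is at most $\n T\n_{\g(H,E)}$. Evaluating the minoration at $\e=e_n(T\s)$, where $\log N(\e)\gtrsim n$, gives $e_n(T\s)\sqrt n\lesssim\n T\n_{\g(H,E)}$, that is, $\sup_{n\ge1}n^{1/2}e_n(T\s)<\infty$, as claimed.

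The main obstacle is importing the two Gaussian-process inequalities—the chaining upper bound and Sudakov's lower bound—which carry the genuine analytic content and are not among the results established earlier in the survey; in a fully self-contained treatment they would have to be proved, respectively, by a dyadic $\e$-net chaining argument and by comparison with an orthogonal Gaussian system on a maximal separated set. A secondary but essential point in (1) is that one must extract $L^2$-convergence of the Gaussian series, not merely almost sure boundedness of its partial sums: this is exactly where the compactness of $T\s$ (yielding $\mathrm{diam}_N\to0$) together with the convergence of the full entropy integral are used, which is what lets us dispense with any hypothesis that $E$ omit an isomorphic copy of $c_0$.
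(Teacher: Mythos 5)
The paper gives no proof of this theorem: it is quoted from K\"uhn with the explicit remark that parts (1) and (2) are operator-theoretic reformulations of Dudley's entropy bound and Sudakov's minoration, which is precisely the strategy you carry out. Your fleshing-out is correct --- the identification $\log N(e_n(T\s))\asymp n$, the Abel-summation comparison of the entropy integral with $\sum_{n\ge 1} n^{-\frac12}e_n(T\s)$, the tail-diameter argument (compactness of $T\s$ giving $\mathrm{diam}_N\to 0$, hence $L^1$- and then, via Kahane--Khintchine, $L^2$-Cauchyness without any $c_0$-type hypothesis on $E$), and the evaluation of Sudakov's bound at $\e=e_n(T\s)$ are all sound, with the two Gaussian-process inequalities correctly flagged as the imported external content.
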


If fact one has the following quantitative version of part (2): there exists an
absolute constant $C$ such that for all Hilbert spaces $H$, Banach spaces $E$,
and operators $T\in \g(H,E)$ one has
$$\sup_{n\ge 1} \ n^{\frac12} e_n(T^*)\le C\n T\n_{\g(H,E)}.$$
In combination with a result of {\sc Tomczak-Jaegermann} \cite{TomJae87} to the
effect that for any compact operator $T\in \calL(\ell^2,E)$ one has
$$ \frac1{32} e_n(T\s)\le e_n(T)\le 32 e_n(T\s),$$
this yields (recalling that $\g$-radonifying operators are supported on a
separable closed subspace, see \eqref{eq:sep-supp}) the inequality
$$\sup_{n\ge 1} \ n^{\frac12} e_n(T)\le C\n T\n_{\g(H,E)}$$  
for some absolute constant $C$. See 
{\sc Cobos} and {\sc K\"uhn} \cite{CobKuhn} and  
{\sc K\"uhn} and {\sc Schonbeck} \cite{KuhnSch}, where these results are applied
to obtain estimates for the entropy numbers
of certain diagonal operators between Banach sequence spaces.

{\sc K\"uhn} \cite{Kuhn82} also showed that 
Theorem \ref{thm:entropy} can be improved for Banach spaces with (co)type $2$:

\begin{theorem} Let $T\in \calL(H,E)$ be a bounded operator.
\begin{enumerate}
\item[\rm(1)]
If $E$ has type $2$ and $(\sum_{n=1}^\infty (e_n(T^*))^2)^\frac12 <\infty$, then
$T\in \g(H,E)$;
\item[\rm(2)]
If $E$ has cotype $2$ and $T\in \g(H,E)$, then $(\sum_{n=1}^\infty 
(e_n(T^*))^2)^\frac12 <\infty$.
\end{enumerate}
\end{theorem}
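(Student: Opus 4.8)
The plan is to recognise the two assertions as the type-$2$ and cotype-$2$ refinements of the Dudley and Sudakov bounds recorded in Theorem \ref{thm:entropy}, and to prove them in the quantitative forms
$$\n T\n_{\g(H,E)}\le c\,T_2^\g(E)\Big(\sum_{n\ge 1}(e_n(T\s))^2\Big)^\frac12,\qquad \Big(\sum_{n\ge 1}(e_n(T\s))^2\Big)^\frac12\le c\,C_2^\g(E)\,\n T\n_{\g(H,E)},$$
from which the stated implications follow at once. Two facts about entropy numbers will be used repeatedly: the classical Carl--Stephani estimate that a Hilbert--Schmidt operator $j$ between Hilbert spaces has square-summable entropy numbers, with $(\sum_{n\ge 1}(e_n(j))^2)^\frac12\le c\,\n j\n_{\calL_2}$; and the entropy duality $e_n(T\s)\le 32\,e_n(T)$ for compact operators $T:H\to E$, already quoted in the remark following Theorem \ref{thm:entropy}. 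Since every $T\in\g(H,E)$ is compact and supported on a separable subspace (Proposition \ref{prop:compact} and \eqref{eq:sep-supp}), the latter applies freely.

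For part (2) I would argue by factorisation. Assume $E$ has cotype $2$ and $T\in\g(H,E)$. By Theorem \ref{thm:CT2} we have $T\in\pi_2(H,E)$ with $\n T\n_{\pi_2(H,E)}\le C_2^\g(E)\n T\n_{\g(H,E)}$. Because the \emph{source} $H$ is a Hilbert space, the Pietsch factorisation theorem produces a Radon probability measure $\nu$ on $(B_H,\mathrm{weak})$ with $\n Th\n^2\le \n T\n_{\pi_2(H,E)}^2\int_{B_H}|[h,g]|^2\,d\nu(g)$. Writing $j:H\to L^2(\nu)$, $jh:=[h,\cdot\,]|_{B_H}$, one checks that $\n j\n\le 1$ and, since $g\in B_H$, that $j$ is Hilbert--Schmidt with $\n j\n_{\calL_2}^2=\int_{B_H}\n g\n^2\,d\nu(g)\le 1$; moreover $T=A\circ j$ for a bounded operator $A$ with $\n A\n\le\n T\n_{\pi_2(H,E)}$. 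The submultiplicativity $e_n(T)\le\n A\n\,e_n(j)$ together with the Carl--Stephani bound then gives $(\sum_{n\ge 1}(e_n(T))^2)^\frac12\le c\,\n T\n_{\pi_2(H,E)}\le c\,C_2^\g(E)\n T\n_{\g(H,E)}$, and a final appeal to entropy duality transfers this to $T\s$. This establishes the second inequality above, hence part (2).

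For part (1) the factorisation argument runs in the wrong direction (square-summable entropy numbers do not control approximation numbers, so they cannot be fed back into a Hilbert--Schmidt factorisation), and I would instead prove the enhanced Dudley inequality directly by chaining. Setting up the Gaussian process $x\s\mapsto \lb W(T),x\s\rb$ on $B_{E\s}$, whose $L^2$-metric is $d(x\s,y\s)=\n T\s x\s-T\s y\s\n_H$, one chooses nested $\e_j$-nets $N_j$ of $T\s(B_{E\s})$ with $|N_j|\le 2^{2^j}$ and mesh $\e_j\approx e_{2^j}(T\s)$, and decomposes $T\s$ as a telescoping sum $\sum_j\Delta_j$ over scales, where each $\Delta_j$ takes at most $|N_j|^2$ values of norm $\lesssim\e_{j-1}$. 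In the classical Dudley estimate the pieces are summed by the triangle inequality, producing $\sum_j\e_{j-1}2^{j/2}\approx\sum_{n\ge 1}n^{-\frac12}e_n(T\s)$. The role of type $2$ is to replace this $\ell^1$ summation by an $\ell^2$ one: transferring the type $2$ inequality of $E$ to $\g(H,E)$-valued Gaussian sums (which is legitimate because successive scales live on orthogonal systems) yields $\n T\n_{\g(H,E)}^2\lesssim(T_2^\g(E))^2\sum_j\n\Delta_j\n_{\g(H,E)}^2\lesssim(T_2^\g(E))^2\sum_j\e_{j-1}^2\,2^j\approx(T_2^\g(E))^2\sum_{n\ge 1}(e_n(T\s))^2$. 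Carrying out this chaining carefully, and in particular justifying the $\ell^2$-summation of the pieces through the type $2$ property, is the technical heart of the argument and the step I expect to be the main obstacle.
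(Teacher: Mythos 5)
The paper itself gives no proof of this theorem (it is quoted from {\sc K\"uhn} \cite{Kuhn82}), so your argument must stand on its own. Your part (2) does: Theorem \ref{thm:CT2} gives $T\in\pi_2(H,E)$ with $\n T\n_{\pi_2(H,E)}\le C_2^\g(E)\n T\n_{\g(H,E)}$; the Pietsch factorisation $T=A\circ j$ through $L^2(\nu)$, with $\n A\n\le \n T\n_{\pi_2(H,E)}$ and $j$ Hilbert--Schmidt of norm at most one (since $\sum_i \n jh_i\n^2=\int_{B_H}\n g\n^2\,d\nu(g)\le 1$), Carl's inequality for the entropy numbers of $j$, and the quoted Tomczak-Jaegermann duality all fit together as you say. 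You could even skip the duality step: dualising the factorisation gives $T\s=j\s\circ A\s$ with $j\s$ again Hilbert--Schmidt, so $e_n(T\s)\le \n A\n\, e_n(j\s)$ directly.

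Part (1), however, is not a proof, and the gap is exactly where you flag it --- but it is worse than a technical obstacle. The chaining decomposition $\sum_j \Delta_j$ decomposes the index map $x\s\mapsto T\s x\s$ into \emph{nonlinear} link assignments, not the operator $T$ into a sum of operators, so $\n \Delta_j\n_{\g(H,E)}$ is not defined; and your justification for the $\ell^2$-summation --- that ``successive scales live on orthogonal systems'' --- is false: all nets and links lie in the single compact set $\ov{T\s(B_{E\s})}\subseteq H$, with no orthogonality across scales. More fundamentally, type $2$ is a statement about $E$-valued sums of independent vectors, while chaining takes place entirely at the level of the scalar process $\lb W(T),x\s\rb$ and its supremum, so there is no point in your scheme where the type $2$ inequality can legitimately enter; as written, summing the scales still forces the $\ell^1$ (Dudley) bound.

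Moreover, your reason for abandoning the factorisation route in (1) rests on a false premise. Entropy numbers do not dominate approximation numbers in general Banach spaces, but between \emph{Hilbert} spaces they do: if $R(B_{H_1})$ is covered by $2^{k}$ balls of radius $e_{k+1}(R)$, projecting onto the span of the top $k$ left singular vectors and comparing volumes of ellipsoids gives $\prod_{i\le k}a_i(R)\le 2^{k}(e_{k+1}(R))^k$, hence $a_k(R)\le 2e_k(R)$. Since the $2$-summing norm of $S=T\s:E\s\to H$ is computed against Hilbertian test operators, $\pi_2$-$\n S\n = \sup\{\n SV\n_{\calL_2}:\ V:\ell^2_n\to E\s,\ \n V\n\le 1\}$, and each $SV$ acts between Hilbert spaces, this yields $\n T\s\n_{\pi_2(E\s,H)}\le 2\big(\sum_{n\ge 1}(e_n(T\s))^2\big)^{1/2}$. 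Feeding this into the type $2$ characterisation theorem preceding Theorem \ref{thm:CT2} (type $2$ together with $T\s\in\pi_2(E\s,H)$ implies $T\in\g(H,E)$, with $\n T\n_{\g(H,E)}\le K(E)\,T_2^\g(E)\,\n T\s\n_{\pi_2(E\s,H)}$) proves part (1) at once. In other words, the correct proof of (1) is the exact mirror of your proof of (2), and the chaining plan should be dropped.
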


It appears to be an open problem whether these properties characterise 
spaces with type $2$ (cotype $2$) and whether they can be extended to spaces of
type $p$ (cotype $q$).

\medskip\noindent
{\bf The indefinite integral.} \ 
The final example is a reformulation of {\sc Wiener}'s classical result on the
existence of the existence of Brownian motions. The proof presented here is due
to {\sc Ciesielski} \cite{Cie}.

\begin{proposition}[Indefinite integration]\label{prop:WM}
The operator $I: L^2(0,1)\to C[0,1]$ defined by
$$ (If)(t):= \int_0^t f(s)\,ds, \qquad f\in L^2(0,1), \ t\in [0,1],$$ 
is $\g$-radonifying. 
\end{proposition}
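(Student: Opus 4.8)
The plan is to apply Corollary \ref{cor:ONB}. Since $L^2(0,1)$ is separable, it suffices to exhibit one orthonormal basis $(h_n)_{n\ge 1}$ for which the Gaussian series $\sum_n \g_n I h_n$ converges in $L^2(\Om;C[0,1])$, and the natural choice is the Haar system, following {\sc Ciesielski}. First I would set up the basis: put $h_0 := \one_{[0,1]}$ and, for each $j\ge 0$ and $0\le k<2^j$, let $h_{j,k}$ be the Haar function supported on $I_{j,k} := [k2^{-j},(k+1)2^{-j}]$, equal to $2^{j/2}$ on the left half and $-2^{j/2}$ on the right half. These form an orthonormal basis of $L^2(0,1)$. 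The images $\Delta_{j,k} := I h_{j,k}$ are the Schauder tent functions: $\Delta_{j,k}$ is supported on $I_{j,k}$, rises linearly to a peak at the midpoint and returns to $0$, with $\n \Delta_{j,k}\n_{C[0,1]} = \tfrac12 2^{-j/2}$ (and $\n I h_0\n_{C[0,1]} = 1$). The decisive elementary observation is that for fixed $j$ the supports $I_{j,k}$, $0\le k<2^j$, overlap only at endpoints, so at each $t\in[0,1]$ at most one $\Delta_{j,k}(t)$ is nonzero.

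Next I would group the series by dyadic level. Writing $Y_j := \sum_{k=0}^{2^j-1} \g_{j,k}\Delta_{j,k}$ (and $Y_{-1} := \g_0 I h_0$, a single integrable term), the disjoint-support property yields the pointwise bound
$$ \n Y_j\n_{C[0,1]} \le \tfrac12 2^{-j/2}\max_{0\le k<2^j}|\g_{j,k}|. $$
I would then invoke the Gaussian maximal inequality: the maximum of $m$ standard Gaussians satisfies $\E\max_{1\le k\le m}|\g_k|^2 \le 2\log m + 2$, which follows by exactly the tail estimate used in Example \ref{ex:LP}, namely $\P\{\max_k|\g_k|>t\}\le m e^{-t^2/2}$, integrated against $2t\,dt$ after splitting the integral at $t=\sqrt{2\log m}$. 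Applied with $m=2^j$ this gives $\big(\E\n Y_j\n_{C[0,1]}^2\big)^{\frac12} \le \tfrac12 2^{-j/2}(2j\log 2 + 2)^{\frac12} \lesssim 2^{-j/2}\sqrt{j+1}$.

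Finally I would use the triangle inequality in $L^2(\Om;C[0,1])$: for $J\le J'$,
$$ \Big(\E\Big\n\sum_{j=J}^{J'} Y_j\Big\n_{C[0,1]}^2\Big)^{\frac12} \le \sum_{j=J}^{J'}\big(\E\n Y_j\n_{C[0,1]}^2\big)^{\frac12} \lesssim \sum_{j\ge J} 2^{-j/2}\sqrt{j+1}, $$
and the right-hand side tends to $0$ as $J\to\infty$. Hence $\sum_j Y_j = \sum_n \g_n I h_n$ is Cauchy, and therefore convergent, in $L^2(\Om;C[0,1])$, and Corollary \ref{cor:ONB} gives $I\in\g(L^2(0,1),C[0,1])$. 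The only genuinely nontrivial ingredient is the Gaussian maximal estimate controlling $\E\max_k|\g_{j,k}|^2$ by $O(j)$; this is precisely what makes $2^{-j/2}\sqrt{j}$ summable and thereby forces convergence, while everything else (orthonormality of the Haar system, the disjoint supports, and the $\tfrac12 2^{-j/2}$ sup-norm computation) is routine.
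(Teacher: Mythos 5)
Your proof is correct, and it shares the paper's combinatorial skeleton --- Ciesielski's Haar basis, grouping by dyadic level, and the observation that the Schauder functions $\Delta_{j,k}$ at a fixed level have essentially disjoint supports with $\n \Delta_{j,k}\n_{C[0,1]} = 2^{-j/2-1}$ --- but the probabilistic engine is genuinely different. The paper's proof (via Lemma \ref{lem:log}) uses Borel--Cantelli to get the \emph{almost sure} bound $|\g_n(\om)|\le C(\om)\sqrt{\log(n+1)}$ with $C(\om)<\infty$ a.s., and concludes almost sure uniform convergence of the expansion; strictly speaking this is not yet the $L^2(\O;C[0,1])$-convergence demanded by Corollary \ref{cor:ONB}, and the paper leaves implicit the upgrade from a.s. convergence to mean-square convergence (available through the It\^o--Nisio/Hoffmann--J{\o}rgensen circle of ideas, e.g.\ the L\'evy-inequality argument in the proof of Theorem \ref{thm:c0}, or via Fernique's theorem). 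You replace the a.s.\ bound by the quantitative maximal estimate $\E\max_{1\le k\le m}|\g_k|^2\le 2\log m+2$ (your derivation of it is sound: the tail bound $\P\{|\g|>t\}\le e^{-t^2/2}$ holds for $t\ge 1$, and splitting the integral at $t=\sqrt{2\log m}$ works for $m\ge 2$) and sum the resulting bounds $\lesssim 2^{-j/2}\sqrt{j+1}$ by the triangle inequality in $L^2(\O;C[0,1])$, so your argument plugs into Corollary \ref{cor:ONB} with no further step; what you give up is the pathwise statement (a.s.\ uniform convergence of the Ciesielski expansion), which is the classical construction of Brownian motion and of independent interest. One small point you should make explicit: Corollary \ref{cor:ONB} requires Cauchyness of \emph{all} partial sums $\sum_{n=1}^N\g_n Ih_n$, not merely of the completed block sums $\sum_j Y_j$; this is immediate, however, because your disjoint-support estimate $\bigl\n\sum_{k\in S}\g_{j,k}\Delta_{j,k}\bigr\n_{C[0,1]}\le \tfrac12\, 2^{-j/2}\max_{k}|\g_{j,k}|$ holds for an arbitrary subset $S$ of the indices at level $j$, so an incomplete top block obeys the same $2^{-j/2}\sqrt{j+1}$ bound and the full sequence of partial sums is Cauchy in $L^2(\O;C[0,1])$.
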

 
The proof is based on the following simple lemma (which is related to the
estimates in Example \ref{ex:LP}).

\begin{lemma}\label{lem:log} For any Gaussian sequence $(\g_n)_{n\ge 1}$,
$$ \limsup_{N\to\infty} \sum_{n=1}^N \frac{|\g_n|}{\sqrt{2
\log(n+1)}} \le 1.$$
\end{lemma}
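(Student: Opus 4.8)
The plan is to establish the almost-sure bound $\limsup_{n\to\infty}\frac{|\g_n|}{\sqrt{2\log(n+1)}}\le 1$; the displayed sum over $n$ in the statement is evidently a misprint for this single-index $\limsup$ (a sum of terms of order $|\g_n|/\sqrt{\log n}$ cannot be bounded), and it is the $\limsup$ that feeds into Ciesielski's construction in Proposition \ref{prop:WM}. The whole argument rests on the first Borel--Cantelli lemma together with the elementary Gaussian tail estimate already recorded in \eqref{eq:LP-1}.

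First I would fix $\e>0$ and set $t_n:=(1+\e)\sqrt{2\log(n+1)}$. Since $\P\{|\g_n|>t_n\}=\P\{|\g_n|^2>t_n^2\}=1-G(t_n^2)$, the lower bound \eqref{eq:LP-1} for $G$ gives $\P\{|\g_n|>t_n\}\le \tfrac{2}{\sqrt{2\pi}}\,\dfrac{e^{-t_n^2/2}}{t_n}$. Here $t_n^2/2=(1+\e)^2\log(n+1)$, so $e^{-t_n^2/2}=(n+1)^{-(1+\e)^2}$, while $t_n\ge (1+\e)\sqrt{2\log 2}$ keeps the denominator bounded away from $0$. Hence $\P\{|\g_n|>t_n\}\le C_\e\,(n+1)^{-(1+\e)^2}$ for a constant $C_\e$ depending only on $\e$.

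Next I would sum over $n$. Because $(1+\e)^2>1$, the series $\sum_{n\ge 1}(n+1)^{-(1+\e)^2}$ converges, so $\sum_{n\ge 1}\P\{|\g_n|>t_n\}<\infty$. By the Borel--Cantelli lemma the event $\{|\g_n|>t_n\}$ occurs for only finitely many $n$ almost surely; equivalently, almost surely $|\g_n|\le (1+\e)\sqrt{2\log(n+1)}$ for all sufficiently large $n$, whence $\limsup_{n\to\infty}\frac{|\g_n|}{\sqrt{2\log(n+1)}}\le 1+\e$ almost surely. Intersecting these almost-sure events over the countable family $\e=1/k$, $k\ge 1$, then yields the asserted bound with $\e$ replaced by $0$.

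The argument is essentially routine once \eqref{eq:LP-1} is in hand; the only genuine point is the convergence of $\sum_n (n+1)^{-(1+\e)^2}$, which is precisely what forces the factor $(1+\e)$ rather than $1$ in the threshold and explains the role of the constant $2$ inside the logarithm. I therefore expect no real obstacle beyond bookkeeping, noting only that the assertion is almost sure (the $\limsup$ is an a.s.\ constant by a tail argument) and that the $\sqrt{\log}$ factor in the denominator of the tail bound merely improves convergence and may be discarded.
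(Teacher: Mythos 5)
Your proof is correct and takes essentially the same route as the paper: the paper likewise reads the lemma as the single-index statement $\limsup_{n\to\infty}|\g_n|/\sqrt{2\log(n+1)}\le 1$ almost surely (indeed only $\sup_{n\ge 1}|\g_n|/\sqrt{\log(n+1)}<\infty$ a.s.\ is used in Proposition \ref{prop:WM}), derives the same Gaussian tail bound $\P\{|\g_n|>t\}\le \tfrac{2}{t\sqrt{2\pi}}e^{-\frac12 t^2}$, and applies Borel--Cantelli with a parameter $\alpha>1$ playing the role of your $(1+\e)^2$. The only cosmetic differences are that you invoke the already-recorded estimate \eqref{eq:LP-1} instead of rederiving the tail bound, and you make explicit the final countable intersection over $\e=1/k$, which the paper leaves implicit.
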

\begin{proof}
For all $t\ge 1$, 
$$
\P\{|\g_n| > t\} 
 = \frac{2}{\sqrt{2\pi}}\int_t^\infty {e^{-\frac12 u^2}du}
 \leq \frac{2}{t\sqrt{2\pi}}\int_t^\infty{ue^{-\frac12 u^2}du} =
\frac{2}{t\sqrt{2\pi}}e^{-\frac12t^2}.
$$
Fix $\alpha > 1$ arbitrarily. For all
$n \geq 1$ we have $2\alpha \log (n+1) \geq 1$ and therefore
$$
\P\big\{|\g_n| \geq \sqrt{2\alpha \log (n+1)}\big\} 
\leq  \sqrt{2/\pi}\,(n+1)^{-\alpha}.
$$
The Borel-Cantelli lemma now implies that almost surely
$|\g_n| \geq \sqrt{2\alpha \log (n+1)}$ for at most finitely many $n\ge 1$.
\end{proof}

Let $(\chi_n)_{n\ge 1}$ be the $L^2$-normalised Haar functions
on $(0,1)$, which are defined by $h_1 \equiv 1$ and  
$\phi_n := \chi^{jk}$ for $n\ge 1$, where $n=2^j+k$ with 
$j = 0,1,\dots$ and $ k = 0,\dots, 2^j-1$, and
$$ \chi^{jk} = 2^{j/2} \one_{\big(\tfrac{k}{2^{j}},\tfrac{k+1/2}{2^{j}}\big)}
- 2^{j/2} \one_{\big(\tfrac{k+1/2}{2^{j}},\tfrac{k+1}{2^{j}}\big)}.$$
Note that the functions $\chi^{jk}$ are supported on the interval
$[\tfrac{k-1}{2^{j}}, \tfrac{k}{2^{j}}]$.

\bpf[Proof of Proposition \ref{prop:WM}]
It suffices to prove that the sum 
$$\sum_{j\ge 0} \sum_{k=1}^{2^j} {\g_{jk}
I\chi^{jk}(t)}, \qquad t\in [0,1],
$$
converges uniformly on $[0,1]$ almost surely. 

Fixing $j\ge 0$, for all $t\in [0,1]$ we have
$I\chi^{jk}(t) = 0$ for all but at most one $k\in \{1,\dots,2^j\}$,
and for this $k$ we have $$0\le I\chi^{jk}(t) \le 2^{-j/2-1}, \qquad t\in
[0,1].$$
Using this, 
for all $j_0\ge 0$ we obtain the following estimate, uniformly in $t\in [0,1]$:
$$
\bal
\sum_{j\ge j_0}\sum_{k=1}^{2^j} |\g_{jk}(\om)| I\chi^{jk}(t) 
& \leq C(\om)  \sum_{j\ge j_0}\sum_{k=1}^{2^j}\! \sqrt{\log (2^j+k)}
I\chi^{jk}(t)
\\ &  \leq C(\om)\sum_{j=j_0}^{\infty}\sum_{k=1}^{2^j}\!
\sqrt{j+1} I\chi^{jk}(t) 
 = C(\om)\sum_{j=j_0}^{\infty}2^{-j/2-1}\sqrt{j+1} ,
\eal
$$
where $$C(\om):= \sup_{n\ge 1}\, \frac{|\g_n(\om)|}{\sqrt{\log (n+1)}}$$
is finite almost surely by Lemma \ref{lem:log}. 
This proves the result.
\epf

It is straightforward to show that $Q:= I\circ I\s$ is given by
$$ (Q \mu)(t) = \int_0^1 s\wedge t \,d\mu(s), \qquad
\mu\in M[0,1].$$
Here $M[0,1] = (C[0,1])\s$ is the space of all bounded Borel measures
$\mu$ on $[0,1]$.
The unique Gaussian measure on $C[0,1]$
with covariance operator $Q$ is called the {\em Wiener
measure}.

Refining the proof of Proposition \ref{prop:WM}, one can prove that the
indefinite integral is  $\g$-radon\-ifying from $L^2(0,1)$ into the H\"older
space $C^\a[0,1]$ for $0\le \a<\frac12$; this reflects the fact that the paths
of a Brownian motion are $C^\a$-continuous for all $0\le \a<\frac12$.
Alternatively, this can be deduced from the Sobolev embedding theorem combined
with fact that the indefinite integral is $\g$-radonifying from $L^2(0,1)$ into
the Sobolev space $H^{\a,p}(0,1)$ for all $2<p<\infty$ and $\a\in
(\frac1p,\frac12)$; see {\sc Brze\'zniak} \cite{Brz96}.

Concerning the critical exponent $\a=\frac12$, it is known that the paths of a
Brownian motion $B$ belong to the Besov space $B_{p,\infty}^\frac12(0,1)$ for
all $1\le p<\infty$
and there is a strictly positive constant $C>0$ such that
\begin{equation}\label{eq:lower-Besov}
\P\Big\{\n B\n_{B_{p,\infty}^\frac12(0,1)} \ge C\Big\} = 1.
\end{equation}
see {\sc Ciesielski} \cite{Cie91, Cie93}, {\sc Ciesielski, Kerkyacharian,
Roynette} \cite{CKR}, {\sc Roy\-nette} \cite{Roy}, {\sc Hyt\"onen} and {\sc
Veraar} \cite{HytVer08} for a discussion of this result and further refinements.
As a consequence of this inequality one obtains the somewhat surprising fact
that the indefinite integral fails to be $\g$-radonifying from $L^2(0,1)$ into $
B_{p,\infty}^\frac12(0,1)$; the point is that \eqref{eq:lower-Besov} prevents
$B$ from being a strongly measurable (i.e. Radon) Gaussian random variable. 
A similar phenomenon in $\ell^\infty$ had been discovered previously by {\sc
Fremlin} and {\sc Talagrand} \cite{FreTal}.

\medskip\noindent
{\bf An application to stochastic Cauchy problems} \
In this section we shall briefly sketch how the theory of $\g$-radonifying
operators enters naturally in the study of stochastic abstract Cauchy problems
driven by an isonormal process. For unexplained terminology we refer to {\sc
Engel} and {\sc Nagel}  \cite{EngNag} and {\sc Pazy} \cite{Paz} (for the theory
of semigroups of operators) and {\sc van
Neerven} and {\sc Weis} \cite{NeeWei05a} (for a discussion of stochastic Cauchy
problems).   

Suppose $A$ is the infinitesimal generator of a strongly continuous semigroup
$S=(S(t))_{t\ge 0}$ of
bounded linear operators  on a Banach space $E$, let $W_H$ be
an $L^2(\R_+;H)$-isonormal process, and let $B\in\calL(H,E)$ be a bounded linear
operator.
Building on previous work of {\sc Da Prato} and {\sc Zabczyk} \cite{DaPZab} and
{\sc van Neerven} and {\sc Brze\'zniak} \cite{BrzNee00}, it has been shown in
{\sc van Neerven} and {\sc Weis} \cite{NeeWei05a} that the linear stochastic
Cauchy problem
\begin{align*}
 dU(t) &= AU(t)\,dt + B\,dW_H(t), \quad t\ge 0, \\
 U(0)  &= u_0,
\end{align*}
admits a unique weak solution $U$ if and only if for some (and then for all)
$T>0$ the bounded operator $R_T: L^2(0,T;H)\to E$ given by
$$ R_T f := \int_0^T S(t)Bf(t)\,dt$$
is $\g$-radonifying from $L^2(0,T;H)$ to $E$. 
Here we give two sufficient condition for this to happen.

\begin{proposition} Each of the following two conditions imply that $R_T$ is
$\g$-radonifying: 
\begin{enumerate}
 \item[\rm(1)] $E$ has type $2$ and $B\in\g(H,E)$;
 \item[\rm(2)] $S$ is analytic and $B\in\g(H,E)$.
\end{enumerate}
\end{proposition}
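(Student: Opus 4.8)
The plan is to write $\phi(t) := S(t)B$ and to recognise $R_T$ as the operator $T_\phi$ associated with this $\g(H,E)$-valued function, so that both parts reduce to embedding results proved earlier. In each case the hypothesis $B \in \g(H,E)$ together with the left ideal property (Theorem \ref{thm:ideal}) gives $S(t)B \in \g(H,E)$ for every $t \in [0,T]$, with $\n S(t)B\n_{\g(H,E)} \le \n S(t)\n\,\n B\n_{\g(H,E)}$. Moreover $R_T f = \int_0^T S(t)Bf(t)\,dt = \int_0^T \phi(t)f(t)\,dt = T_\phi f$, the operator appearing in \eqref{eq:Tphi}.

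For part (1) I would invoke Theorem \ref{thm:type2}(1): since $E$ has type $2$, the assignment $\phi \mapsto T_\phi$ is precisely the continuous embedding $L^2(0,T;\g(H,E)) \embed \g(L^2(0,T;H),E)$. It then suffices to check $\phi \in L^2(0,T;\g(H,E))$. Strong continuity of $S$ yields a bound $\n S(t)\n \le M$ on $[0,T]$, and Corollary \ref{cor:g-cont} (convergence by left multiplication) shows that $t \mapsto S(t)B$ is continuous, hence strongly measurable, as a $\g(H,E)$-valued function. Combined with $\n S(t)B\n_{\g(H,E)} \le M\n B\n_{\g(H,E)}$ this gives $\int_0^T \n \phi(t)\n_{\g(H,E)}^2\,dt \le TM^2\n B\n_{\g(H,E)}^2 < \infty$, so $\phi \in L^2(0,T;\g(H,E))$ and therefore $R_T = T_\phi \in \g(L^2(0,T;H),E)$.

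For part (2) I would instead appeal to Proposition \ref{prop:C1-derivative} on the interval $(0,T)$. The key input is that for an analytic semigroup $t\mapsto S(t)$ is norm-differentiable on $(0,\infty)$ with $\frac{d}{dt}S(t) = AS(t)$, and on the bounded interval $(0,T]$ one has the standard estimate $\n AS(t)\n \le C/t$. Using the left ideal property as above, I would verify that $\phi(t) = S(t)B$ is continuously differentiable as a $\g(H,E)$-valued map on $(0,T)$ with $\phi'(t) = AS(t)B$: indeed $\n(h^{-1}(S(t+h)-S(t)) - AS(t))B\n_{\g(H,E)} \le \n h^{-1}(S(t+h)-S(t)) - AS(t)\n\,\n B\n_{\g(H,E)} \to 0$, and continuity of $t \mapsto AS(t)B$ follows in the same way from norm-continuity of $t \mapsto AS(t)$. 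The integrability hypothesis of Proposition \ref{prop:C1-derivative} then holds because
\[
\int_0^T s^{\frac12}\n \phi'(s)\n_{\g(H,E)}\,ds \le C\n B\n_{\g(H,E)}\int_0^T s^{-\frac12}\,ds = 2CT^{\frac12}\n B\n_{\g(H,E)} < \infty,
\]
whence $R_T = T_\phi \in \g(L^2(0,T;H),E)$.

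Once the right tools are identified the arguments are largely bookkeeping; the only genuinely delicate point is part (2), where one must establish that $\phi$ is $C^1$ as a $\g(H,E)$-valued map and that $\phi'$ is integrable against the weight $s^{1/2}$. This is exactly where analyticity enters: the singularity $\n AS(t)\n \sim 1/t$ at the origin is integrable against $s^{1/2}$ but not against $s^{0}$, which is precisely why Proposition \ref{prop:C1-derivative} carries the weight $(s-a)^{1/2}$ rather than none.
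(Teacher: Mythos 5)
Your proof is correct and follows essentially the same route as the paper: part (1) via Corollary \ref{cor:g-cont} and the type~$2$ embedding of Theorem \ref{thm:type2}, part (2) via Proposition \ref{prop:C1-derivative} together with the analytic-semigroup estimate $\n AS(t)\n \le C t^{-1}$ on $(0,T)$. The extra details you supply (strong measurability of $t\mapsto S(t)B$, the ideal-property verification that $\phi$ is $C^1$ with $\phi'(t)=AS(t)B$, and the remark on why the weight $s^{1/2}$ is exactly what makes the singularity integrable) are accurate refinements of the paper's more compressed argument.
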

\begin{proof}
(1): \  By the strong continuity and Corollary \ref{cor:g-cont} the $\g(H,E)$-valued 
function $t\mapsto S(t)B$ is continuous on $[0,T]$. In particular it belongs to
$L^2(0,T;\g(H,E))$ and therefore, by Theorem \ref{thm:type2}, the induced operator $R_T$ 
belongs to $\g(L^2(0,T;H),E))$. 

(2): By the analyticity of $S$ the $\g(H,E)$-valued function $t\mapsto S(t)B$ is continuously differentiable
 on $(0,T)$ and
$$ \int_0^T t^\frac12 \n S'(t)B\n_{\g(H,E)}\,dt\le C_T \int_0^T t^{-\frac12} \n B\n_{\g(H,E)}\,dt
\le 2C_T T^\frac12 \n B\n_{\g(H,E)}.  
$$
where we used the analyticity of $S$ to estimate $\n S'(t)\n = \n AS(t)\n \le C_T t^{-1}$
for $t\in (0,T)$. Now Proposition \ref{prop:C1-derivative} implies that $R_T$ 
belongs to $\g(L^2(0,T;H),E))$. 
\end{proof}

\medskip
\noindent
{\em Acknowledgment} -- It is a pleasure to thank David Applebaum and Thomas K\"uhn for useful comments on a previous draft of this paper, and the anonymous referee for providing many detailed corrections. 
 
\bibliographystyle{ams-pln}
\bibliography{canberra}

\end{document}